\newtheorem{theorem}{Theorem}[section] \newtheorem{lemma}[theorem]{Lemma}
\newtheorem{proposition}[theorem]{Proposition}
\theoremstyle{definition} 
\newtheorem{definition}[theorem]{Definition}
\newtheorem{remark}[theorem]{Remark} \numberwithin{equation}{section}
\numberwithin{figure}{section}
\newcommand{\Cb}{\mathbb{C}}
\newcommand{\Eb}{\mathbb{E}}
\newcommand{\Pb}{\mathbb{P}}
\newcommand{\Qb}{\mathbb{Q}}
\newcommand{\Rb}{\mathbb{R}}
\newcommand{\Zb}{\mathbb{Z}}
\newcommand{\Pbf}{\mathbf{P}}
\newcommand{\Kbf}{\mathbf{K}}
\newcommand{\Pbx}{\mathbb{P}_{\vec{x}}}
\newcommand{\unionW}{\mathcal{W}}
\newcommand{\Pf}{\mathbf{P}}
\newcommand{\Ac}{\mathcal{A}}
\newcommand{\Bc}{\mathcal{B}}
\newcommand{\Cc}{\mathcal{C}}
\newcommand{\Dc}{\mathcal{D}}
\newcommand{\Nc}{\mathcal{N}}
\newcommand{\Uc}{\mathcal{U}}
\newcommand{\Wc}{\mathcal{W}}
\newcommand{\Yc}{\mathcal{Y}}
\newcommand{\Acc}{\mathcal{A}}
\newcommand{\Jcc}{\mathcal{J}}
\newcommand{\ol}{\overline}
\def \eps {\varepsilon}
\newcommand{\extendSubSet}{T}
\newcommand{\Ball}{\widetilde{B}}
\newcommand{\extendNearSubSet}{V}
\newcommand{\extendSquare}{S}
\newcommand{\extendSet}{T}
\newcommand{\Tube}{T}
\newcommand{\TubeComplement}{\widetilde\Tube}
\newcommand{\TubeGroup}{\mathscr{T}}
\newcommand{\Cone}{\mathsf{C}}
\newcommand{\Cylinder}{\mathsf{D}}
\newcommand{\dist}{\mathrm{dist}}
\newcommand{\B}{B}
\newcommand{\CAP}{\mathrm{Cap}}
\newcommand{\hpro}{X}
\newcommand{\T}[2]{T_{#2}(#1)}
\newcommand{\sausage}[2]{\B(#1,#2)}
\newcommand{\con}{\,\Big |\,} 
\newcommand{\tgp}{\widetilde\gamma '{}}
\newcommand{\hgp}{\widehat\gamma '{}}
\newcommand{\olgp}{\overline\gamma '{}}
\newcommand{\tg}{\widetilde\gamma}
\newcommand{\hg}{\widehat\gamma}
\newcommand{\olg}{\overline\gamma}
\newcommand{\hA}{\widehat A}
\newcommand{\rQ}{Q}
\newcommand{\ind}[1]{{\mathbf{1}{\{#1\}}}}
\title{A boundary Harnack principle and its application to analyticity of 3D Brownian intersection exponents}
\author[1]{Yifan Gao\thanks{yifangao@cityu.edu.hk}}
\author[2]{Xinyi Li\thanks{xinyili@bicmr.pku.edu.cn 
Research supported by National Key R\&D Program of China (No.\ 2021YFA1002700 and No.\ 2020YFA0712900) and NSFC (No.\ 12071012).
}}
\author[2]{Yifan Li\thanks{2100015801@stu.pku.edu.cn}}
\author[2]{Runsheng Liu\thanks{liurunsheng@pku.edu.cn}}
\author[2]{Xiangyi Liu\thanks{liuxiangyi@stu.pku.edu.cn}}
\affil[1]{City University of Hong Kong}
\affil[2]{Peking University}
\date{}
\begin{document}

\maketitle
\begin{abstract}
    We show that a domain in $\mathbb{R}^3$ with the trace of a 3D Brownian motion removed almost surely satisfies the boundary Harnack principle (BHP). Then,
    we use it to prove that the intersection exponents for 3D Brownian motion are analytic. 
    
    \bigskip
    
    \textit{Key words and phrases: Brownian motion, analyticity, intersection exponent, boundary Harnack principle, twisted H\"older domain, separation.}
\end{abstract}

\tableofcontents

\section{Introduction}
In this paper, we prove that the intersection exponents for 3D Brownian motions  are analytic. This property has been shown to hold in 2D by Lawler, Schramm and Werner in \cite{LSW02a}. Due to the absence of a rich conformal symmetry in 3D, the strategy used in the 2D case cannot be applied directly, and thus making the proof of analyticity in 3D a long-standing open problem. 

By analyzing the fine geometric structure of the random slit domain in $\Rb^3$, whose boundary is given by the trace of a Brownian motion, we establish a conditional version of separation lemma. As we will show in this paper, it is indeed equivalent to a boundary Harnack principle (BHP) for the slit domain. Surprisingly, this BHP in 3D serves as an effective substitute for the conformal invariance in 2D to derive the analyticity of Brownian intersection exponents.

Section~\ref{subsec:analyticity} is devoted to the result of analyticity of Brownian intersection exponents. In Section~\ref{subsec:csl}, we describe the conditional separation lemma (CSL). In Section~\ref{subsec:bhp}, we state a particular type of BHP we obtained. We outline the proof strategy in Section~\ref{subsec:strategy}, and give the structure of the paper in Section~\ref{subsec:structure}.

\subsection{Analyticity in dimension three}\label{subsec:analyticity}
We begin by reviewing the Brownian intersection exponents in $\Rb^d$, $d=2,3$. Let $k$ be a positive integer, and let $W_1, \dots, W_k$ denote $k$ independent Brownian motions, each started at the origin. Additionally, let $W_0$ be another independent Brownian motion, this time started at $(1,0)$ in 2D or $(1,0,0)$ in 3D. We use $\Pb$ and $\Eb$ to denote the respective probability measure and expectation for these processes. Let $\Bc_n$ be the ball of radius $e^n$ about $0$, and $\tau^i_n$ be the first hitting time of $\partial\Bc_n$ by $W_i$ for all $0\le i\le k$. For $n\ge 0$, consider the following conditional probability
\begin{equation*}
Z_n:=\Pb\Big(W_0[0,\tau^0_n]\cap \big(W_1[0,\tau^1_n]\cup\ldots\cup W_k[0,\tau^k_n]\big)=\varnothing \ \Big| \ W_1[0,\tau^1_n],\ldots,W_k[0,\tau^k_n]\Big).  
\end{equation*}
For all real $\lambda>0$, the intersection exponent in $d=2,3$ is defined by 
\begin{equation}\label{eq:exponent}
    \xi_d(k,\lambda):=-\lim_{n\rightarrow\infty}\frac{\log \Eb[Z_n^\lambda]}{n},
\end{equation}
where the existence of this limit follows from a standard subadditivity argument.

The intersection exponents are closely linked to other critical exponents that arise in statistical physics, such as those in critical planar percolation and self-avoiding walks (see \cite{MR1796962, MR1905353, LSW01a, LSW02a, MR2112127} for detailed discussions). In particular, the values of $\xi_2(k,\lambda)$ were originally predicted by Duplantier and Kwon \cite{DK88} using methods from conformal field theory. Alternative, non-rigorous derivations based on quantum gravity also supported these predictions (see \cite{Du98, duplantier1999two}). Through the connection between planar Brownian motion and Schramm-Loewner evolution (SLE$_6$), along with the conformal invariance of Brownian motion, Lawler, Schramm and Werner provided a rigorous proof of this conjecture in a series of celebrated works \cite{LSW01a,LSW01b,LSW02,LSW02a}. In particular, in \cite{LSW02a}, they established that $\xi_2(k,\lambda)$ is a real analytic function of $\lambda$ on the interval $(0, \infty)$, a result which plays a crucial role in determining the values of these exponents across the entire range of $\lambda$.

However, little is known about precise values of $\xi_3(k,\lambda)$, except the particular case $\xi_{3}(1,2)=1$; see \cite{MR998666,BL90}. Although determining all values of $\xi_3(k,\lambda)$ is still out of reach, Lawler \cite{G98} showed that $\xi_3(k,\lambda)$ has a continuous, negative second derivative (in $\lambda$), based on basic properties of Brownian motions. Nevertheless, the analyticity of $\xi_3(k,\lambda)$ in $\lambda$ remains open to this day. Several preparations have been made; see \cite{vermesi2008intersection,LV12}.
But as pointed out in these works, there are still essential gaps. In this paper, we resolve this long-standing open problem with the following theorem, which confirms the analyticity of $\xi_3(k, \lambda)$ for all positive integers $k$.

\begin{theorem}[Analyticity]\label{thm:analyticity}
    For all integers $k\geq 1$, the function $\lambda \mapsto \xi_3(k,\lambda)$ is real analytic on $(0,\infty)$.
\end{theorem}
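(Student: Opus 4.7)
The plan is to realize $e^{-\xi_3(k,\lambda)}$ as the leading eigenvalue of a $\lambda$-dependent transfer operator $\Lc_\lambda$ acting on a space of ``boundary configurations'' at scale $e^n$, and to apply Kato's analytic perturbation theory, using the BHP of Section~\ref{subsec:bhp} to produce a uniform spectral gap in $\lambda$. The overall strategy mirrors \cite{LSW02a} in two dimensions, but conformal invariance—which there collapses the configuration space to a compact, finite-dimensional object—is replaced in 3D by the BHP for the random slit domain.

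First, I would set up the configuration space. Let $\Xi$ denote the space of rescaled configurations $\xi$ describing the geometry of the paths $W_1[0,\tau^1_n]\cup\cdots\cup W_k[0,\tau^k_n]$ and the location $W_0(\tau^0_n)$ near $\partial\Bc_n$; by the BHP, such a configuration matters for future events only through the harmonic measure from infinity on the slit domain, up to a multiplicative constant. Let $\Xi^\star \subset \Xi$ denote the ``good'' (separated) configurations supplied by the CSL of Section~\ref{subsec:csl}, and on a suitable Banach space of functions $f\colon\Xi^\star \to \Rb$ define
\[
\Lc_\lambda f(\xi) \;=\; \Eb_\xi\!\Big[\, Y^\lambda \, f(\xi') \, \ind{\xi'\in\Xi^\star}\, \Big],
\]
where under $\Eb_\xi$ the paths $W_0,W_1,\ldots,W_k$ are extended from $\xi$ one scale further to $\partial\Bc_{n+1}$, $\xi'$ is the new configuration, and $Y$ denotes the appropriate single-scale analog of $Z_n$ on the annulus $\Bc_{n+1}\setminus\Bc_n$. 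Brownian scale invariance ensures that $\Lc_\lambda$ does not depend on $n$, and the CSL together with quasi-multiplicativity yields $\Eb[Z_n^\lambda] \asymp \langle \mathbf{1}, \Lc_\lambda^n \mathbf{1}\rangle$, so that $\xi_3(k,\lambda) = -\log r(\Lc_\lambda)$, where $r(\cdot)$ denotes the spectral radius.

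The crux is to deduce from the BHP a Doeblin-type minorization, uniform in $\lambda$ on compact subsets of $(0,\infty)$: there exist $N\in\Nb$ and $c>0$ such that for all $\xi,\xi'\in\Xi^\star$,
\[
\Lc_\lambda^N(\xi,\cdot) \;\ge\; c\,\Lc_\lambda^N(\xi',\cdot) \quad \text{as measures on } \Xi^\star.
\]
The BHP says precisely that any two positive harmonic functions on the slit domain that vanish on the slit are comparable up to a multiplicative constant; applying this to the transition densities associated with two different starting configurations yields the above minorization. This produces a uniform Perron-Frobenius gap: $\Lc_\lambda$ has a simple leading eigenvalue $\mu(\lambda)=e^{-\xi_3(k,\lambda)}$ isolated from the rest of its spectrum by a distance bounded below locally in $\lambda$. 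Since $\lambda \mapsto \Lc_\lambda$ is manifestly holomorphic (the only $\lambda$-dependence is through $Y^\lambda = \exp(\lambda \log Y)$, which is entire in $\lambda$ for $Y\in(0,1]$), Kato's analytic perturbation theorem gives analyticity of $\mu(\lambda)$, and hence of $\xi_3(k,\lambda)$, on $(0,\infty)$.

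The hard part will be making the Doeblin minorization quantitative and uniform in $\lambda$. In two dimensions this step is essentially free thanks to conformal invariance; in three dimensions the configuration space is genuinely infinite-dimensional and must be tamed by analytic means. Producing the minorization requires the quantitative BHP of Section~\ref{subsec:bhp}, combined with the fine geometry of the Brownian trace (the twisted Hölder property alluded to in the abstract) and the CSL to guarantee enough ``room'' near the slit for harmonic comparison. Once these inputs are in place, the remaining steps—positivity and holomorphy of $\Lc_\lambda$, and the passage from the uniform Perron-Frobenius gap to analyticity of the top eigenvalue via Kato's theorem—are standard operator-theoretic manipulations.
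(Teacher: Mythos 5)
Your high-level architecture --- realize $e^{-\xi_3(k,\lambda)}$ as the top eigenvalue of a one-scale transfer operator, obtain a spectral gap from the BHP, and invoke analytic perturbation theory --- is the same as the paper's (and as that of \cite{LSW02a}), but the central step, the uniform Doeblin minorization, has a genuine gap and as stated would fail for two reasons. First, the constant in Theorem~\ref{theorem:bhp} is $C(U,K,\Wc)$: it depends on the realization of the slit and is not bounded over configurations, and the paper explicitly expects the \emph{uniform} BHP to fail for $U\setminus\Wc$. So a minorization constant $c$ valid for all $\xi,\xi'\in\Xi^\star$ can only hold if $\Xi^\star$ is cut down to configurations with a uniformly bounded comparison constant, and then one must separately control how often, and at what cost under the reweighted law, the chain leaves $\Xi^\star$ --- this is precisely the content of Propositions~\ref{pro:2} and~\ref{pro:4}, which your plan does not supply. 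Second, and more structurally: if a ``configuration'' retains the full past path (as it must for the weight $Y^\lambda$ to be a function of it), then $\mathcal L_\lambda^N(\xi,\cdot)$ and $\mathcal L_\lambda^N(\xi',\cdot)$ are mutually singular for $\xi\neq\xi'$ --- the configurations after $N$ steps still contain their distinct initial segments --- so the claimed domination of measures is false; if instead configurations are truncated to an annulus near $\partial\Bc_n$, then $Y^\lambda$ is no longer a function of the configuration, because the $h$-process conditioned to avoid the paths depends on the entire history. ``Comparable up to a multiplicative constant via harmonic measure'' does not resolve this: comparability of kernels does not make the truncated process Markov, and a multiplicative ambiguity is exactly what an eigenvalue computation cannot tolerate.

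The paper's resolution of this tension is its main novelty and is absent from your plan: it works on the full path space and replaces the Doeblin condition by an explicit coupling of the two $h$-processes (Proposition~\ref{pro:1}) whose failure probability is $O(e^{-cm})$ given $m$ \emph{good layers}, where a layer is good when the switching constant $K(p)$ of its Poisson kernel --- shown in Lemma~\ref{lem:5.17} to equal the optimal BHP constant --- is at most $M$. The exponential forgetting of the past is then encoded in the weighted norm $\|\cdot\|_{u,M}$ over pairs in $\mathcal X_m(M)$, and the spectral gap appears as the contraction estimate \eqref{eq:on}. Relatedly, your claim that $\lambda\mapsto\mathcal L_\lambda$ is ``manifestly holomorphic'' is too quick: holomorphy in the relevant operator norm requires the exponential-decay estimate of Proposition~\ref{pro:3}, which itself rests on the coupling. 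To repair your argument you would need to (i) specify $\Xi^\star$ so that the BHP constant is uniform on it, (ii) prove a quantitative renewal estimate for returns to $\Xi^\star$ under the reweighted measure, and (iii) replace the minorization of kernels by a coupling statement compatible with the non-Markovian weights --- at which point you will have reconstructed Sections~\ref{subsection:RK}--\ref{subsec:pro:4}.
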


The proof of Theorem~\ref{thm:analyticity} relies on the BHP of the slit domain ``$\Rb^3$ with the trace of a BM removed'' (see Section~\ref{subsec:bhp} below), which is a consequence of the CSL, as we illustrate in Section~\ref{subsec:csl} below.

\subsection{Conditional separation lemma}\label{subsec:csl}
In this subsection, we state the CSL in more detail. 
Intuitively, if we ``freeze'' several Brownian motions and then consider another independent Brownian motion conditioned to avoid the frozen ones, the conditioned Brownian motion will be well-separated from the frozen paths with positive probability, uniformly in its starting point. As mentioned, it can be viewed as an equivalent probabilistic interpretation of the BHP (see Section~\ref{subsec:equi}). We believe it is of independent interest and has potential applications in many other problems.  

We introduce some notation first, which will be used throughout the paper.
For $k\ge1$ and any $k$ points $x_1,\ldots,x_k$ in $\Rb^3$, let $(W_i)_{1\le i\le k}$ be $k$ independent Brownian motions started from $x_i$, respectively. We use $\Pb_{\vec x}:=\prod_{i=1}^k\mathbb P_{x_i}$ to denote the product measure of these $k$ Brownian motions. Let $\unionW:=\cup_{i=1}^k W_i[0,\infty)$ be the union of their traces. For $x\in\Rb^3$, let $W$ be another independent Brownian motion started from $x$, and we denote its law by $\Pbf_x$.
For all $y\in\Rb^3$ and $r>0$, let $B(y,r)$ be the closed ball of radius $r$ around $y$.
For $A\subseteq \Rb^3$, define $\dist(y,A):=\inf\{|y-z|:z\in A\}$. 

\begin{theorem}[CSL] \label{thm:sep}
    For $\Pb_{\vec x}$-almost  every $\unionW$, there exist $\delta_1(\unionW)$, $\delta_2(\unionW)>0$ such that for any closed subset $\Ac\subseteq\unionW$ and all $x\in B(0,1/2)\setminus \Ac$, we have 
    \begin{equation}\label{eq:csl}
\Pbf_x\left(\dist(W(\tau),\Ac)>\delta_1 \con  W[0,\tau]\cap \Ac=\varnothing\right)>\delta_2,
    \end{equation}
    where $\tau$ is the first hitting time of $\partial \B(0,1)$ by $W$.
\end{theorem}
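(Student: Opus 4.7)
The plan is to combine a.s.\ geometric regularity of the Brownian slit $\Omega := \Rb^3 \setminus \unionW$ with a Brownian tube-following estimate, extended to a ratio estimate near the slit, and supplemented by a short strong-Markov step when the starting point lies on $\unionW \setminus \Ac$.

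The main geometric step is to show that $\Pb_{\vec x}$-almost surely $\Omega$ is a \emph{twisted H\"older} domain inside $B(0,2)$: there exist a.s.\ positive constants $c_*, \alpha, r_*$, depending only on $\unionW$, such that every $z \in \Omega \cap B(0, 3/2)$ can be connected via a ``twisted tube'' $T_z \subset \Omega$ to a fixed reference region $\Gamma^*(\unionW) \subset \Omega \cap (B(0,1) \setminus B(0, 3/4))$ of diameter at least $c_*$, with the tube cross-section at each $w \in T_z$ of radius at least $c_* \min(\dist(w, \unionW), r_*)^{1/\alpha}$. This is the technical heart of the proof: it requires a multiscale capacity analysis of the three-dimensional Brownian trace, ruling out quantitative bottlenecks and pinches at all scales, and I expect it to be by far the hardest part of the argument.

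Given this structure, pick $\delta_1 = \delta_1(\unionW) > 0$ small enough that $\Lambda^* := \Gamma^* \cap \partial B(0,1)$ has positive area and lies at distance at least $2\delta_1$ from $\unionW$ (possible a.s.\ because $\unionW \cap \partial B(0,1)$ has vanishing surface measure). For any closed $\Ac \subseteq \unionW$ and any $x \in B(0, 1/2) \setminus \unionW$, the event
\[
E_x := \{\, W \text{ stays in } T_x \text{ and exits } B(0,1) \text{ through } \Lambda^* \,\}
\]
is a sub-event of $\{W[0,\tau] \cap \Ac = \varnothing,\ \dist(W(\tau), \Ac) > 2\delta_1\}$, since $T_x \subset \Omega \subseteq \Rb^3 \setminus \Ac$ and $\Lambda^*$ is separated from $\unionW \supseteq \Ac$. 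A standard Brownian chaining along $T_x$ through a geometrically decreasing sequence of Harnack balls gives a lower bound on $\Pbf_x(E_x)$; when $\dist(x, \unionW) \geq c(\unionW)$ this bound is uniform and, combined with the trivial $\Pbf_x(W[0,\tau] \cap \Ac = \varnothing) \leq 1$, proves the CSL in this regime.

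When $\dist(x, \unionW)$ is small, both numerator and denominator of the conditional probability shrink at comparable rates and a ratio argument is required: I would express each probability as an integral over the first exit distribution from a small ball around $x$ and compare these expressions term by term using the twisted H\"older geometry, with the monotonicity $\Omega \subseteq \Rb^3 \setminus \Ac$ valid for \emph{every} closed $\Ac \subseteq \unionW$ being precisely what makes the resulting constants depend only on $\unionW$, not on $\Ac$. The remaining case $x \in \unionW \setminus \Ac$ is handled by first letting $W$ exit a small ball $B(x, r)$ with $r \in (\dist(x, \Ac)/4, \dist(x, \Ac)/2)$ chosen so that $\unionW \cap \partial B(x, r)$ has vanishing surface measure (possible by a Fubini-type argument), thereby landing a.s.\ in $\Omega$, and then applying the previous analysis from the new starting point. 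The main obstacle, as indicated, is the twisted H\"older property of $\Omega$ combined with the ratio control near $\unionW$; these two ingredients are essentially equivalent and together require the bulk of the paper's multiscale technical work.
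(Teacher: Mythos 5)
Your overall picture (escape through an a.s.\ existing ``good'' region of $\Rb^3\setminus\unionW$, plus a strong Markov step for $x\in\unionW\setminus\Ac$, which matches the paper's final remark that $\unionW$ is nowhere dense) is in the right spirit, and the twisted H\"older property you posit is in fact true and proved in the paper (Theorem~\ref{thm:twisted}). But there is a genuine gap exactly where you wave your hands: the regime $\dist(x,\unionW)$ small. Your claim that ``both numerator and denominator shrink at comparable rates'' is not justified and is false for general closed $\Ac\subseteq\unionW$: the denominator $\Pbf_x(W[0,\tau]\cap\Ac=\varnothing)$ depends on $\Ac$ and can be anywhere between the full avoidance probability $\Pbf_x(W[0,\tau]\cap\unionW=\varnothing)$ and $1$ (e.g.\ it equals $1$ for polar $\Ac$), while your tube-event lower bound on the numerator tends to $0$ as $x\to\unionW$. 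The ``term by term comparison of exit distributions near the slit with constants depending only on $\unionW$'' that you invoke to close this case is essentially the boundary Harnack principle for $\Rb^3\setminus\Ac$ with $\Ac$-uniform constants, which the paper shows is \emph{equivalent} to the CSL (Proposition~\ref{pro:equivalence}); so your plan for the hard regime is circular, or at best defers the entire difficulty. Note also that the route twisted H\"older $\Rightarrow$ BHP via Bass--Burdzy, which the paper explicitly discusses, only yields a constant depending on $\Ac$, not the uniformity the theorem asserts.

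The missing mechanism in the paper's proof is a scale-by-scale conditional estimate that sidesteps any direct comparison of exit distributions: for $x$ at distance $\approx 2^{-m}$ from $\unionW$, (i) an uncovered-cone construction gives $\Pbf_x(W[0,\tau]\cap\unionW=\varnothing)\ge a_1^{m\,1.1^m}$ (Propositions~\ref{prop:cone1} and~\ref{prop:lowEstimate}), while (ii) a sausage estimate gives that $W$ stays within distance $2^{-m+1}$ of $\unionW$ up to radius $(3/4)^m$ with probability at most $a_2^{1.2^{m/2}}$ (Lemma~\ref{lem:1}). Since avoiding $\Ac$ is no less likely than avoiding $\unionW$, the conditional probability of the bad event given $\{W[0,\tau]\cap\Ac=\varnothing\}$ is at most the ratio (ii)/(i), which is doubly-exponentially small and summable in $m$ \emph{uniformly in} $\Ac$ --- this is what makes $\delta_1,\delta_2$ depend only on $\unionW$. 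One then needs the recursion $g_{m+1}\ge(1-\mathrm{err}_m)g_m$ for the conditional escape probability, a discretization of $B(0,3/4)$ by $O(8^m)$ lattice points with a union bound against the $O(9^{-m})$ exceptional events (to decouple the exceptional set from the starting point), and Harnack's inequality to pass from lattice points back to arbitrary $x$. None of these ingredients appears in, or follows from, your proposal; the twisted H\"older property alone does not supply the quantitative separation between (i) and (ii) on which the whole argument rests.
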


\begin{remark}
    In fact, the proof of Theorem~\ref{thm:analyticity} only needs the result of Theorem~\ref{thm:sep} with $\Ac=\unionW$. However, we provide a stronger result for arbitrary closed $\Ac\subseteq\unionW$ since such generalization incurs no extra difficulty. More importantly, we can use it to study related problems associated with subsets of $3$D Brownian motions, such as the ``loop-erasure'' of Brownian motion (which we denote by $\Kbf$) given by Kozma's \cite{MR2350070} scaling limit of loop-erased random walk in three dimensions (see \cite{MR3877544} for a characterization). More precisely, taking $\Ac=\Kbf$ in Theorem~\ref{thm:sep}, we see that the CSL holds for $\Kbf$, and hence the BHP (Theorem~\ref{theorem:bhp}) also holds true for $\Kbf$. If one defines the
    intersection exponents $\eta(k,\lambda)$ associated with $\Kbf$, as in \eqref{eq:exponent} by keeping the Brownian motion $W_0$ unchanged and replacing the other $(W_i)_{1\le i\le k}$ with $k$ independent copies of $\Kbf$, then we can also show that $\lambda \mapsto \eta(k,\lambda)$ is real analytic on $(0,\infty)$. In particular, $\beta=2-\eta(1,1)$, also known as the growth exponent, can be used to describe the growth rate of the length of a $3$D loop-erased random walk, and has been extensively studied in \cite{LERW3exp,MR4017129}.
\end{remark}

\begin{remark}
A corresponding version of the CSL can be formulated in two dimensions, where it follows directly from conformal invariance. In contrast, the CSL in three dimensions is significantly more challenging to establish, and this difficulty is also reflected in the BHP. Notably, a similar separation idea was implicitly used by Lawler, Schramm, and Werner \cite{LSW02a} when they coupled two $h$-processes in appropriate slit domains. A detailed explanation of this coupling strategy is provided in Section~\ref{subsec:strategy}.

We also emphasize that the above \emph{conditional} separation lemma is different from the so-called separation lemmas (i.e., non-conditional ones; see Lemma~\ref{lem:sep} for a version) in the literature \cite{Law96,G98,LV12}. They are not implied by each other. 
\end{remark}

For completeness of exposition, in the following, we briefly summarize the development of classical (non-conditional) separation lemmas.
Broadly speaking, the separation lemma asserts that if two random processes are conditioned not to intersect, then they have a positive probability of being ``well-separated'' at their endpoints. This separation phenomenon has been observed and rigorously established in various statistical physics models, where it is instrumental in deriving quasi-multiplicativity properties of certain probabilities and in determining the Hausdorff dimension of specific random fractals. Given the extensive literature on this topic, we provide only a selective overview of separation lemmas across different models.

This type of separation result was first employed by Kesten \cite{Ke1987a} in studying arm events in two-dimensional percolation (see, e.g., \cite{BN2021,SS2010,DS11} for subsequent alternative proofs and generalizations). In a distinct context, Lawler \cite{Law96,Law96a,G98} established the separation lemma for non-intersecting Brownian motions, which served as a key ingredient for determining the Hausdorff dimension of cut points and the Brownian frontier. Lawler's approach was later streamlined and applied to show strong separation properties for other models in two and three dimensions, including random walks \cite{RWcuttimes}, loop-erased random walks \cite{Ma2009,LERW3exp,La2020}, and Brownian loop soups \cite{GLQ2022}. 

A unified approach that synthesizes ideas from Kesten and Lawler was presented in the appendix of \cite{GPS2013}. Recently, this refined approach has been adapted to prove stronger versions of separation lemmas in critical planar percolation \cite{du2022sharp}, as well as variations for random walks within a random walk loop soup \cite{GNQ2024a}.

\subsection{Boundary Harnack principle }\label{subsec:bhp}
As an application of Theorem \ref{thm:sep}, we obtain the following version of BHP. 

\begin{theorem}[BHP]\label{theorem:bhp}
    Let $U$ be a domain in $\Rb^3$ and $K$ a compact set such that $K\subseteq U$. For $\Pb_{\vec x}$-almost every $\unionW$, there exists $C(U,K,\unionW)\in (0,\infty)$ such that for all closed subset $\Ac\subseteq\unionW$, if $u$ and $v$ are bounded positive harmonic functions in $U\setminus \Ac$ that vanish continuously on regular points of $\Ac\cap U$, then
    \begin{equation}\label{eq:bhp}
        u(x_1)\,v(x_2)\leq C\,u(x_2)\,v(x_1) \quad \text{for all $x_1,x_2\in K\setminus \Ac$}.
    \end{equation}
\end{theorem}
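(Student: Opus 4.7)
The plan is to deduce Theorem~\ref{theorem:bhp} from the CSL (Theorem~\ref{thm:sep}) by reducing the BHP to a uniform two-sided comparison of killed harmonic measures on $\partial U$, and then using the CSL together with standard Harnack chains to establish that comparison.

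\textbf{Reduction to harmonic measures.} Since $u,v$ are bounded positive harmonic on $U\setminus\Ac$ and vanish continuously on regular points of $\Ac\cap U$, the strong Markov property at the exit time $\tau_U$ of $U$ gives the probabilistic representation
\[
u(x)=\Ebf_x\bigl[u(W_{\tau_U})\,\ind{W[0,\tau_U]\cap\Ac=\varnothing}\bigr],\qquad x\in U\setminus\Ac,
\]
and similarly for $v$. Setting $\omega_x^{\Ac}(E):=\Pbf_x(W_{\tau_U}\in E,\,W[0,\tau_U]\cap\Ac=\varnothing)$ for Borel $E\subseteq\partial U$, the inequality \eqref{eq:bhp} follows at once once I show that for every pair $x_1,x_2\in K\setminus\Ac$ there exists a scalar $c(x_1,x_2)>0$ with
\[
C^{-1}c(x_1,x_2)\;\le\;\frac{d\omega_{x_1}^{\Ac}}{d\omega_{x_2}^{\Ac}}\;\le\;Cc(x_1,x_2)\quad \omega_{x_2}^{\Ac}\text{-a.e.\ on }\partial U.
\]
Indeed, the scalar $c(x_1,x_2)$ cancels in the BHP ratio, which then lies in $[C^{-2},C^{2}]$.

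\textbf{Using CSL and Harnack chains.} Next I would promote Theorem~\ref{thm:sep} from the fixed ball $\B(0,1)$ to arbitrary balls $\B(z,r)$ with $\B(z,2r)\subset U$: applying the theorem to the rescaled Brownian motions $t\mapsto(W_i(r^{2}t)-z)/r$ (which are again independent 3D Brownian motions from arbitrary starting points) and covering $U$ by a countable dense family of such balls, one obtains, for $\Pb_{\vec x}$-a.e.\ $\unionW$, uniform constants $\delta_1,\delta_2>0$ (depending on $U,K,\unionW$) such that for every closed $\Ac\subseteq\unionW$ and every $x\in \B(z,r/2)\setminus\Ac$,
\[
\Pbf_x\bigl(\dist(W(\sigma_{z,r}),\Ac)>\delta_1 r\,\big|\,W[0,\sigma_{z,r}]\cap\Ac=\varnothing\bigr)>\delta_2,
\]
where $\sigma_{z,r}$ denotes the first hitting time of $\partial\B(z,r)$. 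Now for $x\in K\setminus\Ac$ and a fixed radius $r_0\sim\dist(K,\partial U)$, I decompose $\omega_x^{\Ac}$ via the strong Markov property at the exit of $\B(x,r_0)$. By the rescaled CSL, a $\delta_2$-fraction of the killing-avoiding mass exits $\B(x,r_0)$ at a point $Y_x$ with $\dist(Y_x,\Ac)>\delta_1 r_0$; from such a well-separated point, a standard Harnack chain in the open set $\{z\in U:\dist(z,\Ac)>\delta_1 r_0/4\}$ — which is disjoint from $\Ac$, so classical interior Harnack applies — connects $Y_{x_1}$ and $Y_{x_2}$ to a common reference point, yielding the two-sided comparison of $\omega_{x_1}^{\Ac}$ and $\omega_{x_2}^{\Ac}$ up to the scalar $c(x_1,x_2)$.

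\textbf{Main obstacle.} The crux is to execute the Harnack chain argument uniformly in the random open set $\{z\in U:\dist(z,\Ac)>\delta_1 r_0/4\}$, whose connected components could a priori be numerous or require arbitrarily long chains whose length depends on the realization $\unionW$ and on the particular closed subset $\Ac$. Controlling this chain length uniformly requires quantitative interior regularity of $U\setminus\Ac$ — essentially that $\unionW$ almost surely yields a ``twisted H\"older domain'' (as alluded to in the abstract's keywords) — which I would extract by iterating the CSL across a dyadic sequence of shells while preserving the uniform constants $\delta_1,\delta_2$. This scale-uniform iteration is the main technical bottleneck; once it is in place, the BHP reduction above is routine.
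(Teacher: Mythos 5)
Your reduction to a two-sided comparison of the killed harmonic measures $\omega_{x}^{\Ac}$, and your rescaling of the CSL to a countable family of balls $\B(z,r)\subseteq U$, are exactly the right starting point (this is condition (i) of the paper's Proposition~\ref{pro:equivalence}). The gap is in the execution of the comparison. Your separation-plus-Harnack-chain step only produces the \emph{lower} half of the two-sided bound: decomposing at the exit of $\B(x,r_0)$, the CSL guarantees that a $\delta_2$-fraction of the $\Ac$-avoiding mass lands at a point $\delta_1 r_0$-far from $\Ac$, from which you can chain to a reference point; but the \emph{upper} bound requires controlling the remaining $(1-\delta_2)$-fraction of the mass that exits at points arbitrarily close to $\Ac$, where the density $d\omega_{y}^{\Ac}/d\omega_{x_0}^{\Ac}$ could a priori blow up. Bounding that contribution by its value at a well-separated point is precisely the Carleson estimate, which (as the paper remarks) is \emph{equivalent} to the BHP for arbitrary domains --- so as written the argument is circular at its crucial step. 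In addition, the Harnack-chain uniformity that you flag as the ``main technical bottleneck'' is genuinely unresolved in your write-up: the set $\{z:\dist(z,\Ac)>\delta_1 r_0/4\}$ need not admit chains of length bounded independently of $\Ac$, and deferring this to a twisted-H\"older property is deferring the whole difficulty.

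The paper's proof of Proposition~\ref{pro:equivalence} avoids both problems by working with the Green's function rather than chaining. For $y$ in a small rational ball inside $V_1\supseteq K$ and $z$ in a rational ball near $\partial V_3$, it writes $G_{U\setminus\Ac}(y,z)$ as an integral of $G_{U\setminus\Ac}(y',z')$ against the product of the two $\Ac$-avoiding exit measures and applies the CSL \emph{at both endpoints}: the upper bound uses only the trivial estimate $G_{U\setminus\Ac}(y',z')\le \sup G_U<\infty$ for $y',z'$ in fixed disjoint compacts (so no Carleson estimate is needed), while the lower bound uses $G_{U\setminus\Ac}\ge G_{U\setminus\unionW}\ge\delta_3>0$ between $\delta_1$-separated points --- a consequence of continuity and positivity of the Green's function of the fixed connected domain $U\setminus\unionW$ on a compact set, with no Harnack chain and with $\delta_3$ depending only on $\unionW$, not on $\Ac$. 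This sandwich makes $G_{U\setminus\Ac}(y,z)$ comparable to a product of a function of $y$ and a function of $z$, which gives the switching inequality \eqref{eq:4.4} for the Poisson kernel directly and hence \eqref{eq:bhp}. To repair your proof you would need to replace the one-sided separation-then-chain step by this two-endpoint Green's function argument (or independently prove the Carleson estimate and the uniform chain bound, which is substantially harder).
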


In fact, one can also derive Theorem~\ref{thm:sep} from Theorem~\ref{theorem:bhp}; see Section~\ref{subsec:equi}.

\begin{remark}
    We mention that Theorems \ref{thm:sep} and~\ref{theorem:bhp} also hold for the 3D Brownian fabric of any positive intensity (introduced along with Brownian interlacements in \cite{sznitman2013scaling})  in place of $\Wc$, since a.s.\ there are only finitely many trajectories that intersect a bounded domain. 
\end{remark}

\begin{remark}
    As shown in \cite{MR2464701}, the BHP and the Carleson estimate are equivalent for arbitrary domains. Hence, we can obtain the Carleson estimate as follows. Under the setting of Theorem~\ref{theorem:bhp} and letting $x_0$ be a point in $K\setminus \Ac$,
    \begin{equation}
    \mbox{there exists $C(U,K,\Ac,x_0)\in (0,\infty)$ such that $u(x)\le C\, u(x_0)$ for all $x\in K\setminus \Ac$.}    
    \end{equation}
    We believe that with more efforts one can also make $C$ only depend on $U,K,\Wc,x_0$ (but independent of $\Ac$) in this Carleson estimate.
\end{remark}

\begin{remark}
    A \emph{uniform} BHP was established for uniform domains in \cite{MR1800526} (see e.g.\ \cite{MR2204573,MR4482110} for related considerations). However, we expect that $U\setminus\Wc$ does not satisfy the uniform BHP.
\end{remark}

The BHP has a long history. It is hard to be exhaustive, and we only mention some related literature. For Lipschitz domains, the BHP was first proved in \cite{Da77,An78} in the late 1970's (see \cite{MR513884,MR716504} for alternate proofs and \cite{MR1042338} in particular for a probabilistic proof). It was extended to nontangentially accessible (NTA) domains in \cite{MR676988,MR946350}. In the seminal work of Bass and Burdzy \cite{MR1127476}, the twisted H\"older domains of order $\alpha$, $\alpha\in(0,1]$, were introduced and the BHP was shown to hold in such domains with $\alpha\in(1/2,1]$, based on probabilistic methods (where it was also showed that $1/2$ is in some sense critical). The twisted H\"older domains contain a large class of domains. When $\alpha=1$, it is equivalent to the class of John domains, hence it contains NTA domains and uniform domains \cite{MR1131398}. In particular, \cite{MR1127476} implies that the BHP holds for H\"older domains of order $\alpha\in(1/2,1]$, as a subclass of twisted H\"older domains. Based on a similar probabilistic approach, the BHP was extended to all H\"older domains, $\alpha\in(0,1]$, in \cite{MR1131398} subsequently. An analytic proof was later given in \cite{MR1658624} (see also Remark 3 in \cite{MR1800526}). We also refer to \cite{MR4093736} for a recent short unified analytic proof to various domains.

We will derive the following result in Section~\ref{sec:twisted}, which is of independent interest.
\begin{theorem}\label{thm:twisted}
    Let $U$ be a domain. $\Pb_{\vec x}$-almost surely, for every closed subset $\Ac$ of $\unionW$, $U\setminus \Ac$ is a twisted H\"older domain of any order $\alpha<1$. 
\end{theorem}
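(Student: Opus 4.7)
The plan is to construct, for each endpoint $y\in U\setminus\Ac$ and a fixed base point $x_0$, a continuous curve in $U\setminus\Ac$ from $x_0$ to $y$ satisfying the twisted H\"older condition of order $\alpha$, using the fact that the 3D Brownian trace $\unionW$ is locally so sparse that, at every small scale $r$, holes of size comparable to $r$ can be found within a ball of radius $r^\alpha \gg r$. I first reduce to the case $\Ac=\unionW$: since $\Ac\subseteq\unionW$ implies $\dist(\cdot,\Ac)\ge\dist(\cdot,\unionW)$, any twisted H\"older curve in $U\setminus\unionW$ lies in $U\setminus\Ac$ and satisfies at least the same lower bound on the distance to the boundary; for endpoints $y\in\unionW\setminus\Ac$, I use density of $U\setminus\unionW$ to approximate $y$ by some $y'\in U\setminus\unionW$ within distance $\dist(y,\Ac)/4$ and append the short segment $[y',y]$, which lies in $U\setminus\Ac$ and automatically satisfies the twisted H\"older condition locally.

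Next, establish the main geometric lemma (``hole-finding''): for any $\alpha\in(0,1)$, $\Pb_{\vec x}$-a.s.\ there is a random $r_\star>0$ such that for every $r\in(0,r_\star]$ and every $y\in U$ with $B(y,r^\alpha)\subseteq U$, the ball $B(y,r^\alpha)$ contains some $y'$ with $\dist(y',\unionW)\ge 2r$. The heuristic is clear: by Spitzer-type Wiener sausage estimates the expected volume of $(\unionW\oplus B(0,2r))\cap B(y,r^\alpha)$ is of order $r\cdot r^{3\alpha}$, while $B(y,r^\alpha)$ has volume of order $r^{3\alpha}$, so the occupied fraction is of order $r$. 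Upgrading to an almost-sure statement uniform in $y$ uses a dyadic cover of $U$ at each scale $r^\alpha$ together with a concentration estimate for the Wiener sausage volume; the slack provided by $\alpha<1$ accommodates the $\sim r^{-3\alpha}$ balls in the union bound, and the 3D Brownian modulus of continuity bounds the number of excursions of each $W_i$ into any small ball.

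Given the hole-finding lemma, I build the curve iteratively. Fix $x_0$ with $\dist(x_0,\unionW)\ge\rho>0$. For $y$ with $\dist(y,\unionW)=r_0$, define $r_k=2^k r_0$ and choose $y_k\in B(y_{k-1},r_{k-1}^\alpha)$ with $\dist(y_k,\unionW)\ge r_k$ via the hole-finding lemma. Connect $y_{k-1}$ to $y_k$ by a path inside $B(y_{k-1},r_{k-1}^\alpha)$ staying at distance $\gtrsim r_{k-1}$ from $\unionW$, using recursive applications of the hole-finding lemma at subscales. Once $r_k\ge\rho$, connect to $x_0$ by a path in the bulk. The twisted H\"older condition is then verified: at any point $z$ on the segment from $y_{k-1}$ to $y_k$, the remaining Euclidean diameter of $\gamma$ is $O(r_{k-1}^\alpha)$ while $\dist(z,\unionW)\gtrsim r_{k-1}$, so $\dist(z,\unionW)\gtrsim\mathrm{diam}(\gamma|_{[t,1]})^{1/\alpha}$ as required.

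The main obstacle will be the uniform-in-$y$ hole-finding estimate: a naive expectation bound only controls averages, whereas we need the lemma to hold simultaneously for all (uncountably many) $y$ at all sufficiently small scales $r$. This requires concentration of the Wiener sausage volume decaying fast enough to survive a union bound over $\sim r^{-3\alpha}$ balls at each dyadic scale. The modulus of continuity of 3D Brownian motion plays a key role by bounding the number of excursions of each $W_i$ into any small ball by a logarithmic factor, so that the sausage volume is tightly concentrated around its mean. A secondary technical point is the construction of the connecting paths within each annular region $B(y_{k-1},r_{k-1}^\alpha)\setminus B(y_{k-1},r_{k-1})$, which uses the same hole-finding principle recursively but requires careful bookkeeping to ensure constants do not blow up through iteration.
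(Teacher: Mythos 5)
Your overall architecture (reduce to $\Ac=\unionW$, build a curve through successively larger ``holes'' at geometrically growing scales, verify the twisted H\"older inequality via Lemma~\ref{lem:vthd}) matches the paper's, but the engine you propose for finding the holes has a gap that is not just bookkeeping: it gives no connectivity. A volume/Wiener-sausage bound shows that $B(y_{k-1},r_{k-1}^\alpha)$ contains \emph{some} point $y_k$ with $\dist(y_k,\unionW)\ge 2r_{k-1}$, but your construction then needs a path from $y_{k-1}$ to $y_k$, of length up to $r_{k-1}^\alpha\gg r_{k-1}$, staying at distance $\gtrsim r_{k-1}$ from $\unionW$. That is a statement about the $cr_{k-1}$-sausage of $\unionW$ not disconnecting $B(y_{k-1},r_{k-1}^\alpha)$ at scale $r_{k-1}$, which does not follow from any bound on its volume: a set of tiny volume can still separate two holes. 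Recursing to subscales does not help, since subscales only produce points \emph{closer} to $\unionW$. The paper resolves exactly this point by replacing holes with \emph{uncovered cones} (Proposition~\ref{prop:cone1}, strengthened in Lemma~\ref{lem:cone_in} to prescribe an approximate direction and to reach unit length): a cone $\Cone_x(v_x,u_1^{-n},1)\subseteq\Dc$ is a connected escape route whose cross-section widens linearly, so moving a distance $10(u_1/2)^m$ along its axis automatically doubles the clearance, and the poly-line of Lemma~\ref{lem:continuous curve} is the whole curve construction. Without an analogue of this connected structure your iteration cannot be closed.

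Two further points. First, even the hole-finding lemma itself is not established: the first-moment bound gives failure probability $O(r)$ per ball, which does not survive a union bound over the $\sim r^{-3\alpha}$ balls at scale $r^\alpha$ once $\alpha>1/3$; you correctly identify that genuine concentration (or an a.s.\ deterministic volume bound via excursion counts and occupation times) is needed, but you do not supply it, whereas the paper's Proposition~\ref{prop:cone1} obtains failure probability $u_2^n$ for arbitrarily small $u_2$ via a cover-time argument, which is what makes the union bound over the lattice $(2^{-n}\Zb^3)\cap U$ in Lemma~\ref{lem:cone_in} work. Second, Definition~\ref{def:thd} has a third condition, the capacity lower bound \eqref{eq:cap} on $\B(x,C_3a)\cap F_a^c$, which your proposal never addresses; the paper devotes Step~2 of its proof to it, reducing it to a hitting-probability estimate and the sausage estimate Lemma~\ref{lem:1}. (This piece is the most repairable in your framework, since a bound showing the sausage occupies at most half the ball, combined with the isocapacitary inequality, would yield \eqref{eq:cap} — but it must be stated and proved.)
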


Since a twisted H\"older domain of order $\alpha>1/2$ satisfies the BHP by Theorem 4.4 of \cite{MR1127476},  
combined with Theorem~\ref{thm:twisted}, we can also obtain Theorem~\ref{theorem:bhp}, but the constant $C$ in \eqref{eq:bhp} derived in this way further depends on $\Ac$. 
This provides an alternative way to show Theorem~\ref{thm:analyticity} (see Figure~\ref{fig:label1}),
which in fact only needs to use Theorem~\ref{theorem:bhp} in the case $\Ac=\Wc$. Moreover, we believe that $U\setminus \Wc$ is \emph{not} a  twisted Lipschitz domain, i.e., twisted H\"older domain of order $\alpha=1$, and leave its proof to the interested reader.

\subsection{Outline of the proof}\label{subsec:strategy}
Our proof will follow the structure outlined in the diagram in Figure~\ref{fig:label1}. 
\begin{figure}[ht]
    \centering
\includegraphics[width=0.6\linewidth]{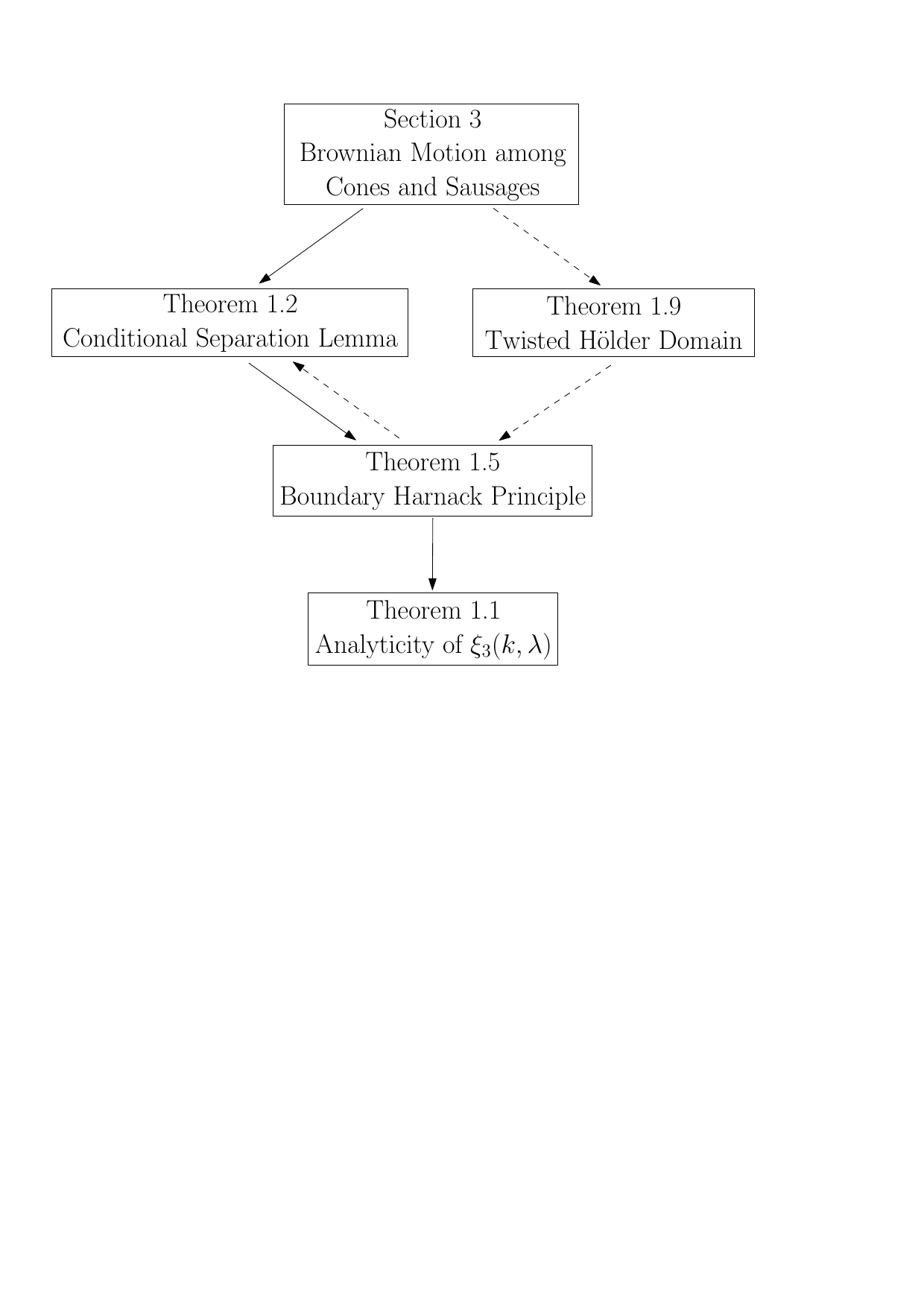} 
    \caption{The workflow diagram. We mainly follow the solid arrows to arrive at Theorem~\ref{thm:analyticity}. An alternate route via twisted H\"older domain is illustrated by dashed arrows. The direction from Theorem~\ref{theorem:bhp} to Theorem~\ref{thm:sep} is less relevant, and hence also dashed.}
    \label{fig:label1}
\end{figure}

We first explain the proof of Theorem~\ref{thm:sep}, which mainly relies on the following estimate established in Section~\ref{sec:cone}. Recall the setup above Theorem~\ref{thm:sep}. Except for a negligible event for $\Wc$ (by negligible we mean its probability under $\Pb_{\vec x}$ is $O(9^{-n})$), for Brownian motion \( W \) starting from any point \( x \) at a distance \( 2^{-n-1} \) from \( \Wc \), we establish the following two estimates, where \( a_1 \) and \( a_2 \) are universal constants in \( (0,1) \).
\begin{enumerate}[label=(\roman*)]
    \item $W[0,\tau]$ does not intersect $\Wc$ with probability at least $a_1^{1.1^n}$.
    \item $W$ stays in the $2^{-n+1}$-sausage of $\Wc$ before exiting $B(x,(3/4)^n)$ with probability bounded from above by $a_2^{1.2^n}$.
\end{enumerate}

For (i), we start by finding a cone with vertex $x$ and radius $u_1^{-n}$ (with some adjustable constant $u_1>1$) avoiding $\Wc$ in the unit ball (which will be called an uncovered cone) and thus the proof boils down to a cover time argument; see Proposition~\ref{prop:cone1} (a more detailed account is provided just below it). We then obtain (i) by forcing $W$ to stay in the uncovered cone (Proposition~\ref{prop:lowEstimate}). The second point (ii) is much easier to get by using a standard large deviation argument; see Lemma \ref{lem:1}.

Combining (i) and (ii), we conclude that conditioned on $W[0,\tau]\cap\Wc=\varnothing$, $W$ will get $2^{-n+1}$ away from $\Wc$ before it travels a distance of $(3/4)^n$, with probability at least $(1-a_2^{1.2^n}/a_1^{1.1^n})$. Since both the spacings $(3/4)^n$ and the probability errors $a_2^{1.2^n}/a_1^{1.1^n}$ are summable sequences, with probability bounded from below, the distance between the ending point of $W$ and the whole range $\Wc$ can improve from any ``microscopic'' $2^{-n}$ to some ``macroscopic'' $\delta_1>0$ before the time $\tau$. This concludes Theorem~\ref{thm:sep}. In fact,  we cheated a bit here in exchange of clarity of explanation, that is, the negligible event can depend on the starting point of $W$. To handle this issue, we discretize, approximating the unit ball by \( O(8^n) \) lattice points in \( \Zb^3/2^{n+2} \). The strategy works because a union bound is valid here for the $O(8^n)$ associated negligible events, which results in a (still summable) error of $O((8/9)^n)$. 

Theorem~\ref{thm:twisted} can be also derived from a combination of (i) and (ii).
The equivalence between Theorem~\ref{thm:sep} and Theorem~\ref{theorem:bhp} is not hard to establish by considering the Dirichlet problem. Hence, we will focus on the proof of Theorem~\ref{thm:analyticity} below, with the aid of Theorem~\ref{theorem:bhp}.

For clarity of explanation, we will only discuss the case $k=1$. Let $m'\ge m\ge 1$. Let $\gamma$ and $\gamma'$ be two continuous curves started from some points in $\Bc_{-m'}^\circ\setminus\{0\}$ and ended at their first visits of $\partial\Bc_0$, respectively, such that they agree with each other after their first visits of $\partial\Bc_{-m'}$. Consider two $h$-processes $X$ and $X'$, given by two Brownian motions started from $0$ and conditioned to avoid $\gamma$ and $\gamma'$ respectively until exiting $\Bc_0$. 
Let $D_i:=\Bc^\circ_{-i}\setminus(\Bc_{-i-1}\cup\gamma)$, $0\le i\le m'-1$ be $m'$ layers. 
A key step in proving analyticity is to couple \( X \) and \( X' \) so that they coincide after hitting \( \partial\Bc_{-m/2} \), with probability at least \( (1 - O(e^{-cm})) \), given at least \( m \) ``good'' layers.
In fact, the main task is to find a suitable definition of good layers such that the coupling will succeed with positive probability at each good layer. In $2$D, Lawler, Schramm and Werner in \cite{LSW02a} used ``down-crossings'' to measure the layers, and constructed the desired coupling by exploiting the conformal invariance in the plane (see Proposition~4.1 in \cite{LSW02a}). However, it does not work in the three-dimensional case. The main novelty of this part of our work is to characterize a good layer by its associated optimal comparison constant of BHP (i.e., the infimum $C$ such that \eqref{eq:bhp} holds).

Before getting to the definition of good layers, we will first explain how the BHP comes into play. Let $p=p_i(x,y)$ be the Poisson kernel in some layer $D_i$ restricted to $\partial\Bc_{-i-1/2}\setminus\gamma\times\partial\Bc_{-i}\setminus\gamma$ (see \eqref{eq:pb}). Then, it is not hard to see that (Lemma \ref{lem:5.17}) the optimal comparison constant of BHP in $D_i$ is equal to $K(p)$, the ``switching constant'' of $p$ (see \eqref{eq:K} for the definition). Another important observation is that $K(p)$ can be related to the ``extremal total variation distance'' $R(p)$ with respect to $p$ (see \eqref{eq:R}) through the following formula (see Lemma~\ref{lemma:RK})
\begin{equation}\label{eq:RK}
        R(p)=1-\frac{2}{1+\sqrt{K(p)}}.
\end{equation}
The quantity $R(p)$ can be used to bound the total variation distance between two measures produced by any positive finite measure $\mu$ on $\partial\Bc_{-i}\setminus\gamma$ reweighted by $p(x_1,\cdot)$ and $p(x_2,\cdot)$, respectively, for any pair  $x_1,x_2 \in \partial\Bc_{-i-1/2}\setminus\gamma$. Note that the law of the part of $h$-process $X$ after its first visit of $\partial\Bc_{-i}$ can be represented as some measure reweighted by $p(x,y)$. Hence, $X$ and $X'$ can be coupled to coincide with each other after their first visits of $\partial\Bc_{-i}$ with probability at least $2/(1+\sqrt{K(p)})$ by the maximal coupling.
Therefore, our definition of good layers can be made as follows.
Let $M$ be some large constant.
We say the layer $D_i$ is {\it good} if $K(p_i)< M$. Roughly speaking, if there are $m$ good layers, and the coupling has a positive chance to succeed at each good layer, then it will succeed before reaching $\partial\Bc_{-m/2}$ with probability at least $(1-O(e^{-cm}))$. This completes the construction of the desired coupling (Proposition~\ref{pro:1}). 

We would like to mention that the two functionals $K(\cdot)$ and $R(\cdot)$ can be defined for general non-negative Borel measurable functions on any product measurable space (although we only need to take $p$ as a Poisson kernel for the proof of Theorem~\ref{thm:analyticity}). The formula \eqref{eq:RK} still holds in this general setting, as well as some other useful properties. We believe they are of independent interest and refer to Section~\ref{subsection:RK} for details.

As another important intermediate step, we need to show that except for an event of probability $O(e^{-cm})$, if we take $\gamma$ as a Brownian motion started from $\partial\Bc_{-m'}$ and stopped upon reaching $\partial\Bc_0$, then it contains at least $m/2$ good layers. Now, the Poisson kernel $p_i$ in the layer $D_i$ associated with $\gamma$ is a random function. Since $K(p_i)$ is equal to the optimal comparison constant of BHP in $D_i$ as mentioned,   according to Theorem~\ref{theorem:bhp}, the probability $\Pb(K(p_i)<M)$ can be made arbitrarily close to $1$ by choosing $M$ sufficiently large. Finally, we obtain the desired estimate by decoupling the scales and using a large deviation argument (Proposition~\ref{pro:2}).

Together, these two estimates and additional inputs from Section~\ref{sec:analyticity} complete the proof of Theorem~\ref{thm:analyticity}, using a similar strategy as in \cite{LSW02a}. We do not repeat it here.

\subsection{Organization of the paper}\label{subsec:structure}
The paper is organized in the following way. We introduce the notation and review some elementary results for Brownian motion in Section~\ref{sec:notation}. In Section~\ref{sec:cone}, we derive various estimates on Brownian motions. In particular, we show that with high probability, there exists a cone not intersected by Brownian motions, which leads to a useful lower bound on the conditional non-intersection probability. In Section~\ref{sec:sep_bhp}, we use the estimates in Section~\ref{sec:cone} to prove the CSL (Theorem~\ref{thm:sep}), which leads to the BHP (Theorem~\ref{theorem:bhp}) by a deterministic equivalence result. In Section~\ref{sec:analyticity}, we use Theorem~\ref{theorem:bhp} to prove that the intersection exponents are analytic (Theorem~\ref{thm:analyticity}). Finally, in Section~\ref{sec:twisted}, we prove Theorem~\ref{thm:twisted} by using some estimates in Section~\ref{sec:cone}.

\bigskip

\noindent {\bf Acknowledgments:} The authors thank Greg Lawler for suggesting the problem and very helpful suggestions. Yifan Li wishes to thank Yuanzheng Wang for fruitful discussions. Yifan Gao is supported by a GRF grant from the Research Grants Council of the Hong Kong SAR (project CityU11307320). Xinyi Li acknowledges the support of National Key R\&D Program of China (No.\ 2021YFA1002700 and No.\ 2020YFA0712900) and NSFC (No.\ 12071012).

\section{Notation and preliminary results}\label{sec:notation}

\subsection{Notation}\label{subsec:notation}

\paragraph{Sets in $\mathbb R^3$.}

Let $B(x,r):=\{y:|x-y|\leq r\}$ denote a closed ball in $\Rb^3$, and $\mathcal{B}_{r}:=\B(0,e^{r})$ the exponential ball around $0$. Let $\sigma_l(\cdot)$ be the surface measure on $\partial\mathcal B_l$. For a set $A$ in $\Rb^3$, we use $A^\circ$ to denote its interior and $\ol A$ to denote its closure. The $r$-sausage of $A$ is denoted by 
\begin{equation}\label{eq:sausage}
    \sausage{A}{r}:=\{x\in\mathbb R^3:\dist(x,A)\leq r\}.
\end{equation}
Let $v\in\mathbb R^3$, $|v|=1$, and $r>0$.
Denote the cylinder with {\it direction} $v$ and {\it radius} $r$ by 
\begin{equation}\label{eq:cylinder}
    \Cylinder(v,r):=\bigcup_{a\in\Rb}\sausage{av}{r},
\end{equation}
and the cone with {\it direction} $v$ and {\it radius} $r$ by 
\begin{equation}\label{eq:cone}
    \Cone(v,r):=\{x\in\mathbb R^3\setminus\{0\}:|x/|x|-v|\le r\}
\end{equation}
(note that such cones do not contain the origin). For $y\in\Rb^3$, let $\Cone_y(v,r):=y+\Cone(v,r)$ be the cone with vertex $y$.
\paragraph{Curves.}
Let $\mathcal P$ denote the set of all finite time continuous curves $\gamma:[0,t_\gamma]\to\mathbb R^3$, endowed with the following metric
$$d(\gamma,\gamma')=|t_\gamma-t_{\gamma'}|+\sup_{s\in [0,1]}\left|\gamma(st_\gamma)-\gamma'(st_{\gamma'})\right|.$$ 

\paragraph{Hitting time.}
For any set $A\subseteq\mathbb R^3$ and a continuous curve $\gamma$, let
\begin{equation}\label{eq:hitting}
    \tau_A=\tau_A(\gamma):=\inf\{t\ge0:\gamma(t)\in A\}\text{\ \ and\ \ }L_A=L_A(\gamma):=\sup\{t\ge0:\gamma(t)\in A\}
\end{equation}
denote the first-hitting and last-exit time of $A$ by $\gamma$, respectively. We will always use the convention that 
\begin{equation*}
\tau:=\tau_{\partial B(0,1)}.
\end{equation*}

Let $\mathscr{S}=\{S_i\}_{i\in I}$ be a countable collection of disjoint Borel sets in $\mathbb R^3$. Define the sequence of stopping times $\tau_{\mathscr{S}}^j$ $(j\in\mathbb N)$ associated with $\mathscr{S}$ and $\gamma$ recursively as follows
\begin{align}\notag
    &\tau_{\mathscr{S}}^0:=\tau_{\cup_{i\in I} S_i}(\gamma),\\ \label{eq:oht}
    &\tau_{\mathscr{S}}^j:=\inf\{t>\tau_{\mathscr{S}}^{j-1}:\gamma(t)\in \cup_{i\in I\setminus\{l\}} S_i\}\quad \text{ if } j\ge 1 \text{ and } \gamma(\tau_{\mathscr{S}}^{j-1})\in S_l.
\end{align}

\paragraph{Constants.}
Constants without numeric subscripts will change from time to time, while constants with numeric subscripts remain the same in the same section. Unless otherwise
specified, all constants are positive. Given two positive sequences $a_n$ and $b_n$, we write $a_n=O(b_n)$ if there exists a constant $C$ such that $a_n\le Cb_n$. If $a_n=O(b_n)$ and $b_n=O(a_n)$, we write $a_n=\Theta(b_n)$.

\subsection{Basic facts on Brownian motion}

We now review some basic facts about potential theory for later use, and refer the reader to  \cite{BM} for more details.
For $x\in\Rb^3$, denote by $\Pb_x$ the probability measure of a Brownian motion $W$ starting from $x$.
Let $p(t,z):=(2\pi t)^{-3/2}e^{-|z|^2/(2t)}$ be the heat kernel of a three-dimensional Brownian motion. 
For $t>0$, $x,y\in\mathbb R^3$ and $D$ a closed set, define $$r_D(t,x,y):=\Eb_x\big[p(t-\tau_D,W(\tau_D)-y);\tau_D<t\big],$$
and 
$$g_D(x,y):=\int_0^{\infty}\big( p(t,x-y)-r_D(t,x,y)\big)\,dt.$$
Note that $g_D(x,y)=G_{D^c}(x,y)$ if $x,y\notin D$, where $G_{F}(\cdot,\cdot)$ stands for the Green's function in a domain $F\subset\mathbb{R}^3$. 

Suppose $D^c$ is a domain. The harmonic measure in $D^c$ from $x\notin D$ is given by
\begin{equation}\label{eq:h}
h_D(x,V):=\Pb_x\big(\tau_D<\infty,W(\tau_D)\in V\big) \quad \text{ for all } V\subseteq \partial D.
\end{equation}
If $\partial D$ is smooth, then $h_D(x,\cdot)$ has a density with respect to the surface measure $\sigma_D$ on $\partial D$, which is called the Poisson kernel and will be denoted by $H_{D^c}(x,y)$. Let $U$ be a bounded domain with smooth boundary and $A$ be a closed set. Then, for all $x\in U\setminus A$ and $y\in \partial U\setminus A$, the derivative $(\partial/\partial n_y)G_{U\setminus A}(x,y)$ exists (see e.g.\  \cite[Theorem 3]{Da77}), where $n_y$ is the unit inward normal to $U$ at $y$, and it coincides with the Poisson kernel $H_{U\setminus A}(x,y)$. When $U=\Bc_b^\circ$ is an open ball, we write
\begin{equation}\label{eq:pb}
    p_{b}^A(x,y):=H_{\Bc_b^\circ\setminus A}(x,y).
\end{equation}
For $a<b$, we use $p_{a,b}^A$ to denote $p_b^A$ restricted to $(\partial\mathcal B_a\setminus A)\times(\partial\mathcal B_b\setminus A)$. By the strong Markov property of Brownian motion, for any $x\in \Bc_a^\circ\setminus A$, we have
\begin{equation}
    \label{eq:329}
    p_{b}^A(x,y)=\int p_a^{A}(x,z)\,p_{a,b}^A(z,y)\,\sigma_a(dz),
\end{equation}
where we recall that $\sigma_a$ denotes the surface measure on $\partial\Bc_a$. 

Finally, we recall the solution of Dirichlet problem; see \cite[Theorem 2.11]{BM}.
\begin{lemma}[Solution of Dirichlet problem]
    \label{lemma:harnack}
    Let $D$ be a bounded domain. Let $\phi$ be a bounded continuous function on $\partial D$. Let $f$ be a bounded continuous function on $D\cup\partial D$ that is harmonic in $D$ and converges to $\phi$ on regular points of $D^c\cap\partial D$. Then for all $x\in D$,
    \begin{equation*}
        f(x)=\int \phi(y)\, h_{D^c}(x,dy).
    \end{equation*}
\end{lemma}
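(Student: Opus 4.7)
The plan is to identify $f$ with an expectation functional via the martingale property of harmonic functions composed with Brownian motion, then to read off the stated formula from the definition of harmonic measure in \eqref{eq:h}.

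Fix $x \in D$, and let $W$ be a Brownian motion under $\Pb_x$ with $W(0)=x$. Let $\tau := \tau_{D^c}(W)$ be the first hitting time of $D^c$. Since $D$ is bounded, $\Eb_x[\tau] < \infty$ and in particular $\tau < \infty$ almost surely. First I would show that the stopped process $M_t := f(W(t \wedge \tau))$ is a bounded martingale under $\Pb_x$. Boundedness is immediate from the hypothesis that $f$ is bounded on $D \cup \partial D$. For the martingale property, the harmonicity of $f$ on $D$ yields that $f(W(\cdot))$ is a local martingale on $[0,\tau)$ (either via It\^o's formula applied to $f$ on open balls compactly contained in $D$, or directly from the spherical mean value property combined with the strong Markov property); the stopped process, being bounded, is then a true martingale.

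Applying the optional stopping theorem to $M$ at time $\tau$ yields
\[
f(x) = M_0 = \Eb_x[M_\tau] = \Eb_x[f(W(\tau))].
\]
The continuity of $W$ together with $W(0) = x \in D$ and $\tau<\infty$ a.s.\ ensures $W(\tau) \in \partial D$ almost surely. Whenever $W(\tau)$ is a regular point of $D^c$, the hypothesis that $f$ converges continuously to $\phi$ at such points gives $f(W(\tau)) = \phi(W(\tau))$. Substituting and invoking the definition \eqref{eq:h} of harmonic measure then produces the claimed representation
\[
f(x) = \Eb_x[\phi(W(\tau))] = \int_{\partial D} \phi(y)\, h_{D^c}(x, dy).
\]

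The one real subtlety, and the step I expect to be the main obstacle, is controlling the set $\mathcal{I} \subseteq \partial D$ of irregular boundary points of $D^c$, at which $f$ is only assumed to be bounded rather than continuously matching $\phi$. This is dispatched by the classical Kakutani-type result (see, e.g., \cite{BM}) that $\mathcal{I}$ is polar, and in particular satisfies $h_{D^c}(x, \mathcal{I}) = 0$ for every $x \in D$. Consequently $W(\tau) \in \partial D \setminus \mathcal{I}$ holds $\Pb_x$-almost surely, so the contribution of $\mathcal{I}$ to $\Eb_x[f(W(\tau))]$ vanishes and the identification $f(W(\tau)) = \phi(W(\tau))$ is valid $\Pb_x$-a.s. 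Everything else in the argument is standard bookkeeping around optional stopping and boundary continuity.
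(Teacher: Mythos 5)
Your proof is correct; it is the standard probabilistic argument (bounded martingale $f(W(t\wedge\tau))$, optional stopping, and polarity of the irregular boundary points via Kellogg's theorem) and matches how this result is proved in the reference. The paper does not prove this lemma at all --- it simply quotes it from \cite{BM} --- so your write-up supplies exactly the argument being cited, and you correctly identified the only nontrivial point, namely that $h_{D^c}(x,\cdot)$ does not charge the irregular points.
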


\section{Brownian motion among cones and sausages}\label{sec:cone}

In this section, we will give various estimates on  Brownian motion moving among cones and Wiener sausages. In Section~\ref{subsec:ucone}, we show that with high probability, there is a cone (called an {\it uncovered cone} below) that is not intersected by $\Wc$ (the union of $k$ independent Brownian motions), using an argument in the framework of cover times. In Section~\ref{subsec:lbc}, we obtain a lower bound on the conditional non-intersection probability by forcing the other Brownian motion to stay in an uncovered cone. In Section~\ref{subsec:ubs}, we derive upper bounds for the probability that an independent Brownian motion stays close to $\Wc$. The estimates in this section together serve as building blocks for the proof of Theorems~\ref{thm:sep} and~\ref{thm:twisted}.

Throughout this section, we work under the setting of Theorem~\ref{thm:sep}, and refer to the paragraph above it for relevant notation. 

\subsection{Finding uncovered cones}\label{subsec:ucone}
In this subsection, we show that with high probability, there exists a cone that is not intersected by $\Wc$. The following result is reminiscent of large deviation results of cover times of Markov chain on 2D torus (see e.g.\ \cite{MR2123929,MR3126579}).
\begin{proposition}[Uncovered cone]
\label{prop:cone1}
For any $u_1>1$ and $0<u_2<1$, there exists 
$n_0(u_1,u_2,k)>0$ such that if $n>n_0$ and $|x_i|\geq 2^{-n}$ for all $1\le i\le k$, then 
\begin{equation}\label{eq:488}
\Pbx\left(\Wc\cap\left(\Cone(v,u_1^{-n})\cap \B(0,2)\right)=\varnothing \text{ for some unit vector $v$}\right)>1-u_2^n.
\end{equation}
\end{proposition}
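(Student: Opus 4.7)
The approach is a cover-time-style argument on the unit sphere $S^2$, viewed as the space of cone directions. Finding an uncovered cone of half-angle $u_1^{-n}$ is equivalent to finding a spherical cap on $S^2$ of radius $\sim u_1^{-n}$ missed by the radial projection $\pi(\Wc\cap\B(0,2))$, where $\pi\colon\Rb^3\setminus\{0\}\to S^2$. Fix a maximal $\tfrac14u_1^{-n}$-separated net $\{v_1,\dots,v_N\}\subset S^2$, so that $N\asymp u_1^{2n}$. It suffices to exhibit one $j$ with $\Cone(v_j,\tfrac34u_1^{-n})\cap\B(0,2)\cap\Wc=\varnothing$: then \eqref{eq:488} holds with $v=v_j$. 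Let $X_j$ denote the indicator of this event and set $Z:=\sum_{j=1}^N X_j$; the target becomes $\Pbx(Z=0)\le u_2^n$.

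For the mean estimate I would decompose $\B(0,2)$ into dyadic shells $A_\ell:=\{2^{-\ell-1}\le|x|\le 2^{-\ell}\}$ for $\ell=-1,0,\dots,n-1$. Within $A_\ell$ the test cone is a stubby frustum of radial extent $\Theta(2^{-\ell})$ and transverse thickness $\Theta(u_1^{-n}2^{-\ell})$, whose Newtonian capacity is $O(2^{-\ell}/n)$ by standard capacity estimates for thin tubes. Kakutani-type hitting-probability bounds then give that the probability that any given $W_i$ hits the frustum during a single crossing of $A_\ell$ is $O(1/n)$. The hypothesis $|x_i|\ge 2^{-n}$ bounds the expected number of shell-crossings of each $W_i$ by $O(n)$. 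Applying the strong Markov property shell-by-shell and then taking the product over the $k$ independent Brownian motions leads to a uniform lower bound
$$
\Pbx(X_j=1)\ge c_0\quad\text{for some }c_0=c_0(u_1,k)>0,
$$
so that $\Eb_{\vec x}Z\ge c_0N\asymp u_1^{2n}$.

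Upgrading the mean estimate to the required exponential decay $\Pbx(Z=0)\le u_2^n$ is the heart of the argument. The plan is a scale-by-scale concentration. For each shell $A_\ell$, let $\Sigma_\ell\subseteq S^2$ be the (random) set of directions whose test cone is hit by $\Wc\cap A_\ell$. A strong Markov argument at the hitting times of $\partial\B(0,2^{-\ell})$ should show that, conditional on coarser-scale information, the normalized surface measure of $\Sigma_\ell$ exceeds $1-\beta$ only with conditional probability $\le p_0$, for some $\beta=\beta(u_1,k)>0$ and $p_0=p_0(u_1,k)<1$. Iterating these nearly-independent per-scale estimates over the $\Theta(n)$ scales via a Chernoff/Azuma-type concentration inequality then yields $\Pbx(Z=0)\le u_2^n$ after tuning constants, with $n_0$ chosen to absorb all lower-order losses.

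The main obstacle is the concentration step. The events $\{X_j=1\}$ are positively correlated because the same Brownian motions shade many test cones simultaneously, especially near the origin where all cones share the common vertex $0$. The decoupling must be carried out at scales adapted to each pair's angular separation, with careful bookkeeping across the $O(n)$ radial scales and the $N^2\asymp u_1^{4n}$ pairs of directions. The hypothesis $|x_i|\ge 2^{-n}$ is essential to rule out pathological near-origin behavior, and securing the decay $u_2^n$ for \emph{every} $u_2\in(0,1)$ forces the per-scale error terms to accumulate no worse than exponentially, which is the core quantitative point of the argument.
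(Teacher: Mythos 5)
Your reduction to a cover-time problem on the sphere of directions is the right framing (the paper explicitly describes its argument this way), but the step you yourself flag as ``the heart of the argument'' is where the proof fails, and the route you sketch does not close it. Bounding the measure of the per-scale covered set $\Sigma_\ell$ by $1-\beta$ with conditional probability $1-p_0$, and then applying Chernoff/Azuma over the $\Theta(n)$ scales, controls how many scales are ``good'' but says nothing about the event $\{Z=0\}=\{\text{every candidate direction lies in }\bigcup_\ell\Sigma_\ell\}$: different scales can shade disjoint parts of the sphere, so the union can be everything even when every single $\Sigma_\ell$ has measure at most $1-\beta$. The natural repair via a first moment on the total covered measure, $\Pbx(\bigcup_\ell\Sigma_\ell=S^2)\le\Eb\sum_\ell\sigma(\Sigma_\ell)=O(\mathrm{poly}(n)\,u_1^{-n})$, is also insufficient, because the statement demands decay $u_2^n$ for \emph{every} $u_2\in(0,1)$ while $u_1>1$ may be arbitrarily close to $1$; when $u_2<u_1^{-1}$ this bound is too weak. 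Likewise, your first-moment bound $\Eb Z\gtrsim N$ only yields $\Pbx(Z>0)\ge c_0$, a constant, and the positive correlations you mention prevent any second-moment upgrade to exponential decay.

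What the paper does instead is engineer the target set so that a genuine coupon-collector lower-deviation bound applies with \emph{super}-exponential error. It constructs $\Theta(n^2)$ disjoint cones of radius $u_1^{-n}$, organized into $n$ groups of $m=\Theta(n)$ cones each, the cones of a group spread along an arc inside a larger cone of angular width $\Theta(n^{-1/2})$ (Definition~\ref{def:cone}). Three quantitative inputs are then proved: (a) the cones are sparse enough that each $W_i$ makes at most $O(n^2)$ visits to their union before its last exit of $B(0,2)$, except on an event of probability $u^n$ (Lemmas~\ref{lem:883} and~\ref{Lemma:3.9}, using the transience of the radial part); (b) conditionally on hitting group $i$, the probability of landing in a sub-group $J$ is at most $c(|J|/m)(\log(m/|J|)+1)$, while the probability of hitting a given group at each excursion is bounded below (Lemmas~\ref{Lemma:3.6} and~\ref{lem:transition}); and (c) a general downward-deviation lemma for cover times (Lemma~\ref{Lemma:3.8}), whose hypothesis $\int_0^1 F(x)^{-1}dx=\infty$ is exactly satisfied by $F(x)=cx\log(1/x)+cx$, giving that all $\Theta(n^2)$ cones are visited within $O(n^2)$ steps with probability at most $q^{n^2}$. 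The bound $q^{n^2}$ is what beats an arbitrary $u_2^n$; no argument producing only single-exponential decay in $n$ with a rate tied to $u_1$ can do so. To salvage your approach you would need to replace the per-scale measure bounds by a comparable hierarchical cover-time estimate; as written, the proof has a genuine gap.
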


Basically, our strategy for the proof of Proposition~\ref{prop:cone1} is to construct $n$ groups of disjoint cones of radius $u_1^{-n}$ such that the following properties hold.
\begin{enumerate}[label=(\roman*)]
    \item Each group contains $\Theta(n)$ disjoint cones. They are aligned in a way that the hitting probability of any sub-group $G$ by a Brownian motion started from some remote point before reaching distance $n^{-1/2}$ is bounded from above by $c\, (\log (n/|G|)+c')\, |G|/n$.
    \item The cones are dense enough such that the transition probability from any cone to other cones within distance $n^{-1/2}$ is uniformly bounded from below; 
    \item The cones are sparse enough such that a Brownian motion started from $B(0,2^{-n})^c$ will visit the cones at most $O(n^2)$ times before its last exit of $B(0,2)$ with high probability. 
\end{enumerate}
We refer to Lemma~\ref{Lemma:3.6} for (i) and (ii), and Lemma~\ref{Lemma:3.9} for (iii). Using these estimates, one can derive Proposition~\ref{prop:cone1} from a large deviation result on cover times under a general setting (see Lemma \ref{Lemma:3.8}).

The next three further subsections are devoted to the construction of cones, the transition between cones, and the conclusion of proof, respectively.

\subsubsection{Construction of cones}

We start with the construction of cones that will satisfy the previously mentioned  properties. Recall \eqref{eq:cone} for the definition of cones.
\begin{definition}[Cones]\label{def:cone}
    For any $u_1>1$, we choose a small constant $d_0>0$ and a large constant $n_1>64\, d_0^2$ such that the following construction is possible for all $n>n_1$ and $1\le m\le n$. 
    \begin{enumerate}[label=$\bullet$]
        \item On the unit sphere, we pick $n$ unit vectors $(v_i)_{1\le i\le n}$ such that $|v_i-v_{i'}|>12\,d_0\,n^{-1/2}$ for any $1\le i<i'\le n$. Let $\extendSquare_i:=\Cone(v_i,d_0/\sqrt n)$, $1\le i\le n$, be $n$ disjoint cones.
        \item On each surface $S_i\cap\partial \Bc_0$, we further pick $m$ points $v_{i,j}$ along an arc such that the following properties hold.
        Let $c_0$ and $c_1$ be two small positive constants. Consider two smaller cones
        \[
        \Tube_{i,j}:=\Cone(v_{i,j},u_1^{-n}) \quad \text{and} \quad \extendNearSubSet_{i,j}:=\Cone(v_{i,j},c_0m^{-1}n^{-1/2}).
        \]
        For any $i,j$, we have $\Tube_{i,j}\subseteq \extendNearSubSet_{i,j}\subseteq \extendSquare_i$, $\dist(v_{i,j},\partial S_i)\ge c_1\, n^{-1/2}$, and $V_{i,j}\cap V_{i,l}=\varnothing$ if $j\neq l$. Moreover, if $1\le j<l\le m$, then $ |z/|z|-v_{i,l}|\geq c_1|j-l|/(m\sqrt n)$  for every $z\in \extendNearSubSet_{i,j}$.
    \end{enumerate} 
\end{definition}
\begin{figure}[H]
    \centering
    \includegraphics[width=0.75\linewidth]{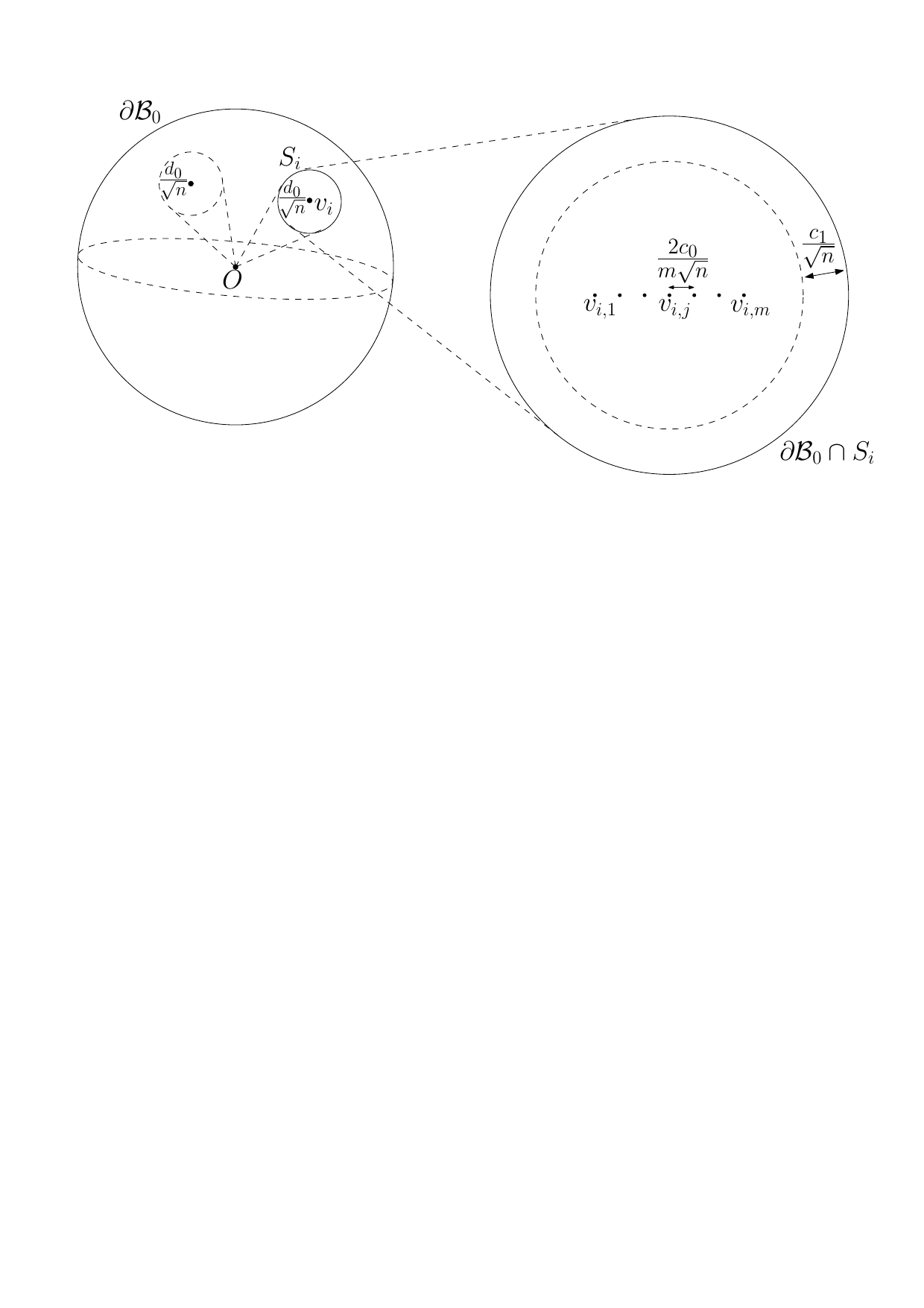}
    \caption{\textit{Left:} Each group of cones $(T_{i,j})_{1\le j\le m}$ is contained in some large cone $S_i$, and there are $n$ groups. \textit{Right:} Each group contains $m$ cones distributed uniformly in a line. We will set $m$ to be a constant fraction of $n$; see \eqref{eq:m}.}
    \label{fig:Cones Constructions}
\end{figure}
We refer to Figure~\ref{fig:Cones Constructions} for an illustration.
For later use, we set $r_0:=4d_0$ and note  $n_1>4r_0^2$. For $z\in \mathbb R^3$, denote 
\begin{equation}\label{eq:rz}
r(z) := r_0|z|n^{-1/2}<|z|/2 \quad \mathrm{and}\quad\Ball(z):=B(z,r(z)).
\end{equation}
We call $\Ball(z)$ the local ball around $z$.
Then, for all $z\in\extendSquare_i$ (see Figure~\ref{fig:Pic_Cones settings}),
\begin{equation}\label{6-1}
    \text{$\partial \B(0,|z|)\cap\extendSquare_i\subseteq \B(z,r(z)/2)$ and $\Ball(z)\cap\extendSquare_{i'}=\varnothing$ if $i'\neq i$,}
\end{equation}
and furthermore (see Figure~\ref{fig:Pic_Lemma 3.6}),
\begin{equation}\label{eq:440}
    \Cylinder(v_{i,j},u_1^{-n}|z|/2)\cap \Ball(z)\subseteq \Tube_{i,j}\cap \Ball(z)\subseteq\Cylinder(v_{i,j},2u_1^{-n}|z|)\cap \Ball(z),
\end{equation}
where $\Cylinder$ refers to cylinders; see \eqref{eq:cylinder}.
\begin{figure}[H]
    \centering
\includegraphics[width=0.55\linewidth]{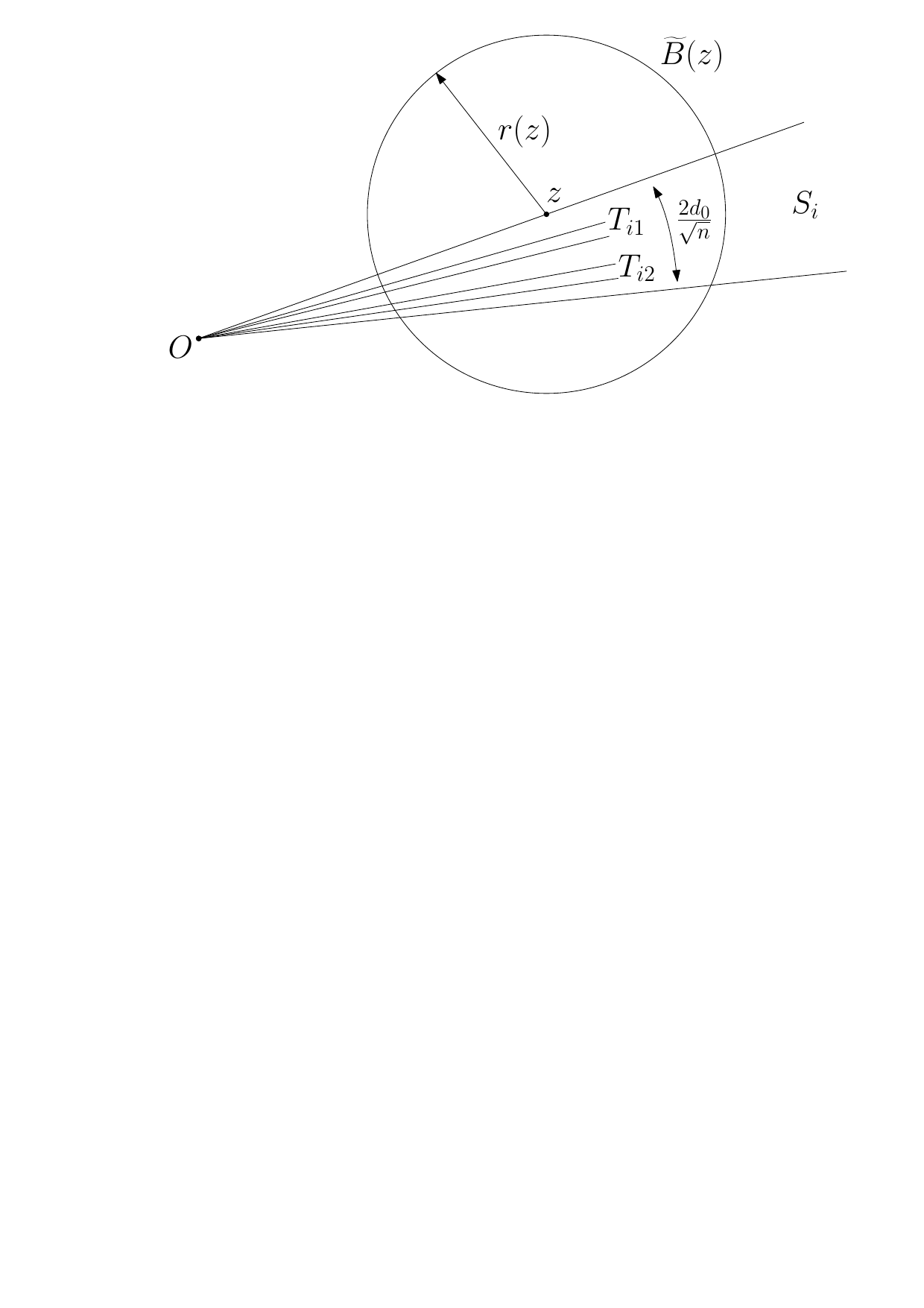}
    \caption{The local ball $\Ball(z)$.}
    \label{fig:Pic_Cones settings}
\end{figure}
\begin{figure}[H]
    \centering
\includegraphics[width=0.55\linewidth]{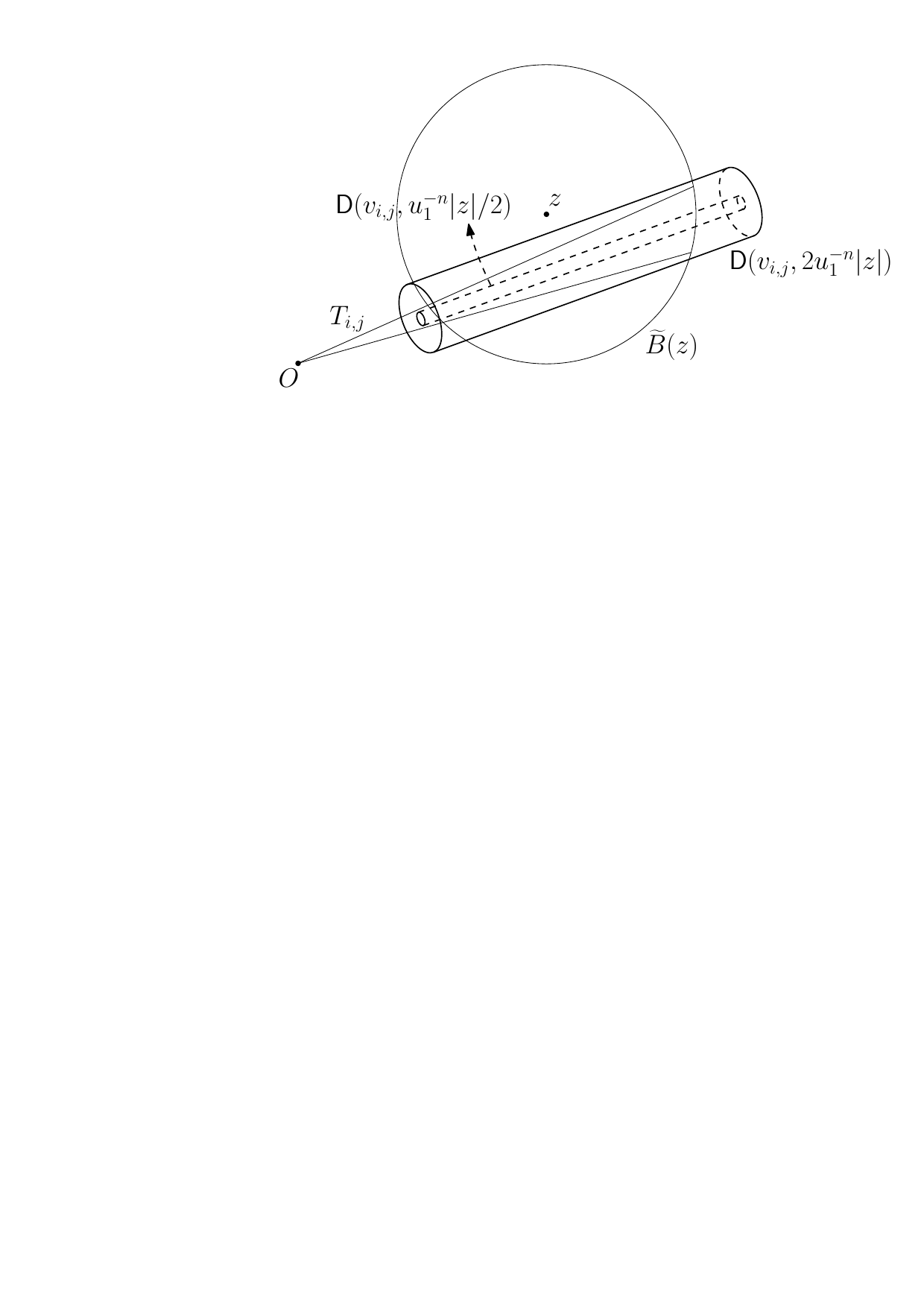}
    \caption{Bound cones by cylinders within $\Ball(z)$.}
    \label{fig:Pic_Lemma 3.6}
\end{figure}

\subsubsection{Transitions between the cones}
We first introduce some notation. Consider the union of tubes
$$
\extendSet_i=\bigcup_{j=1}^m\Tube_{i,j},\quad \Tube=\bigcup_{i=1}^n \extendSet_i, \quad \TubeComplement_{i,j}=\Tube\setminus \Tube_{i,j},
$$
and for $J\subseteq\{1,2,\ldots,m\}$, let
$$
\extendSubSet_{i,J}=\bigcup_{j\in J}\Tube_{i,j}, \quad 
\extendNearSubSet_{i,J}=\bigcup_{j\in J}\extendNearSubSet_{i,j}.
$$
We view $T_i$ as the $i$-th group and $T_{i,J}$ as a sub-group of $T_i$.
The following lemma provides upper and lower bounds on the hitting probability of cones. We leave its proof to Appendix~\ref{sec:hpe}. 

\begin{lemma}\label{Lemma:3.6}
\textnormal{(i) (Upper bound)} There exist positive constants $c_2(u_1)$ and $n_2(u_1)$ such that for all $n>n_2(u_1)$, $1\le m\le n$, $1\le i, i'\le n$, $1\le j'\le m$, and nonempty set $J\subseteq\{1,\ldots,m\}$, 
    \begin{equation}\label{eq:924}
        \Pb_{z}\Big(\tau_{\TubeComplement_{i',j'}} = \tau_ {\extendSubSet_{i,J}},\tau_{\TubeComplement_{i',j'}}<\tau_{\partial \Ball(z)}\Big)<c_2\Big(\log(m/|J|)+1\Big)|J|/n, \quad \text{for all } z\in\partial\extendNearSubSet_{i,J},
    \end{equation}
    and 
    \begin{equation}\label{eq:930}
        \Pb_{z}\Big(\tau_{\TubeComplement_{i, j}}=\tau_{\extendSubSet_{i,J}} ,\tau_{\TubeComplement_{i, j}}<\tau_{\partial \Ball(z)}\Big)<c_2\Big(\log(m/|J|)+1\Big)|J|/n,
        \quad \text{for all } z\in\Tube_{i,j}.
    \end{equation}
    \textnormal{(ii) (Lower bound)} There exist positive constants $c_3(u_1)\le \min\{1,1/(2c_2)\}$, $c_4(u_1)$ and $n_3(u_1)$ such that for all $n>n_3$, $m=[c_3n]$, $1\le i, i'\le n$, $1\le j'\le m$, and $z\in\partial \extendSquare_i$,
    \begin{equation}\label{eq:930-1}
    \Pb_{z}\Big(\tau_{\TubeComplement_{i', j'}}=\tau_{\extendSet_i} ,\tau_{\TubeComplement_{i', j'}}<\tau_{\partial \Ball(z)}\Big)>c_4.
    \end{equation}
\end{lemma}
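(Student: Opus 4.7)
My plan exploits the fact that within each local ball $\Ball(z)$ the cone $\Tube_{i,j}$ is, by \eqref{eq:440}, sandwiched between two cylinders in direction $v_{i,j}$, and since all axes $v_{i,j}$ deviate from $v_i$ by at most $d_0/\sqrt n$ while the local-ball scale is $r(z)=r_0|z|/\sqrt n$ with $r_0=4d_0$, projecting the Brownian motion perpendicular to $v_i$ reduces the 3D hitting problem to a 2D one up to constant-factor distortions. The workhorse is the classical hitting formula $\Pb_{w_0}(\tau_{B(w,\varepsilon)}<\tau_{\partial B(w_0,R)})=\log(R/|w_0-w|)/\log(R/\varepsilon)$ with $\varepsilon\sim u_1^{-n}|z|$ and $R\sim r_0|z|/\sqrt n$, so that the denominator is $n\log u_1+O(\log n)$.

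For the upper bound \eqref{eq:924}, I would first drop the refinement $\tau_{\TubeComplement_{i',j'}}=\tau_{\extendSubSet_{i,J}}$ and apply a union bound
\[
\Pb_z\bigl(\tau_{\extendSubSet_{i,J}}<\tau_{\partial\Ball(z)}\bigr) \le \sum_{j\in J} \Pb_z\bigl(\tau_{\Tube_{i,j}}<\tau_{\partial\Ball(z)}\bigr).
\]
For $z\in\partial\extendNearSubSet_{i,J}$, writing $z\in\partial\extendNearSubSet_{i,j_0}$ for the (unique) $j_0\in J$ to which $z$ belongs, one has $\dist(z,\mathrm{axis}(\Tube_{i,j_0}))\sim c_0|z|/(m\sqrt n)$ while the last property in Definition~\ref{def:cone} gives $\dist(z,\mathrm{axis}(\Tube_{i,j}))\ge c_1|z|\,|j-j_0|/(m\sqrt n)$ for $j\ne j_0$. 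Plugging into the hitting formula and summing with Stirling's bound $\sum_{k=1}^{|J|}\log(m/k)\le|J|\log(em/|J|)$ gives the claimed $c_2(\log(m/|J|)+1)|J|/n$. Estimate \eqref{eq:930} for $z\in\Tube_{i,j}$ then follows by running the Brownian motion until its first exit of $\Tube_{i,j}$ and repeating the argument from the exit point, whose distance to every other axis is still at least $c_1|z|\,|j-l|/(m\sqrt n)$.

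For the lower bound \eqref{eq:930-1}, the goal is to show that the logarithmic capacity $\mathrm{cap}_R(\extendSet_i)$ in the 2D-projected ball of radius $R$ is a positive constant uniformly in $n$. Under projection, $\extendSet_i$ becomes $m=[c_3n]$ disks of radius $\varepsilon\sim u_1^{-n}|z|$ arrayed along a segment of length $L\sim c_1|z|/\sqrt n=\Theta(R)$. Placing mass $\mu$ symmetrically on each disk and solving the equilibrium equation, both the self-energy $\mu\log(R/\varepsilon)\sim\mu n\log u_1$ and the cross-interaction $\mu m\log(R/L)+O(\mu m)\sim\mu c_3n\log(2er_0)$ scale like $\mu n$, forcing $\mu=\Theta(1/n)$ and hence $\mathrm{cap}_R(\extendSet_i)=m\mu=\Theta(1)$. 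Since $z\in\partial\extendSquare_i$ projects at distance $\sim d_0|z|/\sqrt n=R/4$ from the disk array, the hitting formula delivers $\Pb_z(\tau_{\extendSet_i}<\tau_{\partial\Ball(z)})\ge c>0$. To conclude, \eqref{6-1} rules out cone hits outside $\extendSet_i$ within $\Ball(z)$, so the event $\{\tau_{\TubeComplement_{i',j'}}=\tau_{\extendSet_i}\}$ differs from $\{\tau_{\extendSet_i}<\tau_{\partial\Ball(z)}\}$ only when $i'=i$ and the very first cone hit is exactly $\Tube_{i,j'}$; by \eqref{eq:924} applied with $J=\{j'\}$, this exceptional event has probability $O(1/n)$ and is absorbed into $c_4$ for $n$ large.

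The hardest step will be the capacity bookkeeping in the lower bound: since both the self-energy and the cross-interaction contributions are of order $n$ with comparable constants, neither dominates, and the Stirling calculation for $\sum_{l\ne j}\log|l-j|$ must be executed with enough care to guarantee that $\mathrm{cap}_R(\extendSet_i)$ stays bounded below by a positive constant depending only on $u_1,c_3,r_0,d_0$ and not deteriorating with $n$. A secondary technicality, the rigorous sandwiching of tilted 3D cylinders under the projection perpendicular to $v_i$, is handled by comparing with cylinders of slightly enlarged or shrunk radii oriented along $v_i$; this is permissible precisely because $r_0\gg d_0$ ensures the local-ball radius outscales the angular tilt by a bounded factor.
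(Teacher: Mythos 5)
Your upper-bound argument is essentially the paper's: union bound over $j\in J$, a single-cone hitting estimate obtained by sandwiching the cone between cylinders via \eqref{eq:440}, the separation property of Definition~\ref{def:cone}, and a Stirling-type summation. That part is sound (your detour through the first exit of $\Tube_{i,j}$ for \eqref{eq:930} is unnecessary but harmless).

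The lower bound contains a genuine gap. You assert that projecting perpendicular to $v_i$ reduces the 3D hitting problem to a 2D one up to constant factors, and then run a two-dimensional equilibrium-measure computation for the projected array of disks. But projection only works in one direction: if the 3D path hits a thin cylinder before leaving $\Ball(z)$, its projection hits the corresponding disk before leaving the projected disk, so you get \emph{upper} bounds for free. For the \emph{lower} bound the implication fails, because the 3D Brownian motion can exit $\Ball(z)$ longitudinally (in the $v_i$ direction) while its projection is still strictly inside the disk of radius $r(z)$; a lower bound on the 2D hitting probability, i.e.\ on the logarithmic capacity of the disk array, therefore does not transfer to the 3D event $\{\tau_{\Tube_{i,j}}<\tau_{\partial\Ball(z)}\}$. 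This is precisely the difficulty the paper isolates in Lemma~\ref{lem:503}(i) (cylinder-in-cylinder hitting), resolved there by an explicit harmonic minorant of the form $f(\sqrt{z_1^2+z_2^2})\sin(z_3)$ — vanishing on the ends of the ambient cylinder, so longitudinal escape is accounted for — together with an ODE analysis showing $f(r)\ge \tfrac34\log(1/r)/\log(1/d)$; the constant-factor loss is acceptable only because the target radius $d=u_1^{-n}$ is exponentially small. Your plan has no substitute for this step: the ``secondary technicality'' you flag (tilted cylinders under projection) addresses the angular tilt, not the longitudinal escape, and the ``hardest step'' you identify (capacity bookkeeping) is in fact the routine part. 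Once the single-cone lower bound $\Pb_z(\tau_{\Tube_{i,j}}<\tau_{\partial\Ball(z)})\ge c/n$ is available, your capacity balance and your $O(1/n)$ treatment of the exceptional event where the first cone hit is $\Tube_{i,j'}$ are correct, and they amount to the same computation as the paper's first-entrance decomposition, which yields the lower bound $(m-1)\bigl(c/n-\overline c\,m/n^2\bigr)$ and makes it positive by taking $m=[c_3n]$ with $c_3$ small.
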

We used the notation $[a]$ to denote the largest integer that is smaller than or equal to $a$.
From now on, we will fix
\begin{equation}\label{eq:m}
    m=[c_3n].
\end{equation}
As a direct consequence of Lemma~\ref{Lemma:3.6}, the following lemma shows that the cones are effectively ``sparse'' in $\Ball(z)$. More precisely, a Brownian motion started from $z\in\Tube_{i, j}$ exits $\Ball(z)$ before it hits other cones with probability greater than 1/2. 
\begin{lemma}\label{lem:731}
    For all $1\le i\le n$, $1\le j\le m$, $n>n_2$, and $z\in\Tube_{i,j}$, we have\begin{equation}\label{eq:722}
        \Pb_{z}\Big(\tau_{\TubeComplement_{i, j}}\ge\tau_{\partial \Ball(z)}\Big)\ge\frac{1}{2}.
    \end{equation}
\end{lemma}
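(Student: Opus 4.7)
The plan is to reduce the statement directly to \eqref{eq:930} in Lemma~\ref{Lemma:3.6}(i), applied with the maximal choice of $J$. The key geometric input is the separation property \eqref{6-1}, which allows $\TubeComplement_{i,j}$ to be replaced by a single sub-union of tubes inside the local ball $\Ball(z)$.

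First, I would observe that for $z\in \Tube_{i,j}\subseteq \extendSquare_i$, \eqref{6-1} gives $\Ball(z)\cap \extendSquare_{i'}=\varnothing$ for every $i'\ne i$, and in particular $\Ball(z)\cap \Tube_{i',l}=\varnothing$ for every $i'\ne i$ and every $l$. Hence the only tubes from $\Tube$ meeting $\Ball(z)$ other than $\Tube_{i,j}$ itself are those in the group $\{\Tube_{i,l}:l\ne j\}$. Taking $J:=\{1,\ldots,m\}\setminus\{j\}$, this yields the identity $\Ball(z)\cap \TubeComplement_{i,j}=\Ball(z)\cap \extendSubSet_{i,J}$, so on the event $\{\tau_{\TubeComplement_{i,j}}<\tau_{\partial \Ball(z)}\}$ the position $W(\tau_{\TubeComplement_{i,j}})$ automatically lies in $\extendSubSet_{i,J}$, and consequently $\tau_{\TubeComplement_{i,j}}=\tau_{\extendSubSet_{i,J}}$ on this event.

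Next, I would apply \eqref{eq:930} with this $J$ (so $|J|=m-1$) and use the elementary estimate $\log(m/(m-1))\le 1/(m-1)$ for $m\ge 2$ to obtain $(1+\log(m/(m-1)))(m-1)\le m$, whence
\begin{equation*}
\Pb_z\bigl(\tau_{\TubeComplement_{i,j}}<\tau_{\partial \Ball(z)}\bigr)<c_2\, m/n.
\end{equation*}
Since $m=[c_3 n]$ with $c_3\le 1/(2c_2)$ by Lemma~\ref{Lemma:3.6}(ii), the right-hand side is bounded by $1/2$, and \eqref{eq:722} follows by passing to the complementary event.

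The argument is essentially a direct bookkeeping application of Lemma~\ref{Lemma:3.6}(i), and I do not foresee any substantive obstacle. The only points worth verifying are that the hypotheses of \eqref{eq:930} are in force, namely that $J$ is a nonempty subset of $\{1,\ldots,m\}$ and that $n$ is large enough for $m\ge 2$; both are automatic for $n>n_2$, after if necessary enlarging $n_2$ by a constant depending only on $c_3$.
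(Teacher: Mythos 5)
Your proof is correct and takes essentially the same route as the paper's: both reduce \eqref{eq:722} to \eqref{eq:930} via the separation property \eqref{6-1} together with the choice $c_3\le 1/(2c_2)$. The only (immaterial) difference is that the paper applies \eqref{eq:930} with $J=\{1,\ldots,m\}$, so the logarithmic factor vanishes and the bound $c_2m/n\le 1/2$ is immediate, whereas your choice $J=\{1,\ldots,m\}\setminus\{j\}$ needs the extra estimate $\log(m/(m-1))\le 1/(m-1)$.
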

\begin{proof}
    Note that $c_3\leq1/(2c_2)$. Using \eqref{eq:930} and letting $J=\{1,\ldots,m\}$, we have 
    \begin{equation*}
        \Pb_{z}\Big(\tau_{\TubeComplement_{i, j}} = \tau_{\extendSet_{i}},\tau_{\TubeComplement_{i, j}}<\tau_{\partial\Ball(z)}\Big)< c_2m/n\leq1/2.
    \end{equation*}
    Since $\Ball(z)$ does not intersect $\extendSet_{l}$ for any $l\neq i$, we conclude this lemma. 
\end{proof}

Next, we give an upper bound on the hitting probability of a sub-group (of cones) conditioned on hitting a given group (that contains this sub-group). 
\begin{lemma}\label{lem:transition}
    There exist constants $n_4(u_1)$ and $c_5(u_1)$ such that for all $1\le i, i'\le n$, $1\le j\le m$, $J\subseteq\{1,\ldots,m\}$, $z\notin \extendNearSubSet_{i,J}$, and $n>n_4$, we have
    \begin{equation}\label{eq:1062}
    \Pb_{z}\left(W(\tau_{\TubeComplement_{i', j'}})\in \extendSubSet_{i,J}\con W(\tau_{\TubeComplement_{i', j'}})\in \extendSet_i\right)<c_5\Big(\log(m/|J|)+1\Big)|J|/m.
    \end{equation}
    In particular, the above inequality holds for $z\in\Tube_{i',j'}$, and either $i\neq i'$ or $j'\notin J$.
\end{lemma}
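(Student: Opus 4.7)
The plan is to bound the conditional probability by writing it as the ratio
\[
    \frac{\Pb_z\big(W(\tau_{\TubeComplement_{i',j'}})\in \extendSubSet_{i,J}\big)}{\Pb_z\big(W(\tau_{\TubeComplement_{i',j'}})\in \extendSet_i\big)}
\]
and controlling its numerator and denominator separately. Since $\extendSubSet_{i,J}\subseteq\extendSet_i\subseteq\extendSquare_i$ and the large cones $(\extendSquare_i)_i$ are pairwise disjoint, the strong Markov property at $\tau_{\partial\extendSquare_i}$ (for $z\notin\extendSquare_i$) makes both numerator and denominator acquire the same factor $\Pb_z(\tau_{\partial\extendSquare_i}<\tau_{\TubeComplement_{i',j'}})$, which cancels in the ratio; this reduces the task to bounding the same ratio for $w\in\partial\extendSquare_i$. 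When $z\in\extendSquare_i\setminus\extendNearSubSet_{i,J}$ (in particular when $z\in\Tube_{i',j'}$ with $i=i'$ and $j'\notin J$), Lemma~\ref{lem:731} furnishes a positive probability of exiting the local ball without hitting any other tube, after which the same reduction applies.

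For $w\in\partial\extendSquare_i$, inequality \eqref{eq:930-1} directly yields $\Pb_w\big(W(\tau_{\TubeComplement_{i',j'}})\in\extendSet_i\big)\ge c_4$. For the numerator, since $\extendSubSet_{i,J}\subseteq\extendNearSubSet_{i,J}$ and $w\notin\extendNearSubSet_{i,J}$, the strong Markov property applied at $\tau_{\extendNearSubSet_{i,J}}$ gives
\[
    \Pb_w\big(W(\tau_{\TubeComplement_{i',j'}})\in\extendSubSet_{i,J}\big)\ \le\ A:=\sup_{z'\in\partial\extendNearSubSet_{i,J}}\Pb_{z'}\big(W(\tau_{\TubeComplement_{i',j'}})\in\extendSubSet_{i,J}\big).
\]
To estimate $A$, fix $z'\in\partial\extendNearSubSet_{i,J}$ and decompose at $\tau_{\partial\Ball(z')}$. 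The in-ball contribution is at most $c_2(\log(m/|J|)+1)|J|/n$ by \eqref{eq:924}. The out-of-ball contribution, reached only on $\{\tau_{\partial\Ball(z')}<\tau_{\TubeComplement_{i',j'}}\}$, is bounded, via another strong Markov at the next entry into $\extendNearSubSet_{i,J}$, by $\rho A$, where
\[
    \rho:=\sup_{z'\in\partial\extendNearSubSet_{i,J}}\,\sup_{y\in\partial\Ball(z')}\Pb_y\big(\tau_{\extendNearSubSet_{i,J}}<\tau_{\TubeComplement_{i',j'}}\big).
\]

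Solving $A\le c_2(\log(m/|J|)+1)|J|/n+\rho A$ and combining with the denominator bound $c_4$ yields \eqref{eq:1062} provided $\rho\le 1-c$ for some $c(u_1)>0$. The case $|J|/m>1/2$ is trivial after choosing $c_5$ large, since then the right-hand side of \eqref{eq:1062} is uniformly bounded below. In the remaining regime $|J|/m\le 1/2$, the complementary family $\extendNearSubSet_{i,J^c}$ has at least $m/2$ cones, is disjoint from $\extendNearSubSet_{i,J}$, and shares its angular distribution. The local ball $\Ball(z')$ has radius $r_0|z'|/\sqrt n=4d_0|z'|/\sqrt n$, four times the aperture of $\extendSquare_i$ at distance $|z'|$, so the typical exit point $y$ lies outside $\extendSquare_i$. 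A hitting-probability estimate in the spirit of \eqref{eq:930-1} applied with $J^c$ in place of the full group then shows that $W$ from $y$ first enters $\extendSubSet_{i,J^c}\subseteq\TubeComplement_{i',j'}$ before returning to $\extendNearSubSet_{i,J}$ with bounded positive probability, giving $\rho\le 1-c$.

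The main obstacle is precisely this last step: establishing $\rho<1$ uniformly in $z'$ and $J$. While the geometric picture — that $\extendNearSubSet_{i,J^c}$ and $\extendNearSubSet_{i,J}$ are comparable subfamilies of cones within $\extendSquare_i$, and from a point outside $\extendSquare_i$ the BM sees them at similar harmonic-measure scales — is transparent, its rigorous implementation requires controlling the distribution of the excursion exit points on $\partial\Ball(z')$ and arguing uniformly in $J$. Once $\rho<1$ is in hand, we obtain $A\le c_2(\log(m/|J|)+1)|J|/n\cdot(1-\rho)^{-1}$, and combining with the denominator bound together with $m=[c_3 n]$ produces \eqref{eq:1062} with some $c_5(u_1)>0$.
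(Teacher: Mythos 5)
Your skeleton --- writing the conditional probability as a ratio, bounding the denominator below by $c_4$ via \eqref{eq:930-1}, reducing the numerator to the supremum $A$ over starting points on $\partial\extendNearSubSet_{i,J}$, and extracting the term $c_2(\log(m/|J|)+1)|J|/n$ from \eqref{eq:924} --- coincides with the paper's. But the step you yourself flag as the main obstacle is a genuine gap, and worse, the quantity $\rho$ that you need to be strictly less than $1$ is in fact equal to $1$. The thin cones $\extendNearSubSet_{i,j}$, $j\in J$, pass through the local ball and meet $\partial\Ball(z')$ in a set of positive surface measure; for an exit point $y$ lying in (or approaching) $\extendNearSubSet_{i,J}\cap\partial\Ball(z')$ one has $\Pb_y(\tau_{\extendNearSubSet_{i,J}}<\tau_{\TubeComplement_{i',j'}})=1$ (or arbitrarily close to $1$). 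Since your $\rho$ is a supremum over all of $\partial\Ball(z')$, the inequality $A\le c_2(\log(m/|J|)+1)|J|/n+\rho A$ yields nothing. Your sketched remedy via the complementary family $\extendNearSubSet_{i,J^c}$ does not address this (it concerns exit points away from $\extendNearSubSet_{i,J}$), and would in any case require a new hitting estimate --- reaching $\extendSubSet_{i,J^c}$ before returning to $\extendNearSubSet_{i,J}$ from a point off the group --- that none of the stated lemmas provide and that you do not supply.

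The paper closes the recursion by a three-way splitting that avoids any re-entry estimate. Writing $\overline p$ for your $A$, it decomposes according to whether the first hit of $\TubeComplement_{i',j'}$ occurs inside $\Ball(z^*)$ (term $I_1$, bounded via \eqref{eq:924} as you do), or the walk first exits $\Ball(z^*)$ at a point outside $\extendNearSubSet_{i,J}$ (term $I_2$), or at a point inside $\extendNearSubSet_{i,J}$ (term $I_3$). The contraction comes from $I_2\le(1-c_4)\,\overline p$: the factor $1-c_4$ is the probability of exiting the ball before hitting $\TubeComplement_{i',j'}$ at all, controlled by the lower bound \eqref{eq:930-1} on hitting $\extendSet_i$ inside the local ball, while the subsequent probability of reaching $\extendSubSet_{i,J}$ is bounded by $\overline p$ via the strong Markov property at the next visit to $\partial\extendNearSubSet_{i,J}$ --- the re-entry probability itself is simply bounded by $1$. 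The problematic exit points are quarantined in $I_3$, an additive error at most $c'|J|/m$ because the exit distribution from the center $z^*$ is uniform on $\partial\Ball(z^*)$ and $\extendNearSubSet_{i,J}$ occupies a surface fraction $O(|J|/m)$ of that sphere. To repair your argument you would have to perform this same splitting and obtain the contraction from the clean-exit probability rather than from $\rho$. A smaller issue: for $z\in\extendSquare_i\setminus\extendNearSubSet_{i,J}$, exiting the local ball does not place the walk on $\partial\extendSquare_i$, so ``the same reduction'' does not apply there; the paper instead runs the $\overline p$ bound directly for all $z\in\extendSquare_i$ and reserves the averaging over $\partial\extendSquare_i$ for $z\notin\extendSquare_i$.
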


\begin{proof}
    Choose $n_4>\max\{n_2,n_3\}$ such that all the results in Lemma \ref{Lemma:3.6} hold for $n>n_4$. We will first show \eqref{eq:1062} for $z\in S_i$.
    By (ii) of Lemma \ref{Lemma:3.6}, for every $z\in\extendSquare_i$, we have 
    \begin{equation*}
     \Pb_{z}\Big(W(\tau_{\TubeComplement_{i', j'}})\in \extendSet_i\Big)>c_4.
    \end{equation*}
    Then, it is sufficient to show that for all $z\notin \extendNearSubSet_{i,J}$ and large $n$,
    \begin{equation}\label{eq:3.7}
        \Pb_{z}\Big(W(\tau_{\TubeComplement_{i', j'}})\in \extendSubSet_{i,J}\Big)<c\Big(\log({m}/{|J|})+1\Big){|J|}/{m}.
    \end{equation}
    Define $$\overline p:=\sup_{z\in\partial \extendNearSubSet_{i,J}}\Pb_{z}\Big(W(\tau_{\TubeComplement_{i', j'}})\in \extendSubSet_{i,J}\Big),$$
    and let $z^*\in\partial \extendNearSubSet_{i,J}$ be such that $\Pb_{z^*}(W(\tau_{\TubeComplement_{i', j'}})\in \extendSubSet_{i,J})=\overline p$. We split $\overline p$ into three terms.
    \begin{align*}
    \overline p =\ & \Pb_{z^*}\Big(W(\tau_{\TubeComplement_{i', j'}})\in \extendSubSet_{i,J},\tau_{\TubeComplement_{i', j'}}<\tau_{\partial \Ball(z^*)}\Big) \\
&\quad +\Pb_{z^*}\Big(W(\tau_{\TubeComplement_{i', j'}})\in \extendSubSet_{i,J}, \tau_{\TubeComplement_{i', j'}}\geq\tau_{\partial \Ball(z^*)}, W(\tau_{\partial \Ball(z^*)})\notin \extendNearSubSet_{i,J}\Big) \\
&\quad \quad +\Pb_{z^*}\Big(W(\tau_{\TubeComplement_{i', j'}})\in \extendSubSet_{i,J}, \tau_{\TubeComplement_{i', j'}}\geq\tau_{\partial \Ball(z^*)}, W(\tau_{\partial \Ball(z^*)})\in \extendNearSubSet_{i,J}\Big)\\
:=\ &I_1 + I_2 + I_3.
    \end{align*}
    By (i) of Lemma~\ref{Lemma:3.6},
    \[
    I_1\le c_2\,\Big(\log({m}/{|J|})+1\Big){|J|}/{m}.
    \]
    By (ii) of Lemma~\ref{Lemma:3.6} and the fact $\extendSubSet_{i,J}\subset \extendNearSubSet_{i,J}$ (combined with the strong Markov property and the definition of $\overline p$),
    \[
    I_2\le (1-c_4)\, \overline p.
    \]
    Moreover, for some constant $c'(u_1)>0$ and sufficiently large $n$,
    \[
    I_3\le \frac{\sigma(\extendNearSubSet_{i,J}\cap\partial\Ball(z^*))}{\sigma(\partial\Ball(z^*))}\le c'|J|/m,
    \]
    where $\sigma$ denotes the surface measure. Combining the above estimates, we obtain that 
    \[
    \overline p \le c_2\,c_3\,\Big(\log({m}/{|J|})+1\Big){|J|}/{m} + (1-c_4)\, \overline p + c'|J|/m,
    \]
    which implies \eqref{eq:3.7} immediately.
   
    It remains to deal with the case $z\notin \extendSquare_i$. By the strong Markov property again, we can write $\Pb_{z}(W(\tau_{\TubeComplement_{i', j'}})\in \extendSubSet_{i,J}\mid  W(\tau_{\TubeComplement_{i', j'}})\in \extendSet_i)$ as weighted average of that with starting point on $\partial\extendSquare_i$. Hence,
    $$
    \Pb_{z}\left(W(\tau_{\TubeComplement_{i', j'}})\in \extendSubSet_{i,J}\,\con W(\tau_{\TubeComplement_{i', j'}})\in \extendSet_i\right)
    \leq \sup_{z'\in \partial \extendSquare_i}\Pb_{z'}\left(W(\tau_{\TubeComplement_{i', j'}})\in \extendSubSet_{i,J}\con W(\tau_{\TubeComplement_{i', j'}})\in \extendSet_i\right),
    $$
    concluding the proof.
\end{proof}

The following proposition shows that, with high probability, there is at least one cone that is not visited within the first $[Kn^2]$ transitions. It is a particular type of a more general result provided by Lemma~\ref{Lemma:3.8} in the appendix. We introduce some notation first. Let $\TubeGroup:=\cup_{1\le i\le n}\cup_{1\le j\le m}\{\Tube_{i,j}\}$ be the collection of cones. Recall that $\tau_{\TubeGroup}^{l}$ is the $l$-th hitting time of $\TubeGroup$ for any $l\ge 0$ (see \eqref{eq:oht}), and $\{W_s\}_{1\le s\le k}$ are $k$ Brownian motions under $\Pbx$. Define $X_l^s$ as the cone in $\TubeGroup$ where $W_s$ hits $\TubeGroup$ at the $l$-th hitting time, that is, $W_s(\tau_{\TubeGroup}^{l})\in X_l^s$.

\begin{proposition}\label{lem:955}
For any $K>0$, there exist constants $q=q(k,K,u_1)<1$ and $n_5(k,K,u_1)>0$, such that for all $n>n_5$, we have
\begin{equation}\label{eq:955}
    \Pbx\Big(\TubeGroup\subseteq\{X_l^s:l=0,\ldots,[Kn^2],s=1,\ldots,k\}\Big)<q^{c_3n^2}.
\end{equation}
\end{proposition}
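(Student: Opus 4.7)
The plan is to view the successive hits $(X_l^s)_{l\ge 0}$ of $\TubeGroup$ by each walker $W_s$ as a discrete-time Markov chain on the finite set $\TubeGroup$ of cardinality $N := nm = c_3 n^2 + O(1)$, and to reduce the claim to the general cover-time large deviation estimate, Lemma~\ref{Lemma:3.8}. Heuristically, the combined trajectories make only $k[Kn^2] = \Theta(N)$ hits on $\TubeGroup$, which is far below the coupon-collector threshold $N\log N$; hence covering every cone should be an exponentially rare event.

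\textbf{Step 1 (Markov structure).} By the strong Markov property of $W_s$ applied at the hitting times $\tau_{\TubeGroup}^l$, each sequence $(X_l^s)_l$ is a Markov chain on $\TubeGroup$, whose kernel is the Brownian harmonic measure of $\widetilde{T}_{i,j}$ from a point in $T_{i,j}$. The decisive structural input is Lemma~\ref{lem:transition}, which tells us that the kernel is ``near-uniform within each group'': for any current state $z$, any group index $i$, and any $J\subseteq\{1,\ldots,m\}$,
\[
\Pb_z\bigl(X_1 \in T_{i,J}\bigm| X_1 \in T_i\bigr) \le c_5\bigl(\log(m/|J|)+1\bigr)|J|/m.
\]
Summing over $i$ with weights $q_i(z) := \Pb_z(X_1 \in T_i)$ (satisfying $\sum_i q_i(z) = 1$), this yields for any $F \subseteq \TubeGroup$ with $J_i(F) := \{j : T_{i,j}\subseteq F\}$,
\[
\Pb_z(X_1\in F) \;\le\; c_5 \sum_{i=1}^n q_i(z)\,\bigl(\log(m/|J_i(F)|)+1\bigr)\,|J_i(F)|/m,
\]
which is precisely the coupon-collector-type transition bound required to feed into Lemma~\ref{Lemma:3.8}.

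\textbf{Step 2 (Cover-time bound and combining walkers).} With the hypothesis verified, I would apply Lemma~\ref{Lemma:3.8} to the concatenated sequence of all $k$ walkers' hits of $\TubeGroup$, regarded as a single length-$k[Kn^2]$ process on $\TubeGroup$ with $k-1$ ``resets'' between walkers. Since the transition bound from Step~1 is pointwise in $z$, it persists across the resets (using that for $n$ large enough the starting points $x_s$ lie away from any $V_{i,J}$, thanks to the hypothesis $|x_s|\ge 2^{-n}$ and the scales $u_1^{-n}\ll 2^{-n}$ of the cones). The conclusion of Lemma~\ref{Lemma:3.8} then gives coverage probability at most $q^{c_3n^2}$ for some $q=q(k,K,u_1)\in(0,1)$, which is \eqref{eq:955} once $n_5$ is chosen large enough.

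The main obstacle will be the clean translation of the group-conditional bound of Lemma~\ref{lem:transition} into whichever uniform coupon-collector hypothesis Lemma~\ref{Lemma:3.8} demands, since the estimate obtained above depends on the state $z$ through the weights $q_i(z)$. The concavity of $\phi\mapsto\phi(\log(1/\phi)+1)$ on $(0,1]$ combined with $\sum_i q_i(z)=1$ furnishes a weighted Jensen inequality that is the natural tool for this reduction; care must be taken so that the resulting estimate is uniform in $z$ and $F$ and that the final constants depend only on $k$, $K$, and $u_1$ but not on $n$.
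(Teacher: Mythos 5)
Your proposal is correct and follows essentially the same route as the paper: Lemma~\ref{lem:transition} supplies the coupon-collector input $F(x)=c_5x\log(1/x)+c_5x$ (which satisfies $\int_0^1 F(x)^{-1}dx=\infty$), and Lemma~\ref{Lemma:3.8} is applied with $G=\TubeGroup$ and $G^{(i)}=\cup_{j}\{\Tube_{i,j}\}$. The ``translation'' you worry about at the end is a non-issue, since the hypothesis of Lemma~\ref{Lemma:3.8} is already stated in exactly the group-conditional form that Lemma~\ref{lem:transition} delivers, so the Jensen/aggregation step in your Step~1 is unnecessary.
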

\begin{proof}
    By Lemma~\ref{lem:transition}, we can take $F(x)=c_5x\log(1/x)+c_5x$, which satisfies the condition \eqref{eq:2668}. Hence,
    applying Lemma \ref{Lemma:3.8} with $G=\TubeGroup$, $G^{(i)}=\cup_{1\le j\le m}\{\Tube_{i,j}\}$ ($m=[c_3n]$) and the previous function $F$, we conclude the proof immediately. 
\end{proof}

With Proposition~\ref{lem:955},
to prove Proposition~\ref{prop:cone1}, 
it remains to show that with high probability a Brownian motion will not return to $\B(0,2)$ after $[Kn^2]$ transitions between cones in $\TubeGroup$. By Lemma~\ref{lem:731}, we can reduce it to a corresponding transition estimate between   spheres, as illustrated in the following lemma, which exploits the transience of a three-dimensional Brownian motion. 
Recall from \eqref{eq:hitting} that $L_{\B(0,2)}$ denotes the last-exit time of $\B(0,2)$.

\begin{lemma}\label{lem:883}
     For all $0<u\le 1$, there exists $K_1(u)>0$ such that the following holds. Consider the collection of spheres $\mathscr{S}:=\bigcup_{t\in\mathbb Z}\{\partial\mathcal B_{t/p}\}$ with a real parameter $p\geq1$. Then, for all $n\geq 1$ and $z\in\mathbb R^3$ with $|z|\ge2^{-n}$, we have
    \begin{equation*}
        \Pb_{z}\Big(\tau_{\mathscr{S}}^{[K_1np^2]}<L_{\B(0,2)}\Big)<u^n.
    \end{equation*}
\end{lemma}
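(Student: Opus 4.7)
The plan is to reduce the bound to a concentration estimate for a biased simple random walk on $\Zb$, exploiting the explicit hitting distribution of 3D Brownian motion on concentric spheres. Define $X_j\in\Zb$ by $W(\tau_{\mathscr S}^j)\in\partial\Bc_{X_j/p}$. By rotational symmetry, the strong Markov property, and the classical shell-exit formula (a consequence of the harmonicity of $1/|x|$ in $\Rb^3\setminus\{0\}$) applied to $r_1=e^{(t-1)/p},\ r_2=e^{(t+1)/p}$, the process $\{X_j\}_{j\ge 0}$ is, conditionally on $X_0$, a $\pm 1$ random walk with
\[
p_+:=\Pb(X_{j+1}-X_j=1)=\frac{e^{1/p}}{e^{1/p}+1},\qquad p_-:=\frac{1}{e^{1/p}+1},
\]
drift $d:=p_+-p_-=\tanh(1/(2p))\ge c_*/p$ uniformly in $p\ge 1$ for some absolute constant $c_*>0$, and ratio $p_-/p_+=e^{-1/p}$. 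Therefore, by gambler's ruin (equivalently, the transience formula for 3D BM), for any integer $x>L$,
\[
\Pb\bigl(\min_{j\ge 0}X_j\le L\,\big|\,X_0=x\bigr)=e^{(L-x)/p}.
\]

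Set $L:=\lfloor p\log 2\rfloor+1\le 2p$. For $W$ to enter $\B(0,2)$ between $\tau_{\mathscr S}^j$ and $\tau_{\mathscr S}^{j+1}$, the annular region between the two spheres of $\mathscr S$ neighboring $\partial\Bc_{X_j/p}$ must intersect $\B(0,2)$, which forces $X_j\le L$. Hence, writing $T:=[K_1np^2]$,
\[
\{\tau_{\mathscr S}^T<L_{\B(0,2)}\}\subseteq\{\min_{j\ge T}X_j\le L\}.
\]
From $|z|\ge 2^{-n}$ we get $X_0\ge\lfloor p\log|z|\rfloor\ge -pn\log 2-1\ge -2pn$, so $\Eb[X_T\mid X_0]\ge -2pn+c_*K_1np$. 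Since $X_T-X_0$ is a sum of $T$ i.i.d.\ $\pm 1$ increments, Hoeffding's inequality yields
\[
\Pb\bigl(X_T<X_0+Td/2\,\big|\,X_0\bigr)\le e^{-(Td/2)^2/(2T)}=e^{-Td^2/8}\le e^{-c_*^2K_1n/8}.
\]
On the complementary event, once $K_1$ is chosen large enough relative to $c_*$, one has $X_T-L\ge c_*K_1np/4>0$; then the strong Markov property at $\tau^T$ combined with the identity above gives
\[
\Pb\bigl(\min_{j>T}X_j\le L\,\big|\,X_T\ge X_0+Td/2\bigr)\le e^{(L-X_T)/p}\le e^{-c_*K_1n/4}.
\]

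Adding the two contributions,
\[
\Pb\bigl(\tau_{\mathscr S}^{[K_1np^2]}<L_{\B(0,2)}\bigr)\le e^{-c_*^2K_1n/8}+e^{-c_*K_1n/4},
\]
and choosing $K_1=K_1(u)$ sufficiently large makes each term at most $u^n/2$, proving the lemma. The main technical point is ensuring that the drift lower bound $d\ge c_*/p$ is uniform in $p\ge 1$ so that both the Hoeffding exponent $Td^2\asymp K_1n$ and the gambler's-ruin exponent $(X_T-L)/p\asymp K_1n$ are linear in $n$ with slopes driven by $K_1$; once this uniformity is in hand, a single choice of $K_1$ absorbs both failure modes simultaneously.
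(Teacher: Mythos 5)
Your proof is correct and follows essentially the same route as the paper: both decompose the event into (a) the embedded radial walk $X_j$ failing to drift outward by step $[K_1np^2]$, controlled by an exponential large-deviation bound on i.i.d.\ $\pm1$ increments with drift $\Theta(1/p)$, and (b) the process returning inward from a far radius, controlled by the $r'/r$ transience estimate for 3D Brownian motion. Your use of Hoeffding in place of the paper's direct exponential-moment computation, and of gambler's ruin for the walk in place of the ball-hitting probability for the Brownian motion, are only cosmetic repackagings of the same two estimates.
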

\begin{proof}
    Let $Y_i$ be the unique integer that satisfies $W(\tau_{\mathscr{S}}^{i})\in\partial\mathcal B_{Y_i/p}$. Let $X_i:=Y_i-Y_{i-1}$ for all $i\geq 1$.
    By the strong Markov property, $X_i$'s are i.i.d.\ random variables with law 
    $$\Pb_{z}(X_i=-1)={e^{-1/p}}{(1+e^{-1/p})}^{-1}, \quad \Pb_{z}(X_i=1)={(1+e^{-1/p})}^{-1}.$$
    It follows that for any $M>0$,
    \begin{align}
        &\Pb_{z}\Big(X_1+\cdots+X_{[K_1np^2]}<2pM\Big)
        =\Pb_{z}\left(\exp\left(-{{(2p)}^{-1}(X_1+\cdots+X_{[K_1np^2]})}\right)>e^{-M}\right)\notag\\
        \leq\ &e^{M}\left(\mathbb E_z\left[\exp(-{(2p)}^{-1}{X_1})\right]\right)^{[K_1np^2]}
        \leq e^{M-(K_1n-1)c}, \label{eq:896}
    \end{align}
    where we used 
    \[
    \mathbb E_z\left[\exp(-{(2p)}^{-1}{X_1})\right]=\frac{2e^{-{1}/{(2p)}}}{1+e^{-{1}/{p}}},
    \] 
    and there is an absolute constant 
    $c>0$ such that for all $p\ge 1$,
    $$
    \left(\frac{2e^{-{1}/{(2p)}}}{1+e^{-{1}/{p}}}\right)^{p^2}\leq e^{-c}.
    $$
    Also, note that
    \begin{align*}
&\Pb_{z}\left(X_1+\cdots+X_{[K_1np^2]}\geq p(n\log(4/u)+10),\ \tau_{\mathscr{S}}^{[K_1np^2]}<L_{\B(0,2)}\right)\\
        =\ &\Pb_{z}\left(\big|W(\tau_{\mathscr{S}}^{[K_1np^2]})\big|\ge e^{n\log(4/u)+10}\big|W(\tau_{\mathscr{S}}^{0})\big|,\ W[\tau_{\mathscr{S}}^{[K_1np^2]},\infty)\cap\B(0,2)\neq\varnothing\right)\\
        \le\ &\frac{2}{e^{n\log(4/u)+10}|z|e^{-1/p}}\le (u/2)^n.
    \end{align*}
    Hence, combining with \eqref{eq:896}, it yields that
    \begin{align*}
        \Pb_{z}\Big(\tau_{\mathscr{S}}^{[K_1np^2]}<L_{\B(0,2)}\Big)
        \leq\ &\Pb_{z}\left(X_1+\cdots+X_{[K_1np^2]}<p(n\log(4/u)+10)\right)
        \\
        &+\Pb_{z}\left(X_1+\cdots+X_{[K_1np^2]}\geq p(n\log(4/u)+10),\tau_{\mathscr{S}}^{[K_1np^2]}<L_{\B(0,2)}\right)
        \\
        \leq\ & e^{n\log(4/u)/2+5-(K_1n-1)c}+(u/2)^n.
    \end{align*}
    We conclude the result by taking $K_1=((3/2)\log(4/u)+5+c)/c$.
\end{proof}
Next, we derive the transition estimate for cones as promised.
\begin{lemma}\label{Lemma:3.9}
    For all $0<u\le 1$, there exists $K_2(u_1,u)$ such that for all $n>n_2(u_1)$ and $z\in\mathbb R^3$ with $|z|\ge2^{-n}$, we have
    \begin{equation*}
        \Pb_{z}\Big(\tau_{\TubeGroup}^{[K_2n^2]}<L_{\B(0,2)}\Big)<u^n.
    \end{equation*}
\end{lemma}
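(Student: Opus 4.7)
The strategy is to reduce Lemma~\ref{Lemma:3.9} to Lemma~\ref{lem:883} by showing that cone transitions force sphere transitions with positive probability, so that the large-deviation estimate on sphere transitions translates to one on cone transitions. Concretely, I will work with the family of spheres $\mathscr{S}:=\bigcup_{t\in\mathbb Z}\{\partial\Bc_{t/p}\}$ for a suitable parameter $p$ and count sphere transitions along the Brownian path.

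The key building block is a single-step comparison. I would fix $p := 8\sqrt n/r_0$, so that the radial spacing of adjacent spheres in $\mathscr{S}$ at radius $|z|$ is approximately $|z|/p = r(z)/8$, where $r(z)=r_0|z|/\sqrt n$ is the radius of $\Ball(z)$. For each $l\ge 0$, let $z_l := W(\tau_\TubeGroup^l) \in \Tube_{i_l,j_l}$ and define
\[
\xi_l := \big\{\tau_{\TubeComplement_{i_l,j_l}} \ge \tau_{\partial\Ball(z_l)}\big\} \cap \big\{\big||W(\tau_{\partial\Ball(z_l)})| - |z_l|\big| \ge r(z_l)/4\big\}.
\]
The first event has conditional probability $\ge 1/2$ by Lemma~\ref{lem:731}. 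Because $z_l$ lies at the center of $\Ball(z_l)$, Brownian motion exits $\Ball(z_l)$ uniformly on $\partial\Ball(z_l)$, and the radial displacement $|W(\tau_{\partial \Ball(z_l)})|-|z_l|$ is asymptotically $r(z_l)\cos\theta$ with $\cos\theta$ uniform on $[-1,1]$, so the second event has unconditional probability $3/4+O(1/\sqrt n)$. A union bound then gives $\Pb(\xi_l \mid \mathcal F_{\tau_\TubeGroup^l}) \ge 1/8$ for $n$ large. Crucially, whenever $\xi_l$ occurs, the radial excursion exceeds twice the sphere spacing, forcing at least one sphere transition during $[\tau_\TubeGroup^l, \tau_\TubeGroup^{l+1}]$.

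To finish, since $\ind{\xi_l}$'s are adapted with conditional expectation $\ge 1/8$, Azuma's inequality yields $\Pb\big(\sum_{l<[K_2 n^2]}\ind{\xi_l} < K_2 n^2/16\big) \le e^{-c K_2 n^2}$ for an absolute $c>0$. On the complement there are at least $K_2 n^2/16$ sphere transitions in $[\tau_\TubeGroup^0, \tau_\TubeGroup^{[K_2 n^2]}]$. Choosing $K_2 := 1024\,K_1(u/2)/r_0^2$ with $K_1(u/2)$ the constant from Lemma~\ref{lem:883} applied to $u/2$, this lower bound exceeds $K_1(u/2)\,n\,p^2$, so by Lemma~\ref{lem:883} the probability that this many sphere transitions occur before $L_{\B(0,2)}$ is $<(u/2)^n$. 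Combining the two contributions gives $\Pb_z(\tau_\TubeGroup^{[K_2 n^2]} < L_{\B(0,2)}) \le (u/2)^n + e^{-cK_2 n^2} < u^n$ for all $n$ sufficiently large.

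The main subtlety lies in the correlation between the two events in $\xi_l$: conditioning on not hitting other cones could in principle distort the Brownian exit distribution on $\partial\Ball(z_l)$ toward the tangential directions and wipe out the radial displacement. This is circumvented without any fine analysis because Lemma~\ref{lem:731} leaves a generous margin; the crude union bound $\Pb(\xi_l)\ge 1-\Pb(\tau_{\TubeComplement_{i_l,j_l}}<\tau_{\partial \Ball(z_l)})-\Pb(\text{uncond.\ radial change}<r(z_l)/4)\ge 1-1/2-1/4-o(1)$ suffices, a possibility that rests on the cones $\Tube_{i_l,j_l}$ being so thin (angular radius $u_1^{-n}$) that Lemma~\ref{lem:731}'s bound is nearly tight.
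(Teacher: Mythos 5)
Your proposal is correct and follows essentially the same route as the paper: combine Lemma~\ref{lem:731} (probability $\ge 1/2$ of leaving the local ball before hitting another cone) with a probability-$\ge 3/4$ event forcing a crossing of a sphere in $\mathscr{S}$ with spacing $\Theta(n^{-1/2})$ times the radius, union-bound to get a positive conditional probability of a sphere transition per cone transition, then apply Hoeffding/Azuma and Lemma~\ref{lem:883} with $p=\Theta(\sqrt n)$. The only (cosmetic) difference is that you obtain the $3/4$ from the uniform exit distribution on $\partial\Ball(z_l)$ and a radial-displacement threshold, whereas the paper simply takes the sphere spacing small enough relative to $r(z_l)$ that a sphere is hit before exiting the ball with probability $\ge 3/4$.
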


\begin{proof}
    Assume $z'\in\Tube_{i,j}$ for some $i,j$. By Lemma~\ref{lem:731}, for $n>n_2$,
    $$\Pb_{z'}\Big(\tau_{\TubeComplement_{i, j}}\ge\tau_{\partial \Ball(z')}\Big)\geq1/2.$$
    Moreover, since the radius of $\Ball(z')$ is of order $n^{-1/2}$, we can choose $0<c'(u_1)<1$ small enough such that 
    $$\Pb_{z'}\Big(\tau_{\cup_{t\in I}\partial\mathcal B_{c't/\sqrt n}}<\tau_{\partial \Ball(z')}\Big)\geq3/4,$$ 
    where $I=\mathbb Z\setminus\{t'\}$ if $z\in \partial\mathcal B_{c't'/\sqrt n}$ for some integer $t'$ and $I=\mathbb Z$ otherwise.
    
    Now, consider the union of the spheres $\mathscr{S}=\bigcup_{t\in\mathbb Z}\{\partial\mathcal B_{c't/\sqrt n}\}$, that is, setting $p=\sqrt n/c'$ in Lemma~\ref{lem:883}. Combining the above two estimates, we see that for any $l'\geq 0$,
    $$
    \Pb_{z}\Big(\tau_{\TubeGroup}^{l'}\le\tau_{\mathscr{S}}^l<\tau_{\TubeGroup}^{l'+1} \text{ for some } l \con \mathcal F_{\tau_{\TubeGroup}^{l'}+}\Big)\geq1/2+3/4-1=1/4.
    $$
    As a result, by Hoeffding's inequality, for each $u,K_1>0$, we can pick $K_2(K_1,u,c')$ such that
    \begin{equation}\label{eq:954}
        \Pb_{z}\Big(\tau_{\TubeGroup}^{[K_2n^2]}<\tau_{\mathscr{S}}^{[K_1n^2/c'^2]}\Big)<(u/2)^{n^2}.
    \end{equation}   
    By Lemma \ref{lem:883}, we can pick $K_1=K_1(u/2)$ such that
$$\Pb_{z}\Big(\tau_{\mathscr{S}}^{[K_1n^2/c'^2]}<L_{\B(0,2)}\Big)<(u/2)^n.$$
    The above, combined with \eqref{eq:954}, shows that  
    \begin{align*}
        \Pb_{z}\Big(\tau_{\TubeGroup}^{[K_2n^2]}<L_{\B(0,2)}\Big)
        \leq\ &\Pb_{z}\Big(\tau_{\TubeGroup}^{[K_2n^2]}<\tau_{\mathscr{S}}^{[K_1n^2/c'^2]}\Big)+\Pb_{z}\Big(\tau_{\mathscr{S}}^{[K_1n^2/c'^2]}<L_{\B(0,2)}\Big)\\
        <\ &(u/2)^{n^2}+(u/2)^{n}\leq u^n.
    \end{align*}   
    This finishes the proof.
\end{proof}
\subsubsection{Proof of Proposition~\ref{prop:cone1}}
\begin{proof}[Proof of Proposition~\ref{prop:cone1}]
Combining Proposition~\ref{lem:955} and Lemma~\ref{Lemma:3.9}, we obtain that for any $u_1>1$ and any $0<u\le 1$, there exist $K_2(u_1,u)>0$, $0<q(k,K_2,u_1)<1$ and $n_0(k,K_2,u_1)>0$ such that if $n>n_0$ and $|x_i|\geq 2^{-n}$ for all $1\le i\le k$, then
\begin{align*}
    &\Pbx\left(\Wc\cap\left(\Cone(v,u_1^{-n})\cap \B(0,2)\right)\neq\varnothing \text{ for any unit vector $v$}\right)\\
    \le\ & \sum_{1\le i\le k}\Pb_{x_i}\Big(\tau_{\TubeGroup}^{[K_2n^2]}<L_{\B(0,2)}\Big) + \Pbx\Big(\TubeGroup\subseteq\{X_l^i:l=0,\ldots,[K_2n^2],i=1,\ldots,k\}\Big)\\
    \le\ & k\,u^n+q^{c_3n^2}.
\end{align*}
For any $0<u_2<1$, we can choose small $u$ and large $n_0$ such that $u^n+q^{c_3n^2}\le u_2^n$ for all $n>n_0$. This completes the proof. \end{proof}

\subsection{Lower bound on the conditional non-intersection probability}\label{subsec:lbc}
In this subsection, using Proposition~\ref{prop:cone1}, we show that for most samples of $\Wc$ (i.e., with $\Pbx$-probability at least $1-u_2^{n}$), another independent Brownian motion started from $x$ will not intersect $\Wc$ outside $B(x,2^{-n})$ with probability at least $C_1^{nu_1^{n}}$.

\begin{proposition}\label{prop:lowEstimate}
    For all $u_1>1$ and $0<u_2<1$, there exists $0<C_1(u_1)<1$ such that for all $n>n_0$, $x_i\in\Rb^3$ and $|x|<1$, 
    \begin{equation*}
        \Pbx\bigg(\unionW:\Pbf_{x}\Big(W[0,\tau]\cap (\unionW\setminus \B(x,2^{-n}))=\varnothing\Big)\ge C_1^{nu_1^{n}}\bigg)\ge 1-u_2^{n}.
    \end{equation*}
\end{proposition}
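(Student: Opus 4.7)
The plan is to apply Proposition~\ref{prop:cone1} in a coordinate frame centered at $x$ to produce an ``uncovered cone'' with vertex $x$, and then force the Brownian motion $W$ from $x$ to remain within this cone (together with the excised ball $\B(x,2^{-n})$) until exiting $\B(0,1)$. Since $|x|<1$ implies $\B(0,1)\subseteq \B(x,2)$, a cone centered at $x$ and ``active'' inside $\B(x,2)$ will cover the whole trajectory up to time $\tau$.

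First I would establish a translated form of Proposition~\ref{prop:cone1}: with $\Pbx$-probability at least $1-u_2^n$, there exists a unit vector $v=v(\unionW)$ such that $\unionW\cap \Cone_x(v,u_1^{-n})\cap \B(x,2)\subseteq \B(x,2^{-n})$, where $\Cone_x(v,r):=x+\Cone(v,r)$. The issue that some $x_i$ may lie inside $\B(x,2^{-n})$, so that the hypothesis $|x_i|\ge 2^{-n}$ of Proposition~\ref{prop:cone1} fails after translation to $x$, would be handled via the strong Markov property: replace each such $W_i$ by the (countably many) excursions of its trace outside $\B(x,2^{-n})$; each such excursion is started on $\partial\B(x,2^{-n})$, i.e., at distance exactly $2^{-n}$ from $x$. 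The cover-time argument of Section~\ref{subsec:ucone} can then be rerun in the $x$-centered frame, with a union bound over the almost-surely finitely many excursions reaching $\B(x,2)$ (their count being controlled by standard transience estimates for a three-dimensional Brownian motion).

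On the above event I would then lower-bound $\Pbf_x\bigl(W[0,\tau]\subseteq \Cone_x(v,u_1^{-n})\cup \B(x,2^{-n})\bigr)$, which is a stronger event than the one in \eqref{eq:488}. Splitting at $\tau_{\partial\B(x,2^{-n})}$ by the strong Markov property gives two factors: (a) $W\bigl(\tau_{\partial\B(x,2^{-n})}\bigr)$ lands in the half-aperture sub-cone $\Cone_x(v,u_1^{-n}/2)$, which by isotropy of the exit distribution from a ball has probability $\gtrsim u_1^{-2n}$ (the area ratio of the sub-cone's spherical cap); and (b) starting from such an exit point, $W$ stays in $\Cone_x(v,u_1^{-n})$ until it exits $\B(x,2)$, which contains $\partial\B(0,1)$. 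Part (b) is a classical survival-in-cone estimate: for a three-dimensional cone of half-angle $\theta\to 0$, the first Dirichlet eigenvalue on the spherical cap satisfies $\lambda_1(\theta)\asymp \theta^{-2}$, so the radial exponent $\alpha(\theta)=-\tfrac12+\sqrt{\tfrac14+\lambda_1(\theta)}\asymp \theta^{-1}$; the survival probability from radial distance $d$ to $R$, starting at angle within $\theta/2$ of the axis, is then at least $c(d/R)^{\alpha(\theta)}$. Taking $\theta=u_1^{-n}$, $d=2^{-n}$ and $R=2$ yields (b) $\ge 2^{-c'nu_1^n}$, and multiplying by (a) gives the claimed $C_1^{nu_1^n}$ lower bound.

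The principal obstacle is the first step: carefully adapting the cover-time argument of Proposition~\ref{prop:cone1} so that the vertex lies at an arbitrary point $x\in\B(0,1)$ and so that the hypothesis $|x_i|\ge 2^{-n}$ can be replaced by ``effective starting points at distance $\ge 2^{-n}$ from the vertex,'' incorporating both the original $x_i$'s lying outside $\B(x,2^{-n})$ and the countable collection of excursion start-points on $\partial\B(x,2^{-n})$. One needs a quantitative bound on how many such excursions ever reach $\B(x,2)$ in order to preserve the $u_2^n$ bound after the union bound; this is routine given the transience of three-dimensional Brownian motion. The second step is a textbook Brownian-cone estimate.
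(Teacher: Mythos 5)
Your proposal is correct and follows essentially the same route as the paper: reduce to starting points at distance at least $2^{-n}$ from $x$ by the strong Markov property, translate Proposition~\ref{prop:cone1} to produce an uncovered cone with vertex $x$, and conclude with a standard cone-survival estimate (which the paper simply cites from Lemma~2.4 of \cite{LV12}; your eigenvalue computation is the standard proof of that estimate). One simplification: there is no need to decompose into excursions or to rerun the cover-time argument --- running each $W_i$ with $x_i\in\B(x,2^{-n})$ until its \emph{first} exit of $\B(x,2^{-n})$ discards only a piece of trace contained in $\B(x,2^{-n})$ (hence irrelevant to the event), and the remaining post-exit processes are again $k$ independent Brownian motions started at distance exactly $2^{-n}$ from $x$, to which Proposition~\ref{prop:cone1} applies verbatim after translation; the excursions themselves are neither independent nor full Brownian motions, so applying the cover-time machinery to them separately would require extra justification.
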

\begin{proof}[Proof of Proposition~\ref{prop:lowEstimate}]
    By running $W_i$'s until their first exit of $B(x,2^{-n})$ and considering the remaining parts, we can assume that $|x_i-x|\ge 2^{-n}$ for all $i$. 
    By Proposition~\ref{prop:cone1} and translation invariance, with probability at least $1-u_2^n$ under $\Pbx$, there is a cone $\Cc$ with radius $u_1^{-n}$ and vertex $x$ such that it does not intersect $\unionW$ in the unit ball. 
    By a standard cone estimate (see e.g.\ Lemma~2.4 of \cite{LV12}), there exists $0<C_1(u_1)<1$ such that
    $$
    \Pbf_x\Big(W[0,\tau] \subseteq \left(\mathcal C\cup\B(x,2^{-n})\right)\Big)\geq C_1^{nu_1^n},
    $$
    which implies
    $$
    \Pbf_x\left(W[0,\tau]\cap \left(\unionW\setminus \B(x,2^{-n})\right)=\varnothing\right)\geq C_1^{nu_1^n},
    $$
    and concludes the proof immediately.
\end{proof}

\begin{remark}
    It is interesting to compare Proposition~\ref{prop:lowEstimate} with an inequality in a different direction: for all $0<u<1$, there exist constants $0<C<1$ and $c>0$ such that if $|x_i|=2^{-n}$ for some $1\le i\le k$, then
    \[
    \Pb_{\vec x}\Big( \Wc: \sup_{|x|=2^{-n}}\Pbf_x(W[0,\tau]\cap \Wc=\varnothing)\le C^n \Big)\ge 1-cu^n.
    \]
    The above inequality was first established by Lawler; see e.g.\ \cite{Law96,RWcuttimes,LP2000}. Although its proof is simpler, which only requires a use of large deviation estimate, it is a powerful tool in the study of non-intersecting Brownian motions (or random walks).  
\end{remark}

\subsection{The sausage estimates}\label{subsec:ubs}
Recall \eqref{eq:sausage} for the definition of the sausage $\sausage{\cdot}{\cdot}$. In this subsection, we will prove two estimates regarding Brownian motion moving among Wiener sausages.

In the lemma below we show that it is unlikely that a Brownian motion will remain in a constant-radius sausage of $\Wc$ for a long time.
\begin{lemma}\label{lem:1}
There exist constants $0<C_2(k)<1$ and $R_0(k)>0$ such that for all $R>R_0$ and $x,x_i\in\Rb^3$,
\begin{equation}\label{eq:353}
    \Pb_{\vec{x}}\times\Pbf_{x}\Big(W[0,\tau_{\partial \B(x,R)}]\subseteq \sausage{\unionW}{2}\Big)\leq C_2^{\sqrt R}.
\end{equation}
\end{lemma}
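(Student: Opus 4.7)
The argument is a strong Markov / Wiener--sausage decoupling over $\sqrt{R}$ concentric shells of width $\sqrt{R}$. Set $L := \lfloor \sqrt{R} \rfloor$, $N := \lfloor R/L \rfloor \asymp \sqrt{R}$, and $A := \sausage{\unionW}{2}$. Applying the strong Markov property of $W$ at the successive first-hitting times of $\partial B(x, jL)$ for $j=0,\ldots,N-1$ yields
\begin{equation*}
\Pbf_x\bigl(W[0, \tau_{\partial B(x, R)}] \subseteq A \mid \unionW\bigr) \;\le\; \prod_{j=0}^{N-1} P_j(\unionW),
\end{equation*}
where $P_j(\unionW) := \sup_{y \in \partial B(x, jL) \cap A} \Pbf_y\bigl(W[0, \tau_{\partial B(y, L)}] \subseteq A \mid \unionW\bigr) \in [0,1]$.

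The core one-step estimate I would establish is that there exists $q=q(k) < 1$ with $\Eb_{\vec{x}}[P_j] \le q$ uniformly in $j$ and in $R$ sufficiently large. The input is the classical Wiener-sausage volume bound
\begin{equation*}
\Eb_{\vec{x}}\bigl|A \cap B(y, L)\bigr| \;=\; \sum_{i=1}^k \int_{B(y, L)} \min\!\bigl(1,\, 2/|z - x_i|\bigr)\,dz \;=\; O(kL^2),
\end{equation*}
obtained from the explicit $3$D Brownian hitting probability $2/|z-x_i|$ for a ball of radius $2$. Since $L = \sqrt{R}$, the sausage occupies only an $O(k/\sqrt{R})$-fraction of the ball $|B(y, L)| = \Theta(L^3)$. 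A second-moment/packing argument on a family of $O(1)$ disjoint candidate balls of radius $L/4$ inside $B(y, 3L/4)$ then shows that, with probability bounded below by a positive absolute constant over $\unionW$, at least one such candidate ball lies entirely in $B(y, 3L/4) \setminus A$; and on this event, standard $3$D Brownian hitting estimates give $W$ (starting at $y$) a uniformly positive probability of entering this free ball before exiting $B(y, L)$, and hence of escaping $A$. Uniformity in $y \in \partial B(x, jL)$ is obtained by a further union bound over a $1$-net of the shell, absorbed into the polynomial prefactor.

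The main obstacle is the dependence of the $P_j$'s through the shared $\unionW$, which prevents direct multiplication of expectations. I would handle this by localization: $P_j$ depends only on $\unionW \cap B(y_j, 2L)$, and the contribution of each $W_i$ to different shells can be decoupled via the strong Markov property of the individual $W_i$'s applied at their successive first entries into the annuli $\{B(x, (j+1)L) \setminus B(x, jL)\}$. Concretely, setting $\mathcal{F}_j := \sigma(\unionW \cap B(x, jL))$, one obtains $\Eb_{\vec{x}}[P_j \mid \mathcal{F}_j] \le q$ almost surely (the one-step estimate above being insensitive to the past, by transience of $3$D BM), and telescoping yields
\begin{equation*}
\Pbx \times \Pbf_x\bigl(W[0, \tau_{\partial B(x, R)}] \subseteq A\bigr) \;=\; \Eb_{\vec{x}}\Bigl[\prod_{j=0}^{N-1} P_j\Bigr] \;\le\; q^N \;\le\; C_2^{\sqrt{R}}
\end{equation*}
for any $C_2 \in (q^{1/2}, 1)$ and all $R > R_0(k, C_2)$, as required.
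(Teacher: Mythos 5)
Your overall decomposition into $\sim\sqrt R$ shells is in the right spirit, but two of the key steps do not hold as stated. First, the one-step estimate: the Wiener-sausage volume bound $\Eb|\,\sausage{\unionW}{2}\cap B(y,L)| = O(kL^2)$ does not produce a free ball of radius $L/4$ — a tube of radius $2$ has negligible volume yet can meet every one of your $O(1)$ candidate balls, so no second-moment or packing argument on volumes alone can locate a macroscopic free ball. Worse, with candidate balls of radius $L/4$ whose centers are only $O(L)$ apart, the probability that a Brownian motion which has just entered one of them goes on to hit another is of order $1$ (the $3$D hitting probability $\min(1,\rho/d)$ with $\rho=L/4$, $d\asymp L/4$ gives no gain), so you cannot conclude that some candidate is missed with probability bounded below, even for $k=1$, let alone uniformly in $k$ with an absolute number of candidates. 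Second, the decoupling: the $P_j$ are all functions of the same $\unionW$ and are positively correlated (if $\unionW$ happens to clutter $B(x,R)$, every $P_j$ is close to $1$), so $\Eb[\prod_j P_j]\le\prod_j\Eb[P_j]$ is false in general; the filtration $\mathcal F_j=\sigma(\unionW\cap \B(x,jL))$ does not make $P_j$ measurable with respect to $\mathcal F_{j+1}$ (each $W_i$ revisits the shells after excursions to larger radii, and $P_j$ looks at $\unionW\cap B(y,2L)$), and the supremum over $y\in\partial \B(x,jL)$ inside $P_j$ cannot be handled by a union bound over a net, because the per-$y$ estimate is only a constant $q'<1$ rather than polynomially small, so $\Eb[\sup_y(\cdot)]\le \mathrm{poly}(L)\,q'$ is vacuous.

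The paper's proof avoids both problems by swapping the roles of the two sources of randomness: it conditions on the trajectory of $W$, treating it as a fixed curve $\gamma$ from $x$ to $\partial\B(x,R)$, and bounds $\Pbx(\gamma\subseteq\sausage{\unionW}{2})$. It picks $r=[\sqrt R/4]$ points $y_j$ on $\gamma$ with $|y_j-x|=16j^2$, so that $\sum_{t\ne j}2(|16j^2-16t^2|-2)^{-1}\le 1/2$; hence each time some $W_i$ enters one of the balls $\B(y_t,2)$ it avoids all the others forever with probability at least $1/2$, giving a geometric tail for the number $N_i$ of such balls visited by $W_i$. Since $\gamma\subseteq\sausage{\unionW}{2}$ forces $N_1+\cdots+N_k\ge r$, a union bound yields $k\,2^{-(r/k-1)}$. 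If you want to salvage your route, the content you are missing is exactly this separated-points/geometric-tail count (applied to the points $W(T_j)$, which are automatically $\ge L$ apart), rather than a volume or free-ball argument conditional on $\unionW$.
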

\begin{proof}
    It suffices to show that there exist $0<C_2(k)<1$ and $R_0(k)>0$ such that for all $R>R_0$ and any continuous curve $\gamma\in \mathcal P$ such that $\gamma(0)=x$ and $\gamma(t_\gamma)\in\partial\B(x,R)$, we have
    \begin{equation*}
        \Pbx\Big(\gamma\subseteq \sausage{\unionW}{2}\Big)\leq C_2^{\sqrt R}.
    \end{equation*}
    Let $r=[{\sqrt R}/4]$. 
    For $j=0,1,\ldots,r$, let $y_j$ be the first hitting of $\partial \B(x,16j^2)$ by $\gamma$. 
    For each $y\in \B(y_j,2)$, let $W'$ be a Brownian motion started from $y$. Then we have
    \begin{align*}
        \Pb_y\Big(W'\cap \B(y_t,2)=\varnothing\text{ for every }t\neq j\Big)
        &\geq 1- \sum_{t\neq j}\Pb_y\Big(W'\cap \B(y_t,2)\neq\varnothing\Big)\\
        &\geq 1- 2\sum_{t\neq j}{(|16j^2-16t^2|-2)}^{-1}
        \geq 1/2.
    \end{align*}
    Let $N_i$ be the number of $j$ such that $W_i\cap \B(y_j,2)\neq\varnothing$. Using the above inequality, for $l\geq 1$, we have $\mathbb P_{x_i}(N_i\geq l)\leq 2^{-(l-1)}$. Hence, by a union bound,
    $$
    \Pbx\Big(\gamma[0,\tau_{\partial \B(x,R)}]\subseteq \sausage{\unionW}{2}\Big)\leq\Pbx\Big(N_1+\cdots+N_k\geq r\Big)\leq k\,2^{-(r/k-1)}\leq k\,2^{-(\sqrt R/(4k)-2)}.
    $$
    Taking $C_2=2^{-1/{(4k+1)}}$ and $R_0$ large, we conclude this lemma.
\end{proof}

The following lemma shows that, with high probability under $\Pbx$, the Brownian motion $W$ started away from $\unionW$ will get separated from $\unionW$, with some positive probability depending on the initial distance.

\begin{lemma}\label{lem:821}
    For any $\varepsilon>0$ and $n\geq1$, there exist constants $\delta_1(\varepsilon,n,\vec x)>0$ and $\delta_2(\varepsilon,n,\vec x)>0$, such that with probability at least $1-\varepsilon$ under $\Pbx$, we have
    $$\inf_{x\in\B(0,1)\setminus \sausage{\unionW}{2^{-n}}}\Pbf_{x}\Big(W[0,\tau]\cap\sausage{\unionW}{\delta_1}=\varnothing\Big)>\delta_2.$$
\end{lemma}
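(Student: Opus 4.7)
\textit{Proof plan.} Let $g_\delta(x) := \Pbf_x\big(W[0,\tau] \cap \sausage{\Wc}{\delta} = \varnothing\big)$ for $\delta \ge 0$, and $K_n := \{x \in \B(0,1) : \dist(x,\Wc) \ge 2^{-n}\}$. Since $K_n$ contains the set $\B(0,1) \setminus \sausage{\Wc}{2^{-n}}$ from the statement, it is enough to produce $\delta_1, \delta_2 > 0$ such that $\inf_{K_n} g_{\delta_1} > \delta_2$ on a $(1-\varepsilon)$-probability event. The plan is to first bound $\inf_{K_n} g_0$ from below via a discretization of Proposition~\ref{prop:lowEstimate} combined with Harnack's inequality, and then transfer the bound to $g_{\delta_1}$ via Dini's theorem.

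For the first step, fix $u_1 > 1$, $u_2 \in (0, 2^{-4})$ and a large parameter $N \ge n+3$ (above the threshold of Proposition~\ref{prop:lowEstimate}). For the lattice $\mathcal{L}_N := \{z \in (2^{-N}\Zb)^3 : |z| < 1\}$ of cardinality $O(2^{3N})$, a union bound over Proposition~\ref{prop:lowEstimate} applied at each $y \in \mathcal L_N$ with parameter $N$ gives, with $\Pbx$-probability at least $1 - O(2^{3N})u_2^N \ge 1 - 2^{-N}$, the simultaneous bound $\Pbf_y(W[0,\tau] \cap (\Wc \setminus \B(y,2^{-N}))=\varnothing) \ge C_1^{N u_1^N}$ for every $y \in \mathcal L_N$. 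For any $x \in K_n$, pick the nearest $y \in \mathcal L_N$: then $|x-y|\le 2^{-N}$ and $\dist(y,\Wc) \ge 2^{-n-1}$, hence $\B(y, 2^{-N}) \cap \Wc = \varnothing$ and the above becomes $g_0(y) \ge C_1^{N u_1^N}$. Harnack's inequality on $\B(y, 2^{-n-2})$ (disjoint from $\Wc$) then yields $g_0(x) \ge c_H\, C_1^{N u_1^N} =: 2\delta_2'$ whenever $y \in \B(0, 1 - 2^{-n-2})$; for $y$ within $2^{-n-2}$ of $\partial\B(0,1)$, $g_0(y)$ is bounded below by a positive universal constant via a direct short-time estimate, since $W$ exits $\B(0,1)$ quickly without approaching $\Wc$. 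Taking $N$ large enough that $2^{-N} \le \varepsilon/2$, I obtain $\inf_{K_n} g_0 \ge 2\delta_2'$ on an event $\mathcal A$ of $\Pbx$-probability at least $1-\varepsilon/2$.

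For the second step, observe that for each $\delta < 2^{-n}$ the function $g_\delta$ is continuous on $K_n$ (it is harmonic on the open set $\B(0,1) \setminus \sausage{\Wc}{\delta} \supsetneq K_n$ and extends continuously to the regular boundary points not in $\Wc$). Since $W[0,\tau]$ is a.s.\ compact, $\dist(W[0,\tau], \Wc) > 0$ on $\{W[0,\tau] \cap \Wc = \varnothing\}$, so monotone convergence yields $g_\delta \uparrow g_0$ pointwise on $K_n$ as $\delta \downarrow 0$. Dini's theorem then upgrades this to uniform convergence on the compact set $K_n$, so $\Pbx$-a.s.\ on $\mathcal A$ there is a random $\delta_1^*(\Wc) > 0$ with $\inf_{K_n} g_{\delta_1^*} \ge \delta_2'$. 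Fixing a deterministic $\delta_1 \in (0, 2^{-n})$ with $\Pbx(\delta_1^*(\Wc) \ge \delta_1) \ge 1-\varepsilon/2$ and using that $g_\delta$ is decreasing in $\delta$, on the intersection event (of $\Pbx$-probability $\ge 1-\varepsilon$) we get $\inf_{K_n} g_{\delta_1} \ge \delta_2'$, and setting $\delta_2 := \delta_2'$ completes the argument. The main technical point is the Harnack interpolation from $\mathcal L_N$ to arbitrary $x \in K_n$, together with the direct handling of boundary points near $\partial\B(0,1)$; the remaining compactness and bookkeeping steps are routine.
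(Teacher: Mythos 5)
Your argument is correct, and its second half is exactly the paper's: monotone pointwise convergence of $g_\delta$ to $g_0$ on a compact set, upgraded to uniform convergence by Dini's theorem, followed by the replacement of the a.s.\ positive random quantities by deterministic thresholds $\delta_1,\delta_2$ achieved with probability $1-\eps$. Where you genuinely diverge is in the first half. To get $\inf g_0>0$ on a high-probability event you import the quantitative machinery of Proposition~\ref{prop:lowEstimate}, a union bound over a $2^{-N}$-lattice, interior Harnack, and a separate exit estimate near $\partial\B(0,1)$. The paper's proof is entirely soft at this point: $f(x)=\Pbf_x(W[0,\tau]\cap\Wc=\varnothing)$ is continuous on the compact set $D=\B(0,1)\setminus\sausage{\Wc}{2^{-n}}^\circ$ and strictly positive there because $\B(0,1)\setminus\Wc$ is a.s.\ connected, so $\inf_D f>0$ almost surely, and no rate is needed since the statement only asks for \emph{some} $\delta_2(\eps,n,\vec x)>0$. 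Your route buys an explicit, $\vec x$-uniform value of $\delta_2$ (of order $C_1^{Nu_1^N}$ with $N=N(\eps,n)$), at the price of reproducing the discretization-plus-Harnack scheme that the paper reserves for the proof of Theorem~\ref{thm:sep} itself, and of some extra bookkeeping near $\partial\B(0,1)$ (the Harnack ball $\B(y,2^{-n-2})$ must lie inside the unit ball, and the short-exit estimate is cleanest when applied directly to $x$ rather than to its lattice approximation $y$). These are routine details; both approaches are valid, and the paper's is shorter precisely because it forgoes the quantitative bound you establish.
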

\begin{proof}
    We will freeze $\Wc$ and consider functions depending on $\Wc$ below. First, denote the compact set $\B(0,1)\setminus \sausage{\unionW}{2^{-n}}^\circ$ by $D$. Let $f(x):=\Pbf_{x}(W[0,\tau]\cap \unionW=\varnothing)$ be a continuous function on the compact set $D$. Define $\phi_2(\Wc):=\frac{1}{2}\inf_{x\in D}f(x)$, and note that $\Pbx(\phi_2>0)=1$ since $\B(0,1)\setminus\unionW$ is a.s.\  connected.   
    Consider the monotone sequence of continuous functions $f_m(x):=\Pbf_{x}(W[0,\tau]\cap\sausage{\unionW}{2^{-m}}=\varnothing)$ on $D$ with $m>n$, which converges pointwise to $f$, then the convergence is also uniform by Dini's theorem. Hence,  
    we can pick $\phi_1(\Wc)$ such that $\Pbx(\phi_1>0)=1$ and  
$$\inf_{x\in D} \Pbf_{x}\Big(W[0,\tau]\cap\sausage{\unionW}{\phi_1}=\varnothing\Big)>\phi_2.$$
For any $0<\eps<1$, there exist positive constants $\delta_1$ and $\delta_2$ such that $\Pbx(\phi_1>\delta_1,\phi_2>\delta_2)>1-\varepsilon$. This concludes the result.
\end{proof}

\section{Conditional separation lemma and BHP}\label{sec:sep_bhp}
In Section~\ref{subsec:csl1}, we complete the proof of Theorem \ref{thm:sep} (CSL). In Section~\ref{subsec:equi}, we obtain Theorem~\ref{theorem:bhp} (BHP), by showing that Theorems~\ref{thm:sep} and~\ref{theorem:bhp} are equivalent. 

\subsection{Conditional separation lemma}\label{subsec:csl1}

\begin{proof}[Proof of Theorem \ref{thm:sep}]We divide the proof into three steps.

\smallskip
\noindent \textbf{Step 1: Definition of the sequence $g_m$.}
Let $m_0$ be some large constant to be chosen later to satisfy \eqref{eq:m0choice0}, \eqref{eq:m0choice1} and \eqref{eq:m0choice2}. By Lemma~\ref{lem:821}, for $m_0$ and any $\eps>0$, there exist constants $\delta_1(\varepsilon,m_0,\vec x)>0$ and $\delta_2(\varepsilon,m_0,\vec x)>0$, such that with probability at least $1-\varepsilon$ under $\Pbx$, we have
\begin{equation}\label{eq:gm0}
    \inf_{x\in\B(0,1)\setminus \sausage{\unionW}{2^{-m_0}}}\Pbf_{x}\Big(W[0,\tau]\cap\sausage{\unionW}{\delta_1}=\varnothing\Big)>\delta_2.
\end{equation}

For any $n>m_0$, 
consider the decreasing sequence $r_n:=1/2$ and $r_{m}:=r_{m+1}+(3/4)^{m}+2^{-m-2}$ for all $m_0\le m\le n-1$. Choose $m_0$ large enough such that 
\begin{equation}\label{eq:m0choice0}
\sum_{m=m_0}^\infty ((3/4)^{m}+2^{-m-2})<1/4.
\end{equation}
Let $\Ac$ be any closed subset of $\Wc$. Consider the following random sequence with $m_0\le m\le n$, 
\[
g_m=g_m(\Acc,\delta_1):=\mathop{\inf}\limits_{x\in \B(0,r_m)\setminus \sausage{\unionW}{2^{-m}}}\Pbf_{x}\Big(W(\tau)\notin \sausage{\unionW}{\delta_1}\con W[0,\tau]\cap \Acc=\varnothing\Big).
\]
Note that $r_{m_0}\le 3/4$, and therefore, by \eqref{eq:gm0}, we have $g_{m_0}>\delta_2$.
In the following, we aim to show that $g_n\ge g_{m_0}/2$ for sufficiently large $m_0$.

\smallskip
\noindent \textbf{Step 2:  Bounding the error from discretization.}
As mentioned in Section~\ref{subsec:strategy}, we need to discretize the unit ball. More precisely, we approximate any point $x\in B(0,3/4)$ by $\overline x_m:=[2^{m+2}x+(1,1,1)/2]/2^{m+2}\in \mathbb Z^3/2^{m+2}$ such that $\dist(x,\overline x_m)\leq\sqrt{3}/2^{m+3}<2^{-m-2}$.
For brevity, we write $\tau_{x,m}$ for $\min\{\tau_{\partial\sausage{\unionW}{2^{-m+1}}} ,\tau_{\partial\B(x,(3/4)^m)}\}$. See Figure~\ref{fig:proof of prop 1.3} for an illustration.

    \begin{figure}[t]
        \centering
        \includegraphics[width=0.5\linewidth]{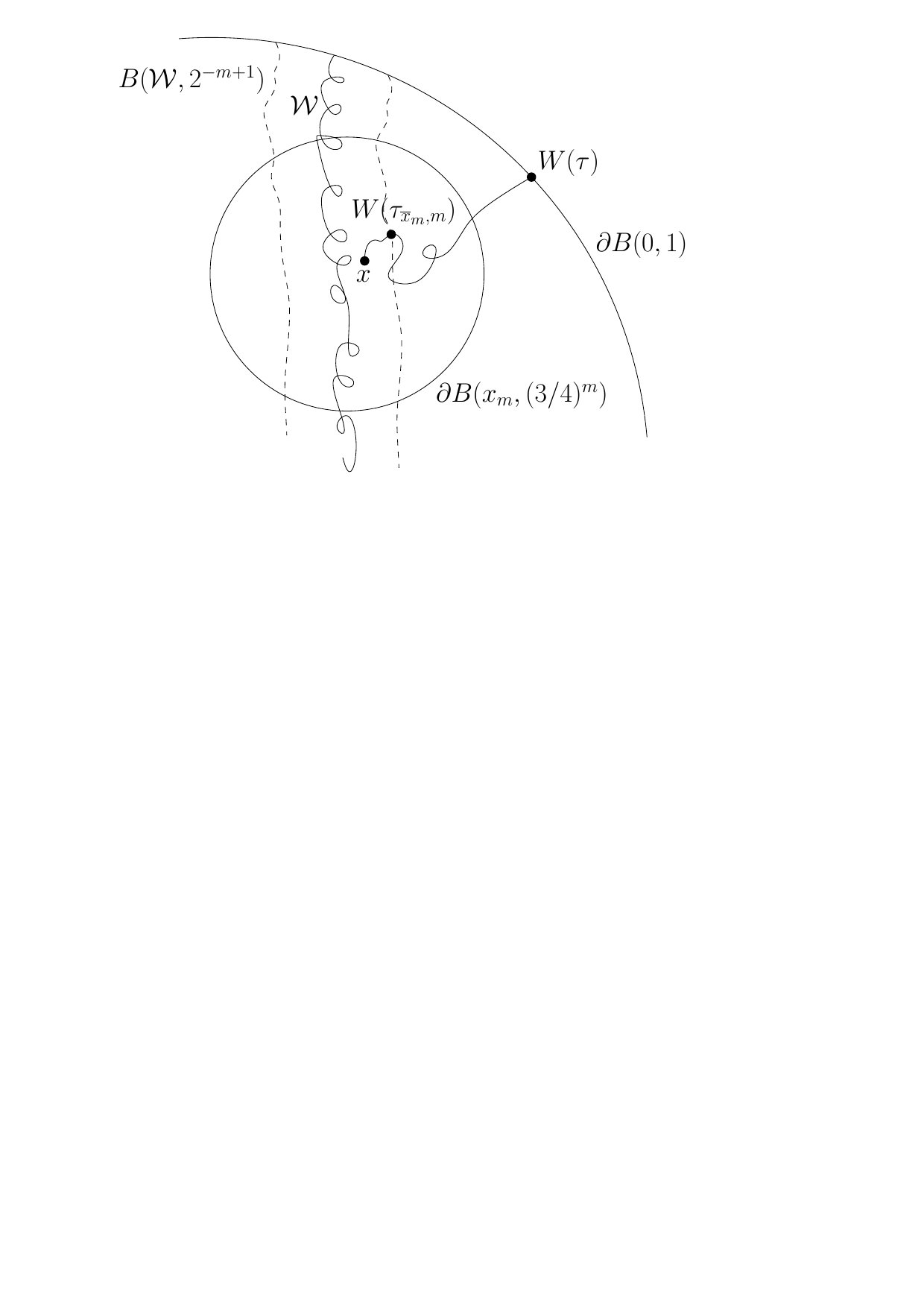}
        \caption{Illustration for the proof of Theorem \ref{thm:sep}.}
        \label{fig:proof of prop 1.3}
    \end{figure}

Now, applying Proposition~\ref{prop:lowEstimate} with $u_1=1.1$ and $u_2=1/9$, for any $m>n_0$, 
\begin{equation}\label{inequality:3}
    \Pbx\left(\Pbf_{\ol x_m}\left(W[0,\tau]\cap (\unionW\setminus \B(x,2^{-m-2}))=\varnothing\right)\geq C_1^{m1.1^m}\right)\geq 1-9^{-m}.
\end{equation}
Moreover, applying Lemma \ref{lem:1} with $R=1.5^m$, combined with Markov's inequality and the scaling invariance, we have for $m>\log_{1.5}R_0$,
\begin{equation}\label{inequality:2}
    \Pbx\left(\Pbf_{\ol x_m}\left(W(\tau_{\overline x_m,m})\notin \partial \sausage{\unionW}{2^{-m+1}}\right)\leq9^mC_2^{1.5^{m/2}}\right)\geq 1-9^{-m}.
\end{equation}
Note that there are at most $8^{m+3}$ points $\ol x_m$. Choose $m_0$ large enough such that 
\begin{equation}\label{eq:m0choice1}
m_0>\max\{n_0,\log_{1.5}R_0\}\quad \text{and} \quad
\sum_{m=m_0}^{\infty} {8^{m+3}}/{9^m}<\varepsilon/4.
\end{equation}
By a union bound, with probability at least $(1-\eps)$ under $\Pbx$, the inequalities regarding $\Pbf_{\ol x_m}$ in \eqref{inequality:3} and \eqref{inequality:2} hold simultaneously for all $\ol x_m$, assumed below. Hence, 
\begin{equation}\label{eq:discr}
    \frac{\Pbf_{\ol x_m}(W(\tau_{\ol x_m,n})\notin \partial \sausage{\unionW}{2^{-m+1}})}{\Pbf_{\ol x_m}(W[0,\tau]\cap (\unionW\setminus \B(\ol x_m,2^{-m-2}))=\varnothing)}\le \dfrac{9^mC_2^{1.5^{m/2}}}{C_1^{m1.1^m}}.
\end{equation}

Next, we will assume that $x\in B(0,3/4)$ and $2^{-m-1}\leq\dist(x,\unionW)\leq2^{-m}$, and derive an upper bound under $\Pbf_x$, similar to \eqref{eq:discr}, by Harnack's inequality. 
First, note that $\dist(\overline x_m,\unionW)> 2^{-m-2}$ since $\dist(x,\unionW)\geq 2^{-m-1}$ and $|x-\overline x_m|\leq 2^{-m-2}$. Hence, the denominator in the LHS of \eqref{eq:discr} is equal to $\Pbf_{\ol x_m}(W[0,\tau]\cap \unionW)=\varnothing)$. Moreover, since
$\dist(x,\partial\sausage{\unionW}{2^{-m+1}})\geq 2^{-m}$, by Harnack's inequality, there exists an absolute constant $C$ such that
\begin{equation}\label{eq:pf_csl_4}
\Pbf_{x}\Big(W(\tau_{\overline x_m,m})\notin \partial \sausage{\unionW}{2^{-m+1}}\Big)\leq C\,\Pbf_{\overline x_m}\Big(W(\tau_{\overline x_m,m})\notin \partial \sausage{\unionW}{2^{-m+1}}\Big),
\end{equation}
and
\begin{equation}\label{eq:pf_csl_5}
\Pbf_{x}\big(W[0,\tau]\cap\unionW=\varnothing\big)\geq C^{-1}\,\Pbf_{\overline x_m}\big(W[0,\tau]\cap\unionW=\varnothing\big).
\end{equation}
Plugging \eqref{eq:pf_csl_4} and \eqref{eq:pf_csl_5} into \eqref{eq:discr},
we conclude 
\begin{equation}\label{eq:discr1}
    \frac{\Pbf_{x}(W(\tau_{\ol x_m,m})\notin \partial \sausage{\unionW}{2^{-m+1}})}{\Pbf_{x}(W[0,\tau]\cap \unionW)=\varnothing)}\le \dfrac{C^2\,9^m\,C_2^{1.5^{m/2}}}{C_1^{m1.1^m}}.
\end{equation}

\smallskip
\noindent \textbf{Step 3: A recursive formula for $g_m$.} Now, we show a recursive formula for $g_m$ with error term bounded by  \eqref{eq:discr1}. Now, we assume $|x|\le r_{m+1}$ and $2^{-m-1}\leq\dist(x,\unionW)\leq2^{-m}$. Then, $|W(\tau_{\overline x_m,m})|\leq |x|+2^{-m-2}+(3/4)^m\le r_m$. Decomposing $W$ at the time $\tau_{\overline x_m,m}$ (see Figure~\ref{fig:proof of prop 1.3}), we obtain that
\begin{equation*}
    \begin{split}
        &\ \Pbf_{x}\Big(W(\tau)\notin \sausage{\unionW}{\delta_1}\con  W[0,\tau]\cap \Acc=\varnothing\Big)\\
        \geq\,&\ \Pbf_{x}\Big(W(\tau)\notin \sausage{\unionW}{\delta_1},W(\tau_{\overline x_m,m})\in \partial \sausage{\unionW}{2^{-m+1}}\con  W[0,\tau]\cap \Acc=\varnothing\Big)\\
        \geq\,&\ g_m\,\Pbf_{x}\Big(W(\tau_{\overline x_m,m})\in \partial \sausage{\unionW}{2^{-m+1}}\con  W[0,\tau]\cap \Acc=\varnothing\Big)\\
        \geq\,&\ g_m\,\Bigg(1-\frac{\Pbf_{x}\Big(W(\tau_{\overline x_m,m})\notin \partial \sausage{\unionW}{2^{-m+1}}\Big)}{\Pbf_{x}(W[0,\tau]\cap\unionW=\varnothing)}\Bigg).
    \end{split}
    \end{equation*}
    Using \eqref{eq:discr1} and taking the infimum over $x$ in the above inequality, we have
    \begin{equation}\label{eq:gm}
        g_{m+1}\ge \Bigg(1- \dfrac{C^2\,9^m\,C_2^{1.5^{m/2}}}{C_1^{m1.1^m}} \Bigg)\, g_m.
    \end{equation}  
We further choose large $m_0$ such that 
\begin{equation}\label{eq:m0choice2}
\sum_{m=m_0}^{\infty}C^2\,9^m\,C_2^{1.5^{m/2}}/C_1^{m1.1^m}<1/2.
\end{equation}
Using \eqref{eq:gm} repeatedly, we obtain the following inequality as promised:
$$
g_n\geq g_{m_0}\left(1-\sum_{m=m_0}^{n-1}\dfrac{C^2\,9^mC_2^{1.5^{m/2}}}{C_1^{m1.1^m}}\right)\geq g_{m_0}/2> \delta_2/2.
$$
Since $n$ can be taken arbitrarily large, we conclude that with probability at least $1-\varepsilon$ under $\Pbx$, for any $x\in \B(0,1/2)\setminus\unionW$ and any closed subset $\Acc\subset\unionW$, we have
\begin{equation}\label{eq:pf_csl_9}
\Pbf_{x}\Big(W(\tau)\notin\sausage{\unionW}{\delta_1} \con W[0,\tau]\cap \Acc=\varnothing\Big)>\delta_2/2.
\end{equation}
Indeed, \eqref{eq:pf_csl_9} also holds for $x\in(\unionW\setminus \Acc)\cap \B(0,1/2)$ since $\unionW$ is almost surely a nowhere-dense closed set. Since $\eps$ can be taken arbitrarily small, we finish the proof. 
\end{proof}

\subsection{Equivalence between CSL and BHP}\label{subsec:equi}

In this section, we show that Theorem \ref{thm:sep} and Theorem \ref{theorem:bhp} are equivalent, based on a general result  provided by the following Proposition \ref{pro:equivalence}. Since we have already proved Theorem \ref{thm:sep}, we obtain Theorem \ref{theorem:bhp} immediately.

\begin{proposition}\label{pro:equivalence}
    Let $\hA $ be a closed set in $\mathbb R^3$ such that $\hA ^\circ=\varnothing$ and $U\setminus\hA $ is connected for any domain $U\subset\mathbb{R}^{3}$. Then, the following two statements are equivalent:

    \textnormal{(i)} For any rational point $x\in\mathbb R^3$ and any rational $r>0$, there exist positive constants $\delta_1$ and $\delta_2$ that only depend on $\hA , x$ and $r$ such that for all $A\subseteq\hA $ and $y\in\B(x,r/2)\setminus A$,
    \begin{equation}\label{eq:2.7i}
        \Pbf_y\Big(W(\tau_{\partial\B(x,r)})\notin\sausage{\hA}{\delta_1} \con  W[0,\tau_{\partial\B(x,r)}]\cap A=\varnothing\Big)>\delta_2.
    \end{equation}

    \textnormal{(ii)} For each domain $U$ and compact set $K\subseteq U$, there exists a constant $C(U,K,\hA )$, such that for any closed subset $A$ of $\hA $, and any two bounded positive harmonic functions $u,v$ on $U\setminus A$ that vanish continuously on regular points of $A\cap U$, we have
    \[
    u(x_1)\,v(x_2)<C\,u(x_2)\,v(x_1),\quad \text{ for all } x_1,x_2\in K\setminus A.\]
\end{proposition}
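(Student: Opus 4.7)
The plan is to prove the two implications separately; both rely on representing bounded positive harmonic functions via the Dirichlet formula (Lemma~\ref{lemma:harnack}).

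\textbf{Direction $(\mathrm{ii}) \Rightarrow (\mathrm{i})$.}
Fix rational $x, r$ and any closed $A \subseteq \hA$. Let $U := B(x, r)^{\circ}$, $K := B(x, r/2)$, and pick a small rational $\delta_1 > 0$ so that $F := \partial B(x, r) \setminus \sausage{\hA}{\delta_1}$ has positive surface measure, which is possible since $\hA^{\circ} = \varnothing$. Consider the two bounded positive harmonic functions on $U \setminus A$
\[
u(y) := \Pbf_y\big(W(\tau_{\partial U}) \in F,\ \tau_{\partial U} < \tau_A\big), \qquad v(y) := \Pbf_y(\tau_{\partial U} < \tau_A),
\]
both of which vanish continuously on regular points of $A \cap U$. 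By (ii), $u(y)/v(y) \geq u(y_0)/(C v(y_0))$ for all $y, y_0 \in K \setminus A$. Choosing $y_0 \in K$ with $\dist(y_0, \hA) \geq \delta_1$ and lower-bounding the probability that $W$ from $y_0$ exits $U$ through $F$ along a path confined to $\{z : \dist(z, \hA) > \delta_1/2\}$ (automatically avoiding $A$) gives $u(y_0)/v(y_0) \geq c_0 > 0$ uniformly in $A \subseteq \hA$. Since $u(y)/v(y)$ is precisely the conditional probability in \eqref{eq:2.7i}, this yields (i) with $\delta_2 := c_0/C$.

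\textbf{Direction $(\mathrm{i}) \Rightarrow (\mathrm{ii})$.}
Cover $K$ by finitely many rational balls $B(x_i, r_i/2)$ with $B(x_i, 2 r_i) \Subset U$ and apply (i) to each to obtain uniform constants $\delta_1, \delta_2 > 0$ depending only on $U, K, \hA$; by monotonicity of (i) in $\delta_1$, we may shrink $\delta_1$ freely. Fix a reference point $x_0 \in U$ with $\dist(x_0, \hA) > 2\delta_1$ and a sub-domain $U_0$ with $K \cup \{x_0\} \subset U_0 \Subset U$ and $\dist(\partial U_0, \hA) > \delta_1$, so that $\partial U_0 \subset S := U \setminus \sausage{\hA}{\delta_1}$. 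For bounded positive harmonic $u, v$ on $U \setminus A$ vanishing on regular points of $A \cap U$, normalize so $u(x_0) = v(x_0) = 1$. Using a Harnack chain argument in $U \setminus A$, detailed below, obtain a constant $C_H = C_H(U, K, \hA)$ such that $C_H^{-1} \leq u, v \leq C_H$ on $\partial\sausage{\hA}{\delta_1} \cap \overline{U}_0$. For $y \in K \cap S$ this directly gives $u(y)/v(y) \leq C_H^2$. For $y \in K \setminus A$ with $\dist(y, \hA) < \delta_1$, set $D := \sausage{\hA}{\delta_1}^{\circ} \cap U_0 \setminus A$ and $\sigma := \tau_{\partial D}$; since $\partial D \subseteq (\partial\sausage{\hA}{\delta_1} \cap U_0) \cup A$ with $u = v = 0$ on regular $A$ points,
\[
u(y) = \Ebf_y\big[u(W(\sigma));\, W(\sigma) \in \partial\sausage{\hA}{\delta_1}\big] \leq C_H\, \Pbf_y\big(W(\sigma) \in \partial\sausage{\hA}{\delta_1}\big),
\]
and symmetrically $v(y) \geq C_H^{-1}\, \Pbf_y(W(\sigma) \in \partial\sausage{\hA}{\delta_1})$, yielding $u(y)/v(y) \leq C_H^2$. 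The symmetric bound $v(y)/u(y) \leq C_H^2$ then gives BHP with $C := C_H^4$.

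\textbf{Main obstacle.}
The crux lies in establishing the Harnack comparison constant $C_H$ uniformly in $A \subseteq \hA$. The safe region $S$ may well be disconnected, so Harnack chains in $U \setminus A$ joining points on $\partial\sausage{\hA}{\delta_1} \cap \overline{U}_0$ must navigate through narrow corridors of $\sausage{\hA}{\delta_1/2} \setminus A$, whose lengths a priori depend on $A$. This is where (i) enters decisively: iterating the conditional separation at each link of the chain provides uniform multiplicative control, producing a finite $C_H$ depending only on $U, K, \hA$. The availability of the shell sub-domain $U_0$ and reference point $x_0$ likewise rests on the geometric structure implied by (i) beyond merely $\hA^{\circ} = \varnothing$ and connectedness of $U \setminus \hA$.
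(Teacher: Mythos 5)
Your direction (ii)$\Rightarrow$(i) is correct and essentially identical to the paper's: take $U=\B(x,r)^\circ$, $v$ the non-intersection probability, $u$ the non-intersection probability with exit point away from $\sausage{\hA}{\delta_1}$, and divide. The gap is in (i)$\Rightarrow$(ii), and it is structural. Your argument hinges on a sub-domain $U_0$ with $K\subseteq U_0\Subset U$ and $\dist(\partial U_0,\hA)>\delta_1$, so that the stopped process exits $D=\sausage{\hA}{\delta_1}^\circ\cap U_0\setminus A$ only through $\partial\sausage{\hA}{\delta_1}$ or through $A$. Such a $U_0$ does not exist in the intended application: $\hA=\Wc$ is a connected unbounded set, so whenever $\hA$ meets $U_0$ it must also meet $\partial U_0$, forcing $\dist(\partial U_0,\hA)=0$. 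Without this, $\partial D$ contains a portion of $\partial U_0$ lying inside the sausage, on which you have no control of $u$ or $v$, and bounding the harmonic measure of exactly that "bad" exit is where hypothesis (i) has to do real work. This is what the paper's proof does: it sandwiches $G_D(y,z)$ between constant multiples of $h_{\partial\B(y_i,r_i)\cup A}(y,\,\cdot\,)\otimes h_{\partial\B(z_j,s_j)\cup A}(z,\,\cdot\,)$ restricted to points outside $\sausage{\hA}{\delta_1}$, with the factor $\delta_2^{-2}$ from (i) entering the upper bound, and then deduces quasi-multiplicativity of the Green's function and hence of the Poisson kernel.

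A symptom of the problem is that your main chain of inequalities never invokes (i): the only place it appears is the closing assertion that "iterating the conditional separation at each link of the chain" yields $C_H$, which is not carried out and is aimed at the wrong difficulty. Indeed, the two-sided bound $C_H^{-1}\le u,v\le C_H$ on $\Sigma=\partial\sausage{\hA}{\delta_1}\cap\overline U_0$ (after normalizing at $x_0$) needs no conditional separation at all: $\Sigma\cup\{x_0\}$ is a compact subset of the fixed connected open set $U\setminus\hA\subseteq U\setminus A$, so ordinary interior Harnack chains inside $U\setminus\hA$, which are independent of $A$, already give it; your worry about corridors of $\sausage{\hA}{\delta_1/2}\setminus A$ is unfounded because the chains need not stay in $S$. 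But then, if your $U_0$ existed, the whole argument would prove BHP for every compact non-separating closed set with empty interior without ever using (i) — so either the hypothesis is vacuous in your scheme (compact $\hA$, where the claim is comparatively easy) or the scheme breaks down (unbounded $\hA$, the relevant case). To repair the proof you need to confront the exits through the intermediate boundary inside the sausage, e.g.\ via the Green's-function estimates of the paper or an equivalent Carleson-type bound derived from (i).
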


\begin{remark}
    In fact, the above result can be generalized to more general sets $\hA$. We will not bother with this generalization to avoid unnecessary complexity.
\end{remark}

\begin{proof}[Proof of Proposition \ref{pro:equivalence}]
We start by showing 
\textnormal{(i)$\Rightarrow$(ii).}
    By restriction and approximation, it is sufficient to work under the assumption that $U$ is a bounded domain with smooth boundary, $u,v$ can be extended continuously to $\partial U\setminus A$ and they vanish continuously on all regular points of $A\cap\overline U$. 
    
    Write $D:=U\setminus A$. Recall that $H_D(x,y)$ is the Poisson kernel in $D$ (see the paragraph just above \eqref{eq:pb}). Using Lemma \ref{lemma:harnack}, we have
    \begin{equation*}
        u(y)=\int_{\partial U\setminus A}H_D(y,z)\,u(z)\,\sigma_U(dz) \quad \text{   and   } \quad
        v(y)=\int_{\partial U\setminus A}H_D(y,z)\,v(z)\,\sigma_U(dz).
    \end{equation*}
    Therefore, it suffices to show that there exists a constant $C(U,K,\hA )$ such that for all $A\subseteq\hA $, $y,y'\in K$ and $z,z'\in \partial U\setminus A$,
    \begin{equation}\label{eq:4.4}
        H_D(y,z)\,H_D(y',z')\le C\,H_D(y,z')\,H_D(y',z).
    \end{equation}
    
    Let $V_1,V_2,V_3,V_4$ be open sets such that \[K\subseteq V_1\subseteq\overline V_1\subseteq V_2\subseteq\overline V_2\subseteq V_3\subseteq\overline V_3\subseteq V_4\subseteq\overline V_4\subseteq U.\]
    Since $K$ and $\partial V_3$ are compact sets, we can pick finite balls with rational centers and rational radii $\B(y_i,r_i)\subseteq V_1$ $(i=1,\ldots,m)$ and $\B(z_j,s_j)\subseteq V_4\setminus\overline V_2$ $(j=1,\ldots,m)$ such that $\cup_i\B(y_i,r_i/2)\supseteq K$ and $\cup_j\B(z_j,s_j/2)\supseteq \partial V_3$. By (i), there exist positive constants $\delta_1,\delta_2$ such that \eqref{eq:2.7i} holds for all $(x,r)=(y_i,r_i)$ or $(z_j,s_j)$. 
    Define two constants 
    \[
    \delta_3:=\inf_{y\in\overline V_1\setminus\sausage{\hA}{\delta_1},z\in\overline V_4\setminus (V_2\cup\sausage{\hA}{\delta_1})} G_{U\setminus\hA }(y,z) \quad \text{and} \quad c:=\sup_{y\in\overline V_1,z\notin V_2} G_{U}(y,z),
    \]
    where $G$ denotes the Green's function.
    Recall the harmonic measure $h$ defined in \eqref{eq:h}.
    For $y\in \B(y_i,r_i/2)\setminus A$ and $z\in \B(z_j,s_j/2)\setminus A$, on the one hand,
        \begin{align*}
        G_D(y,z)
        =\ &\int_{\partial \B(y_i,r_i)\setminus A}\int_{\partial \B(z_j,s_j)\setminus A} G_D(y',z')\,h_{\partial \B(y_i,r_i)\cup A}(y,dy')\,h_{\partial \B(z_j,s_j)\cup A}(z,dz')\\
        \leq\ &\dfrac{c}{\delta_2^2}\int_{\partial \B(y_i,r_i)\setminus\sausage{\hA}{\delta_1}}\int_{\partial \B(z_j,s_j)\setminus\sausage{\hA}{\delta_1}}h_{\partial \B(y_i,r_i)\cup A}(y,dy')\,h_{\partial \B(z_j,s_j)\cup A}(z,dz'),
    \end{align*}
    where we used \eqref{eq:2.7i}, and on the other hand,
    \begin{align*}
        G_D(y,z)
        \geq \delta_3\, \int_{\partial \B(y_i,r_i)\setminus\sausage{\hA}{\delta_1}}\int_{\partial \B(z_j,s_j)\setminus\sausage{\hA}{\delta_1}} h_{\partial \B(y_i,r_i)\cup A}(y,dy')\,h_{\partial \B(z_j,s_j)\cup A}(z,dz').
    \end{align*}
    It follows that for all $y,y'\in K\setminus A$ and $z,z'\in\partial V_3\setminus A$,
    \begin{equation}\label{eq:4.3}
        G_{D}(y,z)\,G_D(y',z')
        \leq \frac{c^2}{\delta_3^2\delta_2^4}G_D(y,z')\,G_D(y',z).
    \end{equation}
We can also show that \eqref{eq:4.3} holds for $z,z'\in U\setminus (V_3\cup A)$ by using the following formula 
    \begin{equation*}
        G_D(y,z)
        =\int_{\partial V_3\setminus A}
        G_D(y,x)\,
        h_{\partial U\cup A\cup\partial V_3}(z,dx).
    \end{equation*}
    We conclude \eqref{eq:4.4} by taking appropriate limits in \eqref{eq:4.3} with $z,z'\to\partial U$. This finishes the proof of (i)$\Rightarrow$(ii).
    
    Next, we deal with the other direction (ii)$\Rightarrow$(i), which is easier. To use (ii), we set $U=\B(x,r)^\circ$, $K=\B(x,r/2)$ and $v(y)=\Pbf_{y}(W[0,\tau]\cap A=\varnothing)$. Fix some point $x_0\in K\setminus\hA $. Since $\hA ^c$ is connected, we have $v(x_0)>0$. 
    Thus, there exists $\delta_1>0$ such that 
    $$
    \Pbf_{x_0}\Big(W(\tau)\notin\sausage{\hA}{\delta_1},W[0,\tau]\cap\hA =\varnothing\Big)>0.
    $$
    Let $u(y)=\Pbf_{y}\Big(W(\tau)\notin\sausage{\hA}{\delta_1},W[0,\tau]\cap A=\varnothing\Big)$.
    It follows from (ii) that for all $y\in K\setminus A$,
    $$
    \Pbf_{y}\Big(W(\tau)\notin\sausage{\hA}{\delta_1} \con  W[0,\tau]\cap A=\varnothing\Big)=\frac{u(y)}{v(y)}> C^{-1}\,\frac{u(x_0)}{v(x_0)}.
    $$
    This concludes the result by letting $\delta_2=u(x_0)/(Cv(x_0))$. 
\end{proof}

Proposition \ref{pro:equivalence} now immediately leads to the following.

\begin{proof}[Proof of equivalence between Theorems \ref{thm:sep} and \ref{theorem:bhp}]
    Recall the notation for Theorem~\ref{thm:sep}. We apply Proposition \ref{pro:equivalence} with $\hA $ and $A$ therein as $\Wc$ and $\Ac$, respectively. Note that 
    almost surely under $\Pbx$, we have $\unionW^\circ=\varnothing$ and $\unionW$ is a closed set such that $U\setminus\unionW$ is connected for every domain $U$. Then, the equivalence follows immediately from Proposition \ref{pro:equivalence}. 
\end{proof}

\section{Analyticity}\label{sec:analyticity}
In this section, we will prove Theorem \ref{thm:analyticity}. Our proof will mainly follow the strategy in \cite{LSW02a}, with most of the ingredients replaced by their 3D versions. In Section~\ref{subsection:RK}, we introduce two notions that are useful in our couplings. In Section~\ref{subsec:prep}, we introduce various objects regarding paths which will be used frequently. 
In Section~\ref{subsec:pfm}, we complete the proof of Theorem \ref{thm:analyticity} based on some ingredients. Then, these ingredients are proved in Sections~\ref{subsec:pro:1}--\ref{subsec:pro:4}, separately.

\subsection{Extremal total variation distance and switching constant}\label{subsection:RK}
In this subsection, we introduce two special functionals, $R(p)$ the ``extremal total variance distance'' and $K(p)$ the ``switching constant'', associated with a function $p$ defined on a product space. For applications in this work, we take $p$ as a Poisson kernel. In this case, $K(p)$ is equal to the optimal comparison constant of BHP (see Lemma~\ref{lem:5.17}), and is useful for us to couple $h$-processes (see Proposition \ref{pro:1}). 
But for independent interest, we will state the results in a more general framework, and postpone most proofs to Appendix~\ref{appen:A}.

In the following, let $p$ be a non-negative Borel measurable function $p$ defined on some product space $E\times F$. For any finite measure $\mu$ on $F$, we reweight $\mu$ by $p(x_1,\cdot)$ and $p(x_2,\cdot)$ respectively, and define the maximal total variation distance between them among all possible $x_1,x_2\in E$ by
$$
\T{p}{\mu}:=\dfrac{1}{2}\sup_{x_1,x_2}\int \left|\dfrac{p(x_1,y)}{\int p(x_1,z)\mu(dz)}-\dfrac{p(x_2,y)}{\int p(x_2,z)\mu(dz)}\right|\mu(dy),
$$
where $x_1,x_2$ range over $x\in E$ such that $0<\int p(x,y)\mu(dy)<\infty$ (let $\T{p}{\mu}:=0$ if such $x$ does not exist). Then, the \emph{extremal total variance distance} of $p$ is defined by the supremum of $\T{p}{\mu}$ over $\mu$, i.e.:
\begin{equation}\label{eq:R}
    R(p):=\sup\{\T{p}{\mu}:\mu \mathrm{\ is\ a\ positive\ finite\ measure\ on\ } F\}.
\end{equation}
Moreover, we define the \emph{switching constant} of $p$ by 
\begin{equation}\label{eq:K}
    K(p):=\inf\{t\geq1:p(x_1,y_1)\,p(x_2,y_2)\leq t\, p(x_1,y_2)\,p(x_2,y_1),\forall x_1,x_2\in E,\forall y_1,y_2\in F\},
\end{equation}
which describes the ``lowest cost'' to switch the endpoints of $p$.

The following lemma reveals a simple identity between $K(p)$ and $R(p)$.
\begin{lemma}
\label{lemma:RK}
    For all Borel measurable functions $p:E\times F\to [0,\infty)$, we have
    \begin{equation*}
        R(p)=1-\dfrac{2}{1+\sqrt{K(p)}}.
    \end{equation*}
\end{lemma}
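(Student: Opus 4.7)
The plan is to prove the two inequalities $R(p) \le 1 - 2/(1 + \sqrt{K(p)})$ and $R(p) \ge 1 - 2/(1 + \sqrt{K(p)})$ separately, showing in each direction that the extremum of $\T{p}{\mu}$ is (asymptotically) attained by a two-point configuration.

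For the upper bound, I would fix $x_1, x_2 \in E$ and a positive finite measure $\mu$ on $F$, assuming $c_i := \int p(x_i, z)\,\mu(dz) \in (0, \infty)$ for $i = 1, 2$ (otherwise that pair contributes nothing). Setting $\tilde\mu(dy) := p(x_2, y)\,\mu(dy)$ and $h(y) := p(x_1, y)/p(x_2, y)$ on $\{p(x_2, \cdot) > 0\}$, the defining inequality of $K := K(p)$ translates to $h(y_1) \le K\,h(y_2)$ pointwise, so $h \in [m, M]$ with $M/m \le K$. Normalizing $\tilde\mu$ to a probability measure $\hat\mu$ and writing $E := \int h\,d\hat\mu$, a short calculation using $\int (h - E)\,d\hat\mu = 0$ rewrites the $(x_1, x_2)$-contribution to $\T{p}{\mu}$ as $E^{-1}\int (h - E)^+\,d\hat\mu$. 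The pointwise convex upper bound $(h - E)^+ \le \frac{h - m}{M - m}(M - E)$, valid for $h \in [m, M]$, then yields $\T{p}{\mu} \le (E - m)(M - E)/[E(M - m)]$, and a one-variable optimization in $E \in [m, M]$ locates the maximum at $E = \sqrt{mM}$ with value $(\sqrt{M/m} - 1)/(\sqrt{M/m} + 1)$, which is at most $1 - 2/(1 + \sqrt{K})$.

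For the matching lower bound, the definition of $K$ as an infimum provides, for each small $\delta > 0$, points $x_1, x_2 \in E$ and $y_1, y_2 \in F$ with $p(x_1, y_1)\,p(x_2, y_2) > (K - \delta)\,p(x_1, y_2)\,p(x_2, y_1) > 0$. Taking $\mu = \alpha_1 \delta_{y_1} + \alpha_2 \delta_{y_2}$ and restricting to these two $x$-values reduces the computation to an explicit $2 \times 2$ instance; optimizing over $(\alpha_1, \alpha_2)$ gives $\T{p}{\mu} \ge (\sqrt{K - \delta} - 1)/(\sqrt{K - \delta} + 1)$, and sending $\delta \downarrow 0$ closes the gap. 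The degenerate cases $K = 1$ (forcing $R = 0$) and $K = \infty$ (take quadruples whose cross-ratio diverges, yielding $R \to 1$) are handled by direct inspection.

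The main technical point I expect to navigate is the measure-theoretic bookkeeping when the supports of $p(x_1, \cdot)$ and $p(x_2, \cdot)$ differ. If $\mu\bigl(\{p(x_1, \cdot) > 0\} \setminus \{p(x_2, \cdot) > 0\}\bigr) > 0$, then pairing a $y_1$ from the support deficit with any $y_2$ satisfying $p(x_2, y_2) > 0$ forces $K(p) = \infty$, so the upper bound is vacuous. One must verify that this is the only obstruction, ensuring that whenever $K < \infty$ the function $h$ is $\tilde\mu$-essentially bounded and the convex reduction is rigorous. Once this dichotomy is in place, the remainder is the elementary one-variable optimization above.
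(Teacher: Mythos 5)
Your proposal is correct, and the lower bound coincides with the paper's: both extract a near-extremal quadruple $(x_1,x_2,y_1,y_2)$ from the definition of $K(p)$ and realize the bound with a two-point measure on $\{y_1,y_2\}$ (the paper writes the optimal weights explicitly as $\varepsilon+\sqrt{p_{12}p_{22}}$ and $\varepsilon+\sqrt{p_{11}p_{21}}$ rather than optimizing over $(\alpha_1,\alpha_2)$, but it is the same computation, and the $\varepsilon$-perturbation also covers the degenerate $K(p)=\infty$ case you treat by inspection). The upper bound is argued by a genuinely different route. The paper splits $F$ into $S=\{q(x_1,\cdot)>q(x_2,\cdot)\}$ and its complement $T$, writes the total variation as $u_1-u_2$ with $u_i=\int_S q(x_i,y)\,\mu(dy)$, integrates the defining inequality of $K(p)$ over $S\times T$ to obtain the single constraint $u_1(1-u_2)\le K(p)\,u_2(1-u_1)$, and finishes with an elementary Cauchy--Schwarz estimate; you instead pass to the likelihood ratio $h=p(x_1,\cdot)/p(x_2,\cdot)$, which the $K$-condition confines to $[m,M]$ with $M/m\le K$, rewrite the total variation as $E^{-1}\int(h-E)^+\,d\hat\mu$, dominate $(h-E)^+$ by the chord of the convex function through $(m,0)$ and $(M,M-E)$, and optimize the resulting expression in $E$ at $E=\sqrt{mM}$. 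Both reduce the extremal problem to a two-parameter calculation with value $(\sqrt{K}-1)/(\sqrt{K}+1)$; your chord bound makes it conceptually transparent that two-point configurations are extremal (which is what the lower bound then exploits), while the paper's $S/T$ decomposition sidesteps any discussion of where $p(x_i,\cdot)$ vanishes. The support issue you flag is indeed the only obstruction and resolves exactly as you anticipate: if $K(p)<\infty$ and both normalizations lie in $(0,\infty)$, the pointwise condition forces $\{p(x_1,\cdot)>0\}=\{p(x_2,\cdot)>0\}$ and $0<\inf h\le\sup h\le K\inf h$, so $h$ is bounded away from $0$ and $\infty$ and the convex reduction is legitimate.
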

Suppose $p:E\times F\to [0,\infty)$ and $q:F\times G\to [0,\infty)$ are two functions, and $\mu$ is a measure on $F$, then we define the convolution of $p$ and $q$ (with respect to $\mu$) as 
$$
p*q(\mu)(x,z):=\int _F p(x,y)\,q(y,z)\,\mu(dy), \quad \text{ for all $x\in E$ and $z\in G$.}
$$
The following two lemmas show that convolution decreases the weighted total variance distance. 

\begin{lemma}\label{lemma:1090}
    If $\int p(x,y)\mu(dy)=1$ for all $x\in E$, and $\int q(y,z)\nu(dz)=1$ for all $y\in F$, then,
    $$
    \T{p*q(\mu)}{\nu}\leq \T{p}{\mu}\,\T{q}{\nu}.
    $$
\end{lemma}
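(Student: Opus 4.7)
The plan is to recognize Lemma~\ref{lemma:1090} as a disguised form of the classical contraction property of the Dobrushin ergodic coefficient under composition of Markov kernels.

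First I would reformulate the hypotheses. Under the normalization $\int p(x,y)\,\mu(dy)=1$, the measure $\pi_x:=p(x,\cdot)\,\mu$ is a probability measure on $F$ for every $x\in E$, and
\[
\T{p}{\mu}\;=\;\sup_{x_1,x_2\in E}\tfrac12\int |p(x_1,y)-p(x_2,y)|\,\mu(dy)\;=\;\sup_{x_1,x_2\in E}\|\pi_{x_1}-\pi_{x_2}\|_{\mathrm{TV}}.
\]
Likewise $K_y:=q(y,\cdot)\,\nu$ is a probability measure on $G$ for every $y\in F$ and $\T{q}{\nu}=\sup_{y_1,y_2}\|K_{y_1}-K_{y_2}\|_{\mathrm{TV}}$, which is exactly the Dobrushin coefficient $\delta(K)$ of the kernel $y\mapsto K_y$. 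A direct Fubini computation gives $\int p*q(\mu)(x,z)\,\nu(dz)=\int p(x,y)\,\mu(dy)=1$, so $p*q(\mu)(x,\cdot)\,\nu$ is again a probability measure on $G$; in fact it is precisely the image $\pi_x K$ of $\pi_x$ under the Markov kernel $K$. Consequently the desired bound reduces to the pairwise inequality
\[
\|\pi_{x_1}K-\pi_{x_2}K\|_{\mathrm{TV}}\;\le\; \delta(K)\,\|\pi_{x_1}-\pi_{x_2}\|_{\mathrm{TV}},
\]
i.e.\ Dobrushin's classical contraction for Markov kernels.

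For the pairwise inequality I would apply the Jordan decomposition. Set $\alpha=\|\pi_{x_1}-\pi_{x_2}\|_{\mathrm{TV}}$ and write $\pi_{x_1}-\pi_{x_2}=\alpha(\rho^+-\rho^-)$ for mutually singular probability measures $\rho^\pm$ on $F$. Then $\pi_{x_1}K-\pi_{x_2}K=\alpha(\rho^+K-\rho^-K)$, and for any Borel $A\subseteq G$,
\[
|\rho^+K(A)-\rho^-K(A)|\;=\;\Big|\int\!\!\int (K_{y_1}(A)-K_{y_2}(A))\,\rho^+(dy_1)\,\rho^-(dy_2)\Big|\;\le\;\sup_{y_1,y_2}|K_{y_1}(A)-K_{y_2}(A)|\;\le\;\delta(K),
\]
because $|K_{y_1}(A)-K_{y_2}(A)|\le\|K_{y_1}-K_{y_2}\|_{\mathrm{TV}}$. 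Taking the supremum over $A$ gives $\|\rho^+K-\rho^-K\|_{\mathrm{TV}}\le\delta(K)$, and then taking the supremum over the pair $(x_1,x_2)$ yields the stated product bound.

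The argument is essentially bookkeeping; the only conceptual step, not really an obstacle, is to recognize that once $p$ and $q$ are normalized to be probability densities along the relevant slice, the functional $\T{\cdot}{\cdot}$ collapses to a genuine total-variation distance between probability measures, at which point the lemma is exactly Dobrushin's contraction along a composition of Markov kernels.
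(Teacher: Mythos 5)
Your proposal is correct and is essentially the paper's argument in different clothing: the paper also normalizes both kernels, splits $F$ into the Hahn sets $S_1,S_2$ where $p(x_1,\cdot)-p(x_2,\cdot)$ is positive/negative, and rewrites the convolved difference as an average of $q(y_1,\cdot)-q(y_2,\cdot)$ against the product of the normalized positive and negative parts — which is exactly your Jordan-decomposition/Dobrushin-contraction step. The identification of $\T{\cdot}{\cdot}$ with a total-variation distance and of the lemma with Dobrushin's contraction is a clean way to package the same computation.
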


\begin{lemma}\label{lemma:basic}
    Suppose $p:E\times F\to [0,\infty)$ and $q:F\times G\to [0,\infty)$ are two bounded functions, and $\mu$ is a measure on $F$. Then, we have $$R(p*q(\mu))\leq R(p)\,R(q).$$
\end{lemma}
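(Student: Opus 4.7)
My plan is to reduce Lemma~\ref{lemma:basic} to the already-established Lemma~\ref{lemma:1090} via a suitable renormalization of $p$ and $q$. Fix a positive finite measure $\lambda$ on $G$ and set $w(y):=\int_G q(y,z')\,\lambda(dz')$; since $q$ is bounded and $\lambda$ finite, $w$ is a bounded non-negative function. Define
\[
\tilde q(y,z):=q(y,z)/w(y), \qquad \bar p(x,y):=p(x,y)\Big/\int_F p(x,y')\,w(y')\,\mu(dy'),
\]
interpreted as zero where the denominators vanish, and restricted to $x$ for which the denominator of $\bar p$ is positive and finite. By construction $\int_G \tilde q(y,z)\,\lambda(dz)=1$ whenever $w(y)>0$, and $\int_F \bar p(x,y)\,w(y)\,\mu(dy)=1$ for admissible $x$. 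Lemma~\ref{lemma:1090} applied with base measures $w\mu$ on $F$ and $\lambda$ on $G$ then yields
\[
T\bigl(\bar p * \tilde q(w\mu);\lambda\bigr)\;\le\;T(\bar p;w\mu)\cdot T(\tilde q;\lambda).
\]

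The key step is to check that this renormalization telescopes exactly: plugging in the definitions and applying Fubini gives
\[
\bar p * \tilde q(w\mu)(x,z)\;=\;\frac{(p*q(\mu))(x,z)}{\int_G (p*q(\mu))(x,z'')\,\lambda(dz'')},
\]
so $\bar p * \tilde q(w\mu)$ is precisely the $\lambda$-normalization of $p*q(\mu)$. Consequently the left-hand side of the displayed inequality equals $T(p*q(\mu);\lambda)$. Moreover, $\bar p(x,\cdot)$ differs from $p(x,\cdot)$ only by a multiplicative constant depending on $x$ alone, so the two induce the same normalized probability measures, giving $T(\bar p;w\mu)=T(p;w\mu)$; similarly $T(\tilde q;\lambda)=T(q;\lambda)$. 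Bounding $T(p;w\mu)\le R(p)$ and $T(q;\lambda)\le R(q)$ and then taking the supremum over positive finite $\lambda$ delivers $R(p*q(\mu))\le R(p)R(q)$.

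The main obstacle is a finiteness issue: $R(p)$ is defined as a supremum over \emph{finite} positive measures, whereas $w\mu$ is only sigma-finite in general (since $\mu$ need not be finite, even though $w$ is bounded). I plan to resolve this by a standard monotone truncation of $\mu$, applying the bound $T(p;w\mu_n)\le R(p)$ for each finite $\mu_n\uparrow\mu$ and passing to the limit by monotone/dominated convergence on the normalizing integrals and the TV integrands. Degenerate configurations---namely $w\equiv 0$ on a $\mu$-positive piece of $F$, or $\int p(x,\cdot)\,w\,d\mu\in\{0,\infty\}$ for some $x$---contribute nothing to either side and can be excluded up front.
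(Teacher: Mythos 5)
Your proposal is correct and is essentially the paper's own argument: both reduce to Lemma~\ref{lemma:1090} by normalizing $q$ through $w(y)=\int q(y,z)\,\lambda(dz)$ and $p$ through $h(x)=\int p(x,y)\,w(y)\,\mu(dy)$, the only (cosmetic) difference being that you carry the weight $w$ on the base measure $w\mu$ whereas the paper absorbs it into the kernel (taking $p_1=pg/h$ against $\mu$ itself), which produces the same normalized families and the same identity $\bar p * \tilde q(w\mu)=p*q(\mu)/h$ up to a $\lambda$-null set. The finiteness issue you flag is genuine but can also be settled without truncation, since the first half of the proof of Lemma~\ref{lemma:RK} bounds $\T{p}{\nu}$ by $1-2/(1+\sqrt{K(p)})=R(p)$ for an arbitrary base measure $\nu$; this is in effect how the paper proceeds, via $K(p_1)\le K(p)$.
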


The following lemma shows that averaging decreases the extremal total variation distance, and equivalently, the switching constant. 

\begin{lemma}\label{lemma:mean}
    Suppose that for each $x\in E$, $\mu(x,\cdot)$ is a measure on $F$. Let $q:F\times G\to\mathbb [0,\infty)$ be a measurable function. If 
    $$
    r(x,z)=\int_F q(y,z)\,\mu(x,dy),
    $$
    then we have $R(r)\leq R(q)$ and $K(r)\leq K(q)$.
\end{lemma}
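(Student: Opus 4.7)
\textbf{Proof plan for Lemma~\ref{lemma:mean}.} The plan is to prove the inequality for $K$ directly from the definition, and then deduce the inequality for $R$ by invoking the identity from Lemma~\ref{lemma:RK}.

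First I would handle $K(r) \le K(q)$. Let $t > K(q)$ be arbitrary; by definition of the switching constant, the pointwise inequality
\[
q(y_1, z_1)\, q(y_2, z_2) \le t\, q(y_1, z_2)\, q(y_2, z_1)
\]
holds for all $y_1, y_2 \in F$ and $z_1, z_2 \in G$. For fixed $x_1, x_2 \in E$ and $z_1, z_2 \in G$, I would integrate this inequality against the product measure $\mu(x_1, dy_1) \otimes \mu(x_2, dy_2)$ over $F \times F$. Fubini's theorem, together with the definition of $r$, turns the left-hand side into $r(x_1, z_1)\, r(x_2, z_2)$ and the right-hand side into $t\, r(x_1, z_2)\, r(x_2, z_1)$. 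Taking the infimum over admissible $t$ yields $K(r) \le K(q)$.

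For $R(r) \le R(q)$, I would appeal to Lemma~\ref{lemma:RK}, which expresses both quantities as $1 - 2/(1 + \sqrt{K(\cdot)})$. Since the map $s \mapsto 1 - 2/(1+\sqrt{s})$ is strictly increasing on $[1, \infty)$, monotonicity and the inequality $K(r) \le K(q)$ immediately give $R(r) \le R(q)$.

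I do not anticipate any serious obstacle here: the $K$-inequality is a one-line application of Fubini to the defining pointwise inequality, and the $R$-inequality is immediate from Lemma~\ref{lemma:RK}. The only mild care needed is to treat the degenerate cases where $K(q) = \infty$ (trivial) or where some integrals vanish (in which case the switching inequality is trivially satisfied), but these do not affect the argument.
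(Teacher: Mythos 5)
Your proposal is correct and matches the paper's proof essentially verbatim: integrate the pointwise switching inequality for $q$ against $\mu(x_1,dy_1)\otimes\mu(x_2,dy_2)$ to get $K(r)\le K(q)$, then deduce $R(r)\le R(q)$ from Lemma~\ref{lemma:RK} and the monotonicity of $s\mapsto 1-2/(1+\sqrt{s})$. No issues.
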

The following lemma will be useful in the proof of Proposition \ref{pro:1}.

\begin{lemma}\label{lemma:tricky}
    Let $E,F\subseteq \mathbb R^3$ be non-empty Borel sets. Let $\mu$ be a finite measure on $F$, and $\nu(y,\cdot)$ be a finite measure on $E$ for each $y\in F$. 
    Suppose $p(x,y)$, $s(y)$ and $r(z)$ are non-negative bounded Borel functions on $E\times F,\,F,\,E$, respectively, such that the following equality holds for all $x\in E$,
    \begin{equation}\label{eq:482}
        r(x)=\int _F p(x,y)\left (\int _E r(z)\nu(y,dz)\right )\mu(dy)+\int _F p(x,y)s(y)\mu(dy).
    \end{equation}
    Moreover, suppose there exists a constant $0\le a <1$ such that for all $x\in E$,
    \begin{equation}\label{eq:486}
        \int_F p(x,y)\nu(y,E)\mu(dy)\leq a,
    \end{equation}
    then for all $x\in E$,
    \begin{equation*}
        \int p(x,y)s(y)\mu(dy)\geq \dfrac{1-a}{1+(K(p)-1)a}r(x).
    \end{equation*}
\end{lemma}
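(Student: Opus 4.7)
The plan is to rewrite the hypothesis \eqref{eq:482} as $r(x) = T(x) + I(x)$, where
\[
T(x) := \int_F p(x,y) s(y) \mu(dy), \qquad I(x) := \int_F p(x,y) r_\nu(y) \mu(dy), \qquad r_\nu(y) := \int_E r(z)\,\nu(y,dz),
\]
so that the conclusion becomes a uniform lower bound on $\alpha(x) := T(x)/r(x) \in [0,1]$ for all $x$ with $r(x)>0$ (the case $r(x)=0$ forces $T(x)=0$ and is vacuous). I would obtain this bound by combining two complementary ingredients.

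First, the switching inequality defining $K(p)$ in \eqref{eq:K}, multiplied by $s(y_1)\, r_\nu(y_2)$ and integrated against $\mu \otimes \mu$ in $(y_1,y_2)$, yields
\[
T(x_1)\, I(x_2) \leq K(p)\, T(x_2)\, I(x_1) \qquad \forall\, x_1, x_2 \in E.
\]
Substituting $I = r - T$ and dividing by $r(x_1)r(x_2)$ rearranges this into the clean algebraic inequality
\[
\alpha(x_1)\bigl(1 + (K(p)-1)\alpha(x_2)\bigr) \leq K(p)\, \alpha(x_2),
\]
expressing that the ratio $\alpha$ is almost spatially constant, with slack controlled by $K(p)$.

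Second, Fubini together with the hypothesis \eqref{eq:486} produces the crude global bound
\[
I(x) \leq \sup_z r(z) \cdot \int_F p(x,y)\nu(y,E)\mu(dy) \leq a\, \sup_z r(z),
\]
so $\alpha(x) \geq 1 - a\, \sup r / r(x)$ whenever $r(x)>0$. Choosing a sequence $x^*_\varepsilon \in E$ with $r(x^*_\varepsilon) \to \sup_z r(z)$ (possible since $r$ is bounded) yields $\alpha(x^*_\varepsilon) \geq 1 - a - o_\varepsilon(1)$.

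The endgame is to plug $x_1 = x^*_\varepsilon$ into the first inequality and solve for $\alpha(x_2)$, which gives
\[
\alpha(x_2) \geq \frac{1 - a - o_\varepsilon(1)}{1 + (K(p)-1)\bigl(a + o_\varepsilon(1)\bigr)};
\]
letting $\varepsilon \to 0$ produces the advertised bound. The main conceptual move — and essentially the only non-routine step — is the opening one, recognizing that the switching constant $K(p)$ is precisely the right quantity for transferring a global (sup-norm) bound on $I$ into a pointwise lower bound on $T/r$. The only genuinely technical point is the limiting argument to accommodate a possibly non-attained supremum of $r$; everything else is direct algebra once the two principles are in hand.
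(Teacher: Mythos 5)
Your proposal is correct and follows essentially the same route as the paper's proof: both rest on the switching inequality $T(x_1)I(x_2)\le K(p)\,T(x_2)I(x_1)$ obtained by integrating the defining inequality of $K(p)$ against $s(y_1)r_\nu(y_2)\,\mu(dy_1)\mu(dy_2)$, the bound $I(x)\le a\sup r$ from \eqref{eq:486}, and an $\varepsilon$-near-maximizer of $r$ to close the argument. Your rewriting in terms of the ratio $\alpha=T/r$ is only a cosmetic rearrangement of the paper's algebra.
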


Finally, we show that the switching constant of the Poisson kernel is equal to the optimal comparison constant of the BHP. Recall the Poisson kernel $p_{a,b}^A$ defined in \eqref{eq:pb}.
\begin{lemma}\label{lem:5.17}
    Let $a<b$. Let $A$ be a closed set in $\mathbb R^3$ such that $\partial\mathcal B_a\not\subseteq A$ and $\partial\mathcal B_b\not\subseteq A $. Let $\mathcal U$ be the class of all bounded positive harmonic functions $u$ on $\mathcal B_b^\circ\setminus A$ such that  $u$ vanish continuously on the regular points of $ A\cap\mathcal B_b^\circ$ and can be extended continuously to $\partial\mathcal B_b\setminus A$. We have
    \begin{equation}\label{eq:bestconstant}
    \sup_{u,v\in\mathcal U,x_1,x_2\in\partial\mathcal B_a\setminus A}\dfrac{u(x_1)v(x_2)}{u(x_2)v(x_1)}
    =K(p_{a,b}^ A).    
    \end{equation}
\end{lemma}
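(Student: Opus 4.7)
The plan is to dualize the BHP ratio against the switching constant through the Poisson representation. Write $p := p_{a,b}^A$ and $D := \mathcal B_b^\circ \setminus A$, and start from the observation that every $u \in \mathcal U$ admits
\[
u(x) = \int_{\partial \mathcal B_b \setminus A} u(y)\, p_b^A(x, y)\, \sigma_b(dy), \qquad x \in D,
\]
obtained by applying optional stopping to the bounded martingale $u(W_{t \wedge \tau_{\partial D}})$: by hypothesis $u$ is continuous on $\partial \mathcal B_b \setminus A$ and on the regular points of $A \cap \partial D$, while the irregular boundary points form a polar set of zero harmonic measure. For $x_1, x_2 \in \partial \mathcal B_a \setminus A$ and $u, v \in \mathcal U$, expanding
\[
u(x_1)\, v(x_2) = \iint p(x_1, y_1)\, p(x_2, y_2)\, u(y_1)\, v(y_2)\, \sigma_b(dy_1)\, \sigma_b(dy_2)
\]
and inserting the pointwise bound $p(x_1, y_1)\, p(x_2, y_2) \le K(p)\, p(x_1, y_2)\, p(x_2, y_1)$ from \eqref{eq:K} yields $u(x_1)\, v(x_2) \le K(p)\, u(x_2)\, v(x_1)$, giving the upper bound ``$\le K(p)$'' in \eqref{eq:bestconstant}.

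For the matching lower bound I would test on Poisson extensions of bump functions. Fix any $t < K(p)$ and, by the definition of $K$, extract $x_1^*, x_2^* \in \partial \mathcal B_a \setminus A$ and $y_1^*, y_2^* \in \partial \mathcal B_b \setminus A$ with
\[
p(x_1^*, y_1^*)\, p(x_2^*, y_2^*) > t\, p(x_1^*, y_2^*)\, p(x_2^*, y_1^*).
\]
Pick continuous nonnegative bumps $\phi_1, \phi_2$ on $\partial \mathcal B_b$ supported in small open neighborhoods of $y_1^*, y_2^*$ that are disjoint from $A$, and set
\[
u(x) := \int \phi_1(y)\, p_b^A(x, y)\, \sigma_b(dy), \qquad v(x) := \int \phi_2(y)\, p_b^A(x, y)\, \sigma_b(dy).
\]
These are bounded positive harmonic on $D$, vanish continuously at the regular points of $A$, and extend continuously to $\partial \mathcal B_b \setminus A$ (every such point is regular for $D$, since $\partial \mathcal B_b$ is smooth), so $u, v \in \mathcal U$. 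Shrinking the supports of $\phi_1, \phi_2$ to $\{y_1^*\}$ and $\{y_2^*\}$ and using continuity of $p_b^A(x, \cdot)$ on $\partial \mathcal B_b \setminus A$, the ratio $u(x_1^*)\, v(x_2^*) / (u(x_2^*)\, v(x_1^*))$ converges to $p(x_1^*, y_1^*)\, p(x_2^*, y_2^*) / (p(x_1^*, y_2^*)\, p(x_2^*, y_1^*)) > t$; letting $t \uparrow K(p)$ concludes.

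The main technical point will be ensuring the test functions lie in $\mathcal U$ and that $p_b^A(x, \cdot)$ is continuous at each $y \in \partial \mathcal B_b \setminus A$; both reduce to local regularity of the Poisson kernel near a smooth boundary point that is locally disjoint from $A$, which is classical. Modulo these standard ingredients, the identity in \eqref{eq:bestconstant} expresses a clean duality between the pointwise switching inequality defining $K(p)$ and the integrated BHP ratio, where the extremal boundary data saturating the inequality is a pair of Dirac masses localized at the witnesses for $K(p)$.
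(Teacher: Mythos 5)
Your proposal is correct and follows essentially the same route as the paper: the paper's (one-line) proof also reduces both sides to the ratio of Poisson integrals $\int u\,p_{a,b}^A(x_i,\cdot)\,d\sigma_b$ via the representation of $u,v\in\mathcal U$, gets the upper bound from the pointwise switching inequality, and gets the lower bound from the continuity of $p_{a,b}^A$ (your bump functions localized at the extremal points are exactly the implicit construction). Your write-up simply makes explicit the standard regularity points the paper leaves to the reader.
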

\begin{proof}
    Both the LHS and RHS of \eqref{eq:bestconstant} are equal to 
    $$\sup_{u,v\in\mathcal U,x_1,x_2\in\partial\mathcal B_a\setminus A}\dfrac{\int u(y_1)p_{a,b}^ A(x_1,y_1)\sigma_b(y_1)\int v(y_2)p_{a,b}^ A(x_2,y_2)\sigma_b(y_2)}{\int u(y_1)p_{a,b}^ A(x_2,y_1)\sigma_b(y_1)\int v(y_2)p_{a,b}^ A(x_1,y_2)\sigma_b(y_2)},$$
thanks to the continuity of $p_{a,b}^A$.
\end{proof}

\subsection{Preparations}\label{subsec:prep}

For brevity, we will only deal with the case $\xi:=\xi_3(1,\lambda)$ throughout Section~\ref{sec:analyticity} as other cases follow from a similar argument. We introduce some notation first (somewhat different from those in Section~\ref{subsec:analyticity}). 
Let $\Gamma$ be the set of continuous path $\gamma:[0,t_\gamma]\to \mathcal B_0$ such that $0<|\gamma(t)|<1$ for $0\le t<t_\gamma$, $|\gamma(t_\gamma)|=1$, and $\gamma$ does not intersect the line segment $[0,(1,0,0)]$. Endow $\Gamma$ with the metric
\begin{equation*}
    d(\alpha, \gamma):=\inf_{\phi} \sup_{s\in[0,t_\alpha]} |\alpha(s)-\gamma(\phi(s))|,
\end{equation*}
where $\phi$ ranges over all increasing homeomorphisms $\phi:[0,t_\alpha]\to[0,t_\gamma]$, such that $\Gamma$ is a measurable space. For $\gamma\in\Gamma$ with $\gamma(0)\in \Bc_{-m}$, let $\gamma_m$ be the part of $\gamma$ after its first hitting of $\partial\Bc_{-m}$, that is, 
\begin{equation}\label{eq:shift}
\gamma_m(t):=\gamma(t+\tau_{\partial\mathcal{B}_{-m}})\quad \text{for all } 0\le t\le t_{\gamma}-\tau_{\partial\mathcal{B}_{-m}}.
\end{equation}
Let $\hg$ be a Brownian motion started from the endpoint of $\gamma$, and we write $\hg^n:=\hg[0,\tau_{\partial\mathcal B_n}(\hg)]$. Let $\tg^n$ be the concatenation of $\gamma$ and $\hg^n$, and write $\olg^n:=e^{-n}\tg^n$. Note that $\olg^n\in\Gamma$ almost surely.

For every $\gamma\in\Gamma$, consider the $h$-process $\hpro$ given by the Brownian motion started from $0$ and conditioned to exit $\mathcal B_0\setminus\gamma$ from $\partial\mathcal B_0$ (which occurs with positive probability since $\gamma\cap [0,(1,0,0)]=\varnothing$). 
Attach to the endpoint of $\hpro$ (on $\mathcal B_0$) an independent Brownian motion $\widehat \hpro$, and define the paths $\widehat \hpro^n$ and $\widetilde \hpro^n$ as above. Let 
\begin{equation*}
    Z_n=Z_n(\tg^n):=\mathbb P\big(\widetilde \hpro^n\cap\tg^n=\varnothing \mid \tg^n\big),
\end{equation*}
and $Z=Z_1$. For $n,\lambda>0$, define the linear operator $T_\lambda^n:\Jcc\to\Jcc$ by
\begin{equation}\label{eq:Tl}
    T_\lambda^nf(\gamma):=\mathbb E\big[f(\olg^n)Z_n(\tg^n)^\lambda\big],
\end{equation}
and let $T_\lambda=T_\lambda^1$. The expectation is over the randomness in $\tg^n$. Define 
\begin{equation}\label{eq:rq}
    \rQ_n(\gamma):=e^{n\xi}(T_\lambda^n1)(\gamma).
\end{equation}
For $\tg^n$-measurable events $\mathcal E$, define the weighted probability measure
\begin{equation}\label{eq:reweight}
    \widetilde{\mathbb P}^n_{\gamma}(\mathcal E):={\mathbb E[Z_n^\lambda\,\ind{\mathcal E}]}/{\mathbb E[Z_n^\lambda]}.
\end{equation}
For $0<k\leq n$, let $\delta(n,k,\gamma)$ denote a random variable with the same law that $\olg^k$ has under $\widetilde{\mathbb P}_\gamma^n$.

Recall the Poisson kernel $p_{a,b}^A$ defined in \eqref{eq:pb}, and the switching constant $K(p)$ defined in \eqref{eq:K}. 
Let $M>0$ be a constant to be chosen later in \eqref{eq:Mdef}.
We say that the $i$-th layer $\Bc_{i+1}^\circ\setminus (\Bc_i\cup\gamma)$ is \emph{good} if 
$K(p_{i+0.5, i+1}^{\mathcal B_{i}\cup \gamma})$, the switching constant of the Poisson kernel in the $i$-th layer, is smaller than $M$.
Let $m'\ge m''+1$ and $m'-m''\ge m\ge 1$. 
Define the set of \emph{nice} paths that have at least $m$ good layers by
\begin{equation}\label{eq:Yc}
    \mathcal Y_{m',m'',m}(M) := \Big\{\gamma\in \Gamma:\sum_{i=-m'}^{-m''-1}\mathbf{1}{\{K(p_{i+0.5, i+1}^{\mathcal B_{i}\cup \gamma})<M\}}\ge m\Big\}.
\end{equation}
We also define the set of pairs of nice paths that have at least $m$ good layers and coincide with each other after their first visits of $\partial\Bc_{-m'}$ for some $m'\ge m$ as follows
\begin{equation}\label{eq:Xm}
    \mathcal X_m(M) := \left\{(\gamma,\gamma')\in \Gamma^2:\text{ there exists } m'\ge m\ \ \mathrm{such\ that}\ \ \gamma_{m'}=\gamma_{m'}'\in \mathcal Y_{m',0,m}(M) \right\}.
\end{equation}
For $M,u>0$, define $$\|f\|_{u,M}:=\max\Big\{\|f\|,\sup\Big\{e^{mu}|f(\gamma)-f(\gamma')|: m\ge 1, (\gamma,\gamma')\in\mathcal X_m(M)\Big\}\Big\}.$$ For a linear operator $T:\Jcc\to\Jcc$, define
$$N_{u,M}(T):=\sup_{\|f\|_{u,M}=1}\{\|Tf\|_{u,M}\}.$$
Let $\Jcc_{u,M}$ be the space of all $f\in\Jcc$ such that $\|f\|_{u,M}<\infty$. Let $\mathcal L_{u,M}$ be the Banach space of linear operators from $\Jcc_{u,M}$ to $\Jcc_{u,M}$ with the operator norm $N_{u,M}$.

\subsection{Proof of Theorem~\ref{thm:analyticity}}\label{subsec:pfm}
In this subsection, we prove Theorem~\ref{thm:analyticity} by adapting the framework treating the $2$D case provided in \cite{LSW02a}. To this end, we need to derive a couple of corresponding results in three dimensions.
We would like to mention that some results are much more involved, and require different techniques, including the BHP (Theorem \ref{theorem:bhp}) and various properties of $K(p)$ and $R(p)$ discussed in Section~\ref{subsection:RK}. 
To illustrate the main differences, we will only give a detailed proof for
Propositions \ref{pro:1}, \ref{pro:2} and \ref{pro:4} respectively in the next three subsections, and omit the proof of some intermediate results to avoid repetition.   

Note that the definition \eqref{eq:Tl} can be extended to $T_z$ for complex $z$ with $\mathrm{Re}(z)>0$. As already observed in \cite[Proposition 2.1]{LSW02a}, the analyticity of exponents reduces to the following conclusion about the operator $T_z$. 

\begin{proposition}\label{pro:6}
    \textnormal{(i)} For all $\lambda>0$ and $M<\infty$, there exists $v=v(\lambda,M)>0$, such that the map $z\to T_{\lambda+z}$ is analytic from the disk $\{z:|z|<\lambda/2\}$ to $\mathcal L_{u,M}$ for all $0<u<v$.

    \textnormal{(ii)} For all $\lambda>0$, there exist an $M=M(\lambda)$ and a $v'=v'(\lambda)$, such that for all $0<u<v'$, $e^{-\xi_3(1,\lambda)}$ is an isolated simple eigenvalue of $T_{\lambda}$ in $\Jcc_{u,M}$.
\end{proposition}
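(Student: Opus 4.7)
I would follow the scheme of \cite[Section 2]{LSW02a}, with the 2D conformal ingredients replaced by the 3D coupling and spectral inputs developed earlier in this section. Both parts reduce to controlling the operators $T_\lambda^n$ on $\Jcc_{u,M}$ via the coupling of $h$-processes furnished by Proposition~\ref{pro:1} combined with the abundance of good layers from Proposition~\ref{pro:2} and the moment estimates on $Z$ from Proposition~\ref{pro:4}. The ``nice-path'' set $\mathcal{Y}_{m',m'',m}(M)$ and the pair-set $\mathcal{X}_m(M)$ are designed exactly so that these three inputs combine into operator-norm bounds in $\mathcal{L}_{u,M}$.

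\medskip

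\noindent\textbf{Part (i).} First I would write the power expansion
\begin{equation*}
T_{\lambda+z}f(\gamma)=\Eb\big[f(\olg)\,Z^{\lambda+z}\big]=\sum_{k\ge 0} z^k A_k f(\gamma), \qquad A_k f(\gamma):=\frac{1}{k!}\Eb\big[f(\olg)\,Z^\lambda(\log Z)^k\big].
\end{equation*}
Since $Z\le 1$, the elementary bound $\sup_{0<x\le 1} x^{\lambda/2}|\log x|^k\le (2k/(\lambda e))^k$ together with $Z^{\lambda/2}\le 1$ gives $\|A_k f\|\le \|f\|\,(2k/(\lambda e))^k/k!$, which by Stirling is summable against $|z|^k$ for $|z|<\lambda/2$. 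The substantive point is the H\"older component of $N_{u,M}(A_k)$: for $(\gamma,\gamma')\in \mathcal{X}_m(M)$ I want to show
\begin{equation*}
|A_k f(\gamma)-A_k f(\gamma')|\le C^k\,e^{-mu}\,\|f\|_{u,M}/k!
\end{equation*}
for some $C=C(\lambda,M)$ and all $u<v(\lambda,M)$. This follows by coupling the $h$-processes $X,X'$ conditioned to avoid $\gamma,\gamma'$ via Proposition~\ref{pro:1}: since at least $m$ intermediate layers are good, $X$ and $X'$ can be coupled to coincide before entering $\Bc_{-m/2}$ with probability $1-O(e^{-cm})$, and on this event the values $f(\ol X)$ and $f(\ol {X'})$ agree while the $Z^\lambda(\log Z)^k$ weights are compared using the uniform moment bound on $|\log Z|^{2k}$ from Proposition~\ref{pro:4}. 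Choosing $v<c$ then produces a summable series on $|z|<\lambda/2$.

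\medskip

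\noindent\textbf{Part (ii).} Fix $M=M(\lambda)$ large enough that Proposition~\ref{pro:2} produces at least $m$ good layers with probability $1-O(e^{-cm})$ among paths reaching depth $\ge m$. I would first verify that $\rQ_n=e^{n\xi}T_\lambda^n 1$ is uniformly bounded and forms a Cauchy sequence in $\Jcc_{u,M}$ for $u$ sufficiently small. The key estimate is the contraction
\begin{equation*}
|T_\lambda^n f(\gamma)-T_\lambda^n f(\gamma')|\le C\,e^{-n u'}\,e^{-n\xi}\,\|f\|_{u,M} \qquad\text{for all } (\gamma,\gamma')\in\mathcal{X}_n(M),
\end{equation*}
proved by decomposing the $h$-process across scales and applying Lemma~\ref{lemma:basic} on each good layer: each such layer contributes a multiplicative factor $R(p)\le 1-2/(1+\sqrt{M})<1$ by Lemma~\ref{lemma:RK}, and Proposition~\ref{pro:2} ensures the density of good layers is at least $1/2$. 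The limit $\rQ=\lim_n \rQ_n$ is then a positive eigenfunction of $T_\lambda$ with eigenvalue $e^{-\xi}$ in $\Jcc_{u,M}$. To upgrade to simplicity and isolation I would show that $T_\lambda$ leaves invariant a splitting $\Cb\rQ\oplus W$, with $W$ consisting of functions of zero ``mean'' against the invariant measure of the $\widetilde\Pb$-reweighted chain, and that $T_\lambda|_W$ has operator norm strictly less than $e^{-\xi}$. This last statement is exactly the contraction above phrased as a spectral gap, so $e^{-\xi}$ is an isolated simple eigenvalue.

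\medskip

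\noindent\textbf{Main obstacle.} The principal difficulty lies in the H\"older-norm bound in part~(i): controlling the interaction between the $Z^\lambda(\log Z)^k$ tilting and the coupling of $h$-processes. In 2D the analogous step is essentially free thanks to conformal invariance; here one must verify that the coupling of Proposition~\ref{pro:1} remains effective under the $Z^\lambda$ reweighting \emph{and} that the factor $(\log Z)^k$ does not destroy summability against $|z|^k$. Concretely, this forces the exponential weight $u$ to be chosen small relative to both the coupling rate $\log\bigl((1+\sqrt{M})/2\bigr)$ from Lemma~\ref{lemma:RK} and the $L^2$-moment growth of $(\log Z)^k$ from Proposition~\ref{pro:4}. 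Making this choice uniform over $|z|<\lambda/2$ is what fixes $v=v(\lambda,M)$, after which standard analytic perturbation theory for quasi-compact operators closes both parts.
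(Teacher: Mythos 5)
Your overall plan---follow \cite[Section 2]{LSW02a} with the 2D conformal inputs replaced by the 3D coupling machinery---is indeed the paper's strategy, and your part (i) (power expansion $T_{\lambda+z}=\sum_k z^kA_k$ with $A_kf=\frac{1}{k!}\Eb[f(\olg)Z^\lambda(\log Z)^k]$, sup-norm via $\sup_x x^{\lambda/2}|\log x|^k$, H\"older seminorm via the coupling) matches the intended argument, which the paper runs with Proposition~\ref{pro:3} as the packaged input. However, there are two concrete gaps.

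First, you have mischaracterized Proposition~\ref{pro:4}: it is not a moment estimate on $Z$ or $\log Z$ (those are free from $Z\le 1$), but a coupling of the \emph{reweighted} path laws $\delta(n,n,\gamma)$ and $\delta(n,n,\gamma')$ for \emph{arbitrary} $\gamma,\gamma'\in\Gamma^2$, landing in $\mathcal X_{n/16}(M_0)$ with probability $1-O(e^{-v_3n})$. This is precisely the ingredient your part (ii) is missing. Your claimed contraction $|T_\lambda^nf(\gamma)-T_\lambda^nf(\gamma')|\le Ce^{-nu'}e^{-n\xi}\|f\|_{u,M}$ holds only for $(\gamma,\gamma')\in\mathcal X_n(M)$, i.e.\ it controls the H\"older seminorm of $T^nf$; it says nothing about two starting configurations that share no layers. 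To build the functional $h$ (your ``mean against the invariant measure'') and to show $T_\lambda$ contracts on the complementary subspace $W$, one must compare $\widetilde{\mathbb P}^n_\gamma$ and $\widetilde{\mathbb P}^n_{\gamma'}$ for arbitrary pairs, and that is exactly what Proposition~\ref{pro:4} provides (leading to \eqref{eq:fu} in the paper). As written, your spectral-gap step does not close.

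Second, neither part of your proposal addresses the fact that the Brownian extension $\hg^n$ can \emph{destroy} good layers of $\gamma$ by down-crossing back into $\Bc_{-k}$, which is needed even to verify that $T^n$ maps $\Jcc_{u,M}$ into itself with the right H\"older bound: given $(\gamma,\gamma')\in\mathcal X_m(M)$ with common extension, one must determine for which $m'$ the pair $(\olg^n,\olgp^n)$ lies in $\mathcal X_{m'}(M)$. The paper handles this by decomposing over the down-/up-crossing events $\mathcal U_{k,l}$, observing that at most $k+l+2$ good layers are spoiled, and invoking Proposition~\ref{pro:2} (with $w=2\xi+1$) to guarantee that the new piece $\eta_{n,k}$ contributes $n/2$ fresh good layers except on an event of probability $O(e^{-(2\xi+1)n})$. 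This bookkeeping is the genuinely new difficulty relative to \cite{LSW02a} and cannot be omitted.
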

Theorem \ref{thm:analyticity} is now a direct consequence of Proposition \ref{pro:6}.

\begin{proof}[Proof of Theorem \ref{thm:analyticity} assuming Proposition \ref{pro:6}]
    For each $\lambda$, we take $M=M(\lambda)$ and $u$ such that $0<u<\min\{v,v'\}$. So $z\to T_{\lambda+z}$ is analytic from $\{z:|z|<\lambda/2\}$ to $\mathcal L_{u,M}$, and $e^{-\xi_3(1,\lambda)}$ is an isolated simple eigenvalue of $T_\lambda$. Then, it follows that $\lambda\to\xi_3(1,\lambda)$ is analytic, in the same fashion as in \cite{LSW02a}. The proof for $\xi_3(k,\lambda)$ with $k\ge 2$ is similar.
\end{proof}

In the following, we first collect the results we need to show Proposition \ref{pro:6}, and then finish its proof at the end of this section.

We begin with an important coupling result, which can be regarded as a $3$D counterpart of \cite[Proposition 4.1]{LSW02a}.
It shows that for any $(\gamma,\gamma')\in\mathcal X_m(M)$, i.e., a pair of nice paths that coincide for at least $m$ good layers, we can couple the $h$-processes $\hpro, \hpro'$, associated with $\gamma,\gamma'$ respectively, such that they agree with each other outside $\Bc_{-m/2}$ with probability at least $1-O(e^{-cm})$.

\begin{proposition}\label{pro:1}
    For all $0<M<\infty$, there exist constants $v_1=v_1(M)>0$ and $a_1=a_1(M)$ such that for each $m\ge1$ and $(\gamma,\gamma')\in\mathcal X_m(M)$, we can couple $\hpro$ and $\hpro'$ in such a way that
    \begin{equation}\label{eq:1574}
        \mathbb P\big(\hpro\setminus\mathcal B_{-m/2}\neq \hpro'\setminus\mathcal B_{-m/2}\big)<a_1\,e^{-v_1m}.
    \end{equation}
\end{proposition}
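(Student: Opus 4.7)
The plan is to realize the coupling as an iterated maximal coupling, one attempt per good layer, where the per-attempt success probability is bounded below by $1-\rho$ with
\[\rho := 1 - 2/(1+\sqrt M) \in (0,1),\]
via the identity $R(p) = 1 - 2/(1+\sqrt{K(p)})$ of Lemma \ref{lemma:RK} applied to the annular Poisson kernel in each good layer. Given $(\gamma,\gamma')\in\mathcal X_m(M)$ with witness $m'\ge m$ (so $\gamma_{m'}=\gamma'_{m'}$ and at least $m$ good indices in $\{-m',\dots,-1\}$), the outer sub-range $\{-\lceil m/2\rceil,\dots,-1\}$ contains only $\lceil m/2\rceil$ indices, so a pigeonhole gives at least $\lfloor m/2\rfloor$ good indices inside the inner sub-range $I_{\mathrm{in}}:=\{-m',\dots,-\lceil m/2\rceil - 1\}$. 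Enumerate them as $j_1<\cdots<j_N$ with $N\ge\lfloor m/2\rfloor$. Crucially, each annulus $\Bc_{j_k+1}^\circ\setminus\Bc_{j_k}$ lies in $\Bc_0\setminus\Bc_{-m'}$, where $\gamma$ and $\gamma'$ coincide, so the annular Poisson kernels $p^{(j_k)}:=p^{\Bc_{j_k}\cup\gamma}_{j_k+0.5,j_k+1}$ are common to both processes.

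The per-layer step shows that one coupling attempt at index $i\in I_{\mathrm{in}}$ succeeds with probability at least $1-\rho$. I would write the $h$-process transition from the first visit of $\partial\Bc_{i+0.5}$ to the first visit of $\partial\Bc_{i+1}$ as a composition of the annular kernel $p^{(i)}$ with a ``return'' correction from Brownian excursions that re-enter $\Bc_i$; the identity \eqref{eq:482} encodes this dichotomy into an ``escaping'' piece driven by $p^{(i)}$ and a ``trapped'' piece given by a sub-probability kernel. Lemma \ref{lemma:tricky}, combined with the hypothesis \eqref{eq:486} verified by the standard 3D annulus hitting-probability estimate, extracts a lower bound on the escaping contribution; Lemma \ref{lemma:mean} then shows that the $h$-transform and the weighted averaging across re-entries preserve the switching-constant bound $K<M$. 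Applying Lemma \ref{lemma:RK} to the resulting effective kernel gives $R\le\rho$, after which a maximal coupling at $\partial\Bc_{i+1}$ succeeds with probability $\ge 1-\rho$.

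Chaining across $k=1,\dots,N$ in order of increasing $j_k$ (innermost first), each unsuccessful attempt is conditioned upon and the next attempt is performed at the following good layer. Lemmas \ref{lemma:1090} and \ref{lemma:basic} (convolution does not increase the extremal TV distance) guarantee that conditioning on prior failure does not worsen the next attempt, so the failure probabilities multiply to at most $\rho^N\le\rho^{\lfloor m/2\rfloor}$. A successful coupling at $j_k$ makes $X$ and $X'$ identical as paths from some common position $z\in\partial\Bc_{j_k+1}\subseteq\Bc_{-m/2}^\circ$ onward, so in particular they agree outside $\Bc_{-m/2}$. Taking $v_1=-\tfrac12\log\rho>0$ and $a_1=a_1(M)$ absorbing the boundary cases then gives \eqref{eq:1574}. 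The main obstacle is making the per-layer step precise: the $h$-process observed at the first visits of consecutive half-integer spheres is only approximately Markov because a Brownian leg between $\partial\Bc_{i+0.5}$ and $\partial\Bc_{i+1}$ may re-enter $\Bc_i$ multiple times, so the clean factorization through $p^{(i)}$ requires the decomposition \eqref{eq:482}, the escape bound \eqref{eq:486}, and careful bookkeeping to ensure that the effective switching constant driving the successive coupling attempts remains uniformly bounded by $M$, independent of the prior coupling history.
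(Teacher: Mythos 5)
Your overall strategy (maximal coupling driven by the identity $R(p)=1-2/(1+\sqrt{K(p)})$ and the good-layer count) is the right one, but the layer-by-layer iterated coupling has genuine gaps. The paper instead performs a \emph{single} maximal coupling of the hitting distributions on $\partial\Bc_{-m/2}$: it writes the kernel from $\partial\Bc_{-m'+1/2}$ to $\partial\Bc_{-m/2}$ as a composition of the per-layer kernels via \eqref{eq:329}, applies Lemma \ref{lemma:basic} to get $R(p)\le\prod_i R(p_{i+1/2,i+1}^{\gamma\cup\Bc_i})\le(1-\tfrac{2}{\sqrt M+1})^{[m/2]}$, and invokes Lemma \ref{lemma:tricky} exactly once, at the top scale, where the return probability to $\Bc_{-m'}$ from $\partial\Bc_{-m/2}$ is $e^{-m/2}$. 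Your chaining argument breaks in three places. First, Lemmas \ref{lemma:1090} and \ref{lemma:basic} bound the total variation distance of \emph{composed} kernels; they say nothing about the residual distributions left over after a failed maximal coupling (those residuals are mutually singular), so they cannot justify the claim that ``conditioning on prior failure does not worsen the next attempt.'' One can salvage multiplication of failure probabilities by coupling one-step transition kernels given the current (distinct) positions, but that is a different argument from the one you cite.

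Second, your per-layer correction for re-entries into $\Bc_{-m'}$ does not give a uniformly useful bound: applying the decomposition \eqref{eq:482}/\eqref{eq:486} at a good layer $j$ near the bottom, the constant $a$ (the probability of diving back to $\Bc_{-m'}$ from $\partial\Bc_{j+1}$) is of order $e^{-(m'+j+1)}$, which is $\Theta(1)$ for the innermost layers, so Lemma \ref{lemma:tricky} there yields a success probability that can be far below $1-\rho$ when $M$ is large. The paper avoids this entirely by using Lemma \ref{lemma:tricky} only across the full span $-m'$ to $-m/2$, where $a\le e^{-m/2}$. Third, even when your coupling attempt succeeds and the two processes occupy the same point $z\in\partial\Bc_{j_k+1}$, their futures are \emph{not} identical in law: $X$ from $z$ is conditioned to avoid $\gamma$ and $X'$ to avoid $\gamma'$, and these differ inside $\Bc_{-m'}$, which the path from $z$ can revisit. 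So ``identical as paths from $z$ onward'' is false without a further coupling of the futures, and controlling the discrepancy of the parts outside $\Bc_{-m/2}$ is precisely the content of the paper's Step 3 (the terms $1-\nu(f)$ and $1-\nu(g)$, each bounded by $K(p)e^{-m/2}/(1+(K(p)-1)e^{-m/2})$). As written, your argument does not establish \eqref{eq:1574}.
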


We will prove Proposition \ref{pro:1} in Section~\ref{subsec:pro:1}. Based on this result, we obtain the following exponential decay. We omit the proof and refer to \cite[Proposition 4.3]{LSW02a} for details. 
\begin{proposition}\label{pro:3}
    For every $M<\infty$ and $\lambda>0$, there exist $v_2=v_2(M,\lambda)>0$ and $a_2=a_2(M,\lambda)>0$ such that for each $(\gamma,\gamma')\in\mathcal X_m(M)$, 
    \[
        \mathbb E\Big|Z_n(\tg^n)^\lambda
        -Z_n(\tgp^n)
        ^\lambda\Big|\leq a_2\,e^{-\xi n-v_2m}.
    \]
\end{proposition}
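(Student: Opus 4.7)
\textit{Proof plan.} The proof mirrors that of \cite[Proposition 4.3]{LSW02a}, with Proposition~\ref{pro:1} playing the role taken in the 2D case by the conformal coupling of $h$-processes. Since $\gamma_{m'}=\gamma_{m'}'$, the paths $\gamma$ and $\gamma'$ share the same endpoint on $\partial\mathcal B_0$, so the BM extensions can be coupled identically: set $\hg=\hg'$. Under this coupling $\tg^n$ and $\tgp^n$ agree outside $\mathcal B_{-m'}\subset\mathcal B_{-m/2}$. Then, for each realization of $(\tg^n,\tgp^n)$, I invoke Proposition~\ref{pro:1} for the pair $(\gamma,\gamma')\in\mathcal X_m(M)$ to couple the $h$-processes $X,X'$ so that $X\setminus\mathcal B_{-m/2}=X'\setminus\mathcal B_{-m/2}$ on an event $\mathcal C$ with $\mathbb P(\mathcal C^c\mid\tg^n,\tgp^n)\le a_1 e^{-v_1 m}$; on $\mathcal C$ the two $h$-processes exit $\partial\mathcal B_0$ at a common point, so one further sets $\widehat X=\widehat X'$.

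Introduce the good event $\mathcal G:=\mathcal C\cap\{\widehat X^n\cap\mathcal B_{-m/2}=\varnothing\}\cap\{\hg^n\cap\mathcal B_{-m/2}=\varnothing\}$. The two new events each fail with probability at most $Ce^{-m/2}$ by the explicit hitting-probability formula for 3D Brownian motion between concentric spheres, so $\mathbb P(\mathcal G^c\mid\tg^n,\tgp^n)\le C'e^{-vm}$ with $v:=\min(v_1,1/2)$. On $\mathcal G$, the events $A:=\{\widetilde X^n\cap\tg^n=\varnothing\}$ and $A':=\{\widetilde{X'}^n\cap\tgp^n=\varnothing\}$ coincide: the intersections $X\cap\gamma$ and $X'\cap\gamma'$ are empty by the $h$-process construction, while all remaining components ($X\cap\hg^n$, $\widehat X^n\cap\gamma$, $\widehat X^n\cap\hg^n$, and their primed analogues) are confined to $\overline{\mathcal B_0}\setminus\mathcal B_{-m/2}$, a region on which paths and slits agree with their primed counterparts.

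Consequently $|Z_n(\tg^n)-Z_n(\tgp^n)|\le\mathbb P(A\cap\mathcal G^c\mid\tg^n,\tgp^n)+\mathbb P(A'\cap\mathcal G^c\mid\tg^n,\tgp^n)$. The central technical obstacle is to upgrade the naive $\mathbb P(\mathcal G^c)$ bound to the multiplicative estimate $\mathbb P(A\cap\mathcal G^c\mid\tg^n,\tgp^n)\le Ce^{-vm}\,Z_n(\tg^n)$, which is needed in order to retain the factor $e^{-\xi n}$ in the final bound. I plan to obtain this by re-running the coupling underlying Proposition~\ref{pro:1} under the Doob $h$-transform corresponding to the additional non-intersection conditioning defining $A$: by the BHP (Theorem~\ref{theorem:bhp}), the transformed Poisson kernels on the good layers of $(\gamma,\gamma')$ still have uniformly bounded switching constants, so the maximal-coupling step in the proof of Proposition~\ref{pro:1} goes through under the new measure as well.

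Combining the previous estimates yields $|Z_n(\tg^n)-Z_n(\tgp^n)|\le Ce^{-vm}\bigl(Z_n(\tg^n)+Z_n(\tgp^n)\bigr)$. The elementary inequality $|a^\lambda-b^\lambda|\le C_\lambda|a-b|^{\min(\lambda,1)}\max(a,b)^{\max(\lambda-1,0)}$ then gives $|Z_n(\tg^n)^\lambda-Z_n(\tgp^n)^\lambda|\le C'_\lambda e^{-v_2 m}\bigl(Z_n(\tg^n)^\lambda+Z_n(\tgp^n)^\lambda\bigr)$ with $v_2:=v\min(\lambda,1)$, and taking expectation together with the bound $\mathbb E[Z_n(\tg^n)^\lambda]\le Ce^{-\xi n}$, which is a direct consequence of the subadditive definition \eqref{eq:exponent} of $\xi$, concludes the proof.
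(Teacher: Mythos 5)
Your skeleton is the right one and matches the argument of \cite[Proposition 4.3]{LSW02a} that the paper defers to: couple $\hg=\hgp$, couple $X,X'$ via Proposition~\ref{pro:1}, observe that $A=A'$ on the good event $\mathcal G$, and reduce to bounding $\Eb[\ind{A\triangle A'}]$. You also correctly identify the crux: one must turn the bound on $\Pb(\mathcal G^c)$ into something carrying the factor $e^{-\xi n}$. But the way you resolve that crux has a genuine gap. First, the claimed pointwise estimate $\Pb(A\cap\mathcal G^c\mid\tg^n,\tgp^n)\le Ce^{-vm}Z_n(\tg^n)$ cannot hold: the component $\{\hg^n\cap\Bc_{-m/2}\neq\varnothing\}$ of $\mathcal G^c$ is $\tg^n$-measurable, so on realizations of $\hg$ that dip into $\Bc_{-m/2}$ the left side equals $Z_n(\tg^n)$ exactly, with no $e^{-vm}$ gain. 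What is actually needed is the \emph{joint} expectation bound $\Eb[Z_n^\lambda\,\ind{\hg^n\cap\Bc_{-m/2}\neq\varnothing}]\le Ce^{-\xi n-vm}$ (and its analogue for deep excursions of $\widehat X^n$ on $A$). This does not follow from multiplying the two separate bounds, since the events are correlated; it is obtained by decomposing over the depth of the down-crossing (the events $\mathcal U_{k,l}$ in the paper's sketch of Proposition~\ref{pro:6}), applying the strong Markov property at the return to $\partial\Bc_0$, and invoking the uniform bound \eqref{eq:c1}, $\sup_\gamma \rQ_n(\gamma)\le b_1$, which itself comes from the separation lemma. Your proposal contains none of this.

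Second, the substitute you offer — re-running the coupling of Proposition~\ref{pro:1} under the Doob $h$-transform induced by conditioning on $A$ — does not obviously work, and fails exactly in the problematic regime: conditioning on $A$ tilts $X$ by the non-intersection with all of $\tg^n=\gamma\cup\hg^n$, and when $\hg^n$ re-enters $\Bc_{-m/2}$ the relevant Poisson kernels are those of layers slit by $\gamma\cup\hg^n$, whose switching constants are not controlled by $(\gamma,\gamma')\in\mathcal X_m(M)$ (goodness of layers is defined relative to $\gamma$ alone). A smaller point: $\Eb[Z_n(\tg^n)^\lambda]\le Ce^{-\xi n}$ is \emph{not} a direct consequence of subadditivity (that gives the reverse inequality $\Eb[Z_n^\lambda]\ge e^{-\xi n}$); it is the content of \eqref{eq:c1}, which rests on the separation lemma of \cite{G98}. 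So the structure is sound, but the multiplicative estimate — which you rightly call the central technical obstacle — is not actually established.
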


Using the BHP (Theorem \ref{theorem:bhp}), we can show that with high probability, the trace of a Brownian motion has a positive fraction of good layers, as illustrated below.

\begin{proposition}\label{pro:2}
    For each $w>0$, there exist $M_0=M_0(w)<\infty$ and $a_3=a_3(w)>0$ such that for $x\in\partial \mathcal B_0$ and $n'\geq n\geq 1$,
    \begin{equation*}
        \Pb_{x}\big(e^{-n'}W[0,\tau_{\partial\mathcal B_{n'}}]\notin\mathcal Y_{n',n'-n,n/2}(M_0)\big)\leq a_3\,e^{-wn}.
    \end{equation*}
\end{proposition}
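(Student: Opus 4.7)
The plan is to combine the BHP (Theorem~\ref{theorem:bhp}) with a decorrelation-across-scales argument, after a scaling reduction. By Brownian scaling and rotation invariance, it suffices to work in the unscaled picture: let $W$ be a Brownian motion started at $x\in\partial\mathcal{B}_0$ and killed at $\tau_{\partial\mathcal{B}_{n'}}$, and set $C_j:=K(p_{j+0.5,j+1}^{\mathcal{B}_j\cup W})$ for $j=0,1,\ldots,n-1$; the claim becomes $\Pb_x(\#\{j:C_j<M_0\}<n/2)\le a_3 e^{-wn}$. By Lemma~\ref{lem:5.17} each $C_j$ coincides with the optimal BHP comparison constant in $\mathcal{B}_{j+1}^\circ\setminus W$ between $\partial\mathcal{B}_{j+0.5}$ and $\partial\mathcal{B}_{j+1}$, which is a.s.\ finite by Theorem~\ref{theorem:bhp}. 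A tightness argument leveraging Brownian scaling (rescale by $e^{-j}$, so that $e^{-j}W$ starts at $\partial\mathcal{B}_{-j}$ and is killed at $\partial\mathcal{B}_{n'-j}$) then provides, for every $\eta>0$, an $M_0=M_0(\eta)<\infty$ with $\sup_{j\ge 0,\,n'>j}\Pb_x(C_j\ge M_0)\le\eta$.

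For the concentration step, fix a gap $L$ (to be determined) and define a truncated switching constant $\widetilde C_j$ computed using only the portion of $W$ before $\tau_{\partial\mathcal{B}_{(j+L/2)\wedge n'}}$, thereby ignoring later returns into $\mathcal{B}_{j+1}$. The 3D transience estimate $\Pb_y(\tau_{\mathcal{B}_r}<\infty)=r/|y|$ gives $\Pb_x(\widetilde C_j\ne C_j)\le e^{1-L/2}$, since a Brownian motion started on $\partial\mathcal{B}_{j+L/2}$ hits $\mathcal{B}_{j+1}$ with probability $e^{1-L/2}$. For each offset $s\in\{0,1,\ldots,L-1\}$, the strong Markov property at the successive stopping times $\tau_{\partial\mathcal{B}_{s+kL+L/2}}$ makes the truncated values $(\widetilde C_{s+kL})_k$ conditionally independent given the hit positions, and the uniform single-scale bound yields $\Pb(\widetilde C_{s+kL}\ge M_0\mid\text{past})\le\eta$. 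A Chernoff bound applied to each offset and unioned over $s$ shows that $\#\{j:\widetilde C_j\ge M_0\}\le n/4$ off an event of probability $Le^{-c(\eta)n/L}$. A parallel Chernoff applied to the return events themselves (which, across widely-separated scales, are conditionally Bernoulli of parameter $\le e^{1-L/2}$) controls the truncation discrepancy $\#\{j:\widetilde C_j\ne C_j\}\le n/4$ off an event of comparable probability. Choosing $L=L(w)$ and $\eta=\eta(w)$ (so that $M_0=M_0(w)$) to make both Chernoff exponents exceed $w$ delivers the announced bound.

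The principal delicate point is the joint tuning of the two parameters $L$ and $\eta$: one needs the Chernoff exponent $c(\eta)/L$ associated with the single-scale bound and the corresponding exponent for the return events to \emph{simultaneously} dominate $w$. For small $w$ a constant $L$ suffices; for large $w$, a slightly more careful parameter choice is required, but crucially the threshold $M_0=M_0(\eta(w))$ still depends only on $w$, not on $n$. A secondary technical step is verifying the uniform-in-$(j,n')$ tightness underlying the single-scale bound, which combines the a.s.\ finiteness from Theorem~\ref{theorem:bhp} with Brownian scaling to make the family of laws of $C_j$ comparable across $j$ and $n'$.
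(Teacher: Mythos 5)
Your overall strategy (a BHP-based single-scale tightness bound, decoupling across well-separated scales via the strong Markov property, a Chernoff bound per offset class, and a union over offsets) is the same as the paper's, and the first Chernoff step for the truncated constants $\widetilde C_j$ is sound. The gap is in the control of the truncation discrepancy. The return events $\{\widetilde C_j\neq C_j\}\subseteq\{W \text{ re-enters }\mathcal B_{j+1}\text{ after }\tau_{\partial\mathcal B_{j+L/2}}\}$ are \emph{not} conditionally Bernoulli across scales in any sense that supports a Chernoff bound: the return event at level $j$ is determined only at time $\tau_{\partial\mathcal B_{n'}}$, not by any finite-horizon filtration, and a single deep downcrossing triggers returns at linearly many levels simultaneously. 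Concretely, the event that $W$, after first reaching $\partial\mathcal B_{n/4+L/2}$, returns to $\partial\mathcal B_0$ has probability $e^{-n/4-L/2}$ and forces a return at every level $j\le n/4-1$; hence $\Pb\big(\#\{j:\widetilde C_j\neq C_j\}\ge n/4\big)\ge e^{-n/4-O(L)}$ no matter how $L$ and $\eta$ are tuned. Since the discrepancy allowance can be at most $n/2$ (there are only $n$ layers), this route cannot produce a bound better than roughly $e^{-n/2}$, whereas the proposition must hold for every $w>0$ and is in fact invoked in the paper with $w=\max\{4,2\xi+1\}$.

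The paper circumvents exactly this issue by not insisting that each layer be determined by a single path segment. It conditions on \emph{all} crossing positions of the spheres $\partial\mathcal B_{3j}$, so the conditional law is a concatenation of independent bridges and every return is accounted for; it then bounds the total number of crossings by $Cn$ with $C=C(w)$ via the transience estimate (Lemma~\ref{lem:883}), proves the single-scale BHP bound uniformly over configurations consisting of up to $24C$ conditioned bridge segments (Lemma~\ref{lem:1530}, with $M_0=\max_{k\le 24C}C_1(\varepsilon,k)$), and simply \emph{discards} the at most $\Nc/(24C)\le n/24$ layers that are visited too often rather than trying to show such layers are exponentially rare. To repair your argument you would need an analogous robustness: either prove your single-scale bound for layers visited by a bounded (but $w$-dependent) number of excursions and drop the over-visited layers, or otherwise replace the ``Chernoff for return events'' by a crossing-count bound of the Lemma~\ref{lem:883} type with a $w$-dependent constant.
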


We will prove Proposition \ref{pro:2} in Section~\ref{subsec:pro:2}, and use it as an input to show the following proposition in Section~\ref{subsec:pro:4}, which gives a coupling for the reweighted paths $\delta(n,k,\gamma)$ defined just below \eqref{eq:reweight}.

\begin{proposition}\label{pro:4}
    There exist positive constants $v_3(\lambda)$ and $a_4(\lambda)$ such that the following holds.
    For all $n\ge 1$ and $(\gamma,\gamma')\in\Gamma^2$,
    let $\delta:=\delta(n,n,\gamma)$ and 
    $\delta':=\delta(n,n,\gamma')$. 
    We can define $\delta$ and $\delta'$ in the same 
    probability space $(\Omega,\mathcal F,\mu)$  such that 
    $$\mu((\delta,\delta')\in\mathcal X_{n/16}(M_0))
    \geq 1-a_4\,e^{-v_3n},$$
    where $M_0=M_0(\max\{4,2\xi+1\})$ comes from Proposition \ref{pro:2}.
\end{proposition}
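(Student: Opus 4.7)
The plan is to adapt the two-scale coupling strategy of \cite[Proposition~4.5]{LSW02a} to three dimensions, with Propositions~\ref{pro:1}, \ref{pro:2}, and \ref{pro:3} serving as the 3D substitutes for the planar conformal-invariance inputs. Working in unscaled coordinates, the event $(\delta,\delta')\in\mathcal{X}_{n/16}(M_0)$ translates to: there exists an intermediate scale $k^{*}\le 15n/16$ such that $\tg^{n}$ and $\tgp^{n}$ agree on $\Bc_{n}\setminus\Bc_{k^{*}}$, and such that the scaled common terminal segment lies in $\mathcal{Y}_{n-k^{*},0,n/16}(M_{0})$, i.e.\ contains at least $n/16$ good layers.

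First I would apply Proposition~\ref{pro:2} with $w=\max\{4,2\xi+1\}$ to produce a high-probability ``good layer'' event. Its conclusion gives that under $\mathbb{P}$ the event that $\tg^{n}$ has at least $n/4$ good layers among the last $n/2$ layers has probability at least $1-a_{3}e^{-wn/2}$. By the subadditive definition of $\xi$ we have $\mathbb{E}[Z_{n}^{\lambda}]\ge c e^{-n\xi}$, so the Radon--Nikodym density $d\widetilde{\mathbb{P}}^{n}_{\gamma}/d\mathbb{P}\le C e^{n\xi}$, and the reweighted probability of the bad event is at most $O(e^{n\xi-wn/2})=O(e^{-n/2})$; the same applies to $\gamma'$. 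On the intersection of the two good events one may deterministically select $k^{*}\in[7n/8,15n/16]$ so that at least $n/16$ of the layers between $\partial\Bc_{k^{*}}$ and $\partial\Bc_{n}$ are good simultaneously for both paths.

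Next I would sample $(\tg^{k^{*}},\tgp^{k^{*}})$ independently from the appropriate marginals of $\widetilde{\mathbb{P}}^{n}_{\gamma}$ and $\widetilde{\mathbb{P}}^{n}_{\gamma'}$, and then merge their continuations. By the Markov property of the underlying Brownian motion and the disintegration $Z_{n}^{\lambda}=Z_{k^{*}}^{\lambda}(Z_{n}/Z_{k^{*}})^{\lambda}$, the conditional law of the continuation from $\partial\Bc_{k^{*}}$ to $\partial\Bc_{n}$ is Brownian motion in $\Bc_{n}^{\circ}\setminus\Bc_{k^{*}}$ reweighted by a positive bounded harmonic function obtained as a conditional expectation of $Z_{n}^{\lambda}$. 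By \eqref{eq:329} the associated transition kernel from $\partial\Bc_{k^{*}}$ to $\partial\Bc_{n}$ is a composition of weighted Poisson kernels; Lemmas~\ref{lemma:mean} and \ref{lemma:basic} then bound its extremal $R$-value by the product of the layer-wise $R$-values, each of which is at most $\rho:=1-2/(1+\sqrt{M_{0}})<1$ on good layers via Lemma~\ref{lemma:RK}. Hence a maximal coupling merges the two terminal endpoints on $\partial\Bc_{n}$ with probability $\ge 1-\rho^{n/16}$, and the full terminal segments in $\Bc_{n}\setminus\Bc_{k^{*}}$ can be coupled by iterating this step across the good layers (a time-reversed variant of Proposition~\ref{pro:1}).

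The principal obstacle is that $Z_{n}^{\lambda}$ is a \emph{global} functional of the path, so the reweighted process is not Markov and the conditional weighting function above genuinely couples past and future. To justify the layer-by-layer comparison one must invoke Proposition~\ref{pro:3}: on the good event of many common good layers, the mixing weight can be replaced by one depending only on the continuation at an $O(e^{-cn})$ multiplicative cost, and Lemma~\ref{lemma:tricky} is tailor-made to convert this cost into a correspondingly small coupling error. This substitution is the essential 3D replacement for the conformal-invariance identity exploited in \cite{LSW02a}, and is the technical heart of the argument.
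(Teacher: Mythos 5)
Your high-level reading of the difficulty (non-Markovianity of the $Z_n^\lambda$-reweighting) is correct, but the proposed construction has a gap at its central step that I do not see how to repair. You sample $\tg^{k^*}$ and $\tgp^{k^*}$ \emph{independently} and then try to "merge the continuations" by a maximal coupling of the hitting distributions on $\partial\Bc_n$, controlled by a product of layer-wise $R$-values. This fails for three reasons. First, the target event $(\delta,\delta')\in\mathcal X_{n/16}(M_0)$ requires the two \emph{paths} to coincide after their first visits of $\partial\Bc_{-m'}$, not merely their terminal points; two independently sampled initial segments hit $\partial\Bc_{k^*}$ at different points and share no layers, so their continuations start from different points in different (random) slit geometries and cannot be made literally equal by coupling endpoint laws. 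Second, the telescoping bound $R(p)\le\prod_i R(p_{i+1/2,i+1}^{\cdot})$ via \eqref{eq:329} and Lemmas~\ref{lemma:basic}--\ref{lemma:compare} applies to genuine Poisson kernels of a \emph{fixed} slit domain whose good layers are those of the obstacle; the $Z_n^\lambda$-weighted transition kernel of the reweighted Brownian path is not such a composition (the weight is a non-intersection probability of the path with its own future and past, raised to a non-integer power), and the "good layers" in $\mathcal X_m$ are layers of the coupled paths themselves, which do not yet exist as a common obstacle. Third, the proposed rescue via Proposition~\ref{pro:3} is circular: that proposition bounds $\mathbb E|Z_n(\tg^n)^\lambda-Z_n(\tgp^n)^\lambda|$ under the hypothesis $(\gamma,\gamma')\in\mathcal X_m(M)$, which is essentially the conclusion you are trying to reach; and Lemma~\ref{lemma:tricky} is a lower bound used in Proposition~\ref{pro:1} to control returns to the inner sphere, not a device for linearizing the global weight.

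The paper's proof is structurally different: it builds the coupling \emph{from the inside out}, alternating a match-up step (Lemma~\ref{lem:167}: starting from well-separated seeds in $\Gamma^+$, after advancing $O(1)$ scales the two reweighted paths can, with probability bounded below, be made to coincide on the last $a$ scales, land in the auxiliary class $\Gamma_1$, and acquire $a$ good layers) with a persistence-and-doubling step (Lemma~\ref{lem:226} together with Proposition~\ref{pro:2}: once matched for $l$ scales with $l$ good layers, advancing $4l$ further scales preserves the match and doubles the good-layer count except with probability $O(e^{-wl})$). A large-deviation estimate for the resulting record process (Theorem~3.1 of \cite{vermesi2008intersection}) then yields at least $n/16$ matched good layers with probability $1-O(e^{-v_3 n})$. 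The separation lemma (Lemma~\ref{lem:sep}) and the sets $\Gamma^+$, $\Gamma_1$ are indispensable for initializing this scheme; your proposal uses none of these ingredients and, as written, does not produce a coupling under which the paths agree.
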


Combining all the above ingredients, we are able to conclude Proposition \ref{pro:6} now. Since the proof is quite similar to that of Proposition 2.1 in \cite{LSW02a}, we only give the key ideas below.

\begin{proof}[Sketch of proof of Proposition \ref{pro:6}]
    Since Proposition \ref{pro:6} (i) can be proved by following the same argument as in the proof of Proposition 2.1 (i) in \cite{LSW02a}, using Proposition \ref{pro:3} as an input, we do not repeat the proof here.

In the following, we focus on the proof of (ii), which consists of three steps, as in the proof of Proposition 2.1 (ii) in \cite{LSW02a}. 
We will state without proof the corresponding results in the first and third steps (as they are exactly the same), and explain the second step in detail,
where some caution needs to be exercised in dealing with good layers. 

Below, we take $M_0=M_0(\max\{4,2\xi+1\})$ as in Proposition \ref{pro:4}.
   Using Proposition \ref{pro:4}, one first shows that there exists a bounded linear functional $h:\Jcc_{u,M_0}\mapsto \Cb$ such that $|h(f)|\le \|f\|$, and for all $f\in\Jcc_{u,M_0}$, $\gamma\in\Gamma$ and $u\le 16v_3$,
    \begin{equation}\label{eq:fu}
    \big|T^nf(\gamma)-h(f)T^n1(\gamma)\big|\le c\,e^{-n(\xi+u/16)}\|f\|_{u,M_0}.
    \end{equation}

    The second step is to give an upper bound for the operator norm $N_{u,M_0}$ of the operator $f\mapsto T^n(f)-h(f)T^n1$. It boils down to show that for all sufficiently small $u$, there exists $c'=c'(u,\lambda)$, such that for $n\ge1$, $1\le m\le n$ and $(\gamma,\gamma')\in\mathcal X_m(M_0)$,
    \begin{equation}\label{eq:Num}
    \bigg|\Big(T^nf(\gamma)-h(f)T^n1(\gamma)\Big)-\Big(T^nf(\gamma')-h(f)T^n1(\gamma')\Big)\bigg|\le c'\,e^{-n\xi-nu/32-mu}\|f\|_{u,M_0}.
    \end{equation}
    Note that \eqref{eq:Num} obviously holds when $m\le n/32$ by \eqref{eq:fu}. 
    
    Hence, we only need to prove \eqref{eq:Num} for $m>n/32$.
    Since $\gamma$ and $\gamma'$ have the same endpoints, we can choose
    $\hg=\hgp$. By Proposition \ref{pro:3}, we have  
    \[
    \Big|T^nf(\gamma)-T^nf(\gamma')\Big|+h(f)\Big|T^n1(\gamma)-T^n1(\gamma')\Big|
    \le\mathbb E\Big[\Big|f(\olg^n)-f(\olgp^n)\Big|Z_n(\tg^n)^\lambda\Big]+2\|f\|a_2\,e^{-\xi n-v_2m}.
    \] 
    It remains to bound the first term on the right-hand side above.
    Our strategy, which takes into account the good layers, is a bit different from \cite{LSW02a}.  
    For $k\geq0$ and $0\le l\le n$, define the event that $\hg^n$ has deepest down-crossing to $e^{-k}$ and prior highest up-crossing to $e^l$ as below,
    \begin{equation*}
    \mathcal U_{k,l}:=\left\{
\begin{matrix}
\hg^n\cap \mathcal B_{-k}\neq\varnothing,\hg^n\cap \mathcal B_{-k-1}=\varnothing, \\
\hg^n[0,\tau_{\mathcal B_{-k}}]\cap\partial \mathcal B_l\neq\varnothing,\hg^n[0,\tau_{\mathcal B_{-k}}]\cap\partial \mathcal B_{l+1}=\varnothing
\end{matrix}\right\}.
\end{equation*}
    On the event $\mathcal U_{k,l}$, we define
    $$\tau_k:=\inf\{t:t\geq \tau_{\mathcal B_{-k}},\hg^n(t)\in \partial \mathcal B_0\}\mathrm{\ \ and\ \ }\eta_{n,k}:=e^{-n}\hg^n[\tau_k,\tau_{\partial \mathcal B_n}].$$
    Since there are at most $(k+l+2)$ good layers spoiled by the part of $\olg^n$ before $\eta_{n,k}$ on the event $\mathcal U_{k,l}$, we have $(\olg^n,\olgp^n)\in\mathcal X_{m+n/2-k-l-2}(M_0)$ if $\eta_{n,k}\in\mathcal Y_{n,0,n/2}(M_0)$ and $k+l+2<m+n/2$. 
    By the strong Markov property and \eqref{eq:c1}, we have 
    \begin{align*}
        &\mathbb E\Big[\Big|f(\olg^n)-f(\olgp^n)\Big|\,Z_n(\tg^n)^\lambda\,\ind{\mathcal U_{k,l}}\,\ind{\eta_{n,k}\in\mathcal Y_{n,0,n/2}(M_0)}\Big]\\
        \le\ &2e^{-(m+n/2-k-l-2)u}\,\|f\|_{u,M_0}\,\mathbb E[Z_n(\eta_{n,k})^\lambda\,\ind{\mathcal U_{k,l}}]
        \leq 2b_1\,e^{-\xi n}e^{-(k+l)}e^{-(m+n/2-k-l-2)u}\|f\|_{u,M_0}.
    \end{align*}
    Moreover, applying Proposition \ref{pro:2} with $w=2\xi+1$, we have
    \begin{align*}
        &\mathbb E\Big[\Big|f(\olg^n)-f(\olgp^n)\Big|\,\ind{\mathcal U_{k,l}}\,\ind{\eta_{n,k}\notin\mathcal Y_{n,0,n/2}(M_0)}\Big]\leq 2a_3\,e^{-(2\xi+1)n}e^{-(k+l)}e^{-(m-k-1)u}\|f\|_{u,M_0}.
    \end{align*}
    Summing the above two estimates over $k\ge0$ and $0\le l< n$ or $k=l=0$, we obtain that for all $m>n/32$, with $u<1/2$,
    \[
    \mathbb E\Big[\Big|f(\olg^n)-f(\olg^n)\Big|Z_n(\tg^n)^\lambda\Big]\le 
    c''e^{-n\xi-nu/2-mu}\|f\|_{u,M_0}.
    \] 
    Combining these estimates, we obtain \eqref{eq:Num} and conclude that 
    \begin{equation}\label{eq:on}
        N_{u,M_0}(T^n(\cdot)-h(\cdot)T^n1)\le c'e^{-n\xi-nu/32}.
    \end{equation}

    Using \eqref{eq:on}, one can finish the last step as in the proof of Proposition 2.1 (ii) of \cite{LSW02a}. We omit the details and conclude the proof of Proposition \ref{pro:6}.
\end{proof}

\subsection{Proof of Proposition \ref{pro:1}}
\label{subsec:pro:1}
Our main goal of this subsection is to prove Proposition \ref{pro:1}. Before diving into the proof, we give a simple inequality for the switching constant $K(p)$ when $p$ is a Poisson kernel, which is a direct consequence of Lemma~\ref{lemma:mean}, combined with a last-exit decomposition. 

\begin{lemma}    \label{lemma:compare}
    Let $a<b<c$. For any closed set $A$, we have 
    \begin{equation}\label{eq:rk}
        R(p_{c}^A|_{\mathcal B_b\setminus A\times \partial \mathcal B_c\setminus A})\leq R(p_{b,c}^{A\cup\mathcal B_a})\quad\text{ and }\quad K(p_{c}^A|_{\mathcal B_b\setminus A\times \partial \mathcal B_c\setminus A})\leq K(p_{b,c}^{A\cup\mathcal B_a}).
    \end{equation}
    In particular, for any $d\leq b$, 
    \begin{equation}
        R(p_{d,c}^A)\leq R(p_{b,c}^{A\cup\mathcal B_a})\quad\text{ and }\quad K(p_{d,c}^A)\leq K(p_{b,c}^{A\cup\mathcal B_a}).
    \end{equation}

\end{lemma}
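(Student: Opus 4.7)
The plan is to exhibit, for each $x\in\mathcal B_b\setminus A$, a positive measure $\mu_x$ on $\partial\mathcal B_b\setminus A$ such that
\[
p_c^A(x,y)=\int_{\partial\mathcal B_b\setminus A}p_{b,c}^{A\cup\mathcal B_a}(z,y)\,\mu_x(dz)\qquad\text{for every }y\in\partial\mathcal B_c\setminus A,
\]
after which Lemma~\ref{lemma:mean}, applied with $q=p_{b,c}^{A\cup\mathcal B_a}$, yields both $R(p_c^A|_{\mathcal B_b\setminus A\times\partial\mathcal B_c\setminus A})\le R(p_{b,c}^{A\cup\mathcal B_a})$ and $K(p_c^A|_{\ldots})\le K(p_{b,c}^{A\cup\mathcal B_a})$ simultaneously. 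The ``in particular'' statement is then immediate: $d\le b$ forces $\partial\mathcal B_d\subseteq\mathcal B_b$, so the same representation restricted to $x\in\partial\mathcal B_d\setminus A$ exhibits $p_{d,c}^A$ as an average of $p_{b,c}^{A\cup\mathcal B_a}$, and Lemma~\ref{lemma:mean} delivers the analogous inequalities.

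To construct $\mu_x$ I would let $W$ be a Brownian motion from $x$, $\tau$ its first exit of $\mathcal B_c^\circ\setminus A$, and split the density of $W(\tau)$ at $y$ according to whether $W[0,\tau]\cap\overline{\mathcal B_a}=\varnothing$. On the avoidance event (which forces $x\notin\overline{\mathcal B_a}$), $W$ stays inside $\mathcal B_c^\circ\setminus(A\cup\mathcal B_a)$; since $\partial\mathcal B_b$ topologically separates $x$ from $\partial\mathcal B_c$, the strong Markov property at the first hit of $\partial\mathcal B_b$ rewrites this contribution as $\int p_{b,c}^{A\cup\mathcal B_a}(z,y)\,h_1(x,dz)$ for a sub-probability kernel $h_1$. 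On the complementary event, the last visit $L_a:=\sup\{t\le\tau:W(t)\in\overline{\mathcal B_a}\}$ is finite and $W(L_a)\in\partial\mathcal B_a\setminus A$ almost surely (path continuity combined with $W$-avoidance of $A$ on $(L_a,\tau]$); the classical Hunt last-exit decomposition factors the relevant density as $\ell(x,dw)\,H_a^c(w,y)\,\sigma_c(dy)$, where $H_a^c$ is the outward boundary Poisson kernel of $\mathcal B_c^\circ\setminus(A\cup\mathcal B_a)$ at $w\in\partial\mathcal B_a$. A second strong Markov application at the first hit of $\partial\mathcal B_b$ within $\mathcal B_c^\circ\setminus(A\cup\mathcal B_a)$---obtained by running the argument for interior points near $w$ and passing to the boundary limit---produces $H_a^c(w,y)=\int p_{b,c}^{A\cup\mathcal B_a}(z,y)\,h_2(w,dz)$, and summing the two cases yields the representation with
\[
\mu_x(dz)=\mathbf{1}_{x\notin\overline{\mathcal B_a}}\,h_1(x,dz)+\int_{\partial\mathcal B_a\setminus A}\ell(x,dw)\,h_2(w,dz).
\]

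The main obstacle will be to rigorously set up the Hunt last-exit decomposition at $\overline{\mathcal B_a}$ in the presence of the random closed killing set $A$, and to cleanly identify the post-$L_a$ kernel as the boundary Poisson kernel $H_a^c$. Both ingredients are classical---$\partial\mathcal B_a$ is smooth and regular, Hunt's switching formula applies to the $A$-killed Brownian semigroup, and $H_a^c$ is the inward normal derivative of the corresponding Green function---but the bookkeeping between the three auxiliary kernels $h_1,h_2,\ell$ together with the careful treatment of the boundary-point limit for $H_a^c$ deserves attention.
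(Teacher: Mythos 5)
Your proposal is correct and follows essentially the same route as the paper: the authors also represent $p_c^A(x,\cdot)$ as $\int p_{b,c}^{A\cup\mathcal B_a}(z,\cdot)\,\mu_x(dz)$ with $\mu_x$ built from a last-exit decomposition at $\partial\mathcal B_a$ (a Green's-function-times-boundary-Poisson-kernel term, matching your $\ell*h_2$), a first-passage term to $\partial\mathcal B_b$ on the $\mathcal B_a$-avoidance event (your $h_1$), plus a Dirac mass when $x\in\partial\mathcal B_b$, and then invoke Lemma~\ref{lemma:mean}. The only cosmetic difference is that the paper writes the kernels explicitly rather than leaving $h_1,h_2,\ell$ abstract.
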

\begin{proof}
Recall the definition of Poisson kernel $H$ defined just above \eqref{eq:pb}.
   For any $x\in \Bc_b\setminus A$, define the positive finite measure $\mu_x(dz)$ on $\partial\Bc_b^\circ\setminus A$ by
\begin{align*}
\mu_x(dz)&:=\int_{z_1\in\partial\Bc_a\setminus A} G_{\Bc_c^\circ\setminus A}(x,z_1)\, H_{\Bc_b^\circ\setminus(\Bc_a\cup A)}(z_1,z)\, \sigma_a(dz_1)\,\sigma_b(dz) \\[1mm]
&\qquad+ \mathbf{1}{\{x\in\Bc_b^\circ\setminus\Bc_a\}}\, H_{\Bc_b^\circ\setminus(\Bc_a\cup A)}(x,z)\,\sigma_b(dz)+ \mathbf{1}{\{x\in\partial\Bc_b\}}\,\delta_x(dz),
\end{align*}
where $H_{\Bc_b^\circ\setminus(\Bc_a\cup A)}(z_1,z)$ with $z_1\in \partial\Bc_a\setminus A$ denotes the ``boundary Poisson kernel'' (see e.g.\ Section 5.2 in \cite{MR2129588}), and $\delta_x(dz)$ denotes the Dirac measure. Then, for all $x\in \Bc_b\setminus A$ and $y\in\partial\mathcal B_c\setminus A$, we can represent $p_c^A(x,y)$ as 
\begin{equation}\label{eq:represent}
        p_c^A(x,y)=\int p_{b,c}^{A\cup\partial\mathcal B_a}(z,y)\,\mu_x(dz).
\end{equation}
Applying Lemma \ref{lemma:mean}, we obtain \eqref{eq:rk} immediately.
\end{proof}

We will use the following simple inequality.
\begin{lemma}\label{lem:si}
    Let $f$ and $g$ be two positive measurable functions with respect to some measure $\nu$. Suppose $f'\ge f$ and $g'\ge g$ such that $\nu(f')=\nu(g')=1$. Then, 
    \[
    1-\nu(\min\{f,g\})\le (1-\nu(f))+(1-\nu(g))+\frac12\,\nu(|f'-g'|).
    \]
\end{lemma}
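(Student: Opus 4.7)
The plan is to start from the pointwise identity
\[
\min\{f,g\}=\tfrac{1}{2}\bigl(f+g-|f-g|\bigr),
\]
which, after integration against $\nu$, yields
\[
1-\nu(\min\{f,g\})=\bigl(1-\tfrac{1}{2}\nu(f)-\tfrac{1}{2}\nu(g)\bigr)+\tfrac{1}{2}\nu(|f-g|).
\]
This converts the problem into controlling $\nu(|f-g|)$ by $\nu(|f'-g'|)$ plus the two deficits $1-\nu(f)$ and $1-\nu(g)$.

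For that control, I would use the triangle inequality pointwise: since $f'\ge f$ and $g'\ge g$ are both nonneg\-ative,
\[
|f-g|\le (f'-f)+|f'-g'|+(g'-g).
\]
Integrating and using the normalisations $\nu(f')=\nu(g')=1$ gives
\[
\nu(|f-g|)\le \bigl(1-\nu(f)\bigr)+\nu(|f'-g'|)+\bigl(1-\nu(g)\bigr).
\]

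Plugging this into the first display and collecting terms, the coefficient of $1-\nu(f)$ becomes $\tfrac12+\tfrac12=1$, and likewise for $1-\nu(g)$, leaving the factor $\tfrac12$ in front of $\nu(|f'-g'|)$ exactly as claimed. There is no real obstacle: the whole argument is one application of the $\min$-identity and one pointwise triangle inequality, with the hypotheses $f'\ge f$, $g'\ge g$ being used precisely to turn $\nu(f'-f)$ and $\nu(g'-g)$ into the nonnegative quantities $1-\nu(f)$ and $1-\nu(g)$.
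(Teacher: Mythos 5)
Your proof is correct and is essentially the same argument as the paper's: both rest on the identity relating $\min\{f,g\}$ to $f+g$ and $|f-g|$ (the paper phrases it via $\nu(\min)+\nu(\max)=\nu(f)+\nu(g)$ and $\max\{f',g'\}=\tfrac12(f'+g'+|f'-g'|)$), and both transfer from $(f,g)$ to the normalized majorants $(f',g')$ using $f'\ge f$, $g'\ge g$. Your pointwise triangle inequality $|f-g|\le(f'-f)+|f'-g'|+(g'-g)$ is just the pointwise form of the paper's bound $\max\{f,g\}\le\max\{f',g'\}$, so the two derivations coincide after rearrangement.
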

\begin{proof}
    It follows from the following two facts
    \[
    \nu(\min\{f,g\})+\nu(\max\{f,g\})=\nu(f)+\nu(g), \mbox{ and}
    \] 
    \[
    \nu(\max\{f,g\})\le \nu(\max\{f',g'\})=1+\frac12\,\nu(|f'-g'|),
    \]
    where in the second inequality we used $\max\{f',g'\}=(f'+g'+|f'-g'|)/2$ and $\nu(f')=\nu(g')=1$.
\end{proof}

We are now ready to prove Proposition \ref{pro:1}. 

\begin{proof}[Proof of Proposition \ref{pro:1}]We divide the proof into four steps.

\smallskip
\noindent \textbf{Step 1: Couple $\hpro$ and $\hpro'$.} 
    By the definition of $\mathcal X_m(M)$ (see \eqref{eq:Xm}), we can pick $m'\geq m$ such that $\gamma_{m'}=\gamma_{m'}'\in \mathcal Y_{m',0,m}(M)$.
    
    For $|y|<1$ and a closed set $A$, define $$u_A(y):=\mathbb P_y\big(W(0,\tau]\cap A=\varnothing\big).$$
    Define the finite measure
    $$
    \mu:=\int_{\partial \mathcal B_{-m/2}}\mathbb P_y^{\mathcal B_0}\, \sigma_{-m/2}(dy),
    $$
    where $\Pb_y^{\mathcal B_0}$ denotes the probability measure of a Brownian motion started from $y$ and stopped when it hits $\partial\Bc_0$. Recall from \eqref{eq:shift} that $X_{m/2}$ is the part of $X$ after its first visit of $\partial\Bc_{-m/2}$.
    Then, the laws of $X_{m/2}$ and $X'_{m/2}$
    have the following Radon-Nikodym derivatives with respect to $\mu$, respectively:
    $$
    q(\alpha) := \frac{p^\gamma_{-m/2}(0,\alpha(0))}{u_\gamma(0)}\ind{\alpha\cap\gamma=\varnothing}\quad \mbox{ 
    and } \quad
    q'(\alpha) := \frac{p^{\gamma'}_{-m/2}(0,\alpha(0))}{u_{\gamma'}(0)}\ind{\alpha\cap\gamma'=\varnothing}.
    $$
    
    Note that $\gamma\cup\mathcal B_{-m'}=\gamma'\cup\mathcal B_{-m'}$.
    By the maximal coupling (see e.g.\ (4.3) in \cite{LSW02a}), we can couple $\hpro$ and $\hpro'$ such that (note that one can normalize $\mu$ in the beginning, but the normalizing constant will cancel with the Radon-Nikodym derivatives)
    \begin{align*}
        &\mathbb P\big(\hpro\setminus\mathcal B_{-m/2}=\hpro'\setminus\mathcal B_{-m/2}\big)\ge \int\min\{q,q'\}\,d\mu\\
        \ge&\int\min\left\{\frac{p^\gamma_{-m/2}(0,\alpha(0))}{u_\gamma(0)},\frac{p^{\gamma'}_{-m/2}(0,\alpha(0))}{u_\gamma'(0)}\right\}\ind{\alpha\cap(\gamma\cup \mathcal B_{-m'})=\varnothing}\,\mu(d\alpha)\\
        =& \int_{\partial \mathcal B_{-m/2}}\int \min\left\{\frac{p^\gamma_{-m/2}(0,\alpha(0))}{u_\gamma(0)},\frac{p^{\gamma'}_{-m/2}(0,\alpha(0))}{u_\gamma'(0)}\right\} \ind{\alpha\cap(\gamma\cup \mathcal B_{-m'})=\varnothing}\, \mathbb P_y^{\mathcal B_0}(d\alpha) \,\sigma_{-m/2}(dy)\\
        =& \int_{\partial \mathcal B_{-m/2}} \min\left\{\frac{p_{-m/2}^{\gamma}(0, y)}{u_\gamma(0)},\dfrac{p_{-m/2}^{\gamma'}(0, y)}{u_{\gamma'}(0)}\right\}\,u_{\gamma\cup\mathcal B_{-m'}}(y)\, \sigma_{-m/2}(dy).
    \end{align*}  
    We write $\nu(dy):=u_{\gamma\cup\mathcal B_{-m'}}(y)\,\sigma_{-m/2}(dy)$, and note that $u_{\gamma}(0)\ge\int p_
    {-m/2}^\gamma(0,y)\,\nu(dy)$ and a similar result holds for $\gamma'$. Applying Lemma~\ref{lem:si} with 
    \[
    f=\frac{p_{-m/2}^{\gamma}(0, y)}{u_\gamma(0)}\le f'=\dfrac{p_{-m/2}^{\gamma}(0, y)}{\int p_{-m/2}^{\gamma}(0, z)\nu(dz)}\quad \text{and} \quad g=\frac{p_{-m/2}^{\gamma'}(0, y)}{u_\gamma'(0)}\le
    g'=\dfrac{p_{-m/2}^{\gamma'}(0, y)}{\int p_{-m/2}^{\gamma'}(0, z)\nu(dz)},
    \]
    we obtain that \begin{equation}\label{eq:T123}
    \mathbb P\big(\hpro\setminus\mathcal B_{-m/2}\neq \hpro'\setminus\mathcal B_{-m/2}\big)
        \leq 1-\nu(\min\{f,g\})\le (1-\nu(f))+(1-\nu(g))+\frac12\,\nu(|f'-g'|).
    \end{equation}
    
\smallskip
\noindent \textbf{Step 2: Upper bound $\frac12\,\nu(|f'-g'|)$.} 
    Denote $p:=p_{-m'+1/2,-m/2}^{\gamma\cup\mathcal B_{-m'}}$ for brevity.  
    According to \eqref{eq:represent},
    there are finite measures $\mu_1$ and $\mu_2$ on $\partial \mathcal{B}_{-m'+1/2}$ such that 
    \[
    p_{-m/2}^{\gamma}(0, y)=\int p(x,y)\mu_1(dx) \quad \text{and} \quad p_{-m/2}^{\gamma'}(0,y)=\int p(x,y)\mu_2(dx).
    \]
    Moreover, we define the function $q$ on $\{1,2\}\times\partial\mathcal B_{-m/2}\setminus\gamma$ such that $q(1,\cdot):=p_{-m/2}^\gamma(0,\cdot)$ and $q(2,\cdot):=p_{-m/2}^{\gamma'}(0,\cdot)$. By the definition of extremal total variation distance $R(\cdot)$ (see \eqref{eq:R}) and Lemma \ref{lemma:mean}, we have
    \begin{equation}\label{eq:Rp}
    \frac12\,\nu(|f'-g'|)\leq R(q)\leq R(p).
    \end{equation}
    Moreover, we have
    \begin{equation}\label{eq:Rp1}
        R(p)
        \leq \prod_{i=-m'}^{[-m/2]-1}R(p_{i+1/2,i+1}^{\gamma\cup \mathcal B_{-m'}})
        \leq\prod_{i=-m'}^{[-m/2]-1}R(p_{i+1/2,i+1}^{\gamma\cup \mathcal B_{i}})
        \leq (1-\frac{2}{\sqrt M +1})^{[m/2]},
    \end{equation}
    where we used \eqref{eq:329} and Lemma \ref{lemma:basic} in the first inequality, used Lemma \ref{lemma:compare} in the second one, and used Lemma \ref{lemma:RK} and the fact that there are at least $[m/2]$ good layers from $-m'$ to $[-m/2]-1$ in the last one.

\smallskip
\noindent \textbf{Step 3: Upper Bound $(1-\nu(f))+(1-\nu(g))$.} 
    We aim to prove
    \begin{equation}\label{eq:T1}
        1-\nu(f)\leq \dfrac{K(p)e^{-m/2}}{1+(K(p)-1)e^{-m/2}}.
    \end{equation}
    A similar bound also holds for $(1-\nu(g))$.
    Recall the harmonic measure $h$ defined in \eqref{eq:h}.
    For $x\in\partial\mathcal B_{-m'}\setminus\gamma$, we have
    \begin{align*}
        u_\gamma(x)=\ &\int p_{-m',-m/2}^{\gamma}(x,y)\left(\int_{\partial\mathcal B_{-m'}\setminus\gamma} h_{\gamma\cup\partial\mathcal B_0\cup\partial\mathcal B_{-m'}}(y,dz)u_\gamma(z)\right)\sigma_{-m/2}(dy)\\
        \ &+\int p_{-m',-m/2}^{\gamma}(x,y)\,u_{\gamma\cup\mathcal B_{-m'}}(y)\,\sigma_{-m/2}(dy).
    \end{align*}
    By Lemma \ref{lemma:tricky} and the fact that $$\int p_{-m',-m/2}^{\gamma}(x,y)\, h_{\gamma\cup\partial\mathcal B_0\cup\partial\mathcal B_{-m'}}(y,\partial\mathcal B_{-m'}\setminus\gamma)\,\sigma_{-m/2}(dy)\leq e^{-m/2},$$ 
    we obtain that for all $x\in\partial\mathcal B_{-m'}\setminus \gamma$,
    \begin{equation*}
        \int p_{-m',-m/2}^{\gamma}(x,y)\,u_{\gamma\cup\mathcal B_{-m'}}(y)\,\sigma_{-m/2}(dy)\geq \frac{1-e^{-m/2}}{1+(K(p_{-m',-m/2}^{\gamma})-1)\,e^{-m/2}}\,u_\gamma(x).
    \end{equation*}
    Thus, 
    \begin{align*}
        1-\nu(f)=\ & 1-\frac{\int p_{-m/2}^{\gamma}(0, y)\,u_{\gamma\cup\mathcal B_{-m'}}(y)\,\sigma_{-m/2}(dy)}{u_\gamma(0)}\\
        =\ & 1-\frac{\iint p_{-m'}^\gamma(0,x)\,p_{-m',-m/2}^{\gamma}(x,y)\,u_{\gamma\cup\mathcal B_{-m'}}(y)\,\sigma_{-m'}(dx)\,\sigma_{-m/2}(dy)}{\int p_{-m'}^\gamma(0,x)\,u_\gamma(x)\,\sigma_{-m'}(dx)}\\
        \leq\ & \frac{K(p_{-m',-m/2}^{\gamma})\,e^{-m/2}}{1+(K(p_{-m',-m/2}^{\gamma})-1)\,e^{-m/2}}
        \leq\,\frac{K(p)\,e^{-m/2}}{1+(K(p)-1)\,e^{-m/2}},
    \end{align*}
    where the last inequality is obtained by Lemma \ref{lemma:compare}.

\smallskip
\noindent    \textbf{Step 4: Conclusion.} Combining \eqref{eq:T123}, \eqref{eq:Rp} and \eqref{eq:T1}, we have
    \begin{equation*}
        \mathbb P\big(\hpro\setminus\mathcal B_{-m/2}\neq \hpro'\setminus\mathcal B_{-m/2}\big)
        \leq R(p)+\frac{2K(p)e^{-m/2}}{1+(K(p)-1)e^{-m/2}},
    \end{equation*}
    which combined with Lemma \ref{lemma:RK} and \eqref{eq:Rp1} completes the proof.
\end{proof}

\subsection{Proof of Proposition \ref{pro:2}}
\label{subsec:pro:2}

The main purpose of this subsection is to prove Proposition \ref{pro:2}.

We first give a version of Theorem \ref{theorem:bhp}, which is tailored for our use.
Let $D$ be a bounded domain with smooth boundary. Suppose $x\in D$ and $y\in\partial D$. We denote by $\Pb^D_{x\to y}$ the law of $\eta[0,\tau_{\partial D}]$, where $\eta$ is a Brownian motion started from $x$ conditioned on $\eta(\tau_{\partial D})=y$. 

\begin{lemma}\label{lem:1530}
    Consider the triples $(x_i,y_i,D_i)$ with $x_i\in D_i$ and $y_i\in \partial D_i$ that belong to one of the following three cases
    \begin{enumerate}[label=(\arabic*)]
        \item $x_i\in\partial\Bc_{-1}$ and $D_i=\mathcal B_l^\circ\setminus \mathcal B_{-4}$ with $l=2,3,4$.
        \item $x_i\in\partial\Bc_{-1}$ and $D_i=\mathcal B_2^\circ$.
        \item $x_i\in\partial\Bc_{2}$ and $D_i=\mathcal B_l^\circ\setminus \mathcal B_{-1}$ with $l=5,6,7$.
    \end{enumerate}
   Let $\eta_i$ be sampled according to $\Pb_{x_i\to y_i}^{D_i}$ and $\ol\eta:=\cup_{i=1}^k \eta_i$. Then, for each $\varepsilon>0$, there exists $0<C_1=C_1(\varepsilon,k)<\infty$ such that
    \begin{equation}
        \prod_{i=1}^k\mathbb P_{x_i\to y_i}^{D_i}\Big(K(p_{1/2,1}^{\ol\eta\cup\mathcal B_0})<C_1\Big)>1-\varepsilon.
    \end{equation}
\end{lemma}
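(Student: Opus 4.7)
The plan is to use Lemma~\ref{lem:5.17} to identify $K(p_{1/2,1}^{\ol\eta\cup\mathcal B_0})$ with the optimal BHP comparison constant in the random domain $\mathcal B_1^\circ\setminus(\ol\eta\cup\mathcal B_0)$ (for points on $\partial\mathcal B_{1/2}\setminus\ol\eta$), and then to transfer the almost-sure finiteness of such constants for unconditioned Brownian motions, guaranteed by Theorem~\ref{theorem:bhp}, to the $h$-processes $\eta_i$ via an absolute continuity argument. Observe that this Poisson kernel depends on $\ol\eta$ only through $\ol\eta\cap(\mathcal B_1\setminus\mathcal B_0^\circ)$, the trace in the closed annulus between $\partial\mathcal B_0$ and $\partial\mathcal B_1$, which will let us restrict attention to a compact set bounded away from the singular endpoints of the $h$-processes.

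The central step is the $h$-transform. For each $i$, let $W_i$ be the unconditioned Brownian motion from $x_i$ killed on exit from $D_i$, write $\ol W:=\cup_iW_i$, and put $h_i(\cdot):=H_{D_i}(\cdot,y_i)$; then $\eta_i$ is the Doob $h$-process of $W_i$. A case-by-case check of (1)--(3) shows that in every case $y_i\in\mathcal B_0^\circ\cup(\Rb^3\setminus\mathcal B_1)$ (for instance, $\partial\mathcal B_{-1}$ and $\partial\mathcal B_{-4}$ lie inside $\mathcal B_0^\circ$, while $\partial\mathcal B_l$ with $l\ge 2$ lies outside $\mathcal B_1$). Thus one can fix a universal $\rho\in(0,1)$ such that $B(y_i,\rho)\cap(\mathcal B_1\setminus\mathcal B_0^\circ)=\varnothing$ for every admissible $y_i$. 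Applying the $h$-transform identity at the stopping time $\sigma:=\tau_{B(y_i,\rho)}(W_i)\wedge\tau_{\partial D_i}(W_i)$ gives, on $\{\sigma<\tau_{\partial D_i}\}$, the explicit Radon--Nikodym density $h_i(W_\sigma)/h_i(x_i)$, which is uniformly bounded since $h_i$ is continuous and bounded above and away from zero on the compact set $\partial B(y_i,\rho)\subset D_i$. Coupling this with a strong Markov / excursion decomposition at $\partial B(y_i,\rho)$ to control the small-probability event that $\eta_i$ re-enters $\mathcal B_1\setminus\mathcal B_0^\circ$ after first hitting $B(y_i,\rho)$ (a rare event under the $h$-process law for small $\rho$, by standard conditioned-BM estimates near the target), one obtains that the law of $\eta_i\cap(\mathcal B_1\setminus\mathcal B_0^\circ)$ is absolutely continuous with respect to that of $W_i\cap(\mathcal B_1\setminus\mathcal B_0^\circ)$ with density bounded by some $M_i<\infty$. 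Independence of the $k$ paths then yields the same statement for $\ol\eta$ versus $\ol W$ with density $M:=\prod_iM_i$.

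The proof is completed by invoking Theorem~\ref{theorem:bhp} with $U=\mathcal B_{3/2}^\circ\setminus\mathcal B_0$ and $K=\mathcal B_1\setminus\mathcal B_0^\circ$ (the CSL-based proof is unchanged by adjoining the deterministic obstacle $\mathcal B_0$ to the random slit, since this only enlarges the set being avoided), together with Lemma~\ref{lem:5.17}: these imply $K(p_{1/2,1}^{\ol W\cup\mathcal B_0})<\infty$ almost surely, so for every $\delta>0$ there exists $C_0=C_0(\delta,k)$ with $\Pb(K(p_{1/2,1}^{\ol W\cup\mathcal B_0})\ge C_0)<\delta$. Since $K(p_{1/2,1}^{\ol\eta\cup\mathcal B_0})$ is measurable with respect to $\ol\eta\cap(\mathcal B_1\setminus\mathcal B_0^\circ)$, the absolute continuity yields
$$
\prod_{i=1}^k\Pb_{x_i\to y_i}^{D_i}\big(K(p_{1/2,1}^{\ol\eta\cup\mathcal B_0})\ge C_0\big)\le M\,\delta;
$$
taking $\delta=\varepsilon/M$ and $C_1=C_0(\varepsilon/M,k)$ finishes the proof.

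The principal obstacle is the absolute-continuity step. The $h$-transform natively gives absolute continuity of $\eta_i[0,\sigma]$ against $W_i[0,\sigma]$ only for stopping times $\sigma$ strictly before $\tau_{\partial D_i}$, whereas one needs to compare the traces in the fixed compact annulus $\mathcal B_1\setminus\mathcal B_0^\circ$; the delicate point is a quantitative exclusion of $h$-process excursions from $B(y_i,\rho)$ back into this annulus. The remaining ingredients---Lemma~\ref{lem:5.17}, the (slight extension of) Theorem~\ref{theorem:bhp}, and a union bound across the $k$ independent paths---are essentially routine.
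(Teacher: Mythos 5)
Your proposal is correct and follows essentially the same route as the paper: the paper's (three-line) proof likewise observes that $p_{1/2,1}^{\ol\eta\cup\mathcal B_0}$ depends only on the traces in $V=\overline{\Bc_1\setminus\Bc_0}$, asserts that the law of $\eta_i[\tau_V,L_V]$ is absolutely continuous with uniformly bounded density with respect to that of an unconditioned Brownian motion, and concludes via Theorem~\ref{theorem:bhp} and Lemma~\ref{lem:5.17}. The only substantive difference is that the paper compares against Brownian motions started from \emph{fixed} reference points $x_i^0\in\partial\Bc_{-1}$ or $\partial\Bc_2$, which is what makes the resulting constant uniform in $(x_i,y_i,D_i)$; your version, which keeps the reference motion started at $x_i$, needs (and gets, by the same Harnack comparison of entrance laws on $\partial V$) this extra uniformity, and your choice of $(U,K)$ should be adjusted to, say, $U=\Bc_1^\circ\setminus\Bc_0$ and $K=\partial\Bc_{1/2}$ so that $K\subseteq U$ and the functions in Lemma~\ref{lem:5.17} are harmonic on all of $U\setminus\Ac$.
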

\begin{proof}
    Let $V=\overline{\mathcal B_1\setminus\mathcal B_0}$. 
    Let $x_i^0$ be $(e^{-1},0,0)$ if $x_i\in\partial\Bc_{-1}$ and $(e^2,0,0)$ otherwise.
    Then, the law of $\eta_i[\tau_V,L_V]$ is absolutely continuous with respect to that of $W_i[\tau_V,L_V]$ (with uniformly bounded density), where $W_i$ is a Brownian motion started from $x_i^0$.
    We conclude the result immediately by using Theorem \ref{theorem:bhp} and Lemma \ref{lem:5.17}.
\end{proof}
 
Next, we prove Proposition \ref{pro:2} based on Lemma \ref{lem:1530}.

\begin{proof}[Proof of Proposition \ref{pro:2}\\]
    Recall that $W$ is the Brownian motion started from some point $x\in\partial\Bc_0$.
    Note that we only need to prove the result for large $n$ and $n'$.
    Suppose $n'\geq 3$. 
    Set $\Cc_{-1}=\varnothing$ and $\Cc_{[n'/3]}=\mathcal B_{n'}$. For $j=0,\ldots,[n'/3]-1$, let
    \[
    \Cc_j:=\mathcal B_{3j} \quad \text{and} \quad
    D_j:=\Cc_{j+1}^\circ\setminus \Cc_{j-1}.
    \]  
    Let $\Xi:=\{\partial \Cc_j: 0\le j\le [n'/3]\}$, and $(\tau^{i}_{\Xi})_{0\le i\le m}$ be the sequence of hitting times associated with $\Xi$ and $W$ (see \eqref{eq:oht}), such that $\tau^{m}_{\Xi}$ is the first time $W$ hits $\partial\Bc_{n'}$.
    Note that $m$ is a random integer. For each $0\le i\le m$, let $j_i$ be the random integer between $0$ and $n'$ such that $W(\tau^{i}_{\Xi})\in\partial\mathcal B_{j_i}$. Let 
    \[
    \Nc:=|\{ 0\le i< m: j_i\leq n \}|.
    \]
    Let $\mathscr S=\cup_{i\in\mathbb Z}\{\partial\mathcal B_{i}\}$. 
    By Lemma~\ref{lem:883} and scaling, for $w>0$, we can choose a positive integer $C$ such that for each $n\ge1$,
    \begin{equation}\label{align:1583}
        \Pb(\Nc>Cn) \le \Pb_{x}\big(\tau_{\mathscr S}^{Cn}\le L_{\Bc_{n}}\big)\le e^{-wn}.
    \end{equation}

    Below, we condition on $(j_i)_{0\le i\le m}$, such that $\{\Nc\le Cn\}$ occurs, and the points $x_i:=W(\tau^{i}_{\Xi})\in\partial\mathcal B_{j_i}$ for all $i$ (note $x_0=x$), then the conditional law of $W[0,\tau_{\partial\mathcal B_{n'}}]$, denoted by $\Qb$, can be written as (by the strong Markov property) 
    \[
    \mathbb Q=\oplus_{i=0}^{m-1}\,\Pb^{D_{j_i/3}}_{x_{i}\to x_{i+1}},
    \]
    where $\oplus$ denotes the image measure obtained by concatenation of paths.
    For all $i=1,\ldots,n$, let
    $$
    a_i=a_i(M):=\mathbf{1}\Big\{K\Big(p^{W[0,\tau_{\partial\mathcal B_{n'}}]\cup\mathcal B_{i-1}}_{i-1/2,i}\Big)<M\Big\}.
    $$
    For $h=1,\ldots,[(n+3)/6]$, let 
    $$f(h):=|\{0\le i\le m-1:j_i=6h-6\text{ or }j_i=6h-3\}|,$$ and define
    $$H:=\{h\ge 1:f(h)\le 24C,6h-4\le n-1\}.$$
    By Lemma \ref{lem:1530}, for any $\varepsilon>0$, there exists a constant 
    \begin{equation}\label{eq:Mdef}
    M=\max_{1\le k\leq24C}\{C_1(\varepsilon,k)\}\in (0,\infty)    
    \end{equation}
     such that for all $h\in H$,
    \begin{equation}\label{eq:Qeps}
        \mathbb Q(a_{6h-4}(M)=0)\le\varepsilon.
    \end{equation}
    Moreover, note that $a_{6h-4}$ for different $h$ are independent under $\Qb$.
    Since $\Nc\le Cn$, we have 
    \[
    |H|\geq [(n+3)/6]-\Nc/(24C)>n/8-1/2.
    \]
    Choosing $\varepsilon=e^{-144w}-e^{-153w}$ in \eqref{eq:Qeps}, by a standard large deviation estimate, we obtain that
    \begin{equation}\label{align:1600}
    \mathbb Q\left(\sum_{h=1}^{[(n+4)/6]}a_{6h-4}<\frac{n}{9}\right)
        \leq\ \mathbb Q\Big(\sum_{h\in H} a_{6h-4}<\frac{n}{9}\Big)\le e^{72w-nw}.
    \end{equation}
    Combining \eqref{align:1583} and \eqref{align:1600}, 
    we get
    $$
    \Pb_{x}\left(\sum_{h=1}^{[(n+4)/6]}a_{6h-4}<\frac{n}{9}\right)\le(1+e^{72w})e^{-wn}.
    $$
    For each $i=0,\ldots,5$, applying the above estimate to $W$ after its first visit of $\partial\mathcal B_i$, we get 
    $$\Pb_{x}\left(\sum_{h=1}^{[(n+4-i)/6]}a_{6h-4+i}<\frac{n-i}{9}\right)\le(1+e^{72w})e^{-w(n-i)}.$$ 
    It follows that 
    \[
    \Pb_{x}\Big(\sum_{i=1}^n a_i<\frac{2n}{3}-3\Big)<6(1+e^{72w})e^{-w(n-5)}.
    \] 
    By definition of nice paths (see \eqref{eq:Yc}) and rescaling, we conclude the proof.
\end{proof}

\subsection{Proof of Proposition \ref{pro:4}}
\label{subsec:pro:4}
This subsection is devoted to the proof of Proposition~\ref{pro:4}, the 3D counterpart of Proposition~5.1 in \cite{LSW02a}, whose proof we can heuristically explain as a three-step strategy: 1) make the two paths ``good'' (in other words, easily avoided by an $h$-process); 2) make them match up and walk together for a little while; 3) show that they are unlikely to decouple as long as they have already been walking together for quite a while. In our case, we will basically follow the same strategy, but there are two essential differences, as we now explain.

The first difference is the measurement of the quality of paths. To measure a ``good'' path in $2$D, one can simply require the path to stay in some cone; see the set $\Gamma^+$ introduced in Section 3.2 of \cite{LSW02a}. However, as such a condition is not sufficient in $3$D, we will use the switching constant (or BHP) introduced before instead; see the set $\Gamma_1$ defined in \eqref{eq:Ga1} below. Another difference is that we need to ensure the paths have enough  good layers when we couple.

We first review some results from \cite{G98} (see also \cite[Section 3]{LSW02a}), which will be used later. First,
by (10) in \cite{G98} and (3.6) in \cite{LSW02a}, we have the following result.
\begin{lemma}
    There exist constants $0<b_1=b_1(\lambda), b_2=b_2(\lambda)<\infty$ such that 
    \begin{equation}\label{eq:c1}
    \sup_{n\ge 1}\,\sup_{\gamma\in\Gamma}\,\rQ_n(\gamma)\le b_1,
\end{equation}
and for all $n,n'\geq1$ and $\gamma\in\Gamma$,
\begin{equation}\label{eq:b2}
        \rQ_n(\gamma)\leq b_2\, \rQ_{n'}(\gamma).
    \end{equation}
\end{lemma}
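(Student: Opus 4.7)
The plan is to derive both bounds from the semigroup identity $T_\lambda^{n+m}=T_\lambda^n\circ T_\lambda^m$, which combined with the definitions \eqref{eq:Tl} and \eqref{eq:rq} yields
\begin{equation*}
\rQ_{n+m}(\gamma) \;=\; e^{n\xi}\,\Eb\big[\rQ_m(\olg^n)\,Z_n(\tg^n)^\lambda\big].
\end{equation*}
Combined with the characterization of $\xi$ as the exponential decay rate in \eqref{eq:exponent}, both inequalities reduce to uniform control on how $\rQ_m(\olg^n)$ interacts with $Z_n(\tg^n)^\lambda$ inside this expectation. Both pieces are classical, stated in \cite{G98} and \cite[Section 3]{LSW02a}; below I sketch how I would reconstruct them here.

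For \eqref{eq:c1}, first I would rewrite $Z_n(\tg^n)$ via the $h$-process identity $(\hpro,\widehat\hpro)\stackrel{d}{=}\bigl(\hat W[0,\tau_{\partial\Bc_0}],\,\hat W[\tau_{\partial\Bc_0},\cdot]\bigr)$ conditioned on $\hat W[0,\tau_{\partial\Bc_0}]\cap\gamma=\varnothing$, where $\hat W$ is a free Brownian motion from $0$. This factors out a prefactor $1/\Pb(\hat W[0,\tau_{\partial\Bc_0}]\cap\gamma=\varnothing)^\lambda$ depending only on $\gamma$, which is then absorbed by a one-step Harnack comparison at the innermost scale. What remains is the $\lambda$-th moment of a joint non-intersection probability of two \emph{independent} Brownian motions up to scale $e^n$, and a standard sub-multiplicativity argument across dyadic scales---exactly the proof that $\xi$ is well-defined---yields $\rQ_n(\gamma)\le b_1$ uniformly in $n$ and $\gamma$.

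For \eqref{eq:b2}, assume without loss of generality $n'=n+m$ with $m\ge 0$. The direction $\rQ_{n+m}(\gamma)\le b_1\,\rQ_n(\gamma)$ is immediate: pull the uniform bound $\rQ_m(\olg^n)\le b_1$ from \eqref{eq:c1} out of the expectation in the semigroup identity. The hard part will be the reverse direction $\rQ_n(\gamma)\le b_2\,\rQ_{n+m}(\gamma)$, which amounts to the uniform quasi-multiplicativity estimate
\begin{equation*}
\Eb\big[\rQ_m(\olg^n)\,Z_n(\tg^n)^\lambda\big]\;\ge\; c\,\Eb\big[Z_n(\tg^n)^\lambda\big].
\end{equation*}
Since $\rQ_m$ admits no pointwise lower bound (it can be tiny when the configuration of $\olg^n$ near $\partial\Bc_0$ is ``tangled''), I would restrict the expectation to the event that $\olg^n$ is well-separated near its terminal point on $\partial\Bc_0$, in the sense that the endpoint of $\olg^n$ sits in an open cone disjoint from the remainder of the trace. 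On this event a direct cone construction gives $\rQ_m(\olg^n)\ge c>0$ uniformly, so it suffices that this separation event carries probability bounded away from $0$ under the $Z_n^\lambda$-reweighted measure $\widetilde{\Pb}^n_\gamma$. This is precisely a \emph{conditional} separation statement; in $3$D it is supplied by Theorem \ref{thm:sep}, the corresponding $2$D input uses conformal invariance as in \cite[Section 3]{LSW02a}, and Lawler's earlier unconditional variant appears in \cite{G98}. This single separation input is the only non-trivial ingredient in the whole lemma.
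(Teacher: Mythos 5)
The paper does not actually prove this lemma: it is imported wholesale, citing (10) of \cite{G98} for \eqref{eq:c1} and (3.6) of \cite{LSW02a} for \eqref{eq:b2}. Your skeleton --- the semigroup identity $\rQ_{n+m}(\gamma)=e^{n\xi}\,\Eb[\rQ_m(\olg^n)\,Z_n(\tg^n)^\lambda]$ plus quasi-multiplicativity --- is the right one, but two of your key claims are wrong. First, for \eqref{eq:c1} you assert that ``a standard sub-multiplicativity argument\dots exactly the proof that $\xi$ is well-defined'' gives the uniform upper bound on $\rQ_n$. This is backwards. Setting $\bar a_n:=\sup_{\gamma}\Eb[Z_n(\tg^n)^\lambda]$, sub-multiplicativity $\bar a_{n+m}\le\bar a_n\,\bar a_m$ and Fekete's lemma give $-\xi=\inf_n n^{-1}\log\bar a_n$, hence $\bar a_n\ge e^{-n\xi}$, i.e.\ a \emph{lower} bound $\sup_\gamma\rQ_n(\gamma)\ge 1$. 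The upper bound \eqref{eq:c1} requires the reverse, quasi-multiplicativity $\bar a_{n+m}\ge c\,\bar a_n\,\bar a_m$, and that is precisely where Lawler's separation lemma enters in \cite{G98}. So the separation input is not ``the only non-trivial ingredient'' confined to \eqref{eq:b2}; it is already needed for \eqref{eq:c1}.

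Second, the separation statement you invoke for \eqref{eq:b2} --- that under the reweighted measure $\widetilde{\mathbb P}^n_\gamma$ the path $\olg^n$ is well-separated near its endpoint with probability bounded below, uniformly in $n$ and $\gamma$ --- is \emph{not} supplied by Theorem~\ref{thm:sep}. The CSL is a quenched statement about a single Brownian motion conditioned to avoid a frozen trace, and its constants $\delta_1,\delta_2$ are random (they depend on $\unionW$), so it cannot yield a bound uniform over $\gamma\in\Gamma$; moreover it involves a different conditioning from the $Z_n^\lambda$-weighting. The correct input is the classical, non-conditional separation lemma for the weighted law, namely Lemma~\ref{lem:sep} of this paper (i.e.\ \cite[Lemma 4.2]{G98}), which yields \eqref{eq:c23}. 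The remark following Theorem~\ref{thm:sep} states explicitly that the CSL and the classical separation lemmas ``are not implied by each other,'' so this substitution is not cosmetic. With Lemma~\ref{lem:sep} in place of Theorem~\ref{thm:sep}, and with the quasi-multiplicativity direction corrected in both halves, your outline does match the standard arguments behind the two results the paper cites.
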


Consider two opposite cones $\Cone^-:=\Cone((-1,0,0),1/10)$ and $\Cone^+:=\Cone((1,0,0),1/10)$ (see \eqref{eq:cone} for the definition). 
For $\gamma\in \Gamma$, recalling that $X$ is the $h$-process from $0$ to $\partial\Bc_0$ avoiding $\gamma$, we define the following function in $\gamma$,
\begin{equation}\label{eq:varphi}
\varphi(\gamma):=\ind{\gamma_{1/2}\subseteq \Cone^-}\,\mathbb P(\hpro_{1/2}\subseteq\Cone^+ ),
\end{equation}
where $\gamma_{1/2}$ and $\hpro_{1/2}$ are the truncated versions of $\gamma$ and $\hpro$, respectively; see \eqref{eq:shift}.
We will use the following version of separation lemma, which is a consequence of \cite[Lemma 4.2]{G98}.
\begin{lemma}[Separation lemma]\label{lem:sep}
    There exists a constant $c_1=c_1(\lambda)$, such that for all $\gamma\in\Gamma$ and $n\ge 1$,
    $$\mathbb E\Big[Z_n(\widetilde \gamma^n)^\lambda \,\varphi(\olg^1)^\lambda\Big]\geq c_1\,\mathbb E\Big[Z_n(\widetilde \gamma^n)^\lambda\Big].$$
\end{lemma}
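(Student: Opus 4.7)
My plan is to deduce this lemma as a translation of the non-conditional separation lemma \cite[Lemma 4.2]{G98} into the $h$-process/tilted-measure language used here. First, by the definition \eqref{eq:reweight} of $\widetilde{\mathbb P}^n_\gamma$, the desired inequality is equivalent to the bound $\widetilde{\mathbb E}^n_\gamma[\varphi(\olg^1)^\lambda] \geq c_1$, uniform in $\gamma$ and $n$. So it suffices to exhibit an event of uniformly positive $\widetilde{\mathbb P}^n_\gamma$-probability on which $\varphi(\olg^1)$ is bounded below by a universal constant.

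To this end, I would split
\[
\varphi(\olg^1)^\lambda = \mathbf{1}\{(\olg^1)_{1/2}\subseteq \Cone^-\}\cdot\mathbb P\bigl(\hpro_{1/2}\subseteq \Cone^+\bigr)^\lambda,
\]
where $\hpro$ denotes the $h$-process associated to $\olg^1$, and bound the two factors separately. The first factor is supplied by Lawler's separation lemma \cite[Lemma 4.2]{G98}, which, after a change of notation, guarantees that under the $Z_n^\lambda$-tilt the piece $(\olg^1)_{1/2}$ lies in $\Cone^-$ with probability uniformly bounded below in $\gamma$ and $n$ (its proof is the standard Kesten-type iterative improvement along scales). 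For the second factor, I would use the Doob-transform representation
\[
\mathbb P\bigl(\hpro_{1/2}\subseteq \Cone^+\bigr) = \frac{\mathbb P_0\bigl(W_{1/2}\subseteq \Cone^+,\ W[0,\tau]\cap \olg^1=\varnothing\bigr)}{\mathbb P_0\bigl(W[0,\tau]\cap \olg^1=\varnothing\bigr)},
\]
where $W$ is standard Brownian motion from $0$, and bound the numerator from below by the probability that $W$ threads a narrow tube around the segment $[0,(1,0,0)]$ and then remains in $\Cone^+$; since $\Cone^+\cap \Cone^-=\varnothing$, such a path automatically avoids $(\olg^1)_{1/2}\subseteq \Cone^-$.

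The main technical obstacle is controlling the intersection of $W$ with $\olg^1 \cap \Bc_{-1/2}$: although $\olg^1 \in \Gamma$ guarantees disjointness from $[0,(1,0,0)]$, the path could come arbitrarily close to this half-line, so a naive cone estimate fails. My proposed remedy is to strengthen the bound on the first factor by applying \cite[Lemma 4.2]{G98} jointly at an intermediate scale (say between $\Bc_{-2}$ and $\Bc_{-1}$), ensuring that under $\widetilde{\mathbb P}^n_\gamma$ the rescaled $\gamma$ is, in addition, $\varepsilon$-separated from $[0,(1,0,0)]$ for some universal $\varepsilon>0$. Once this is granted, a standard Harnack/cone estimate yields the desired uniform lower bound on the probability factor, and multiplying the two bounds gives the claim with some $c_1 = c_1(\lambda) > 0$.
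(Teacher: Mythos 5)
The paper does not actually prove this lemma: it is stated as ``a consequence of \cite[Lemma 4.2]{G98}'', i.e.\ the proof is a citation to Lawler's separation lemma for the tilted measures $\widetilde{\mathbb P}^n_{\gamma}$, whose proof is the Kesten--Lawler iterative improvement argument applied to the \emph{joint} configuration of the frozen path and the $h$-process. Your reduction to $\widetilde{\mathbb E}^n_{\gamma}[\varphi(\olg^1)^\lambda]\ge c_1$ and your identification of the indicator factor with that cited result are fine. The gap lies in your treatment of the second factor, and in particular in the proposed remedy for the obstacle you yourself identified.

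First, the remedy is not available: $\gamma$ is a \emph{fixed, arbitrary} element of $\Gamma$, and the tilting by $Z_n^\lambda$ only reweights the law of the attached Brownian pieces $\hg$. No choice of event of positive $\widetilde{\mathbb P}^n_{\gamma}$-probability can ``ensure that the rescaled $\gamma$ is $\varepsilon$-separated from $[0,(1,0,0)]$'' for a universal $\varepsilon$ — the inner portion $e^{-1}\gamma$ of $\olg^1$ may wind within distance $10^{-100}$ of the segment, and it does so deterministically on the whole probability space. Second, and more fundamentally, the factorization itself fails: $\mathbb P(\hpro_{1/2}\subseteq\Cone^+)$ is \emph{not} uniformly bounded below on the event $\{(\olg^1)_{1/2}\subseteq\Cone^-\}$. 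The hitting distribution of the $h$-process on $\partial\Bc_{-1/2}$ is governed by the entire inner part of $\olg^1$ — both $e^{-1}\gamma$ and the portion of $e^{-1}\hg^1$ inside $\Bc_{-1/2}^\circ$, neither of which is constrained by the indicator — and these can funnel the $h$-process onto $\partial\Bc_{-1/2}$ far from $\Cone^+$, making $\varphi(\olg^1)$ arbitrarily small even when the indicator equals $1$. This is precisely why $\varphi$ must be treated as a single separation-quality functional and the bound proved by the multi-scale iteration (configurations with small $\varphi$ are improved at the next scale at bounded cost in $Z_n^\lambda$), rather than by a pointwise lower bound on one factor restricted to an event supplied by the other. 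A correct self-contained argument would either reproduce that iteration or route through the conditional separation machinery of Sections 3--4; the tube-threading estimate, whose cost degenerates as $\dist(e^{-1}\gamma,[0,(1,0,0)])\to 0$, cannot be made uniform in $\gamma$.
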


It follows from Lemma~\ref{lem:sep} that there exist positive constants $c_2(\lambda)$ and $c_3(\lambda)$ such that if we define the set of ``good'' paths by
    \begin{equation}\label{eq:Ga+}
\Gamma^+=\Gamma^+(\lambda):=\{\gamma\in\Gamma:\varphi(\gamma)\geq c_2\},
\end{equation}
then,
    \begin{equation}\label{eq:c23}
    \inf_{n\ge 1}\,\inf_{\gamma\in\Gamma}\,\widetilde{\mathbb P}^n_{\gamma}( \olg^1\in\Gamma^+)\ge c_3,
    \end{equation}
where $\widetilde{\mathbb P}^n_{\gamma}$ is the weighted measure defined in \eqref{eq:reweight}. Note that there is a $2$D version of $\Gamma^+$ introduced and used in \cite{LSW02a}. Our introduction of $\Gamma^+$ in $3$D is a little bit different, but it will play the same role as that in \cite{LSW02a}.
However, $\varphi(\gamma)$ involves all the information of $\gamma$ (when we define the associated $h$-process $X$), which is not good enough for later use (in particular, to derive \eqref{eq:U'} below if one replaces $\Gamma_1$ by $\Gamma^+$ in the second condition of $\Uc'$ there). Hence, we introduce the auxiliary set $\Gamma_1$ as follows.

\begin{equation}\label{eq:Ga1}
\Gamma_1(M):=\Big\{\gamma\in\Gamma:\gamma(0)\in\mathcal B_{-2},
K(p^{\gamma\cup\mathcal B_{-3/2}}_{-1,-1/2})<M,\gamma_2\subseteq\{{x^1}<0\}\setminus\mathcal B_{-3}\Big\},
\end{equation}
where we use $\{{x^1}<0\}$ to denote the set $\{(x^1,x^2,x^3)\in\Rb^3: x^1<0\}$.
Note that $\Gamma_1(M)$ only involves the information of $\gamma$ after time $\tau_{\partial\mathcal B_{-2}}$, compared to $\Gamma^+$.
\begin{lemma}\label{lem:ga1}
    There are constants $0<M_1<\infty$ and $c_4>0$ such that 
    \[\mathbb P_x(W[0,\tau]\in\Gamma_1(M_1))>c_4 \quad \text{ for all }\  x\in\partial\mathcal B_{-2}\cap\{x^1<-e^{-2}/2\}.
\]
\end{lemma}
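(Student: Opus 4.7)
The plan is to prove the lemma in two pieces: first arrange the geometric conditions of $\Gamma_1(M_1)$ via a Brownian tube estimate, then control the switching constant $K(p_{-1,-1/2}^{\gamma\cup\mathcal{B}_{-3/2}})$ via Theorem \ref{theorem:bhp} combined with a rotational invariance trick applied after a strong Markov split.

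For the geometric part, I would construct an explicit straight tube $T_x$ of radius $\delta:=e^{-3}/10$ around the line segment $L_x$ from $x$ to $(-1,0,0)$, intersected with $\mathcal{B}_0$. A direct calculation using $x^1<-e^{-2}/2$ shows that along $L_x$ the first coordinate decreases monotonically from $x^1$ to $-1$ (since $x^1>-1$), and the distance from the origin to $L_x$ is minimized at $t=0$ with value $|x|=e^{-2}$; hence $T_x\subseteq\{z:z^1<0\}\setminus B(0,e^{-3})$. Since the length of $T_x$ is bounded by $1+e^{-2}$ and the radius is a fixed positive constant, a standard Brownian tube estimate yields a uniform lower bound $\mathbb{P}_x(W[0,\tau]\subseteq T_x)\geq c_0>0$ for $x$ in the prescribed spherical cap. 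On this event, the first and third defining conditions of $\Gamma_1(M_1)$ are automatically satisfied.

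For the switching-constant condition, the key observation is that $W[0,\tau_{\partial\mathcal{B}_{-3/2}}]\subseteq\mathcal{B}_{-3/2}$, so $\gamma\cup\mathcal{B}_{-3/2}$ depends only on the post-$\tau_{\partial\mathcal{B}_{-3/2}}$ trajectory, which by the strong Markov property is a Brownian motion started from $z:=W(\tau_{\partial\mathcal{B}_{-3/2}})\in\partial\mathcal{B}_{-3/2}$. Since $\mathcal{B}_{-3/2}$, $\mathcal{B}_{-1}$ and $\mathcal{B}_{-1/2}$ are all rotationally invariant and the switching-constant functional is too, the distribution of $K(p_{-1,-1/2}^{\gamma\cup\mathcal{B}_{-3/2}})$ under $\mathbb{P}_z$ does not depend on $z\in\partial\mathcal{B}_{-3/2}$. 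I would then apply Theorem \ref{theorem:bhp} with $U=\mathcal{B}_{-1/2}^\circ\setminus\mathcal{B}_{-3/2}$ (an open annular domain) and a compact set $K_0\subseteq U$ containing $\partial\mathcal{B}_{-1}$: the resulting BHP constant $C(U,K_0,\mathcal{W})$ is $\mathbb{P}_z$-a.s.\ finite, and Lemma \ref{lem:5.17} gives $K(p_{-1,-1/2}^{\gamma\cup\mathcal{B}_{-3/2}})\leq C(U,K_0,\mathcal{W})$ because the harmonic functions appearing in Lemma \ref{lem:5.17} additionally vanish on $\mathcal{B}_{-3/2}$ and thus form a sub-class of those controlled by Theorem \ref{theorem:bhp}. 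Consequently $\mathbb{P}_z(K(p_{-1,-1/2}^{\gamma\cup\mathcal{B}_{-3/2}})<M_1)\to 1$ as $M_1\to\infty$, uniformly in $z$ by rotational invariance, and thus also uniformly in $x$ after averaging via the strong Markov property.

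Combining these two ingredients by a union bound and choosing $M_1$ large enough so that the switching constant exceeds $M_1$ with probability less than $c_0/2$, I conclude $\mathbb{P}_x(W[0,\tau]\in\Gamma_1(M_1))\geq c_0/2=:c_4>0$, uniformly in $x\in\partial\mathcal{B}_{-2}\cap\{x^1<-e^{-2}/2\}$. The main obstacle is obtaining the switching-constant bound with a uniform constant $M_1$ independent of the starting point; this is precisely where one genuinely needs the BHP established in Theorem \ref{theorem:bhp} rather than classical tube estimates, and it is resolved by the rotational invariance trick at the Markov-split time, which reduces the analysis to a single representative starting point.
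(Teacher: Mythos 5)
Your proposal is correct and follows essentially the same route as the paper: a uniform lower bound on the event that $W[0,\tau]$ stays in $\{x^1<0\}\setminus\mathcal B_{-3}$, combined with the almost-sure finiteness of $K(p^{W[0,\tau]\cup\mathcal B_{-3/2}}_{-1,-1/2})$ obtained from Theorem~\ref{theorem:bhp} and Lemma~\ref{lem:5.17} (noting, as you do, that the relevant harmonic functions form a subclass of those controlled by the BHP since $W[0,\tau]$ is a closed subset of the full trace), and then a choice of $M_1$ making the bad event smaller than half the tube probability. The only remark is that the strong Markov split at $\partial\mathcal B_{-3/2}$ is superfluous: rotational invariance already shows that the law of the switching constant under $\mathbb P_x$ is the same for all $x\in\partial\mathcal B_{-2}$, which gives the required uniformity of $M_1$ directly.
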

\begin{proof}
    Note that $W[\tau_{\partial\mathcal B_{-2}},\tau]\subseteq\{{x^1}<0\}\setminus\mathcal B_{-3}$ occurs with uniformly positive probability. Moreover, combining Theorem \ref{theorem:bhp} and Lemma~\ref{lem:5.17}, we see that almost surely,
    \[
    K\Big(p^{W[0,\tau]\cup\mathcal B_{-3/2}}_{-1,-1/2}\Big)<\infty.
    \]
    Combining these two facts, we conclude the proof.
\end{proof}

In the following, we will fix $\Gamma_1:=\Gamma_1(M_1)$ with $M_1$ given by Lemma~\ref{lem:ga1}. 
Next, we show that $\rQ_1(\gamma)$ is uniformly bounded from below for all $\gamma\in\Gamma_1$.
\begin{lemma}\label{lem:142} 
    There exists a constant $C_1=C_1(\lambda)>0$ such that 
    \[       \inf_{\gamma\in\Gamma_1}\,\rQ_1(\gamma)\ge C_1. \]    
\end{lemma}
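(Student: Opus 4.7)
The plan is to lower-bound $\rQ_1(\gamma)=e^{\xi}\mathbb{E}[Z_1(\tg^1)^\lambda]$ by restricting the expectation to a positive-probability event $\mathcal{E}$ on $\hg$ on which the conditional non-intersection probability $Z_1(\tg^1)$ admits a uniform lower bound for every $\gamma\in\Gamma_1$. Set $x_0:=\gamma(t_\gamma)\in\partial\Bc_0\cap\{x^1<0\}$, $\hat x_0:=x_0/|x_0|=x_0$, and fix $r>0$ small enough that $\Cone_{x_0}(\hat x_0,r)$ sits in $\Rb^3\setminus\Bc_0^\circ$ and the antipodal cone $\Cone_{-x_0}(-\hat x_0,r)$ is disjoint from it. Take $\mathcal{E}$ to be the event $\hg[0,\tau_{\partial\Bc_1}]\subseteq\Cone_{x_0}(\hat x_0,r)$. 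By rotational symmetry and the cone estimate (Lemma~2.4 of~\cite{LV12}), $\mathbb P(\mathcal{E})\ge c_0>0$ uniformly in $x_0\in\partial\Bc_0$.

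On $\mathcal{E}$ the remaining task is to lower-bound $Z_1(\tg^1)=\mathbb P(\wt\hpro^1\cap\tg^1=\varnothing\mid\tg^1)$ uniformly. I will produce an event for $\wt\hpro^1=\hpro\cup\widehat\hpro^1$ of uniformly positive conditional probability by asking that the $h$-process $\hpro$ exits $\Bc_0$ in a small cap $A\subseteq\partial\Bc_0$ about $-x_0$, and that the continuation $\widehat\hpro^1$ stays in the antipodal outward cone $\Cone_{-x_0}(-\hat x_0,r)$. The second half is a standard cone estimate for the BM $\widehat\hpro^1$ starting in $A$. For the first half the key input is a global BHP from $\partial\Bc_{-1}$ to $\partial\Bc_0$, derived from the local condition $K(p^{\gamma\cup\Bc_{-3/2}}_{-1,-1/2})<M_1$ as follows. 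By~\eqref{eq:329} with intermediate surface $\partial\Bc_{-1/2}$, $p^\gamma_{-1,0}=p^\gamma_{-1,-1/2}*p^\gamma_{-1/2,0}$ convolved against $\sigma_{-1/2}$; Lemma~\ref{lemma:basic} gives $R(p^\gamma_{-1,0})\le R(p^\gamma_{-1,-1/2})R(p^\gamma_{-1/2,0})\le R(p^\gamma_{-1,-1/2})$; Lemma~\ref{lemma:compare} (with $a=-3/2$, $b=-1$, $c=-1/2$, $A=\gamma$) gives $K(p^\gamma_{-1,-1/2})\le K(p^{\gamma\cup\Bc_{-3/2}}_{-1,-1/2})<M_1$; and Lemma~\ref{lemma:RK} then yields $K(p^\gamma_{-1,0})<M_1$.

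Integrating the switching inequality over $y_2\in\partial\Bc_0\setminus\gamma$ with $y_1\in A$ kept fixed, this translates into the BHP ratio bound
\[
\frac{\Pb_{z_1}(W(\tau)\in A,\,W[0,\tau]\cap\gamma=\varnothing)}{\Pb_{z_1}(W[0,\tau]\cap\gamma=\varnothing)}\le M_1\,\frac{\Pb_{z_2}(W(\tau)\in A,\,W[0,\tau]\cap\gamma=\varnothing)}{\Pb_{z_2}(W[0,\tau]\cap\gamma=\varnothing)}
\]
for all $z_1,z_2\in\partial\Bc_{-1}\setminus\gamma$. Take as reference $z_0:=-e^{-1}\hat x_0\in\partial\Bc_{-1}$, which lies in $\{x^1>0\}$ (hence outside $\gamma_2\subseteq\{x^1<0\}$) and outside $\Bc_{-2}$. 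A cone estimate from $z_0$ in direction $-\hat x_0$ yields $\Pb_{z_0}(W(\tau)\in A,\,W[0,\tau]\cap\gamma=\varnothing)\ge c$, so the ratio at $z_0$ is $\ge c$, hence $\ge c/M_1$ at every $z\in\partial\Bc_{-1}\setminus\gamma$. Decomposing the $h$-process $\hpro$ at its first hit of $\partial\Bc_{-1}$ and averaging then gives $\mathbb P(\hpro\text{ exits }\Bc_0\text{ in }A)\ge c/M_1$. On the conjunction of all these events, $\widehat\hpro^1\subseteq\Cone_{-x_0}(-\hat x_0,r)$ avoids both $\hg^1\subseteq\Cone_{x_0}(\hat x_0,r)$ (disjoint antipodal cones) and $\gamma\subseteq\Bc_0$ (since $\Cone_{-x_0}(-\hat x_0,r)\cap\Bc_0^\circ=\varnothing$), while $\hpro\subseteq\Bc_0$ automatically avoids $\hg^1\cap\Bc_0^\circ=\varnothing$. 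Hence $Z_1(\tg^1)\ge c_1>0$ on $\mathcal{E}$, and $\rQ_1(\gamma)\ge e^\xi c_0 c_1^\lambda=:C_1$.

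The main obstacle is the BHP-extension step. The switching control afforded by $\Gamma_1$ sits on the annular slab $\Bc^\circ_{-1/2}\setminus(\Bc_{-1}\cup\Bc_{-3/2})$ only, hiding the potentially wild piece of $\gamma$ inside $\Bc_{-3/2}$, yet one needs to compare harmonic-measure ratios between $\partial\Bc_{-1}$ and $\partial\Bc_0$ uniformly in $\gamma$. The convolution identity~\eqref{eq:329} combined with Lemmas~\ref{lemma:basic}, \ref{lemma:compare}, \ref{lemma:RK}, and the $K$-BHP equivalence in Lemma~\ref{lem:5.17} is precisely what lets us propagate the switching constant across scales without ever touching the inner region of $\gamma$.
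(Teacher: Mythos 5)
Your overall architecture is the same as the paper's: restrict $\hg^1$ to a cone event of uniformly positive probability, use the switching constant coming from the definition of $\Gamma_1$ to force the $h$-process $\hpro$ to exit in a region well separated from $\gamma$, and then extend both paths to the next scale in disjoint cones. Your propagation of the switching constant, $K(p^\gamma_{-1,0})\le K(p^\gamma_{-1,-1/2})\le K(p^{\gamma\cup\Bc_{-3/2}}_{-1,-1/2})<M_1$ via \eqref{eq:329}, Lemma~\ref{lemma:basic}, Lemma~\ref{lemma:RK} and Lemma~\ref{lemma:compare}, is correct and is a legitimate (slightly stronger) variant of what the paper does: the paper keeps the target on $\partial\Bc_{-1/2}$ and only uses $K(p^\gamma_{-1,-1/2})<M_1$, whereas you push the comparison all the way to $\partial\Bc_0$.

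The genuine gap is the reference estimate $\Pb_{z_0}\bigl(W(\tau)\in A,\,W[0,\tau]\cap\gamma=\varnothing\bigr)\ge c$ with $z_0=-e^{-1}\hat x_0$ and $A$ a fixed-size cap around $-x_0$. This is not uniform over $\Gamma_1$: the only constraint on $x_0=\gamma(t_\gamma)$ is $x_0^1<0$, so $x_0$ (hence $-x_0$ and $z_0$) can be arbitrarily close to the plane $\{x^1=0\}$. A point of the cone $\Cone_{z_0}(-\hat x_0,r)$ has first coordinate $|x_0^1|(e^{-1}+t)+t\,w^1$ with $|w^1|\le r$, which becomes negative for $t$ of order $1$ once $|x_0^1|<r$; the cone then protrudes into $\{x^1<0\}\setminus\Bc_{-3}$, exactly where $\gamma_2$ is allowed to live, so the cone estimate gives no $\gamma$-avoidance. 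The fix is what the paper does: take the target to be the \emph{fixed} cone $\Cone^+$ around $(1,0,0)$ and the reference point $z_0=(e^{-1},0,0)$, so that the connecting tube lies in $\{x^1>0\}\setminus\Bc_{-2}$ and avoids every $\gamma\in\Gamma_1$ deterministically, making the reference ratio an explicit harmonic-measure constant (e.g.\ $\int_{\Cone^+}p^{\{x^1\le0\}\cup\Bc_{-2}}_{-1,-1/2}(z_0,y)\,\sigma_{-1/2}(dy)$). With that change you must also rework the disjointness bookkeeping: instead of confining $\hg^1$ to a cone around $x_0$ and $\widehat\hpro^1$ to the antipodal cone, require $\widehat\hpro^1$ to stay in $\Cone((1,0,0),1/5)$ and $\hg^1$ to avoid it, which has uniformly positive probability because $x_0^1<0$ puts $x_0$ at macroscopic distance from that cone. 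A second, more minor slip: as written, the event $\widehat\hpro^1\subseteq\Cone_{-x_0}(-\hat x_0,r)$ is essentially null, since $\widehat\hpro^1$ starts at $\hpro(\tau)\in A$, a point of the spherical cap that generically lies \emph{outside} the narrow outward cone anchored at $-x_0$; the cone must be anchored at the actual starting point (or the region fattened), as in the paper's step from $\Cone^+$ to the larger cone $\Cone((1,0,0),1/5)$.
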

\begin{proof}
    Once the beginning paths are well-separated, we can attach non-intersecting Brownian motions from their endpoints to the next scale with positive probability. Therefore,
    it is sufficient to prove that for some absolute constant $C>0$,
    \begin{equation}\label{eq:Cse}
        \mathbb P_0\left(W[\tau_{\partial\mathcal B_{-1/2}},\tau]\subseteq \Cone((1,0,0),1/5) \con  W[0,\tau]\cap\gamma=\varnothing\right)\ge C.
    \end{equation}
    Note that there is an absolute constant $c$ such that for all $x\in \Cone^+=\Cone((1,0,0),1/10)$,
    \[
    \mathbb P_x\Big(W[\tau_{\partial\mathcal B_{-1/2}},\tau]\subseteq \Cone((1,0,0),1/5)\Big)>c.
    \]
    Letting $z_0=(e^{-1},0,0)$,
    we have
    \begin{align*}
        &\ \mathbb P_0\left(W[\tau_{\partial\mathcal B_{-1/2}},\tau]\subseteq \Cone((1,0,0),1/5) \con  W[0,\tau]\cap\gamma=\varnothing\right)\ge\ \frac{c\int_{\Cone^+} p_{-1/2}^\gamma(0,y)\sigma_{-1/2}(dy)}{\int p_{-1/2}^\gamma(0,y)\sigma_{-1/2}(dy)}\\
        =&\ \frac{c\int_{\Cone^+}\int p_{-1}^\gamma(0,z)p_{-1,-1/2}^\gamma(z,y)\sigma_{-1}(dz)\sigma_{-1/2}(dy)}{\iint p_{-1}^\gamma(0,z)p_{-1,-1/2}^\gamma(z,y)\sigma_{-1}(dz)\sigma_{-1/2}(dy)}
        \ge\ c\inf_{z\in\partial\mathcal B_{-1}\setminus\gamma}\frac{\int_{\Cone^+} p_{-1,-1/2}^\gamma(z,y)\sigma_{-1/2}(dy)}{\int p_{-1,-1/2}^\gamma(z,y)\sigma_{-1/2}(dy)}\\
        \ge&\ \frac{c}{K(p_{-1,-1/2}^\gamma)}\frac{\int_{\Cone^+} p_{-1,-1/2}^\gamma(z_0,y)\sigma_{-1/2}(dy)}{\int p_{-1,-1/2}^\gamma(z_0,y)\sigma_{-1/2}(dy)}
        \ge\ \dfrac{c\int_{\Cone^+}p_{-1,-1/2}^{\{x^1\le0\}\cup\mathcal B_{-2}}(z_0,y)\sigma_{-1/2}(dy)}{M_1},
    \end{align*}
    where Lemma \ref{lemma:compare} is used in the last inequality.
    This implies \eqref{eq:Cse} and 
    completes the proof.
\end{proof}

Recall that $\gamma_m$ is the truncated version of $\gamma$ (see \eqref{eq:shift}), and $\Yc_{m'',m',m}$ is the set of nice paths defined in \eqref{eq:Yc}.
For all $l,l'\geq1$, define
\[
    \mathcal K_{l',l}:=\Big\{(\gamma,\gamma')\in\Gamma^2\con \text{there exists } l''\geq l \text{ such that } \gamma_{l''+l'}=\gamma'_{l''+l'} \text{ and } \gamma\in\mathcal Y_{l''+l',l',l}\Big\}.
\]
Suppose that $u>0$ is some small constant to be fixed. We further define
\[
    \ol{\mathcal K}_{l',l}=\ol{\mathcal K}_{l',l}(u):=\Big\{(\gamma,\gamma')\in\mathcal K_{l',l}\con \rQ_1(\gamma)\geq C_1\,e^{-lu},\rQ_1(\gamma')\geq C_1\,e^{-lu}\Big\},
\]
where $C_1$ is the constant in Lemma \ref{lem:142}.
Recall the random variable $\delta(n,k,\gamma)$ defined below \eqref{eq:reweight}. We have the following coupling for $\delta(n,k,\gamma)$ and $\delta(n,k,\gamma')$ when $\gamma,\gamma'\in\Gamma^+$, which is the $3$D counterpart of Lemma 5.3 in \cite{LSW02a}. 
Note that the proof is different since the mirror coupling used therein does not work here, and moreover, one needs to work with $\Gamma_1$ rather than $\Gamma^+$ at some intermediate steps.

\begin{lemma}\label{lem:167}
    There exists some large constant $a_0>0$ such that the following holds for all $a\ge a_0$.
    There exists $\overline c_1=\overline c_1(\lambda,a)>0$ such that for all $\gamma,\gamma'\in\Gamma^+$,
    $n\geq 3a+1$ and $u>0$, 
    one can define $\delta=\delta(n,3a+1,\gamma)$ 
    and $\delta'=\delta(n,3a+1,\gamma')$ 
    on the same probability space $(\Omega,\mathcal F,\mu)$ 
    such that
    \begin{equation*}
        \mu\big((\delta,\delta')\in \ol{\mathcal K}_{a,a}\big)\geq\overline c_1.
    \end{equation*}
\end{lemma}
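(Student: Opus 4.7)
The plan is to follow the three-phase strategy of Lemma 5.3 in \cite{LSW02a}, replacing the two-dimensional tools (conformal invariance, mirror coupling) by their three-dimensional counterparts developed in Sections \ref{subsection:RK}--\ref{subsec:pro:2}. Working outward from $\partial\Bc_0$ to $\partial\Bc_{3a+1}$ along the independent extensions $\wh\gamma, \wh\gamma'$ of $\gamma, \gamma' \in \Gamma^+$, the three phases correspond, in the rescaled picture of $\delta = \ol\gamma^{3a+1}$, to (i) the innermost layers, near the origin, where the two paths may still differ; (ii) the middle $\sim 2a$ layers, where they must be coupled to agree and to contain at least $a$ good layers; and (iii) the outer $\sim a$ layers, which must force $\delta \in \Gamma_1$ and hence $Q_1(\delta) \geq C_1 > C_1 e^{-au}$ via Lemma \ref{lem:142}. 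Throughout, I would handle the switch between the reweighted measure $\widetilde{\mathbb P}^n$ and plain Wiener measure using \eqref{eq:c1}, \eqref{eq:b2}, and Lemma \ref{lem:142}, which together imply that any event specified on $O(1)$ layers of $\wh\gamma$ has $\widetilde{\mathbb P}^n$-probability comparable, up to a constant depending on $\lambda$ and $a$, to its Wiener probability.

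For Phase I, the separation lemma (Lemma \ref{lem:sep}) combined with Lemma \ref{lem:ga1} places the extension at some intermediate scale into $\Gamma^+ \cap \Gamma_1$ with positive $\widetilde{\mathbb P}^n$-probability, performed independently for $\gamma$ and $\gamma'$. For Phase II, conditional on the Phase I event, Proposition \ref{pro:2} produces at least $a$ good layers for each extension in the middle range of scales, except on an event of $\widetilde{\mathbb P}^n$-probability $O(e^{-ca})$; the two extensions are then coupled to agree past their first visit of $\partial\Bc_{-(l''+a)}$ for some $l'' \geq a$ by applying the maximal coupling to the Poisson-kernel densities of the continuations against a common finite measure supported on an intermediate sphere, as in Step 2 of the proof of Proposition \ref{pro:1}. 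The resulting total-variation cost is bounded by a product of $R(p^{\cdot \cup \Bc_{\cdot}}_{i+1/2,i+1})$ over the good layers, which by Lemmas \ref{lemma:RK} and \ref{lemma:basic} is at most $\bigl(1-2/(1+\sqrt{M_1})\bigr)^a$. For Phase III, a $\Gamma_1$-insertion argument in the spirit of Lemma \ref{lem:ga1} forces the (now common) outer portion of $\delta$ into $\Gamma_1$ with positive probability, giving $Q_1(\delta) \geq C_1$. Choosing $a_0(\lambda)$ large enough that the cumulative Phase II failure probabilities are less than $1/4$ then yields $\ol c_1 = \ol c_1(\lambda, a) > 0$.

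The main obstacle will be Phase II. Unlike Proposition \ref{pro:1}, which couples two $h$-processes (Brownian motions conditioned to avoid a fixed path), here we must couple two Brownian extensions whose joint law carries the reweighting $Z_n^\lambda$ coming from the non-intersection probability with an independent outer path at scale $n \gg 3a+1$. Disentangling the inner coupling scale from this outer reweighting will require a strong Markov decomposition of $Z_n$ at the Phase II meeting sphere together with the uniform ratio estimate \eqref{eq:b2}, and this bookkeeping is the technical heart of the argument.
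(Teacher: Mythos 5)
Your three-phase skeleton assembles the right ingredients (Lemma \ref{lem:sep}/\eqref{eq:c23} and Lemma \ref{lem:ga1} to produce well-positioned extensions, Proposition \ref{pro:2} for the good layers, Lemma \ref{lem:142} for the lower bound on $\rQ_1$), but the central coupling mechanism in your Phase II does not apply here. The $R(p)$-product bound of Lemmas \ref{lemma:RK} and \ref{lemma:basic}, as used in Step 2 of the proof of Proposition \ref{pro:1}, requires the two densities being compared to be reweightings $p(x_1,\cdot)$ and $p(x_2,\cdot)$ of one and the same Poisson kernel $p$ of a common slit domain; in Proposition \ref{pro:1} such a common domain exists precisely because $\gamma\cup\mathcal B_{-m'}=\gamma'\cup\mathcal B_{-m'}$. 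In Lemma \ref{lem:167} the paths $\gamma,\gamma'\in\Gamma^+$ are arbitrary and share no structure, the extensions $\hg,\hgp$ start from different points of $\partial\mathcal B_0$, and the reweightings $Z_n(\tg^n)^\lambda$ and $Z_n(\tgp^n)^\lambda$ are tilted by different inner paths. There is no common kernel to factor through good layers --- indeed the good layers of the two extensions are different random sets until the paths have already been coupled, so the argument is circular --- and the total variation distance between the two reweighted laws is of constant order, not $O\bigl((1-2/(1+\sqrt{M_1}))^a\bigr)$.

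The fix (and what the paper does) is cruder and suffices because only a constant lower bound $\ol c_1(\lambda,a)$ is required: one shows that the laws of the outer parts $\hg^{3a+1}_1$ and $\hgp^{3a+1}_1$ under the two reweighted measures both dominate $c\,\nu$ on a fixed set $\Uc'$ of paths satisfying your Phase I/III conditions together with $e^{-3a-1}\eta\in\mathcal Y_{3a,a,a}$, where $\nu$ is the average over $x\in\partial\mathcal B_1$ of Wiener measure stopped on $\partial\mathcal B_{3a+1}$ and $\nu(\Uc')\ge\rho$ by Lemma \ref{lem:ga1} and Proposition \ref{pro:2}. The domination constant comes from bounding the Radon--Nikodym derivative $e^{(3a+1)\xi}Z_{3a+1}(\tg^{3a+1})^\lambda\,\rQ_{n-3a-1}(\olg^{3a+1})/\rQ_n(\gamma)$ below via \eqref{eq:c1}, \eqref{eq:b2}, Lemma \ref{lem:142}, and the separation $\gamma\in\Gamma^+$, which gives $Z_{3a+1}(\tg^{3a+1})\ge c'(\lambda,a)$. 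A common-component coupling then makes the two outer parts literally identical, and simultaneously in $\mathcal Y_{3a,a,a}\cap\Gamma_1$, with probability at least $c\rho$, which is exactly $(\delta,\delta')\in\ol{\mathcal K}_{a,a}$. Your closing paragraph correctly identifies the need to control the reweighting across scales via \eqref{eq:b2}, but that control yields comparability of measures up to multiplicative constants, i.e.\ a positive overlap --- not a small total variation distance.
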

\begin{proof}
    The Radon-Nikodym derivatives of the laws of $\delta$ and $\delta'$ with respect to those of $\hg^{3a+1}$ and $\hgp^{3a+1}$ are given by
    \[q=q(\hg^{3a+1}):=
    \dfrac{e^{(3a+1)\xi} Z_{3a+1}(\tg^{3a+1})^\lambda \rQ_{n-3a-1}(\olg^{3a+1})}{\rQ_n(\gamma)},\quad \text{and}
    \]
    \[q'=q'(\hgp^{3a+1}):=
    \dfrac{e^{(3a+1)\xi} Z_{3a+1}(\tgp^{3a+1})^\lambda \rQ_{n-3a-1}(\olgp^{3a+1})}{\rQ_n(\gamma')}\quad\text{resp.}
    \]

    Let $a\ge 3$. Let $\mathcal U$ be the set of paths $\eta$ such that $\eta(0)\in\partial\mathcal B_1$, $\eta(t_{\eta})\in\partial\mathcal B_{3a+1}$, $\eta(0,t_\eta)\in\mathcal B_{3a+1}^\circ$. Let $\Uc'$ be the paths in $\Uc$ such that the following three conditions hold.
    \begin{enumerate}[label=(\arabic*)]
        \item $\eta\subseteq \{x^1<0\}\setminus\mathcal B_{-1/2}$ and $|\eta(0)/|\eta(0)|-(-1,0,0)|<1/5$.
        \item $e^{-3a-1}\eta\in\Gamma_1$.
        \item $e^{-3a-1}\eta\in\mathcal Y_{3a,a,a}$.
    \end{enumerate} 
    Now, consider the following probability measure $\nu$ on $\Uc$,
    \[
        \nu:=\sigma_1(\partial\Bc_1)^{-1}\,\int_{\partial\Bc_1}\mathbb P_{x}^{\mathcal B_{3a+1}}\,\sigma_1(dx),
    \]
    where $\Pb_{x}^{\mathcal B_{3a+1}}$ denotes the measure of a Brownian motion started from $x$ and stopped when it first hits $\partial\Bc_{3a+1}$.
    On one hand, by Lemma \ref{lem:ga1}, the first two conditions hold with uniformly positive probability under $\nu$. On the other hand,
    by Proposition \ref{pro:2}, the third condition holds with probability close to $1$ when $a$ large. 
    Combined, we conclude that there are absolute constants $\rho$ and $a_0$ such that for all $a\ge a_0$, 
    \begin{equation}\label{eq:U'}
        \nu(\mathcal U')\ge \rho.
    \end{equation}
    
    We define the measures $\nu_1$ and $\nu_2$ as follows. For any measurable set $\Lambda\subseteq\Uc$,
     \[
    \nu_1(\Lambda):=\mathbb E\Big[q(\hg^{3a+1});\,\hg^{3a+1}_1\in\Lambda,\hg[0,\tau_{\partial\mathcal B_1}]\subseteq \{x^1<0\}\setminus\mathcal B_{-1/2}\Big],
    \]
    and define $\nu_2$ with respect to $\hgp$ in the same way.
    
    We now claim that there exists a constant $c=c(\lambda,a)>0$ such that $\nu_i(\Lambda)\ge c\,\nu(\Lambda)$ 
    for $i=1,2$ and all $\Lambda\subseteq\mathcal U'$. For brevity we only discuss the case of $\nu_1$. To this end, it is sufficient to show that $q(\hg^{3a+1})$ is bounded away from $0$, if $\hg[0,\tau_{\partial\mathcal B_1}]\subseteq \{x^1<0\}\setminus\mathcal B_{-1/2}$ (which occurs with positive probability), and $\hg^{3a+1}_1\in\Lambda$.
    Note that $\rQ_n(\gamma)\le b_1$ by \eqref{eq:c1}, and $\rQ_{n-3a-1}(\olg^{3a+1})\ge C_2/b_2$ by Lemma \ref{lem:142} and \eqref{eq:b2}. Moreover, $Z_{3a+1}(\tg^{3a+1})\ge c'(\lambda,a)$, because the $h$-process $X$ has probability at least $c_2$ to stay in the cone $\Cone^+$, which is well-separated from $\gamma\in \Gamma^+$ by \eqref{eq:Ga+}, and then we can force the future part $\widehat X^{3a+1}$ to stay in a larger cone $\Cone((1,0,0),1/5)$ to avoid the whole path $\tg^{3a+1}$ with positive probability. Hence, we conclude that $q(\hg^{3a+1})$ is bounded away from $0$ and $\nu_i(\Lambda)\ge c\,\nu(\Lambda)$, restricted to the mentioned events. 
    
    From this fact, combined with \eqref{eq:U'}, we can construct a coupling $\mu$ such that 
    \[\mu\big(\{\delta_{3a}=\delta_{3a}'\}\cap\{\delta\in\mathcal Y_{3a,a,a}\cap\Gamma_1\}\big)\ge c\,\nu(\mathcal U ')\ge c\rho.\]
    Moreover, from Lemma \ref{lem:142}, we have $\rQ_1(\delta),\rQ_1(\delta')\ge C_1$. This concludes the proof.
\end{proof}

The next lemma shows that the paths are not likely to decouple as long as they are matched up. For technical reason, we distinguish two cases, by
setting $\mathcal K_{l',l}^0:=\mathcal K_{l',l}$, and $\mathcal K_{l',l}^n:=\overline{\mathcal K}_{l',l}$ for $n\ge1$.
\begin{lemma}\label{lem:226}
    There exists $u_1=u_1(\lambda)>0$, 
    such that
    for all $0<u<u_1(\lambda)$, 
    there exist $w_1=w_1(\lambda,u)>0$ and $\overline c_2=\overline c_2(\lambda,u)>0$ such that 
    for all $n\geq 1$, $l'\geq l\geq 1$, and
    $(\gamma,\gamma')\in\ol{\mathcal K}_{l,l '}$, 
    one can define  $\delta=\delta(n,1,\gamma)$ 
    and $\delta'=\delta(n,1,\gamma')$ on the same 
    probability space $(\Omega,\mathcal F,\mu)$ such that
    \begin{equation*}
        \mu\big((\delta,\delta')\in\mathcal K_{l'+1,l}^{n-1}\big)\geq 1-\overline c_2\,e^{-w_1l}.
    \end{equation*}
\end{lemma}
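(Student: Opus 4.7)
Given $(\gamma,\gamma')\in\ol{\mathcal K}_{l,l'}$, the plan is to pick $l''\ge l'$ as in the definition, so that $\gamma_{l''+l}=\gamma'_{l''+l}$ and $\gamma$ has at least $l'$ good layers at scales $[-l''-l,-l-1]$; in particular $\gamma,\gamma'$ share the same endpoint on $\partial\Bc_0$, which lets me take the Brownian extensions $\hg=\hgp$ equal. With this choice, $\delta=\olg^1$ and $\delta'=\olgp^1$ become deterministic functions of the common $\hg^1$, with Radon-Nikodym densities against the underlying Brownian-extension law
\[
q(\hg^1):=\frac{e^\xi\,Z_1(\tg^1)^\lambda\,\rQ_{n-1}(\olg^1)}{\rQ_n(\gamma)},\qquad q'(\hg^1):=\frac{e^\xi\,Z_1(\tgp^1)^\lambda\,\rQ_{n-1}(\olgp^1)}{\rQ_n(\gamma')}
\]
via the semigroup identity $\rQ_n(\gamma)=e^\xi\,\mathbb E[Z_1(\tg^1)^\lambda\,\rQ_{n-1}(\olg^1)]$. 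I would define $\mu$ by the maximal coupling of $(q,q')$, so $\mu(\delta=\delta')\ge 1-\tfrac12\,\mathbb E|q-q'|$. On the event $\{\delta=\delta'\}$, the coincidence of $\gamma,\gamma'$ outside $\Bc_{-l''-l}^\circ$ rescales to coincidence of $\delta,\delta'$ outside $\Bc_{-l''-l-1}^\circ$; setting $\tilde l'':=l''+l-l'\ge l$ (by $l''\ge l'$), we get $\delta_{\tilde l''+l'+1}=\delta'_{\tilde l''+l'+1}$, and by scale invariance of $K(\cdot)$ the $l'$ good layers of $\gamma$ in $[-l''-l,-l-1]$ shift to good layers of $\delta$ in $[-l''-l-1,-l-2]$; since at most $l'-l$ of these fall outside the target region $[-\tilde l''-l'-1,-l'-2]=[-l''-l-1,-l'-2]$, at least $l$ remain inside, giving $\delta\in\mathcal Y_{\tilde l''+l'+1,l'+1,l}$ and $(\delta,\delta')\in\mathcal K_{l'+1,l}$.

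\paragraph{Bounding $\mathbb E|q-q'|$.}
Writing
\[
q-q'=\frac{e^\xi(Z_1(\tg^1)^\lambda-Z_1(\tgp^1)^\lambda)\,\rQ_{n-1}(\olg^1)}{\rQ_n(\gamma)}+\frac{e^\xi\,Z_1(\tgp^1)^\lambda(\rQ_{n-1}(\olg^1)-\rQ_{n-1}(\olgp^1))}{\rQ_n(\gamma)}+q'\,\frac{\rQ_n(\gamma')-\rQ_n(\gamma)}{\rQ_n(\gamma)},
\]
I would control each factor by a Proposition \ref{pro:3}-style bound. Although Proposition \ref{pro:3} is stated for $(\gamma,\gamma')\in\mathcal X_m(M_0)$ with good layers in $[-m',-1]$, inspection of its proof (which runs through Proposition \ref{pro:1}) shows that the critical product bound \eqref{eq:Rp1} only uses the number of good layers, not their location; so the same argument applied to our interior good range $[-l''-l,-l-1]$ yields $\mathbb E|Z_1(\tg^1)^\lambda-Z_1(\tgp^1)^\lambda|\le a_2\,e^{-\xi-v_2l'}$ and analogous bounds for the other two differences. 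Combined with $\rQ_{n-1}\le b_1$ from \eqref{eq:c1} and the lower bound $\rQ_n(\gamma)\ge \rQ_1(\gamma)/b_2\ge C_1 b_2^{-1}e^{-l'u}$ from \eqref{eq:b2} and the $\ol{\mathcal K}$-hypothesis, this produces $\mathbb E|q-q'|\le c\,e^{-(v_2-u)l'}\le c\,e^{-(v_2-u)l}$, so the maximal coupling succeeds with the required probability whenever $u<v_2$.

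\paragraph{The $\rQ_1$-tail bound and main obstacle.}
For $n\ge 2$ one must additionally enforce $\rQ_1(\delta),\rQ_1(\delta')\ge C_1\,e^{-lu}$, which is the essential technical obstacle. The semigroup identity together with \eqref{eq:c1}--\eqref{eq:b2} yields $\mathbb E_{\widetilde{\mathbb P}^n_\gamma}[\rQ_1(\delta)]\ge c_\star>0$ while $\rQ_1(\delta)\le b_1$, but a mere mean bound does not produce the required exponential tail. To upgrade it, the plan is to couple $\widetilde{\mathbb P}^n_\gamma$ (restricted to $\olg^1$) with the one-scale reweighted law $\widetilde{\mathbb P}^1_\gamma$ by an iterated application of Propositions \ref{pro:1}--\ref{pro:3}, and then transport back to the unreweighted Brownian-extension measure, under which a variant of Proposition \ref{pro:2} combined with Lemmas \ref{lem:ga1} and \ref{lem:142} shows that $\delta\in\Gamma_1$ (hence $\rQ_1(\delta)\ge C_1$) with probability at least $1-O(e^{-w'l})$ provided $u$ is sufficiently small. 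Taking $u_1(\lambda):=\min\{v_2,w'\}/2$ and combining all three estimates yields $\mu((\delta,\delta')\in\mathcal K^{n-1}_{l'+1,l})\ge 1-\overline c_2\,e^{-w_1l}$. The $\rQ_1$-tail estimate is the hard part; everything else closely follows the 2D template of \cite{LSW02a}, with Proposition \ref{pro:1} providing the 3D replacement for conformal-invariance arguments.
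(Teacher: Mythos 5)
Your overall framework---take $\hg=\hgp$, realize $\delta,\delta'$ through the Radon--Nikodym densities $q,q'$ against the law of the common Brownian extension, and combine the maximal coupling with Proposition~\ref{pro:3}-type bounds on $\mathbb E|q-q'|$---is the same template the paper follows (its proof is essentially a pointer to Lemma~5.2 of \cite{LSW02a} plus two 3D-specific remarks). However, your structural step contains a genuine gap. On $\{\delta=\delta'\}$ you assert that the good layers of $\gamma$ transfer to $\delta$ ``by scale invariance of $K(\cdot)$''. Scale invariance gives $K(p^{\Bc_{i-1}\cup\olg^1}_{i-1/2,i})=K(p^{\Bc_i\cup\tg^1}_{i+1/2,i+1})$, and $\tg^1=\gamma\oplus\hg^1$ is a strictly larger slit than $\gamma$; the switching constant is not monotone under enlarging the slit, so a good layer of $\gamma$ at scale $i$ remains good for $\delta$ only if $\hg^1\cap\Bc_{i+1}=\varnothing$. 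One must therefore separately rule out the event that $\hg^1$ down-crosses from $\partial\Bc_0$ into $\Bc_{-l'}$ and spoils the deep good layers; under the reweighted measure (whose density is at most $e^\xi b_1 b_2 C_1^{-1}e^{lu}$ by \eqref{eq:c1}, \eqref{eq:b2} and the $\ol{\mathcal K}$ hypothesis) this costs $O(e^{lu}e^{-l'})=O(e^{-(1-u)l})$ by transience, and this is precisely where the smallness requirement on $u$ enters. This is the second bullet of the paper's proof sketch, and your proposal omits it entirely.

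You also misdiagnose the $\rQ_1$-tail as ``the essential technical obstacle''. It follows from a one-line Markov-type bound using the cascade identity and \eqref{eq:b2}: since $\rQ_{n-1}(\olg^1)\le b_2\,\rQ_1(\olg^1)$ and $\rQ_n(\gamma)\ge b_2^{-1}\rQ_1(\gamma)\ge b_2^{-1}C_1e^{-lu}$,
\begin{equation*}
\widetilde{\mathbb P}^n_\gamma\big(\rQ_1(\olg^1)<C_1e^{-lu}\big)
=\frac{e^\xi\,\mathbb E\big[Z_1(\tg^1)^\lambda\,\rQ_{n-1}(\olg^1)\,\ind{\rQ_1(\olg^1)<C_1e^{-lu}}\big]}{\rQ_n(\gamma)}
\le b_2^2\,C_1\,e^{-lu},
\end{equation*}
which is exponentially small in $l$ with rate $u$---all that $w_1=w_1(\lambda,u)$ requires. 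Your proposed iterated coupling back to $\widetilde{\mathbb P}^1_\gamma$ and ``transport to the unreweighted measure'' is both unnecessary and too vague to constitute a proof. (For $n=1$ the target is $\mathcal K^0=\mathcal K$, so no $\rQ_1$ condition is needed; this is the paper's first bullet, which you handle implicitly by restricting to $n\ge2$.) A final bookkeeping point: the hypothesis should be read as $\ol{\mathcal K}_{l',l}$, consistent with its use in \eqref{eq:129}; then the coincidence scale and good-layer window shift exactly onto those of $\mathcal K_{l'+1,l}$ and your $\tilde l''$ adjustment is unnecessary, though your literal reading does close arithmetically.
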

    Basically, one can follow the proof of Lemma 5.2 in \cite{LSW02a} line by line to obtain Lemma \ref{lem:226}. Here we only mention two differences in the proof detail.
    \begin{itemize}
        \item When $n=1$, there is no ``next scale'' for us to use to deduce the condition $\rQ_1(\gamma)\geq C_1\,e^{-lu}$ in $\overline{\mathcal K}_{l',l}$. In this case, we use $\mathcal K_{l',l}$ alternatively.
        \item As an intermediate step, one needs to deal with the situation that 
        $(\gamma,\gamma')\in\mathcal K_{l',l}$ and $(\olg^1,\olgp^1)\in\Gamma^2\setminus \mathcal K_{l'+1,l}$.
        In this case, $\hg^1$ has a down-crossing from $1$ to $e^{-l'}$ to destroy some good layers inside $\Bc_{-l'}$, which has probability $O(e^{-c\,l})$ (note that $l'\ge l$) from the transience of $3$D Brownian motion. 
    \end{itemize}

Finally, we follow a strategy similar to that used in the proof Proposition 5.1 of \cite{LSW02a} to obtain Proposition \ref{pro:4}, which requires iterated use of Lemmas~\ref{lem:167} and~\ref{lem:226}. 
We remind readers that there are some extra complexities here since we also need to increase the number of good layers along the way, which is ensured by Proposition \ref{pro:2}.
\begin{proof}[Proof of Proposition \ref{pro:4}]
    Below, we fix $u=\min\{1/2,u_1(\lambda)/2\}$.
    By Lemma \ref{lem:226},
    there exist $w_1=w_1(\lambda,u)>0$ and $\overline c_2=\overline c_2(\lambda,u)>0$ such that 
    for all $n\geq 1$, $l'\geq l\geq 1$, and 
    if $(\gamma,\gamma')\in\ol{\mathcal K}_{l',l }$, 
    then one can define  $\delta=\delta(n,1,\gamma)$ 
    and $\delta'=\delta(n,1,\gamma')$ on the same 
    probability space such that
    \begin{equation}\label{eq:129}
        \mu\big((\delta,\delta')\in\mathcal K_{l '+1,l}^{n-1}\big)\geq 1-\overline c_2\,e^{-w_1l}.
    \end{equation}
    
    Using \eqref{eq:129} iteratively for $4l$ times, for all $n\geq 4l$, $l\geq 1$, and $(\gamma,\gamma')\in\ol{\mathcal K}_{l,l}$, one can construct  $\delta=\delta(n,4l,\gamma)$ and $\delta'=\delta(n,4l,\gamma')$ on the same probability space such that
    \begin{equation*}
        \mu\big((\delta,\delta')\in\mathcal K_{5l,l}^{n-4l}\big)\geq 1-4l\,\overline c_2\,e^{-w_1l}.
    \end{equation*}
    Now, we take into account the number of good layers.
    We observe that the law of $\delta$ has the Radon-Nikodym derivative $q$ with respect to the law of $\hg^{4l}$ such that
    \begin{equation*}
    q(\hg^{4l})=\dfrac{e^{4l\xi}Z_{4l}(\tg^{4l})^\lambda \rQ_{n-4l}(\olg^{4l})}{\rQ_n(\gamma)}
        \le e^{4l\xi}\,b_1\,b_2\,C_1^{-1}\,e^{lu},
    \end{equation*}
    where we used the fact $Z_{4l}(\tg^{4l})\le 1$, $\rQ_{n-4l}(\olg^{4l})\le b_1$ and $\rQ_n(\gamma)\ge b_2^{-1}\,\rQ_1(\gamma)\ge b_2^{-1}\,C_1\,e^{-lu}$ by \eqref{eq:b2}. 
    Applying Proposition \ref{pro:2} with $w=2\xi+1$ and $M_0=M_0(\max\{4,2\xi+1\})$, we get
    \begin{equation*}
        \mathbb E\Big(q(\hg^{4l})\,\ind{\olg^{4l}\notin\mathcal Y_{4l,2l,l}(M_0)}\Big)\leq (e^{4l\xi}\,b_1\,b_2\,C_1^{-1}\,e^{lu})\,a_3\,e^{(-4\xi-2)l}.
    \end{equation*}
    It follows that for all $n\geq 4l$, $l\geq 1$, and $(\gamma,\gamma')\in\ol{\mathcal K}_{l,l}$, one can construct  $\delta=\delta(n,4l,\gamma)$ and $\delta'=\delta(n,4l,\gamma')$ such that for $w_2=\min\{w_1/2,1\}$ and some constant $\overline c_3=\overline c_3(\lambda)$,
    \begin{equation}\label{eq:152}
        \mu\big((\delta,\delta')\in\mathcal K_{2l,2l}^{n-4l}\big)
        \geq \mu\big((\delta,\delta')\in\mathcal K_{5l,l}^{n-4l},
        \delta\in\mathcal Y_{4l,2l,l}(M_0)\big)
        \geq 1-\overline c_3\,e^{-w_2l}.
    \end{equation}

    Next, we fix a constant $a$ such that $\overline c_2\,e^{-w_1 a}<1/2$, $\overline c_3\,e^{-w_2 a}<1/2$, and $a>a_0$, where $a_0$ is as in Lemma \ref{lem:167}. Then, 
    by Lemma \ref{lem:167}, 
    there exists 
    a constant $\overline c_1=\overline c_1(\lambda,a)>0$, 
    such that for all $\gamma,\gamma'\in\Gamma^+$, 
    and $n\geq 3a+1$, 
    one can construct $\delta=\delta(n,3a+1,\gamma)$ 
    and $\delta'=\delta(n,3a+1,\gamma')$ 
    in such a way that
    \begin{equation}\label{eq:168}
        \mu\big((\delta,\delta')\in \ol{\mathcal K}_{a,a}\big)\geq\overline c_1.
    \end{equation}

    Finally, we use the above two types of couplings, \eqref{eq:152} and \eqref{eq:168}, alternately to construct the desired coupling.
    Similar to (5.2) in \cite{LSW02a}, we have for all $j+k\le n$,
    \begin{equation}\label{eq:induc}
    \delta(n-j,k,\delta(n,j,\gamma)) \text{ has the same law as } \delta(n,k+j,\gamma).
    \end{equation}
    Suppose $n\ge 6a+4$. 
    We construct a sequence $(\delta(j),\delta'(j),N_j,K_j)$ 
    ($j=0,\ldots,\iota$) with
    $N_0=0,K_0=0$ and $\delta(0)=\gamma,\delta'(0)=\gamma'$ as follows, where $\iota$ is some stopping time to be defined.
    For $j\ge 1$, we use an inductive construction and distinguish two cases below:
    \begin{itemize}
        \item If $K_j\ge 1$, $n\ge N_j+4K_j$ and $(\delta(j),\delta'(j))\in\ol{\mathcal K}_{K_j,K_j}$, we let $\delta(j+1)$ 
        have the law of $\delta(n-N_j,4K_j,\delta(j))$ and $\delta'(j+1)$ 
        have the law of $\delta(n-N_j,4K_j,\delta'(j))$, and couple $\delta(j+1)$ and $\delta'(j+1)$ as in \eqref{eq:152}. Set $N_{j+1}=N_j+4K_j$.
        Set $K_{j+1}=2K_j$ if the coupling succeeds, and $K_{j+1}=0$ otherwise.
        \item If $K_j=0$ and $n\ge N_j+3a+2$, we let $\delta(j+1)$ have the law of $\delta(n-N_j,3a+2, \delta(j))$ and $\delta'(j+1)$ have the law of $\delta(n-N_j,3a+2, \delta'(j))$. In this case, we first use \eqref{eq:c23} to concatenate $\delta(j)$ (resp.\ $\delta'(j)$) with the first layer of $\delta(j+1)$ (resp.\ $\delta'(j+1)$) in $\Gamma^+$ with positive probability, and then couple the next $j$ layers of $\delta(j+1)$ and $\delta'(j+1)$ using \eqref{eq:168}. Set $N_{j+1}=N_j+3a+2$. 
        Set $K_{j+1}=a$ if the coupling succeeds, and $K_{j+1}=0$ otherwise.
    \end{itemize}
    Let $\iota$ be the first $j\ge 0$ such that $N_{j+1}>n$.
    By \eqref{eq:induc}, $\delta(j)$ has the same law as $\delta(n,N_j,\gamma)$ (which is also true for $\delta'(j)$ and $\gamma'$).
    Introduce an auxiliary sequence $X_0,\ldots,X_n$ such that $X_{N_{j}}=0$ if $K_j=0$ for $j=0,\ldots,\iota$, and $X_{l}=X_{l-1}+1$ otherwise.
    By Theorem 3.1 of \cite{vermesi2008intersection} (one can easily verify that the sequence $(X_i)_{0\le i\le n}$ satisfies the condition therein), there exist positive constants $\ol c_4(\lambda)$ and $w_3(\lambda)$ such that
    $$
    \mathbb P(X_n\ge n/2)\ge 1-\ol c_4\,e^{-w_3\,n}.
    $$
    Moreover, it is easy to check that $X_n\le 8K_\iota$ if $X_n\ge 3a+2$. Hence, we also obtain that $\mathbb P(K_\iota\ge n/16)\ge 1-\ol c_4\,e^{-w_3\,n}$. It means that one can couple $\delta(n,N_{\iota},\gamma)$ and $\delta(n,N_{\iota},\gamma')$ such that they are in $\ol{\mathcal K}_{n/16,n/16}$ with high probability. Note that there are at most $(n-N_\iota)\le n/2$ layers remaining to be coupled, for which we can use the coupling \eqref{eq:129} repeatedly $O(n)$ times with a cost $O(e^{-w_1\, n})$ each time. We conclude the proof by choosing the constants $a_4$ and $v_3$ in the statement of Proposition~\ref{pro:4} sufficiently small. 
\end{proof}

\section{Twisted H\"older domain}\label{sec:twisted}
This section is dedicated to the proof of Theorem~\ref{thm:twisted}. We begin with the definition of twisted H\"older domain, which can be found in Definition~4.1 of \cite{MR1127476}. 
\begin{definition}\label{def:thd}
A bounded domain $D$ is called a twisted H\"older domain of order $\alpha$, $\alpha\in (0,1]$, if there exist positive constants $C_1,\ldots,C_5$, a point $x_0\in D$ and a bounded continuous function $\delta:D\to(0,\infty)$ such that the following properties (i), (ii) and (iii) hold.

(i) $\delta(x)\le C_1\,\dist(x,\partial D)^\alpha$ for all $x\in D$;

(ii) For all $x\in D$, there exists a rectifiable curve $\gamma_x\subseteq D$ connecting $x$ and $x_0$ such that $$\delta(x')\ge C_2\big(\delta(x)+l(\gamma_x(x,x'))\big)$$ for all $x'$ lying on $\gamma_x$, where $l(\gamma_x(x,x'))$ denotes the length of the piece of curve connecting $x$ and $x'$;

(iii) Let $F_a:=\{y\in D:\delta(y)\le a\}$. For any $a\le C_5$ and $x\in F_a$,
\begin{equation}\label{eq:cap}
\CAP(\B(x,C_3a)\cap F_a^c)\geq C_4\CAP(\B(x,C_3a)),
\end{equation}
where $\CAP(\cdot)$ is the Newtonian capacity in $\Rb^3$ (see Definition 4.31 in \cite{BM1}).
\end{definition}

We will use the following lemma to verify conditions of twisted H\"older domains.
\begin{lemma}[{\cite[Proposition 3.1]{bass1992lifetimes}}]\label{lem:vthd}
    If for any $x\in D$, there exists $b>0$ such that
\begin{equation}\label{eq:2193}
\dist(x',\partial D)\ge C_6(b+l(\gamma_x(x,x')))^{1/\alpha},\ \text{ for all } x'\in\gamma_x,
\end{equation}
letting $\delta(x)$ be the supremum of $b$'s that satisfy \eqref{eq:2193}, then this ${\delta}$ is bounded continuous, and satisfies conditions (i) and (ii) in Definition~\ref{def:thd}.
\end{lemma}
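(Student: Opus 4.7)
The plan is to verify the three conditions of Definition~\ref{def:thd}: (i) and (ii) via Lemma~\ref{lem:vthd} by constructing a suitable curve $\gamma_x$ for every $x\in U\setminus\Ac$, and (iii) by a separate capacity density argument. The main probabilistic input is a uniform, scale-by-scale enhancement of Proposition~\ref{prop:cone1}: almost surely, an uncovered cone is rooted at every suitable lattice point at every dyadic scale, and since $\Ac\subseteq\Wc$ these cones automatically avoid $\Ac$ uniformly in the choice of closed $\Ac\subseteq\Wc$.

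\textbf{Uniform cone property.} Fix $u_1>1$ close to $1$ and $u_2\in(0,1/8)$ close to $1/8$. For each $k\geq 1$ and each lattice point $y\in 2^{-k-2}\Zb^3\cap U$ with $\dist(y,\Wc)\geq 2^{-k-1}$, apply a rescaled and translated version of Proposition~\ref{prop:cone1} inside $\B(y,2\cdot 2^{-k})$ (decomposing $\Wc$ via the strong Markov property into excursions starting on $\partial\B(y,2^{-k})$) with the choice $n(k):=\lceil \beta k\rceil$ for some $\beta>\log 8/\log(1/u_2)$. This produces an uncovered cone rooted at $y$ of aperture $\theta_k:=u_1^{-n(k)}=2^{-\mu k}$, where $\mu:=\beta\log_2 u_1$ can be made arbitrarily small, and length $\asymp 2^{-k}$, with failure probability at most $u_2^{n(k)}$. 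Summing over the $O(|U|\cdot 8^k)$ lattice points at scale $k$ yields a summable total failure probability, and Borel-Cantelli produces a random $k_1(\Wc)<\infty$ such that uncovered cones exist at every admissible lattice point at every scale $k\geq k_1$ simultaneously.

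\textbf{Curve construction and conditions (i)--(ii).} Given $x\in U\setminus\Ac$ with $\dist(x,\Ac)\in[2^{-k_0-1},2^{-k_0})$ and $k_0\geq k_1$, build $\gamma_x$ by concatenating straight segments, one per dyadic scale $k=k_0,k_0-1,\ldots,1$: at the current endpoint $x_k$, use the uncovered cone at the nearest admissible grid point $y_k$ to travel distance $s_k\asymp 2^{-k(1-\mu)}$ along the cone axis, producing $x_{k-1}$ at distance $\asymp 2^{-k+1}$ from $\Wc$. A direct computation along each segment, using that the distance to the cone's lateral boundary at axial parameter $t$ is $\asymp t\theta_k$ and is dominated by the $1$-Lipschitz bound $2^{-k-1}-t$ for small $t$, yields $\dist(x',\Wc)\geq c(b+l(\gamma_x(x,x')))^{1/(1-\mu)}$ for some $b\lesssim\dist(x,\Ac)^{1-\mu}$, which verifies \eqref{eq:2193} with $\alpha=1-\mu$. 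Since $\mu\downarrow 0$ is attainable by the choice of $u_1,u_2$, any $\alpha<1$ is reached; Lemma~\ref{lem:vthd} then produces the bounded continuous $\delta$ satisfying (i) and (ii).

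\textbf{Capacity condition (iii) (main obstacle).} This is the key remaining step. Matching upper and lower bounds along $\gamma_x$ give $\delta(y)\asymp\dist(y,\Wc)^\alpha$, hence $F_a\subseteq\{\dist(\cdot,\Wc)\lesssim a^{1/\alpha}\}$; since $\alpha<1$ yields $a^{1/\alpha}\ll a$ and $\Wc$ has Hausdorff dimension $2$, the set $F_a$ occupies a vanishing Lebesgue fraction of $\B(x,C_3 a)$ as $a\to 0$. To upgrade this volume bound to a capacity bound, I will invoke the uniform cone property at scale $a$ near $x$ to locate an interior point $z$ with $\dist(z,\Wc)\gtrsim a$; a sub-ball $\B(z,\rho)$ of radius $\rho\asymp a$ then lies in $\B(x,C_3 a)\cap F_a^c$ (after enlarging $C_3$ to absorb the lateral cone displacement) and has capacity $\asymp\CAP(\B(x,C_3 a))$. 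The delicate point, and the main obstacle, is that the lateral displacement along a cone of shrinking aperture may push $z$ out of $\B(x,C_3 a)$; resolving this requires either a multi-cone argument using cones at several nearby lattice points, or invoking the sausage estimate of Lemma~\ref{lem:1} (after a further lattice discretization of $x\in F_a$ and Borel-Cantelli) to rule out pathological configurations where $\Wc$ nearly fills $\B(x,C_3 a)$. Once (iii) is established, combining with (i)--(ii) concludes that $U\setminus\Ac$ is a twisted Hölder domain of every order $\alpha<1$, uniformly in closed $\Ac\subseteq\Wc$, as claimed.
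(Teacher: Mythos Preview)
Your proposal does not address the stated lemma at all. Lemma~\ref{lem:vthd} is a purely deterministic, domain-theoretic statement (quoted from Bass--Burdzy): given an arbitrary bounded domain $D$ and curves $\gamma_x$ satisfying \eqref{eq:2193}, one must show that the function $\delta(x):=\sup\{b: \eqref{eq:2193}\text{ holds}\}$ is bounded, continuous, and satisfies conditions (i) and (ii) of Definition~\ref{def:thd}. There is no Brownian motion, no $\Wc$, no $\Ac$, and no Borel--Cantelli in the statement; the lemma is the black-box tool that the paper \emph{applies} when proving Theorem~\ref{thm:twisted}. What you have written is instead a sketch of Theorem~\ref{thm:twisted}, and you even invoke Lemma~\ref{lem:vthd} as an ingredient (``Lemma~\ref{lem:vthd} then produces the bounded continuous $\delta$\ldots''). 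So the proposal proves nothing about the statement you were asked to prove.

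For the record, a proof of Lemma~\ref{lem:vthd} is short and elementary: setting $x'=x$ in \eqref{eq:2193} gives $\delta(x)\le C_6^{-\alpha}\dist(x,\partial D)^\alpha$, which is (i) and boundedness; for (ii), if $x'\in\gamma_x$ then the sub-arc of $\gamma_x$ from $x'$ to $x_0$ witnesses $b'=b+l(\gamma_x(x,x'))$ in \eqref{eq:2193} at $x'$, yielding $\delta(x')\ge \delta(x)+l(\gamma_x(x,x'))$; continuity follows by splicing a short segment between nearby points onto an existing curve and tracking the effect on admissible $b$'s. None of this appears in your write-up. (Separately, even as a sketch of Theorem~\ref{thm:twisted} your argument for the capacity condition (iii) is incomplete in exactly the way you flag: the paper does not use a ``sub-ball inside a cone'' argument, but instead derives \eqref{eq:cap} from the sausage estimate Lemma~\ref{lem:1} via a hitting-probability bound, cf.\ \eqref{eq:pro:6.2}.)
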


For simplicity, we assume $U=\B(0,1)^\circ$ below. The general case can be tackled in a similar way.
We recall $\Wc$ and $\Pbx$ introduced just above Theorem~\ref{thm:sep}.
We write $\Cone_x(v,r,l):=\Cone_x(v,r)\cap\B(x,l)$ and $\Dc:=U\setminus\unionW$.

In the following lemma, we show that for any point $x\in U$ with $\dist(x,\Wc)\ge 2^{-n}$, there is an uncovered cone with vertex $x$ such that it has a macroscopic section (at least unit length) inside $\Dc$. This is carried out by placing the cones within some appropriate direction.
\begin{lemma}\label{lem:cone_in}
For $\Pbx$-almost every $\Dc$ and any $u_1>1$, there exists a integer $n_0=n_0(u_1,\Dc)>0$, such that for any $n>n_0$ and any $x\in(2^{-n}\mathbb{Z}^3)\cap U$ with $\dist(x,\Wc)\ge 2^{-n}$, there exists a unit vector $v_x$, such that
\[\Cone_x(v_x,u_1^{-n},1)\subseteq \Dc.\]
\end{lemma}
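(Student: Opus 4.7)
The plan is to apply Proposition~\ref{prop:cone1}, translated from the origin to each lattice point, and combine by a union bound. For any $x\in(2^{-n}\Zb^3)\cap U$ with $\dist(x,\Wc)\ge 2^{-n}$, the starting points $x_i$ satisfy $|x_i-x|\ge 2^{-n}$ (since $x_i\in\Wc$), so by translation invariance of Brownian motion Proposition~\ref{prop:cone1} yields, with $\Pbx$-probability at least $1-u_2^n$, a unit vector $v$ with $\Wc\cap \Cone_x(v,u_1^{-n})\cap B(x,2)=\varnothing$, and \emph{a fortiori} $\Wc\cap \Cone_x(v,u_1^{-n},1)=\varnothing$.

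To further guarantee $\Cone_x(v,u_1^{-n},1)\subseteq U$ (taking $U=B(0,1)^\circ$ as in the standing simplification), I would first establish a directional refinement of Proposition~\ref{prop:cone1}: if $V\subseteq\Sb^2$ is any spherical cap whose area is bounded below by a fixed constant $\rho>0$, then the same conclusion holds with $v$ additionally required to lie in $V$, at the expense of worsening $u_2$ by a $\rho$-dependent amount. This is a straightforward adaptation of the cover-time construction of Section~\ref{subsec:ucone}: one simply chooses the anchor vectors $(v_i)_{1\le i\le n}$ of Definition~\ref{def:cone} inside $V$, which is possible with $\Theta(n)$ of them at the required spacing $12d_0n^{-1/2}$ as soon as $V$ has area at least $\rho$; Lemma~\ref{Lemma:3.6}, Proposition~\ref{lem:955} and Lemma~\ref{Lemma:3.9} then carry through with constants depending only on $\rho$.

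For each eligible $x$, I would then let $V_x\subseteq\Sb^2$ be the cap of directions $v$ satisfying $x\cdot v<-|x|^2/2-u_1^{-n}$. A direct quadratic computation using $|x+tv|^2=|x|^2+2t(x\cdot v)+t^2$ together with the transverse spread $\le tu_1^{-n}$ of the cone gives $\Cone_x(v,u_1^{-n},1)\subseteq U$ for every $v\in V_x$; moreover, the half-angle of $V_x$ about $-x/|x|$ is uniformly bounded below by $\pi/3-o(1)$ for $x$ with $|x|$ not too close to $0$ relative to $u_1^{-n}$, and in the remaining regime one can pick $V_x$ arbitrarily since any direction works. Applying the directional refinement at every eligible lattice point $x$, with the choice $V=V_x$, the failure probability at each point is at most $u_2^n$. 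Choosing $u_2<1/8$, the union bound over the $O(8^n)$ eligible lattice points yields a summable total error $O((8u_2)^n)$, and Borel--Cantelli produces the random threshold $n_0(\Dc)$ beyond which every eligible $x$ simultaneously admits an uncovered direction $v_x\in V_x$; by construction the resulting cone lies in $U\setminus\Wc=\Dc$.

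The main technical step is the directional refinement of Proposition~\ref{prop:cone1}. The quantitative estimates in Section~\ref{subsec:ucone} are local and insensitive to restricting the initial macro-cones $S_i$ to a prescribed cap, so the adaptation should be routine; the only care needed is to track that all the constants $c_2,c_3,c_4,u_2$ degrade only through a dependence on $\rho$, and that the cover-time/decorrelation arguments do not tacitly use the full sphere anywhere.
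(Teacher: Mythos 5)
Your proposal follows essentially the same route as the paper: establish a directional refinement of Proposition~\ref{prop:cone1} (the paper states it as finding an uncovered cone with $v\cdot v_0>t$ for a prescribed $v_0$, which is your spherical-cap version), use it at each lattice point to pick an inward-pointing direction so the cone stays in $U$, and conclude with a union bound over the $O(8^n)$ points after taking $u_2$ small enough. Your write-up is in fact more detailed than the paper's short sketch, and the extra details (the quadratic containment computation, the count of anchor vectors in a cap) are sound.
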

\begin{proof}
    First, by adapting the proof of Proposition~\ref{prop:cone1}, one can obtain a stronger result as follows. 
    Suppose $v_0$ is a unit vector in $\mathbb R^3$ and $t<1$.
    Then, under the setting of Proposition~\ref{prop:cone1}, there exists 
    $n_1(u_1,u_2,k,t)>0$, such that for all $n>n_1$, we have
    \begin{equation}\label{eq:strong}
        \Pbx\big(\mathrm{there\ exists\ a\ unit\ vector\ }v,~ v\cdot v_0>t\text{ and } \unionW\cap\Cone(v,u_1^{-n},2)=\varnothing\big)>1-u_2^n.
    \end{equation}
    Roughly speaking, one can further construct an uncovered cone which follows closely to some prescribed direction.
    Using this result, for any $x$, with probability at least $1-u_2^n$, one can choose an uncovered cone with direction $v_x$ such that $\Cone_x(v_x,u_1^{-n},1)\subseteq\Dc$.
    Choosing $u_2$ sufficiently small and applying a union bound, we conclude the proof immediately.
\end{proof}

Next, we use Lemma~\ref{lem:cone_in} to construct an arc $\gamma_x$ in $\Dc$ satisfying some bound that can be applied to show \eqref{eq:2193}. The strategy is to let $\gamma_x$ be a poly-line following directions of successive uncovered cones.
\begin{lemma}\label{lem:continuous curve}
    Let $x_0\in U\setminus \{x_1,\ldots,x_k\}$. For $\Pbx$-almost every $\Dc$ and any $0<\beta<1$, there exist a constant $c_5=c_5(\beta,\Dc,x_0)>0$ such that for all $x\in \Dc$, we can construct a rectifiable curve $\gamma_x \subseteq \Dc$ connecting $x$ and $x_0$ satisfying
    \begin{equation}\label{eq:lem:6.1}
        l(\gamma_x(x,x'))\le c_5\,\dist(x',\partial \Dc)^\beta,\ \text{ for all } x' \in \gamma_x.
    \end{equation}
\end{lemma}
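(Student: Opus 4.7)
The plan is to build $\gamma_x$ as a polyline whose successive segments follow the axes of uncovered cones produced by Lemma~\ref{lem:cone_in} at progressively coarser scales, terminating in a fixed rectifiable curve once a macroscopic distance to $\Wc$ is reached. Given $\beta\in(0,1)$, I will choose $\beta'\in(1/(2-\beta),1)$ and set $u_1:=2^{1-\beta'}\in(1,\sqrt{2})$; applying Lemma~\ref{lem:cone_in} with this $u_1$ gives, almost surely, a threshold $n_0=n_0(u_1,\Dc)$ beyond which every properly placed lattice point admits an uncovered cone of opening radius $u_1^{-n}$ and length $1$. Set $\mu:=(1-\beta')/\beta'$; the choice $\beta'>1/(2-\beta)$ is exactly what makes $\mu<1-\beta$, so one may fix any $\gamma\in(\mu\beta/(1-\beta),\,1-\mu)$.

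If $\dist(x,\Wc)\ge 2^{-n_0}$, I take $\gamma_x$ to be any fixed rectifiable curve in $\Dc$ from $x$ to $x_0$, whose existence follows from the a.s.\ connectedness of $\Dc$; the bounds \eqref{eq:lem:6.1} hold trivially by taking $c_5$ large enough. For $x$ with $\dist(x,\Wc)<2^{-n_0}$, I iterate: given $x^{(i)}$ with $2^{-n_i-1}<\dist(x^{(i)},\Wc)\le 2^{-n_i}$, set $N_i:=\lceil(n_i+2)/\beta'\rceil$, let $\overline x^{(i)}$ be the closest point to $x^{(i)}$ in the lattice $2^{-N_i}\Zb^3$ (so that $|x^{(i)}-\overline x^{(i)}|\le\sqrt{3}\,2^{-N_i-1}$ and $\dist(\overline x^{(i)},\Wc)\ge 2^{-N_i}$), invoke Lemma~\ref{lem:cone_in} at scale $N_i$ to obtain a direction $v_i$ with $\Cone_{\overline x^{(i)}}(v_i,u_1^{-N_i},1)\subseteq\Dc$, and set $x^{(i+1)}:=x^{(i)}+s_iv_i$ with $s_i:=\dist(x^{(i)},\Wc)^\gamma$. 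The iteration terminates once $n_i\le n_0$, after which I concatenate with the macroscopic curve from the previous case.

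Three estimates complete the argument. First, the specific choice $N_i=\lceil(n_i+2)/\beta'\rceil$ yields $2^{-\beta'N_i}\le 2^{-n_i-2}<\dist(x^{(i)},\Wc)/2$, so the ``ball regime'' $s\le\dist(x^{(i)},\Wc)$ (where $[x^{(i)},x^{(i)}+sv_i]\subseteq\B(x^{(i)},\dist(x^{(i)},\Wc))\subseteq\Dc$) overlaps the ``cone regime'' $s\ge 2\sqrt{3}\cdot 2^{-\beta'N_i}$ (where the segment sits in the uncovered cone and $\dist(y,\Wc)\gtrsim u_1^{-N_i}|y-\overline x^{(i)}|$), guaranteeing $[x^{(i)},x^{(i+1)}]\subseteq\Dc$. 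Second, $n_{i+1}\le(\mu+\gamma)n_i+O(1)$ with $\mu+\gamma<1$ gives geometric decay of scales, so the iteration terminates after $O(\log n_i)$ steps. Third, the length bound $\sum_{j<i}s_j\le c_5\,\dist(x^{(i)},\Wc)^\beta$ propagates inductively: the required inequality reduces to the two exponent conditions $\mu+\gamma<1$ and $\gamma(1-\beta)\ge\mu\beta$, both built into the choice of $\gamma$.

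The main technical obstacle is verifying the length bound \eqref{eq:lem:6.1} not only at the vertices $x^{(i)}$ but at every intermediate point $x'=x^{(i)}+sv_i$ along each segment, because $\dist(\cdot,\Wc)$ is not monotone along such a segment and dips around the ball-to-cone transition to a value comparable to $\dist(x^{(i)},\Wc)\cdot u_1^{-N_i}\approx 2^{-(1+\mu)n_i}$. Taking the pointwise maximum of the two regime-specific lower bounds $\dist(y,\Wc)\ge\dist(x^{(i)},\Wc)-|y-x^{(i)}|$ and $\dist(y,\Wc)\gtrsim u_1^{-N_i}\cdot|y-x^{(i)}|$, one finds that the worst ratio $l/\dist^\beta$ along the segment is controlled precisely when $(1+\mu)\beta<1$, i.e., $\beta<\beta'$, which holds by our initial choice of $\beta'$; this is the decisive place where the interplay between $\beta$ and $\beta'$ enters, and it explains why the uncovered-cone radius $u_1^{-n}$ must be polynomially small in the scale.
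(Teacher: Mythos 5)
Your overall strategy --- a polyline whose segments follow the axes of the uncovered cones from Lemma~\ref{lem:cone_in}, terminated by a fixed macroscopic curve once a unit scale is reached --- is exactly the paper's, but your exponent bookkeeping at the interior points of the segments is wrong, and for part of your stated parameter range the constructed curve does not satisfy \eqref{eq:lem:6.1}. Concretely, at the dip point of the $i$-th segment (at arclength $s^*\asymp 2^{-n_i}$ from $x^{(i)}$) the distance to $\Wc$ is only $\asymp u_1^{-N_i}2^{-n_i}\asymp 2^{-(1+\mu)n_i}$, while the accumulated length there is at least $s_{i-1}=2^{-\gamma n_{i-1}}$ with $n_{i-1}\asymp n_i/(\mu+\gamma)$. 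Hence \eqref{eq:lem:6.1} at the dip forces $\gamma/(\mu+\gamma)\ge(1+\mu)\beta$, i.e.\ $\gamma\ge(1+\mu)\beta\mu/\bigl(1-(1+\mu)\beta\bigr)$, not merely $(1+\mu)\beta<1$; the latter condition only handles the first segment, where no previous step lengths have accumulated. This lower bound on $\gamma$ is compatible with your termination requirement $\gamma<1-\mu$ only when $(1+\mu)\beta<1-\mu$, i.e.\ $\beta'>(1+\beta)/2$, which is strictly stronger than your hypothesis $\beta'>1/(2-\beta)$ (indeed $(1+\beta)(2-\beta)>2$ for $\beta\in(0,1)$). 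For $\beta'$ near $1/(2-\beta)$ one has $\mu\to 1-\beta$, your admissible interval for $\gamma$ collapses to $\{\beta\}$, while the dip requires $\gamma\ge(2-\beta)\beta/(1-\beta)>\beta$; the construction genuinely fails there.

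The repair is easy: take $\beta'>(1+\beta)/2$ and $\gamma\in\bigl[(1+\mu)\beta\mu/(1-(1+\mu)\beta),\,1-\mu\bigr)$, after which your argument goes through (and your super-geometric scale gain $n_{i+1}\le(\mu+\gamma)n_i+O(1)$ even terminates in $O(\log n)$ steps rather than the paper's $O(n)$). For comparison, the paper sidesteps this entirely by improving the scale only by a constant factor per step ($y_m\mapsto y_{m-1}$ with the distance to $\partial\Dc$ doubling), with step length $10(u_1/2)^m$ and cone opening $u_1^{-m}$: accumulated length, vertex distance and dip distance are then all exponential in $m$, and the single condition $u_1/2<(2u_1)^{-\beta}$ controls the ratio at both the vertices and the dips for every $\beta<1$. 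A minor further point: with $N_i=\lceil(n_i+2)/\beta'\rceil$ your ``ball'' and ``cone'' regimes overlap only up to the constant $2\sqrt3$; shift $N_i$ by one more unit.
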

\begin{proof}
    Let $1<u_1<1.5$ be a constant such that $(u_1/2)< (2u_1)^{-\beta}$. Let $n_0=n_0(u_1,\Dc)$ be the corresponding constant in Lemma~\ref{lem:cone_in}.
    For $x\in \Dc\setminus(2^{-n_0}\mathbb Z^3)$, we are going to use a well-chosen poly-line to construct $\gamma_x$ that satisfies \eqref{eq:lem:6.1}. 
    First, let 
    \[n=n(x):=1+\max\{n_0,[-\log_{2}(\dist(x,\Dc^c))]\},\]
    so that $\dist(x,\Dc^c)\ge 2^{-n+1}$. Then $\gamma_x(x,x_0)$ is constructed by concatenating the poly-line segment $\overline{z_ny_nz_{n-1}\cdots y_{n_0+1}z_{n_0}y_{n_0}}$ and $\gamma_{y_{n_0}}$, where $\gamma_{y_{n_0}}=\gamma_{y_{n_0}}(x_0)$ is a finite-length rectifiable curve connected $y_{n_0}$ and $x_0$, and $z_i,y_i$ are chosen inductively as follows.
\begin{enumerate}[label=(\arabic*)]
\item Let $z_n=x$ and pick $y_n\in(2^{-n}\mathbb Z^3)$ such that $|y_n-x|<2^{-n}$.
\item Suppose $y_m\in(2^{-m}\mathbb Z^3)\cap U$ has already been constructed $(m>n_0)$. Let \[z_{m-1}=y_m+10(u_1/2)^mv_{y_m},\]
    where $v_{y_m}$ is the unit vector that satisfies Lemma \ref{lem:cone_in} with respect to $y_m$, and pick $y_{m-1}\in(2^{-m+1}\mathbb Z^3)\cap U$ such that $|y_{m-1}-z_{m-1}|<2^{-m+1}$.
\end{enumerate}    
    By Lemma \ref{lem:cone_in}, for any $n_0<m\le n$, we have 
    \[\B(z_{m-1},2^{-m+2})\subseteq\Cone_{y_m}(v_{y_m},u_1^{-m},1)\subseteq \Dc.\]
    Therefore, $y_{m-1}\in \Dc$ and $\dist(y_{m-1},\Dc^c)\ge 2^{-m+1}$. 
    From the above construction, it is not hard to verify \eqref{eq:lem:6.1} by brute-force computation. We omit the details and conclude the proof.
\end{proof}

Now, we are going to prove Theorem~\ref{thm:twisted} in three steps. 
In the first two steps, we show that $\Dc=U\setminus\unionW$ is a twisted H\"older domain of any order $\alpha<1$ by verifying \eqref{eq:2193} and \eqref{eq:cap}, making use of Lemmas~\ref{lem:continuous curve} and \ref{lem:1}, respectively. In the last step, we deal with the general case when $\Wc$ is replaced by any of its closed subset.
\begin{proof}[Proof of Theorem~\ref{thm:twisted}]
Let $x_0\in U\setminus \{x_1,\ldots,x_k\}$. Let $\alpha\in (0,1)$. Set $\beta:=\sqrt{\alpha}$. For $x\in \Dc$, let $d(x):=\dist(x,\Dc^c)$. 

\smallskip
\noindent \textbf{Step 1: Verify \eqref{eq:2193}.} We first show that the constant $b$ in \eqref{eq:2193} exists and $\delta(x)$ defined in Lemma \ref{lem:vthd} satisfies $\delta(x)\ge c_6d(x)^\beta$ for some constant $c_6$.

By Lemma~\ref{lem:continuous curve}, there exists a constant $c_5$ and a rectifiable curve $\gamma_x \subseteq \Dc$ connecting $x$ and $x_0$ such that 
    \[l(\gamma_x(x,x'))\leq c_5\,d(x')^\beta\le c_5\,d(x')^\alpha,\ \text{ for all } x'\in\gamma_x.\]
    For any $x'\in\gamma_x(x,x_0)$, we have either $d(x')\ge d(x)/2$ or $d(x)/2\le l(\gamma_x(x,x'))\le c_5 d(x')^\beta$ since $d(x)\le d(x')+l(\gamma_x(x,x'))$.
    Hence, there exists a constant $c_6$ depending on $c_5$, such that for any $x\in \Dc$ and $x'\in\gamma_x(x,x_0)$,
    \[2c_5\,d(x')^\alpha-l(\gamma_x(x,x'))\ge c_5\,d(x')^\alpha\ge c_5\,\min\{d(x)/2,(d(x)/2c_5)^{1/\beta}\}^\alpha\ge c_6d(x)^\beta.\]  
    Taking $C_6=(2c_5)^{-1/\alpha}$, we have $\delta(x)\ge c_6d(x)^\beta$.

\smallskip
\noindent \textbf{Step 2: Verify \eqref{eq:cap}.} 
From the first step,
we have for any $a>0$, 
\begin{equation}\label{eq:pf_thd_1}F_{c_6a}^{c}=\{y\in \Dc:\delta(y)>c_6a\}\supset\{y\in \Dc:d(y)>a^{1/\beta}\}.\end{equation}
    Taking $C_3=2/c_6$, it suffices to show that there exists an absolute constant $c>0$ and a constant $0<C_5(\Dc)<1$, such that for any $x\in\B(0,1)$ and any $0<a\le C_5$, we have 
    \[\CAP(\B(x,2a)\cap F_{c_6a}^c)\ge c\,\CAP(\B(x,2a)).\]
    It can be deduced from Theorem 8.24 in \cite{BM1} that, for any subset $A\subseteq B(x,2a)$, there are universal constants $\rho_1,\rho_2>0$ such that 
    \[
    \rho_1\, a^{-1}\, \Pbf_z(W\cap A\neq\varnothing)\le \CAP(A)\le \rho_2\, a^{-1}\, \Pbf_z(W\cap A\neq\varnothing),
    \]
    where $z:=x+(3a,0,0)$.
    Therefore, it suffices to prove that, using \eqref{eq:pf_thd_1}, for all $x\in\B(0,1)$ and any $0<a\le C_5$, we have 
    \[
    \Pbf_z\Big(W\cap (\B(x,2a)\cap\{x:d(x)>a^{1/\beta}\})\neq\varnothing\Big)>c'.
    \]
    Set $\overline x:=x(1-a)$. By  strong Markov property, we have
    \begin{align*}
        &\Pbf_z\Big(W\cap (\B(x,2a)\cap\{x:d(x)>a^{1/\beta}\})\neq\varnothing\Big)\\
        \ge\ &\Pbf_z\Big(W\cap \B(\overline x,a/16)\neq\varnothing\Big)\inf_{y\in\partial\B(\overline x,a/16)}\Pbf_y\Big(W\cap (\B(\overline x,a)\cap\{x:d(x)>a^{1/\beta}\})\neq\varnothing\Big).
    \end{align*}
    Note that the first probability on the RHS is bounded away from $0$. Hence,
    it suffices to show
    that the following holds almost surely under $\Pbx$. For any $0<a\le C_5(\Dc)$, any $x\in B(0,1-a)$ and  $y\in\partial\B(x,a/16)$, we have
\begin{equation}\label{eq:pro:6.2}
        \Pf_y\big(W\cap\B(x,a)\subseteq \sausage{\unionW}{a^{1/\beta}}\big)<c''.
    \end{equation}
    The above can be proved by using a similar argument as  the proof of Theorem~\ref{thm:sep} (in particular Step 2), using a combination of Lemma~\ref{lem:1}, discrete approximation and Harnack's inequality. We do not repeat the argument here. This yields \eqref{eq:cap}, which combined with the first step and Lemma \ref{lem:vthd}, implies that $U\setminus\Wc$ is a twisted H\"older domain of any order $\alpha<1$.
    
    \smallskip
   \noindent \textbf{Step 3: General $\Ac$.} Suppose $\Ac$ is any closed subset of $\unionW$. Let $\Dc_1=U\setminus \Ac$. Let $\delta_{\Dc}$ and $\delta_{\Dc_1}$ be $\delta$ associated with $\Dc$ and $\Dc_1$, respectively, as in Lemma \ref{lem:vthd}, such that $C_6(\Dc_1)=C_6(\Dc)/2$. Since $\Dc\subseteq \Dc_1$, we have $\delta_{\Dc_1}\ge \delta_{\Dc}$ on $\Dc$. For $x\in \Dc_1\setminus \Dc$ and $\varepsilon>0$, pick $y\in \Dc$ such that $|y-x|\le\min\{\dist(x,\partial \Dc_1)/2),\varepsilon\}$, construct $\gamma_x$ by joining the line segment $\ol{xy}$ and $\gamma_y$. We can see that $\delta_{\Dc_1}(x)>0$ if we pick a sufficiently small $\varepsilon$. Moreover, it is obvious that Definition~\ref{def:thd} (iii) holds true for $\Dc_1$ from the fact $\delta_{\Dc_1}\ge\delta_{\Dc}$. We thus finish the proof.
\end{proof}

\appendix
\section{Properties of $K(p)$ and $R(p)$}\label{appen:A}
In this section, we prove several properties of the extremal total variance distance and the switching constant introduced in Section \ref{subsection:RK}, namely Lemmas \ref{lemma:RK}--\ref{lemma:tricky}. 
\begin{proof}[Proof of Lemma \ref{lemma:RK}]
    We first show $R(p)\leq 1-\dfrac{2}{1+\sqrt{K(p)}}$. Since $R(p)\le 1$, we can assume $K(p)<\infty$. 
    It suffices to show that for any $x_1,x_2\in E$, $x_1\neq x_2$ and $\mu$ such that $0<\int p(x_i,z)\mu(dz)<\infty\ (i=1,2)$,

    \begin{equation}\label{eq:Kone}
    \frac{1}{2}\int \left|q(x_1,y)-q(x_2,y)\right|\mu(dy)\le 1-\frac{2}{1+\sqrt{K(p)}},
    \end{equation}
    where  
    $$q(x_i,y):=\frac{p(x_i,y)}{\int p(x_i,z)\mu(dz)},\ \ i=1,2.$$
    Let $$S:=\{y\in F:q(x_1,y)>q(x_2,y)\}\mathrm{\ \ and\ \ }T:=\{y\in F:q(x_1,y)\le q(x_2,y)\}.$$
    For $i=1,2$, let $u_i=\int_S q(x_i,y)\mu(dy)$ and $v_i=\int_T q(x_i,y)\mu(dy)$. Then we immediately have $u_1+v_1=1$, $u_2+v_2=1$, $0\le u_2\le u_1\le 1$ and
    \begin{equation}\label{eq:Ktwo}
    \frac12\int\left| q(x_1,y)-q(x_2,y)\right|\mu(dy)=u_1-u_2.
    \end{equation}
    Note that either $u_1=1$ or $u_2=0$ will imply $u_1-u_2=0\leq 1-\frac{2}{1+\sqrt{K(p)}}$ since $K(p)\geq1$. Hence, we assume that $0<u_2\leq u_1<1$.
    By the definition of $K(p)$, we have
    \begin{equation}\label{eq:A1}
    \begin{split}
        u_1(1-u_2)&=\,
        \int_S\int_Tq(x_1,y_1)q(x_2,y_2)\mu(dy_2)\mu(dy_1)\\
        &\leq \,K(p)\int_S\int_Tq(x_1,y_2)q(x_2,y_1)\mu(dy_2)\mu(dy_1)= 
        K(p)u_2(1-u_1).
    \end{split}
    \end{equation}
    By \eqref{eq:A1} and Cauchy-Schwarz inequality, we have
    $$
    1-\frac{2}{1+\sqrt{K(p)}}\ge 1-\frac{2}{1+\sqrt{\frac{u_1(1-u_2)}{u_2(1-u_1)}}}=\frac{u_1-u_2}{(\sqrt{u_1(1-u_2)}+\sqrt{u_2(1-u_1)})^2}\geq u_1-u_2,
    $$
    which, combined with \eqref{eq:Ktwo}, implies \eqref{eq:Kone}. 

\smallskip
    It remains to show the other direction $R(p)\geq 1-\frac{2}{1+\sqrt{K(p)}}$. For this, it is sufficient to show the following holds.
    For any $x_1,x_2\in E$ and $y_1,y_2\in F$ such that $K:=\dfrac{p(x_1,y_1)p(x_2,y_2)}{p(x_2,y_1)p(x_1,y_2)}>1$, there is a collection of  measures $\mu_\varepsilon$ on $F$, depending on $x_1,x_2,y_1,y_2$, such that 
    \begin{equation}\label{eq:Kthr}
    \frac{1}{2}\int \left|\frac{p(x_1,y)}{\int p(x_1,z)\mu_{\varepsilon}(dz)}-\frac{p(x_2,y)}{\int p(x_2,z)\mu_{\varepsilon}(dz)}\right|\mu_{\varepsilon}(dy)
    \to1-\dfrac{2}{1+\sqrt{K}},\quad\varepsilon\to0.
    \end{equation}
    Write $p_{ij}:=p(x_i,y_j)$. Let $\mu_{\varepsilon}$ be supported on $\{y_1,y_2\}$ with $$\mu_{\varepsilon}(\{y_1\})=\varepsilon+\sqrt{p_{12}p_{22}}\ \ \mathrm{and}\ \ \mu_{\varepsilon}(\{y_2\})=\varepsilon+\sqrt{p_{11}p_{21}}.$$
    One can directly verify $\mu_\eps$ constructed as above satisfies \eqref{eq:Kthr}. We omit the details and finish the proof.
\end{proof}
\begin{proof}[Proof of Lemma \ref{lemma:1090}]
    It suffices to show that for every $x_1,x_2\in E$,
    \begin{equation}\label{eq:169}
    \frac{1}{2}\int \left |\int\left [p(x_1,y)-p(x_2,y)\right ]q(y,z)\mu(dy)\right |\nu(dz)\leq \T{p}{\mu}\T{q}{\nu}.
    \end{equation}
    Let $p_1(y):=p(x_1,y)-p(x_2,y)$ and $p_2(y):=p(x_2,y)-p(x_1,y)$. Let
    $$
    S_1 = \{y\in F:p(x_1,y)-p(x_2,y)\ge 0\}\ \ \mathrm{and}\ \ S_2 = \{y\in F:p(x_1,y)-p(x_2,y)<0\}.
    $$
    We can assume without loss of generality that $\int_{S_1} p_1(y)\mu(dy)>0$ (otherwise, \eqref{eq:169} trivially holds). Since $\int p(x,y)\mu(dy)=1$ for all $x\in E$ and $\int q(y,z)\nu(dz)=1$ for all $y\in F$, we have
    \begin{align*}
    &\dfrac{1}{2}\int\left |\int(p(x_1,y)-p(x_2,y))q(y,z)\mu(dy)\right |\nu(dz)\\
    =\ &\dfrac{1}{2}\dfrac{\int \left| \iint_{S_1\times S_2} p_1(y_1)p_2(y_2)(q(y_1,z)-q(y_2,z))\mu(dy_1)\mu(dy_2)\right |\nu(dz)}{\int_{S_1}p_1(y)\mu(dy)}\\
    \leq\ & \dfrac{1}{2}\dfrac{\iint_{S_1\times S_2}\int p_1(y_1)p_2(y_2)\left |q(y_1,z)-q(y_2,z)\right |\nu(dz)\mu(dy_1)\mu(dy_2)}{\int_{S_1}p_1(y)\mu(dy)}\\
    \le\ & \T{q}{\nu}\int_{S_1}p_1(y)\mu(dy)
    \leq \T{q}{\nu}\,\T{p}{\mu},
    \end{align*}
    which concludes \eqref{eq:169} and hence the lemma.
\end{proof}
\begin{proof}[Proof of Lemma \ref{lemma:basic}]
    It suffices to prove that for any positive finite measure $\nu$ on $G$, 
    \begin{equation}
        \T{p*q(\mu)}{\nu}\leq R(p)R(q).
    \end{equation}
    Let
    \[g(y):=\int_G q(y,z)\nu(dz),\]
    and
    \[h(x):=\int_{F} p(x,y)g(y)\mu(dy)=\iint _{F\times G}p(x,y)q(y,z)\mu(dy)\nu(dz).\]
    Let
    $$
    F_1=\{y\in F:g(y)>0\}\ \ \mathrm{and}\ \ E_1=\{x\in E:h(x)>0\}.
    $$
    Define $q_1:F_1\times G\to [0,\infty)$ and $p_1:E_1\times F_1\to[0,\infty)$ by
    \begin{equation*}
        q_1(y,z):={q(y,z)}{g(y)}^{-1}\ \ \mathrm{and}\ \ p_1(x,y):={p(x,y)g(y)}{h(x)}^{-1}.
    \end{equation*}
    Then, for any $x\in E_1$, we have $p_1*q_1(\mu)(x,\cdot)= p*q(\mu)(x,\cdot)h(x)^{-1}$ a.s.\ under $\nu$, which implies
    \[
    \T{p*q(\mu)}{\nu}=\T{p_1*q_1(\mu)}{\nu}.
    \]
    By the definition of $K$, it is easy to see that $K(p_1)\leq K(p), K(q_1)\leq K(q)$, which implies $R(p_1)\leq R(p),R(q_1)\leq R(q)$ by Lemma \ref{lemma:RK}. Since $\int_{G} q_1(y,z)\nu(dz)=1$ and $\int_{F_1}p_1(x,y)\mu(dy)=1$, by Lemma~\ref{lemma:1090}, we have
\begin{equation}\label{eq:5.3ineq}
        \T{p_1*q_1(\mu)}{\nu}
        \leq \T{p_1}{\mu}\T{q_1}{\nu}\leq R(p_1)R(q_1)\leq R(p)R(q),
    \end{equation}
    which concludes the proof. 
\end{proof}
\begin{proof}[Proof of Lemma \ref{lemma:mean}]
    For any $x_1,x_2\in E$, $z_1,z_2\in G$, we have
    \begin{align*}
        &r(x_1,z_1)r(x_2,z_2)
        = \int_F\int _Fq(y_1,z_1)q(y_2,z_2)\mu(x_1,dy_1)\mu(x_2,dy_2)\\
        \leq\ &K(q)\int_F\int _Fq(y_1,z_2)q(y_2,z_1)\mu(x_1,dy_1)\mu(x_2,dy_2)
        =K(q)r(x_2,z_1)r(x_1,z_2).
    \end{align*}
    This implies $K(r)\le K(q)$. By Lemma~\ref{lemma:RK}, we also have $R(r)\le R(q)$.
\end{proof}
\begin{proof}[Proof of Lemma \ref{lemma:tricky}]
    Without loss of generality, we assume $\sup_{x\in E} r(x)=1$ and $K(p)<\infty$.
    Let $$g(y):=\int _E r(z)\nu(y,dz).$$
    By \eqref{eq:486}, we have for all $x\in E$,  
    $$
    \int_F p(x,y)g(y)\mu(dy)\leq \int_F p(x,y)\nu(y,E)\mu(dy)\leq a.
    $$
    For $\varepsilon >0$, pick $x_0$ such that $r(x_0)>1-\varepsilon$. By  \eqref{eq:482}, we have
    \begin{equation*}
        \int _F p(x_0,y)s(y)\mu(dy)=r(x_0)-\int _F p(x_0,y)g(y)\mu(dy)> (1-\varepsilon)-a.
    \end{equation*}  
    It follows that for any $x\in E$, we have
    \begin{align*}
        &(1-a-\varepsilon)\int _F p(x,y)g(y)\mu(dy)\le\ \int _F p(x,y)g(y)\mu(dy)\int _F p(x_0,y)s(y)\mu(dy)\\
        \leq\ & K(p)\int _F p(x_0,y)g(y)\mu(dy)\int _F p(x,y)s(y)\mu(dy)
        \ \leq\  aK(p) \int _F p(x,y)s(y)\mu(dy).
    \end{align*}
    Thus, for all $x\in E$,
    \begin{equation*}
        (1-a-\varepsilon)r(x)\leq (aK(p)+1-a-\varepsilon)\int p(x,y)s(y)\mu(dy).
    \end{equation*}
    We conclude the proof by letting $\varepsilon\to 0$.
\end{proof}

\section{Hitting probability estimates}\label{sec:hpe}
In this part, we first derive some hitting estimates for cylinders, spheres and cones, and then use them to prove Lemma~\ref{Lemma:3.6}. 

\begin{lemma}\label{lem:503}
    \textnormal{(i) (Cylinder vs Cylinder)}
    Let $d<1/2$. Consider two concentric cylinders
    $$
    D=\{(z_1,z_2,z_3):0\leq z_3\leq \pi, z_1^2+z_2^2\leq 1\},
    $$
    and
    $$
    D_1=\{(z_1,z_2,z_3):0\leq z_3\leq \pi, z_1^2+z_2^2\leq d^2\}.
    $$
    There exists an absolute constant $c>0$, such that for any $z\in B((0,0,\pi/2),1/2)$, we have
    \begin{equation}\label{eq:745}
        \Pb_z\Big(\tau_{D_1}\le \tau_{\partial D}\Big)
        \geq \frac{c}{\log(1/d)}.
    \end{equation}    
    \textnormal{(ii) (Sphere vs Cylinder)}
    Let $0\leq a\leq 1/2$, $d<1/16$ and $ D_2=\{(z_1,z_2,z_3):z_1^2+(z_2-a)^2<d^2\}$. Recall $\tau=\tau_{\partial B(0,1)}$. There exists an absolute constant $c'>0$ such that
    \begin{equation}\label{eq:753}
        \Pb_0\Big(\tau_{D_2}\le \tau\Big)
        \geq \frac{c'}{\log(1/d)}.
    \end{equation}
\end{lemma}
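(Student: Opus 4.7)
The plan is to prove both estimates by decoupling the $3$D Brownian motion into a planar projection and an independent $1$D orthogonal component, thereby reducing each claim to a $2$D first-passage estimate combined with an elementary ``staying in range'' event for the extra coordinate.

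For (i), write $Y_t:=(Z_1(t),Z_2(t))$ and $\tau_{\mathrm{ax}}:=\inf\{t:Z_3(t)\notin(0,\pi)\}$, so $(Y_t)$ is a planar Brownian motion starting in $\{y\in\Rb^2:|y|\le 1/2\}$, independent of $Z_3$. Writing $\sigma_r:=\inf\{t:|Y_t|=r\}$, one has
\[
\{\sigma_d<\sigma_1\wedge T_0\}\cap\{\tau_{\mathrm{ax}}>T_0\}\subseteq\{\tau_{D_1}\le\tau_{\partial D}\},
\]
and $\Pb(\tau_{\mathrm{ax}}>T_0)$ is a positive constant uniform over $Z_3(0)\in[\pi/2-1/2,\pi/2+1/2]$; by independence, it suffices to find absolute constants $T_0,c>0$ with $\Pb_{Y_0}(\sigma_d<\sigma_1\wedge T_0)\ge c/\log(1/d)$. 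The classical formula gives $\Pb_{Y_0}(\sigma_d<\sigma_1)=\log(1/|Y_0|)/\log(1/d)\ge\log 2/\log(1/d)$ (the case $|Y_0|\le d$ is trivial). The key input is the uniform bound
\[
\Eb_{Y_0}\!\left[\sigma_d\,\mathbf{1}_{\sigma_d<\sigma_1}\right]=O(1/\log(1/d)),
\]
obtained by explicitly solving the radial Poisson equation $w''(r)+w'(r)/r=2\log(r)/\log(1/d)$ on $(d,1)$ with $w(d)=w(1)=0$. This yields $\Eb_{Y_0}[\sigma_d\mid\sigma_d<\sigma_1]=O(1)$ uniformly in $d$, so Markov's inequality bounds $\Pb(\sigma_d>T_0,\sigma_d<\sigma_1)\le C/(T_0\log(1/d))$, and the estimate follows on taking $T_0$ a sufficiently large absolute constant.

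Part (ii) is proved by the same projection/Markov scheme, this time projecting onto the $(z_1,z_2)$-plane perpendicular to the axis of $D_2$: the projection $Y_t$ is a planar Brownian motion from the origin and $Z_3$ is an independent $1$D Brownian motion from $0$, and $D_2$ projects to the disk $\mathbf B:=\{y\in\Rb^2:|y-(0,a)|<d\}$. On the positive-probability event $\{|Z_3(t)|\le 1/2\text{ for all }t\le T_0\}$, the $3$D motion stays in $B(0,1)$ as long as $|Y_t|\le\sqrt{3/4}$, so it suffices to prove $\Pb_0(\sigma_{\mathbf B}<\sigma_{\sqrt{3/4}}\wedge T_0)\ge c/\log(1/d)$, where $\sigma_{\mathbf B}$ (resp.\ $\sigma_r$) is the hitting time of $\mathbf B$ (resp.\ the circle $\{|y|=r\}$) by $Y$. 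A Green's-function estimate in $\{y:|y|<\sqrt{3/4}\}$ (or, equivalently, a conformal map of the doubly-connected domain to a concentric annulus of modulus $\Theta(\log(1/d))$), using that $a\in[0,1/2]$ and $d<1/16$ so that the relevant distances are bounded from $0$ and from $\sqrt{3/4}$, gives $\Pb_0(\sigma_{\mathbf B}<\sigma_{\sqrt{3/4}})\ge c/\log(1/d)$. The conditional hitting-time bound proceeds exactly as in (i), either directly from the Poisson integral $\Eb_0[\sigma_{\mathbf B}\mathbf{1}_{\sigma_{\mathbf B}<\sigma_{\sqrt{3/4}}}]=\int G(0,y)\,\Pb_y(\sigma_{\mathbf B}<\sigma_{\sqrt{3/4}})\,dy$, or by transferring the explicit radial computation in (i) through the conformal map.

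The main obstacle in both parts is the sharp estimate $\Eb[\sigma\mathbf{1}_{\sigma<\tau}]=O(1/\log(1/d))$ on the conditional first moment, rather than the naive $O(1)$ that one gets from $\Eb[\sigma\wedge\tau]\le C$. The naive bound yields $\Eb[\sigma\mid\sigma<\tau]=O(\log(1/d))$, which would force the truncation time $T_0$ to grow like $\log(1/d)$ and make the axial-survival probability decay like $d^{\Theta(1)}$, collapsing the whole argument. The improvement relies on the fact that the Poisson right-hand side $\Pb_\cdot(\sigma<\tau)$ is itself of order $1/\log(1/d)$ throughout the bulk of the planar domain -- large only in an $O(d)$-neighbourhood of the inner target, whose contribution to the Green's-function integral is negligible -- and this small factor propagates through the Poisson solve.
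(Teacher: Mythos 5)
Your proof is correct, but it takes a genuinely different route from the paper's. For (i), the paper argues analytically: it sandwiches $g(z)=\Pb_z(\tau_{D_1}\le\tau_{\partial D})$ between two explicit harmonic barriers via the maximum principle, namely $g_1=\log(1/r)/\log(1/d)$ from above and $g_2=f(r)\sin(z_3)$ from below, where $f$ solves the modified Bessel equation $(rf')'=rf$ with $f(d)=1$, $f(1)=0$; the axial confinement is thus encoded into the factor $\sin(z_3)$, and the whole effort goes into an ODE analysis showing $f(r)\ge\tfrac{3}{4}\log(1/r)/\log(1/d)$. You instead decouple the planar and axial components by independence and pay for the axial confinement through a time truncation at an absolute constant $T_0$; the crux then becomes the occupation-time bound $\Eb_{Y_0}[\sigma_d\mathbf{1}_{\sigma_d<\sigma_1}]=O(1/\log(1/d))$, which you correctly identify as the point where the naive $O(1)$ bound on $\Eb[\sigma_d\wedge\sigma_1]$ would destroy the argument, and which does check out (either by your radial Poisson solve, where the boundary condition $w(1)=0$ forces $dw'(d)=O(1/\log^2(1/d))$ and hence $w\equiv O(1/\log(1/d))$, or more directly by writing $\Eb[\sigma_d\mathbf{1}_{\sigma_d<\sigma_1}]=\int G(Y_0,y)\Pb_y(\sigma_d<\sigma_1)\,dy$ and noting the integrand's logarithmic singularities are harmless). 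For (ii) the paper reduces to (i) via the strong Markov property (first hit a small ball near the axis of $D_2$ with constant probability, then run (i) inside a small cylinder $D_3$), whereas you rerun the same decoupling directly with a non-concentric planar hitting estimate; both work. Your approach is arguably more elementary in that it avoids the barrier construction and the ODE comparison entirely, at the cost of the first-moment computation; the paper's approach is purely spatial and never has to discuss time at all.
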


\begin{proof}
    (i)
    If $z\in D_1$, we have \eqref{eq:745} directly by choosing $c<\log 2$. 
    Now, assume $z\notin D_1$ and write $g(z):=\Pb_z(\tau_{D_1}\le \tau_{\partial D})$. 
    Since $g(z)$ is harmonic in $ D\setminus D_1$, we can bound $g$ by two other harmonic functions by the maximum principle. Define $$g_1(z):=\log(1/\sqrt{z_1^2+z_2^2})/\log(1/d)\ \ \mathrm{and}\ \ g_2(z):=f(\sqrt{z_1^2+z_2^2})\sin(z_3)
    ,$$
    where $f: [d,1] \mapsto [0,\infty)$ is a solution to the ordinary differential equation
    \begin{equation}\label{eq:578}
        (rf'(r))'=rf(r),
    \end{equation}
    with boundary conditions $f(d)=1$ and $f(1)=0$.
    
    By the existence and uniqueness theorem in the ODE theory 
    (see e.g.\ \cite[p.107]{MR2242407}), 
    there exists a solution $f_0$ to \eqref{eq:578} with (different) initial values $f_0(1)=0$ and $f_0'(1)=-1$. We now claim that $f_0'<0$ on $[d,1]$. If it is not true, then we can find $r_0:=\sup\{r\in(d,1):f_0'(r)=0\}$. From $f'(r)<0$ on $(r_0,1]$ and $f(1)=0$, we deduce that $f_0(r_0)>0$ and $f_0''(r_0)\le 0$, which contradicts \eqref{eq:578} at $r_0$. Thus, $f_0'<0$ on $[d,1]$ and $f_0>0$ on $[d,1)$. We then define $f(r):=f_0(r)/f_0(d)$, which is a solution to \eqref{eq:578} as required.
    
    One can directly verify that both $g_1$ and $g_2$ are harmonic inside $D\setminus D_1$ and $0\leq g_2\leq g\leq g_1$ on $\partial(D\setminus D_1)$. By the maximum principle, we get $0\leq g_2(z)\leq g(z)\leq g_1(z)$ for all $z\in D\setminus D_1$.
    From $g_2(z)\leq g_1(z)$ and letting $z_3=\pi/2$, we have 
    \begin{equation}\label{eq:594}
        f(r)\leq \frac{\log(1/r)}{\log(1/d)},\ r\in [d,1].
    \end{equation}
    By \eqref{eq:578}, we have $(rf'(r))'=rf(r)\geq 0$, which implies
    \begin{equation}\label{eq:532}
    rf'(r)\geq d\,f'(d),\ r\in [d,1].
    \end{equation}
    Combining \eqref{eq:594} and \eqref{eq:532}, we get 
    \begin{equation}\label{eq:B1}\frac{\log(1/r)}{\log(1/d)}\geq f(r)=f(d)+\int_d^r f'(s)ds\geq 1+\int_d^r \frac{df'(d)}{s}ds.\end{equation}
    Letting $r=1$ in \eqref{eq:B1}, we have $df'(d)\leq -1/\log(1/d)$. Combined with (\ref{eq:578}) and (\ref{eq:594}), this implies
    \begin{align*}rf'(r)=df'(d)+\int_d^r sf(s)ds&\leq df'(d)+\int_d^r \frac{s\log(1/s)}{\log(1/d)}ds\\
    &\leq df'(d)+\int_0^1 \frac{s\log(1/s)}{\log(1/d)}ds\leq-\frac{3}{4\log(1/d)}.\end{align*}
    Hence, for all $r\in[d,1]$,
    $$f(r)=-\int_r^1 f'(s)ds\geq \frac{3}{4\log(1/d)}\int_r^1\frac{1}{s} ds= \frac{3\log(1/r)}{4\log(1/d)}.$$
    Finally, we get the lower bound of $g(z)$ when $z\in \B((0,0,\pi/2),1/2)\setminus  D_1$:
    $$
    g(z)\geq g_2(z)=f(\sqrt{z_1^2+z_2^2})\sin(z_3)\geq\frac{3\log 2}{4\log(1/d)}\sin(\frac{\pi}{2}-\frac{1}{2}),
    $$ 
    which completes the proof by letting $c=3(\log 2)\cos(1/2)/4<\log2$. 

    \smallskip
    (ii)
    Let $D_3:=\{(z_1,z_2,z_3):-\pi/16\leq z_3\leq\pi/16, z_1^2+(z_2-a)^2\leq 1/64\}$ be a cylinder.
    It is easy to show that there exists a constant $C>0$ such that for all $0\leq a\leq 1/2$,
    \begin{equation}\label{eq:826}
    \Pb_0\Big(  \tau_{\B((0,a,0),1/16)}<\tau\Big)>C.
    \end{equation}
    By the strong Markov property, \eqref{eq:745} and \eqref{eq:826}, we have 
    \begin{align*}
        \Pb_0\Big(\tau_{D_2}\le \tau\Big)
        \geq\  \Pb_0\Big(  \tau_{\B((0,a,0),1/16)}\le \tau\Big)
        \inf_{z\in  \B((0,a,0),1/16)}\Pb_z\Big(\tau_{D_2}\le \tau_{\partial  D_3}\Big)
        \geq\  \frac{c\,C}{\log(1/(8d))},
    \end{align*}
    which concludes the proof.
\end{proof}

Next, we estimate the probability that a Brownian motion visits a specific cone before it escapes from a local ball.
Recall the cones $S_i$ and $T_{i,j}$ in Definition~\ref{def:cone}, and the local ball $\Ball(z)$ in \eqref{eq:rz}. 
\begin{lemma}[Sphere vs Cone]\label{lem:592}
    \textnormal{(i) (Lower bound)} There exist $c=c(u_1)>0$ and $n_1(u_1)>0$, such that for all $z\in \extendSquare_i$ and $n>n_1(u_1)$, we have
    \begin{equation}\label{eq:862}
        \Pb_z\Big(\tau_{\Tube_{i,j}}<\tau_{\partial \Ball(z)}\Big)>\frac{c}{n}.
    \end{equation}
    \textnormal{(ii) (Upper bound)} There exist $c'=c'(u_1)>0$ and $n_2(u_1)>0$, such that for $z\in\extendSquare_i$, $z'\in\Ball(z)$ and $n>n_2(u_1)$, we have
    \begin{equation}\label{eq:866}
        \Pb_{z'}\Big(\tau_{\Tube_{i,j}}<\tau_{\partial \Ball(z)}\Big)<\frac{c'(\log(r(z')/|z'-|z'|v_{i,j}|)+1)}{n}.
    \end{equation}
\end{lemma}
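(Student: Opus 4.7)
The starting point is the geometric inclusion \eqref{eq:440}, which within $\Ball(z)$ sandwiches $\Tube_{i,j}$ between two cylinders parallel to $v_{i,j}$ of radii $d:=u_1^{-n}|z|/2$ and $d':=2u_1^{-n}|z|$. Choosing coordinates so that $v_{i,j}=e_3$ and writing $w=(\tilde w,w_3)\in\Rb^2\times\Rb$, these cylinders become $\{|\tilde w|\le d\}$ and $\{|\tilde w|\le d'\}$. A crucial geometric fact is that, since both $z$ and $v_{i,j}$ lie in the cone $\extendSquare_i$ of angular radius $d_0/\sqrt n$, the axis distance $\rho(z):=|\tilde z|$ satisfies $\rho(z)\le 2d_0|z|/\sqrt n = r(z)/2$ (using $r_0=4d_0$); analogously $\rho(z')\le r(z')/2$ for $z'\in\Ball(z)$, and $|\tilde z'|$ differs from $|z'-|z'|v_{i,j}|$ by at most a factor of $\sqrt 2$ in this near-parallel regime.

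\textbf{Upper bound (ii).} Using the upper inclusion, the target probability is bounded above by $u(z'):=\Pb_{z'}(\tau_{\{|\tilde w|\le d'\}}<\tau_{\partial\Ball(z)})$. Setting $R:=2r(z)$, we observe that on $\partial\Ball(z)$ one has $|\tilde w|\le\rho(z)+r(z)\le 3r(z)/2<R$. The explicit function $v(w):=\log(R/|\tilde w|)/\log(R/d')$ is harmonic in the 3D region $\{|\tilde w|>0\}$ (since $\log|\tilde w|$ is 2D harmonic and $v$ is independent of $w_3$), equals $1$ on $\{|\tilde w|=d'\}$, and is nonnegative on $\partial\Ball(z)$; the maximum principle yields $u(z')\le v(z')$. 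A direct computation gives $\log(R/d')=n\log u_1+O(\log n)$, and combined with $|\tilde z'|\ge|z'-|z'|v_{i,j}|/\sqrt 2$ and the comparability $r(z')\asymp r(z)\asymp R$ (since $z'\in\Ball(z)$), this yields the claimed upper bound.

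\textbf{Lower bound (i).} By the lower inclusion it suffices to bound from below $h(w):=\Pb_w(\tau_{\{|\tilde w|\le d\}}<\tau_{\partial\Ball(z)})$ at $w=z$. Inscribe in $\Ball(z)$ the cylinder $D':=\{|\tilde w|\le R,\,|w_3-z_3|\le\pi R/2\}$ with $R:=r(z)/3$; the fit $(R+\rho(z))^2+(\pi R/2)^2\le r(z)^2$ follows from $\rho(z)\le r(z)/2$. When $\rho(z)\le R/2$, rescaling $D'$ to the unit cylinder of height $\pi$ maps $z$ into the middle ball $B((0,0,\pi/2),1/2)$ of Lemma \ref{lem:503}(i), which applies directly (using $d/R<1/2$ for large $n$) and yields $h(z)\ge c/\log(R/d)\ge c'/n$. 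When $\rho(z)>R/2$, we first apply Lemma \ref{lem:503}(i) at a point $z^{(0)}\in D'$ with $|\tilde z^{(0)}|=R/4$ and $z^{(0)}_3=z_3$, obtaining $h(z^{(0)})\ge c'/n$, and then transfer this to $z$ along the segment from $z^{(0)}$ to $z$ via a Harnack chain. Along this segment the radial coordinate is monotone between $R/4$ and $\rho(z)\le r(z)/2$, hence stays at distance $\ge R/4-d$ from $\{|\tilde w|\le d\}$ and $\ge r(z)/4$ from $\partial\Ball(z)$, giving a Harnack constant and chain length bounded independently of $n$.

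\textbf{Main obstacle.} The delicate point is the lower bound when $\rho(z)$ is comparable to $r(z)/2$: the axis of $\Tube_{i,j}$ is too far from the center of $\Ball(z)$ to inscribe a well-proportioned cylinder in which $z$ itself lies near the center, so one is forced into the two-step argument (apply Lemma \ref{lem:503}(i) at an auxiliary interior point $z^{(0)}$, then transfer by Harnack). The challenge is to verify that the Harnack chain can be chosen to stay uniformly away from both the thin target $\{|\tilde w|\le d\}$ and $\partial\Ball(z)$ while having length bounded uniformly in $n$, despite the target shrinking exponentially fast in $n$.
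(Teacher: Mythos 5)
Your proof is correct and follows essentially the same route as the paper: reduce the cone to cylinders via \eqref{eq:440}, bound the hitting probability from above by the explicit two-dimensional logarithmic harmonic majorant (the paper's ``standard hitting probability estimate for 2D Brownian motion''), and obtain the lower bound from Lemma~\ref{lem:503}. The only difference is that for the lower bound the paper directly invokes Lemma~\ref{lem:503}(ii), which already packages the off-axis case (its parameter $a\le 1/2$ is exactly your $\rho(z)\le r(z)/2$ after rescaling $\Ball(z)$ to the unit ball), whereas you re-derive that statement from Lemma~\ref{lem:503}(i) via an inscribed cylinder, a case split on $\rho(z)$, and a Harnack chain -- the paper's own proof of Lemma~\ref{lem:503}(ii) performs the same transfer by first hitting a small ball near the axis and applying the strong Markov property.
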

\begin{proof}
    (i) By \eqref{eq:rz} and \eqref{eq:440}, we have
    \begin{equation*}
        \Cylinder(v_{i,j},u_1^{-n}r(z))\cap \Ball(z)\subseteq \Tube_{i,j}.
    \end{equation*}
    By \eqref{6-1}, the distance between $z$ and the line connecting $0$ and $v_{i,j}$ is less than $r(z)/2$. Picking $n_1$ large such that $u_1^{-n_1}<1/16$, the result follows immediately from \eqref{eq:753}.
    
    (ii) Denote $\mathscr D:=\Cylinder(v_{i,j},2u_1^{-n}|z|)$ and $d:=\inf_{x\in\Rb}|z'-x\cdot v_{i,j}|$.
    By (\ref{eq:440}), we have $\Tube_{i,j}\cap \Ball(z)\subseteq\mathscr D$, and thus
    \begin{equation*}
        \Pb_{z'}\Big(\tau_{\Tube_{i,j}}<\tau_{\partial \Ball(z)}\Big)
        \leq\Pb_{z'}\Big(\tau_{\mathscr D}<\tau_{\partial \Ball(z)}\Big)
        \leq\Pb_{z'}\Big(\tau_{\mathscr D}<\tau_{\partial \Cylinder(v_{i,j},2r(z))}\Big)
        \leq\frac{\log(2r(z)/d)}{\log(2r(z)/(2u_1^{-n}|z|))},
    \end{equation*}
    where we used a standard hitting probability estimate for $2D$ Brownian motion in the last inequality. Noting that $d\geq|z'-|z'|v_{i,j}|/\sqrt 2$ by $z'\cdot v_{i,j}>0$, and $r(z')\ge r(z)/2$ by \eqref{eq:rz}, we conclude the result.
\end{proof}

We finally complete the proof of  Lemma~\ref{Lemma:3.6} by using Lemma~\ref{lem:592}.

\begin{proof}[Proof of Lemma~\ref{Lemma:3.6}]
    (i) Assume $z\in \partial V_{i,j}$ for some $j\in J$.
    By the union bound, we have
    \begin{equation*}
        \Pb_{z}\Big(\tau_{\TubeComplement_{i', j'}}=\tau_{\extendSubSet_{i,J}} ,\tau_{\TubeComplement_{i', j'}}<\tau_{\partial \Ball(z)}\Big)
        \leq \Pb_{z}\Big(\tau_{\extendSubSet_{i,J}}<\tau_{\partial\Ball(z)}\Big)
        \leq \sum_{j_l\in J}\Pb_{z}\Big(\tau_{\extendSubSet_{i,j_l}}<\tau_{\partial\Ball(z)}\Big).
    \end{equation*}
    By Definition \ref{def:cone}, for $j_l\in J$, we have
    \begin{equation}\label{eq:zz'}
        \big|z-v_{i,j_l}|z|\big|/r(z)>c_1\,\max\{|j-j_l|, 1\}/(mr_0).
    \end{equation} 
    Then, using \eqref{eq:866}, for all $n>n_2$ and $l=1,2,\ldots,|J|$,
    $$\Pb_{z}\Big(\tau_{\Tube_{i,j_l}}<     \tau_{\partial \Ball(z)}\Big)<\frac{c'\log(mr_0/\max\{c_1|j-j_l|, c_1\})+c'}{n}.$$
Rearranging the sum and using the above estimate, we obtain that
    \begin{align*}
        &\  \sum_{j_l\in J}\Pb_{z}\Big(\tau_{\extendSubSet_{i,j_l}}<\tau_{\partial\Ball(z)}\Big)
= \sum_{l=1}^{|J|}\Pb_{z}\Big(\tau_{\extendSubSet_{i,j_l}}<\tau_{\partial\Ball(z)}\Big)
        \le\sum_{l=1}^{|J|} \frac{c'\log(r_0m/\max\{c_1|j-j_l|, c_1\})+c'}{n}\\
        \le&\ \sum_{l=1}^{|J|} \frac{c'\log(3r_0m/(c_1l))+c'}{n}
       \le\dfrac{c'|J|\log(m/{|J|})+c'|J|(2+\log 3+\log(r_0/c_1))}{n}.
    \end{align*}
    Taking $c_2=\max\{c',c'(2+\log 3+\log(r_0/c_1))\}$, we conclude \eqref{eq:924}, and \eqref{eq:930} is obtained similarly.
    
    (ii) Let $n_3(u_1)>\max\{n_1(u_1),n_2(u_1)\}$.
    Pick $j$ and $j_l$ such that $j\neq j'$ and $j_l\neq j$. We first bound from above the probability that a Brownian motion started from $z\in\partial\extendSquare_i$ visits $\Tube_{i,j_l}$ and $\Tube_{i,j}$ successively before exiting $\Ball(z)$.
    By Definition \ref{def:cone}, $\dist(v_{i,j_l},\partial S_i)\ge c_1\, n^{-1/2}$, therefore, 
    $$\big|z-v_{i,j_l}|z|\big|/r(z)\geq c_1/r_0,$$
    and by \eqref{eq:866}, we have
    $$
    \Pb_{z}\Big(\tau_{\Tube_{i,j_l}}<\tau_{\partial \Ball(z)}\Big)\leq \frac{c'(\log(r_0/c_1)+1)}{n}.
    $$    
    Moreover, by \eqref{eq:zz'},
    for $j_l\neq j$ and $z'\in\Tube_{i,j_l}$, we have
    $$\big|z'-v_{i,j}|z'|\big|/r(z')>c_1|j-j_l|/(r_0m).$$
    Using \eqref{eq:866} again, we obtain 
    $$ \sup_{z'\in\Tube_{i,j_l}\cap\Ball(z)}\Pb_{z'}\Big(\tau_{\Tube_{i,j}}<\tau_{\partial \Ball(z)}\Big)\leq \frac{c'(\log(r_0m/c_1|j-j_l|)+1)}{n}.
    $$
    By the strong Markov property,
    \begin{align*}
    \Pb_{z}\Big(\tau_{\Tube_{i,j_l}}<\tau_{\Tube_{i,j}}<\tau_{\partial\Ball(z)}\Big)
    \leq\ \Pb_{z}\Big(\tau_{\Tube_{i,j_l}}<\tau_{\partial\Ball(z)}\Big) \sup_{z'\in\Tube_{i,j_l}\cap\Ball(z)}\Pb_{z'}\Big(\tau_{\Tube_{i,j}}<\tau_{\partial\Ball(z)}\Big).
    \end{align*}     
    By \eqref{6-1}, $\Ball(z)\cap \Tube_{k}=\varnothing$ for all $k\neq i$. Thus, by the union bound, for some constant $\overline c=\overline c(u_1)$,
    \begin{align*}
    \Pb_{z}\Big(\tau_{\TubeComplement_{i',j'}}<\tau_{\Tube_{i,j}}<\tau_{\partial\Ball(z)}\Big)\le\ &\sum_{l=1}^{m-1}\Pb_{z}\Big(\tau_{\Tube_{i,j_l}}<\tau_{\Tube_{i,j}}<\tau_{\partial\Ball(z)}\Big)\notag\\
    \leq\ & \sum_{l=1}^{m-1} \frac{c'(\log(r_0/c_1)+1)}{n}\frac{c'(\log(r_0m/c_1|j-j_l|)+1)}{n}\label{eq:1040}\\
    \leq\ & \sum_{l=1}^{m-1} \frac{c'(\log(r_0/c_1)+1)}{n}\frac{c'(\log(r_0m/c_1l/3)+1)}{n}\le \dfrac{\overline c\,m}{n^2}.\notag
    \end{align*} 
    The above combined with \eqref{eq:862} gives that
    \begin{align*}
        \Pb_{z}\Big(\tau_{\TubeComplement_{i',j'}}=\tau_{\Tube_{i,j}},\tau_{\Tube_{i,j}}<\tau_{\partial\Ball(z)}\Big)
        =\ \Pb_{z}\Big(\tau_{\Tube_{i,j}}<\tau_{\partial\Ball(z)}\Big)-\Pb_{z}\Big(\tau_{\TubeComplement_{i',j'}}<\tau_{\Tube_{i,j}}<\tau_{\partial\Ball(z)}\Big)
        \geq\frac{c}{n}-\dfrac{\overline c\,m}{n^2}.
    \end{align*}
    It follows that
    \begin{equation*}
        \Pb_{z}\Big(\tau_{\TubeComplement_{i', j'}}=\tau_{\extendSet_i} ,\tau_{\TubeComplement_{i', j'}}<\tau_{\partial \Ball(z)}\Big) = \sum_{j=1}^{m}\Pb_{z}\Big(\tau_{\TubeComplement_{i', j'}}=\tau_{\Tube_{i,j}} ,\tau_{\TubeComplement_{i', j'}}<\tau_{\partial \Ball(z)}\Big)\geq (m-1)\left(\frac{c}{n}-\frac{\overline c\,m}{n^2}\right).
    \end{equation*}
    This completes the proof of this lemma by letting $$c_3=\min\left(1,\frac{c}{2\overline c},\frac{1}{2c_2}\right)\ \ \mathrm{and}\ \ c_4=\frac{c_3c}{4},$$
    and choosing $n_3(u_1)$ large enough.
\end{proof}
\section{Downward deviation of cover times in a general setting}
In this part, we state and prove a downward deviation of  the cover time for a general, possibly non-Markovian, stochastic process.
\begin{lemma}\label{Lemma:3.8}
    Let $m,n,k$ be positive integers. 
    For each $s=1,\ldots,k$, consider a probability space $(\Omega_s,\mathcal F^s,\mathbb P_s)$ and denote their product probability space by $(\Omega, \mathcal F,\mathbb P)$. Let $G=\bigcup_{i=1}^n G^{(i)}$ where $G^{(i)}$'s are disjoint sets with $|G^{(i)}|=m$. For each $s$, let $X_l^s:\Omega_s\to G$ with $l=0,1,\ldots$ be random variables adapted to $\{\mathcal F_l^s\}_{l\geq 0}$, where $\{\mathcal F_l^s\}_{l\geq 0}$ is a sequence of increasing $\sigma$-fields.
    Suppose that for any $H^{(i)}\subseteq G^{(i)}$, any integer $1\leq s\leq k$ and any integer $l\geq 1$, we have
    \begin{equation}
        \mathbb P_s\left(X_l^s\in H^{(i)} \con  \mathcal F_{l-1}^s\right)<F(|H^{(i)}|/{m})\,\mathbb P_s\left(X_l^s\in G^{(i)}\con  \mathcal F_{l-1}^s\right),
    \end{equation}
    where $F:(0,1]\to\mathbb R^+$ is a continuous and increasing function such that 
    \begin{equation}\label{eq:2668}
        \int_0^1 \frac{1}{F(x)}dx=\infty.
    \end{equation}
    Then, for any $K>0$, there exist constants $q=q(k,K,F)<1$ and $m_0=m_0(k,K,F)>0$, such that for any $m>m_0$ and any $n\geq1$, we have
    \begin{equation}
    \mathbb P\Big(G\subseteq\{X_l^s:l=0,\ldots,[Kmn],s=1,\ldots,k\}\Big)<q^{mn}.
    \end{equation}
\end{lemma}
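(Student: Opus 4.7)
The plan is to apply exponential Markov to a single global supermartingale constructed from the uncovered counts in all $n$ groups simultaneously; dealing with each group separately would be hopeless because the groups are not independent.

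\textbf{Setup.} Since $\int_0^1 dx/F(x) = \infty$, one can choose $\beta_0 \in (0,1)$ such that $\int_{\beta_0}^1 dx/F(x) > 3kK$, and set $f_0 := F(\beta_0) > 0$. Define the truncated potential
\[
\psi^{*}(j) := \sum_{j' = \max(j,\,\lceil \beta_0 m\rceil)+1}^{m} \frac{1}{F(j'/m)},
\]
which is non-increasing in $j$, constant equal to $\psi(\lceil\beta_0 m\rceil)$ on $[0,\lceil\beta_0 m\rceil]$ (and hence $\geq 3kKm$ for $m\geq m_0(k,K,F)$ by Riemann approximation), and with $\psi^{*}(j-1)-\psi^{*}(j)=1/F(j/m)$ when $j>\lceil\beta_0 m\rceil$ and $0$ otherwise. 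Sequentialize the $k$ independent processes into mini-steps $L=1,\ldots,L^{*}:=k[Kmn]$ (one step of one process at each mini-step, in any fixed order), let $j^{i}_{L}$ and $U^{i}_{L}\subseteq G^{(i)}$ denote the uncovered count and uncovered set in $G^{(i)}$ after $L$ mini-steps, and put
\[
M_L := \sum_{i=1}^{n} \psi^{*}(j^{i}_{L}).
\]
On the covering event $M_{L^{*}}=n\psi(\lceil\beta_0 m\rceil)\geq 3kKmn$ (up to an $O(1)$ coming from the initial $l=0$ points, which is absorbed for $m$ large).

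\textbf{Supermartingale estimate.} I would prove that for $0<\lambda\leq f_0/2$ the process $\exp(\lambda M_L - L(\lambda+\lambda^2/f_0))$ is a supermartingale for the combined filtration $\mathcal{F}^{\mathrm{all}}_{L}:=\bigvee_{s}\mathcal{F}^{s}_{\ell(s,L)}$. Denoting by $s,\ell$ the process and step index of the $L$-th mini-step, and writing $p^{i}:=\mathbb{P}_{s}(X^{s}_{\ell}\in G^{(i)}\mid \mathcal{F}^{s}_{\ell-1})$ (so $\sum_{i}p^{i}=1$), the crucial point is that the hypothesis can be applied pathwise to the \emph{random}, but $\mathcal{F}^{\mathrm{all}}_{L-1}$-measurable, set $H^{(i)}=U^{i}_{L-1}$: by independence of the processes, $X^{s}_{\ell}$ conditionally on $\mathcal{F}^{\mathrm{all}}_{L-1}$ has the same law as conditionally on $\mathcal{F}^{s}_{\ell-1}$, so
\[
\mathbb{P}\bigl(X^{s}_{\ell}\in U^{i}_{L-1}\,\big|\,\mathcal{F}^{\mathrm{all}}_{L-1}\bigr)\leq F\bigl(j^{i}_{L-1}/m\bigr)\,p^{i}.
\]
Combined with the elementary bound $f(e^{\lambda/f}-1)\leq \lambda+\lambda^{2}/f\leq \lambda+\lambda^{2}/f_0$ for $\lambda/f\leq 1/2$ (from $e^{x}\leq 1+x+x^{2}$ on $[0,1/2]$), and noting that only groups with $j^{i}_{L-1}>\lceil\beta_0 m\rceil$ contribute to $M_L-M_{L-1}$,
\[
\mathbb{E}\bigl[e^{\lambda(M_L-M_{L-1})}\,\big|\,\mathcal{F}^{\mathrm{all}}_{L-1}\bigr]\leq 1+(\lambda+\lambda^{2}/f_0)\sum_{i}p^{i}\leq e^{\lambda+\lambda^{2}/f_0}.
\]

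\textbf{Chernoff and conclusion.} Setting $\lambda:=f_0/4$ and noting $L^{*}\leq kKmn$, exponential Markov gives
\[
\mathbb{P}\bigl(G\text{ is covered}\bigr)\leq \mathbb{P}\bigl(M_{L^{*}}\geq 3kKmn\bigr)\leq e^{-3\lambda kKmn+L^{*}(\lambda+\lambda^{2}/f_0)}\leq \exp\Bigl(-\tfrac{3}{16}kKmn\,f_0\Bigr),
\]
so the claim holds with $q:=\exp(-3kK f_0/16)\in(0,1)$, provided $m_0$ is large enough for the Riemann approximation of $\psi(\lceil\beta_0 m\rceil)$ to hold and to absorb the negligible $M_0\leq k/F(1)$. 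The main subtlety is the pathwise application of the hypothesis to the \emph{random} set $U^{i}_{L-1}$, which is the step that requires the independence between the processes to be invoked carefully; the truncation at $\beta_0$ is essential because $\int_0^1 dx/F=\infty$ forces $F(0+)=0$, so that the increment $1/F(j/m)$ and thus $e^{\lambda/F(j/m)}$ would be uncontrollable as $j/m\to 0$ without truncation.
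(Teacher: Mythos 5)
Your argument is correct, and it takes a genuinely different route from the paper's. The paper first dominates the covering process by a family of \emph{independent} geometric random variables $Z_{i,j}\sim G(\min\{F(j/m),1\})$, proving $\mathbb P(G\subseteq\mathcal Y_r\mid Y_1,\dots,Y_t)\le L_{r-t}(\mathcal U_t)$ by backward induction (this is where the hypothesis is applied to the random uncovered sets, and where a convexity-type inequality for $L_r$ is needed), and only afterwards runs a Chernoff bound on $\sum_{i,j}Z_{i,j}$; there the divergence of $\int_0^1 dx/F$ enters through choosing $v$ with $\int_0^1(e^v-1+F(x))^{-1}dx>3K$, and no truncation is needed because the geometric moment generating function $p/(e^u-1+p)$ stays bounded as $p\to0$. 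You instead work directly with the dependent process via the truncated potential $M_L=\sum_i\psi^*(j^i_L)$ and a one-step supermartingale estimate, which collapses the paper's two stages into one and avoids the backward induction entirely; the price is the truncation at $\beta_0$, which is indeed unavoidable in your formulation since $F(0+)=0$. The two places that require care — applying the hypothesis pathwise to the random set $U^i_{L-1}$ (justified by the product structure, exactly as in the paper's Step 1) and extracting a linear-in-$m$ potential gap from $\int_0^1 dx/F=\infty$ — are both handled correctly. Two cosmetic points: with $\lambda=f_0/4$ the exponent you obtain is actually $-\tfrac{7}{16}kK f_0\,mn$ rather than $-\tfrac{3}{16}kKf_0\,mn$ (either is fine), and the bound $M_0\le k/F(1)$ should read $M_0\le k/F((m-k+1)/m)$, which is still $O(1)$ for $m$ large; neither affects the conclusion.
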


\begin{proof}
    $Step$ 1. For $l\geq 0$ and $1\leq s\leq k$, let $Y_{lk+s}:=X_l^s$. Then, for any $t\geq k$ and any $1\le i\le n$, we have
    \begin{equation}\label{eq:835}
        \mathbb P\left(Y_{t+1}\in H^{(i)}\con  Y_1,\ldots,Y_{t}\right)\le F(|H^{(i)}|/m)\,\mathbb P\left(Y_{t+1}\in G^{(i)}\con  Y_1,\ldots,Y_{t}\right).
    \end{equation}
    Let $Z_{i,j}\sim G(p_j)$ $(i=1,\ldots,n;j=1,\ldots,m)$ be independent geometric random variables with parameter $p_j$, where $p_j:= \min\{F(j/m),1\}$. Set $p_0:=0$.
    For integers $k_i\le m$ and $r\ge 0$, let 
    $$L_r(k_1,\ldots,k_n):=\mathbb P\Big(\sum_{i=1}^n\sum_{j=1}^{k_i}Z_{i,j}\leq r\Big).$$
    Let $\mathcal{Y}_{t}:=\{Y_1,\ldots,Y_t\}$ and $U_t^{(i)}:= G^{(i)}\setminus\mathcal{Y}_t$. For simplicity, we write 
    \[\mathcal{U}_t=(|U_t^{(1)}|,\ldots,|U_t^{(n)}|)\] 
    and 
    \[\mathcal{U}_t^{i}=(|U_t^{(1)}|,\ldots,|U_t^{(i)}|-1,\ldots,|U_t^{(n)}|).\]
    Fix $r\ge k$. We now claim that for $k\leq t\leq r$,
    \begin{equation}\label{eq:798}
        \mathbb P\left(G\subseteq\mathcal{Y}_r\con  Y_1,\ldots,Y_t\right)\leq L_{r-t}( \mathcal{U}_t).
    \end{equation}
    We will prove \eqref{eq:798} by induction. Note that \eqref{eq:798} automatically holds when $t=r$. Suppose \eqref{eq:798} holds for $t+1$ ($k\leq t\leq r-1$), we will show it also holds for $t$. By \eqref{eq:835}, inductive hypothesis and the monotonicity of $L_{r}(\cdot)$, we have
    \begin{align*}
        &\,\mathbb P\Big(G\subseteq\mathcal{Y}_r\con  Y_1,\ldots,Y_t\Big)\\
        =&\,\sum_{i=1}^n \mathbb P\Big(G\subseteq\mathcal{Y}_r,Y_{t+1}\in G^{(i)}\setminus U_t^{(i)}\con  Y_1,\ldots,Y_t\Big)
        +\sum_{i=1}^n\mathbb P\Big(G\subseteq\mathcal{Y}_r,Y_{t+1}\in U_t^{(i)}\con  Y_1,\ldots,Y_t\Big)\\
        \leq&\,\sum_{i=1}^n L_{r-t-1}(\mathcal{U}_t)\mathbb P\Big(Y_{t+1}\in G^{(i)}\setminus U_t^{(i)}\con  Y_1,\ldots,Y_t\Big)
        +\sum_{i=1}^n L_{r-t-1}(\mathcal{U}_t^{i})\mathbb P\Big(Y_{t+1}\in  U_t^{(i)}\con  Y_1,\ldots,Y_t\Big)\\
        \leq&\, \sum_{i=1}^n L_{r-t-1}(\mathcal{U}_t)\mathbb P\Big(Y_{t+1}\in G^{(i)}\con  Y_1,\ldots,Y_t\Big)\mathbb P\Big(Y_{t+1}\in G^{(i)}\setminus U_t^{(i)}\con  Y_{t+1}\in G^{(i)},Y_1,\ldots,Y_t\Big)    \\
        &+\, \sum_{i=1}^n L_{r-t-1}(\mathcal{U}_t^{i})\mathbb P\Big(Y_{t+1}\in G^{(i)}\con  Y_1,\ldots,Y_t\Big)\mathbb P\Big(Y_{t+1}\in U_t^{(i)}\con  Y_{t+1}\in G^{(i)},Y_1,\ldots,Y_t\Big)\\
        \leq &\,\sum_{i=1}^n\mathbb P\Big(Y_{t+1}\in G^{(i)}\con  Y_1,\ldots,Y_t\Big)\cdot
        \Big[ L_{r-t-1}(\mathcal{U}_t)(1-p_{|U_t^{(i)}|})
        + L_{r-t-1}(\mathcal{U}_t^{i})p_{|U_t^{(i)}|}\Big]\\
        \leq &\,\max_{1\leq i\leq n}\left\{L_{r-t-1}(\mathcal{U}_t)(1-p_{|U_t^{(i)}|})+L_{r-t-1}(\mathcal{U}_t^{i})p_{|U_t^{(i)}|})\right\}.
    \end{align*}
    Therefore, it suffices to show that for any $r\ge 0$, $1\le i\le n$ and $0\le k_i\le m$, we have
    \begin{equation}\label{eq:C1}
    L_{r}(k_1,\ldots,k_n)(1-p_{k_i})+L_{r}(k_1,\ldots,k_i-1,\ldots,k_n)p_{k_i}
    \leq L_{r+1}(k_1,\ldots,k_n).
    \end{equation}
    If $k_i=0$, the above inequality is trivial from the definition of $L$. If $k_i\ge1$, let $Z:=(\sum_{i'=1}^n\sum_{j=1}^{k_{i'}}Z_{i',j})-Z_{i,k_i}$. Since $Z_{i,k_i}$ is independent of $Z$, we have
    \begin{equation}\label{eq:actualequality}
        \mathbb P(Z+Z_{i,k_i}\leq r)(1-p_{k_i})+\mathbb P(Z\leq r)p_{k_i}\leq\mathbb P(Z+Z_{i,k_i}\leq r+1),
    \end{equation}
    which implies \eqref{eq:C1}, and thus completes the proof of the claim.
    
    $Step$ 2. We will prove that there exist constants $q=q(k,K,F)<1$ and $m_0(k,K,F)$, such that for any $m>m_0$, we have
    \begin{equation}\label{eq:3.8}
        \mathbb P\Big(\sum_{i=1}^n\sum_{j=1}^{m-k} Z_{i,j}\le Kmn\Big)<q^{mn}.
    \end{equation}
    
    For any $u>0$, we have 
    $$\mathbb E[e^{-uZ_{i,j}}]=\dfrac{p_j}{e^u-1+p_j}\le\dfrac{F(j/m)}{e^u-1+F(j/m)}.$$
    By Chebyshev inequality, for $m>2k$ and any $v>0$,
    \begin{align*}
    \mathbb P\Big(\sum_{i=1}^n\sum_{j=1}^{m-k} Z_{i,j}\le Kmn\Big)
    \leq\ e^{vKmn}\prod_{i=1}^n\prod_{j=1}^{m-k} \mathbb E[e^{-vZ_{i,j}}]
    \leq\  e^{vKmn}\left(\prod_{j=1}^{m-k}\dfrac{F(j/m)}{e^v-1+F(j/m)}\right)^n.
    \end{align*}

    By \eqref{eq:2668}, we can choose $v=v(F,K)>0$ such that $\int_0^1 (e^v-1+F(x))^{-1}dx>3K$. 
    Then, we can choose $m_1(k,K,F)$, such that for any $m>m_1(k,K,F)$, we have 
    \begin{equation*}
    \frac 1m\sum_{j=1}^{m-k} \dfrac{1}{e^v-1+F(j/m)}>2K,
    \end{equation*}
    which implies
    \begin{align*}
        \prod_{j=1}^{m-k}\dfrac{F(j/m)}{e^v-1+F(j/m)} 
        =\ &\exp\left( -\sum_{j=1}^{m-k}\int_0^v \dfrac{e^u }{e^u-1+F(j/m)}du \right)\\
        \leq\ & \exp\left( -\int_0^v \sum_{j=1}^{m-k} \frac{1}{e^v-1+F(j/m)}du\right)
        \leq e^{-2vKm}.
    \end{align*}
    Thus, we have
    \begin{equation}\label{eq:1301}
        \mathbb P\Big(\sum_{i=1}^n\sum_{j=1}^{m-k} Z_{i,j}\le Kmn\Big) \leq e^{-vKmn}.
    \end{equation}

    Step 3.
    Combining \eqref{eq:798} with \eqref{eq:3.8}, there are $m_0=m_1(k,kK,F)$ and $q=e^{-vkK}$ such that for all $m> m_0$,
    \begin{align*}
        \mathbb P\Big(G\subseteq \{Y_1,\ldots,Y_{k[Kmn]}\}\con  Y_1,\ldots,Y_{k}\Big)
        &\le L_{k[Kmn]-k}\big(|U_{k}^{(1)}|,\ldots,|U_{k}^{(n)}|\big)\\
        &=\mathbb P\Big(\sum_{i=1}^n\sum_{j=1}^{|U_{k}^{(i)}|} Z_{i,j}\le k[Kmn]-k\Big)\\
        &\le\mathbb P\Big(\sum_{i=1}^n\sum_{j=1}^{m-k} Z_{i,j}\le kKmn\Big)
        <q^{mn}.
    \end{align*}
    This concludes the proof.
\end{proof}

\bibliographystyle{abbrv}
\bibliography{analyticity}

\end{document}